\newcommand\twoheaduparrow{\mathop{\rotatebox[origin=c]{90}{$\twoheadrightarrow$}}}
\newcommand{\wayabove}{{\twoheaduparrow}}
\newtheorem{theorem}{Theorem}[section]
\newtheorem{proposition}[theorem]{Proposition}
\newtheorem{lemma}[theorem]{Lemma}
\newtheorem{corollary}[theorem]{Corollary}
\theoremstyle{definition}
\newtheorem{definition}[theorem]{Definition}
\theoremstyle{remark}
\newtheorem{remark}[theorem]{Remark}
\newtheorem{notation}{Notation}
\newtheorem{example}[theorem]{Example}
\numberwithin{equation}{section}
\setlist[enumerate]{leftmargin=*,align=left,labelindent=\parindent}
\newcommand{\myitem}[1][]{%
\item[#1]\protected@edef\@currentlabel{#1}\ignorespaces%
}
\DeclareMathOperator{\id}{\mathrm{id}}
\newcommand{\cov}{\mathrel{\vartriangleleft}}
\newcommand{\amp}{\mathrel{\&}}
\DeclareMathOperator{\defeqiv}{\stackrel{\mathrm{def}}{\iff}}
\DeclareMathOperator{\defeql}{\overset{\mathrm{def}}{\,=\mathrel{\mkern-9mu}=\,}}
\DeclareMathOperator*{\medwedge}{{\textstyle{\bigwedge}} }
\DeclareMathOperator*{\medvee}{{\textstyle{\bigvee}}}
\DeclareMathOperator{\wb}{\mathsf{wb}}
\newcommand{\Fin}[1]{\mathsf{Fin}(#1)}
\newcommand{\PFin}[1]{\mathsf{Fin}^{+}(#1)}
\newcommand{\Pow}[1]{\mathcal{P}(#1)}
\newcommand{\Sat}[1]{\mathrm{Sat}(#1)}
\newcommand{\sat}{\mathcal{A}}
\newcommand{\Model}[1]{\mathrm{Mod}(#1)}
\newcommand{\Viet}{\mathcal{V}}
\newcommand{\VViet}{\mathfrak{V}}
\newcommand{\One}{ \left\{ * \right\}}
\newcommand{\Terminal}{\mathbf{1}}
\newcommand{\Nat}{\mathbb{N}}
\newcommand{\Rat}{\mathbb{Q}}
\newcommand{\Bin}{{ \left\{ 0,1 \right\}}}
\newcommand{\FR}{\mathcal{R}}
\newcommand{\FTop}{\textbf{\textrm{\textup{FTop}}}}
\newcommand{\FTopi}{\textbf{\textrm{\textup{FTop$_{i}$}}}}
\newcommand{\BCov}{\textbf{\textrm{\textup{BCov}}}}
\newcommand{\BCovi}{\textbf{\textrm{\textup{BCov$_{i}$}}}}
\newcommand{\KReg}{\textbf{\textrm{\textup{KReg}}}}
\newcommand{\LFCov}{\textbf{\textrm{\textup{ContCov}}}}
\newcommand{\meets}{\between}
\newcommand{\op}{\mathrm{op}}
\newcommand{\Patch}{\mathrm{Patch}}
\newcommand{\Perf}{\mathrm{PSub}}
\newcommand{\SubTop}{\mathrm{SubTop}}
\newcommand{\Located}{\mathcal{L}}
\newcommand{\LLocated}{\mathfrak{L}}
\newcommand{\Scott}{\Sigma}
\DeclarePairedDelimiter{\abs}{\textup{`}}{\textup{'}}
\newcommand{\Closed}[1]{\abs{#1}}
\newcommand{\KFit}[1]{\kappa_{#1}}
\newcommand{\rcut}{\textup{\textbf{\textsf{r}}}}
\newcommand{\lcut}{\textup{\textbf{\textsf{l}}}}
\newcommand{\sq}{\Box}
\newcommand{\dia}{\Diamond}
\newcommand{\Lower}[1]{\mathrm{P_L}(#1)}
\newcommand{\elPT}[1]{\mathbb{#1}}
\newcommand{\CZF}{\textmd{\textrm{\textup{CZF}}}}
\title{Geometric theories of patch and \\ Lawson topologies}
\author{Tatsuji Kawai\\[.5em]
\normalsize Dipartimento di Matematica,
\normalsize Universit\`{a} di Padova\\
\normalsize via Trieste 63, 35121 Padova, Italy\\
\normalsize\texttt{tatsuji.kawai@math.unipd.it}}
\date{\today}
\begin{document}
\maketitle
\begin{abstract}
  We give geometric characterisations of patch and Lawson
  topologies in the context of predicative point-free topology
  using the constructive notion of located subset. We present
  the patch topology of a stably locally compact formal topology
  by a geometric theory whose models are the points of the
  given topology that are located, and the Lawson topology of a
  continuous lattice by a geometric theory whose models
  are the located subsets of the given lattice.
  We also give a predicative presentation of the frame of perfect
  nuclei on a stably locally compact formal topology, and show that it
  is essentially the same as our geometric presentation of the patch
  topology.  Moreover, the construction of Lawson topologies
  naturally induces a monad on the category of compact regular formal
  topologies, which is shown to be isomorphic to the Vietoris monad.

  \medskip
  \noindent \textsl{Keywords:} Geometric theory; Located subset;
  Continuous lattice; Lawson topology; Stably locally compact locale;
  Patch topology

  \medskip
  \noindent \textsl{MSC2010:} 06B35; 03F60; 54B20; 06D22
% MSC2000: 06B35(Continuous Lattice);03F60(Constructive and Recursive Analysis);54B20(Hyperspace);06D22(Frame and Locales)
\end{abstract}

\section{Introduction}\label{sec:Introduction}
The purpose of this paper is to give yet another constructions of patch and
Lawson topologies in the context of predicative point-free topology.  A point-free
approach to patch topologies (and in particular Lawson topologies of 
domains) was initiated by \citet{escardo2001regular}, who constructed
the patch topology of a stably locally compact locale as the frame
of perfect nuclei. His construction is constructive in the sense
of topos theory but impredicative.
Later, \citet{CoquandZhangPrediativePatch} gave a predicative
construction of the patch topology of a stably compact (i.e.\ stably
locally compact and compact) locale using the notion of entailment relation
\cite{cederquist2000entailment}. These authors are motivated by the
special case of spectral locales, in which the patch construction
can be seen as a solution to the problem of adding the Boolean
complements to a distributive lattice (cf.\ Section
\ref{sec:Spectral}); in a more general case, they consider the patch
construction to be a solution to a certain generalisation of this
universal problem. 

In this paper, we give another account of patch and Lawson
topologies in the context of formal topology
\cite{Sambin:intuitionistic_formal_space}, motivated by the
constructive notion of located subset. A subset $A \subseteq X$ of a
metric space $(X,d)$ is \emph{located} if for any point $x \in X$, the
distance
  $
  d(x,A) \defeql \inf \left\{ d(x,y) \mid y \in A\right\}
  $
exists as a Dedekind real, i.e.\ for any positive rational numbers
that are apart $p < q$, either there exists $y \in A$ such that
$d(x,y) < q$ or there is no $y \in A$ such that $d(x,y) < p$.
Constructively, locatedness is a non-trivial notion and plays a very
important role in the development of constructive mathematics
\cite{Bishop-67,Intuitionism}. \citet{Spitters10LocatedOvert} gave a
point-free definition of located subset in the context of compact
regular formal topology, generalising the above metric notion to the
point-free setting. He defined a subset $V$ of the base of the compact
regular formal topology to be located if it is a completely prime
upper set such that
\[
  a \ll b \implies a \notin V \vee b \in V
\]
for all basic elements $a,b$, where $\ll$ is the way-below relation.
Since this notion only depends on the structure of the continuous
lattice of a compact regular formal topology, it also makes good sense
in a more general context of continuous lattice (see Section
\ref{sec:Located}).

Then, the main result of this paper is the following: the patch
topology of a stably locally compact formal topology is presented by
a geometric theory whose models are the formal points of the given
topology that are located, and the Lawson topology of a continuous
lattice is presented by a geometric theory whose models are the
located subsets of the given continuous lattice; see Section
\ref{sec:Patch}, and Section \ref{sec:LawsonTop}.
The crucial step is to observe that the above notion
of located subset is not geometric but there is an equivalent
geometric characterisation of a located subset which is analogous
to the Dedekind cuts; see Section \ref{sec:Located}.

Informally, we can say that the patch topology is the space of located
points and the Lawson topology is the space of located subsets. We
have thus given spatial characterisations of patch and Lawson
topologies in the constructive terms. However, we expect that these
characterisations are also relevant to the classical theory of effective
domains; specifically, it would be interesting to see a precise
connection between the notion of located subset and that of Lawson
computable element of a domain discussed by
\citet{EscardoCompContentLawsonTop}.
Our predicative presentations of patch and Lawson topologies
should make this comparison easier, since the basic notions in
the effective domain theory are defined in terms of the base of a
topology.

Apart from the above mentioned results, we give a predicative
presentation of the frame of perfect nuclei on a stably locally
compact locale in terms of formal topology, and show that this
presentation is essentially the same as the geometric presentation of
the patch topology; see Section \ref{sec:PerfectNuclei}.  This makes
precise the connection between our work and the previous work by
\citet{escardo2001regular} in the topos theoretic context.  Moreover,
the construction of Lawson topologies determines the right adjoint to
the forgetful functor from the category $\KReg$ of compact regular
formal topologies to that of continuous lattices and perfect maps.
Hence, the adjunction induces a monad on $\KReg$, which is shown to be
isomorphic to the Vietoris monad on $\KReg$.  Indeed, the connection
between Lawson topologies of continuous lattices and the Vietoris
monad on the category of compact Hausdorff spaces is well known
\cite{gierz2003continuous,johnstone-82};
however, our result seems to be the first constructive and point-free
account of this connection; see Section~\ref{sec:Vietoris}.

Throughout this paper, we work constructively (i.e.\ using
intuitionistic logic) but also predicatively in a generalised sense.
This means that we do not admit the notion of powerset, but allow
certain forms of generalised inductive definitions.  Specifically, our
results can be carried out in the constructive set theory \CZF{} with the
Regular Extension Axiom; see \citet{Aczel-Rathjen-Note} for the
details of \CZF.
Our basic reference for the classical domain theory is
\citet{gierz2003continuous}.

\begin{notation}
We write $\left\{ * \right\}$ for a fixed one point set.
If $S$ is a set, $\Pow{S}$ denotes the class of subsets of $S$.
Predicatively, $\Pow{S}$ cannot be a set
unless  $S = \emptyset$.
A set $A$
is \emph{finitely enumerable} if there exists a surjection $f \colon
\left\{0,\dots,n-1 \right\} \to A$ for some $n \in \Nat$.
The \emph{set} of finitely enumerable
subsets of a set $S$ is denoted by $\Fin{S}$.
For subsets
$U,V \subseteq S$, write $U \meets V$ if the intersection $U \cap
V$ is inhabited.  If $r$ is a relation between sets $X$ and $S$, its
inverse image map $r^{-} \colon \Pow{S} \to \Pow{X}$ is defined by
  $
  r^{-} U \defeql \left\{ x \in X \mid \left( \exists a \in U\right) x
  \mathrel{r} a \right\}.
  $
We often write $r^{-}a$ for $r^{-}\left\{ a \right\}$.
\end{notation}

\section{Formal topologies}\label{sec:FormalTopology}
We give some backgrounds on basic covers and formal
topologies, the predicative notions of complete suplattices and
frames. Our exposition is based on  \citet{ConvFTop} and
\citet{Fox05}. The knowledgeable reader is advised to skip this section.

\subsection{Basic covers}\label{sec:BasicCover}
\begin{definition}%[{\citet[Section 2]{ConvFTop}}]
  \label{def:BCov}
  A \emph{basic cover} is a pair $\mathcal{S}=(S, \cov)$
  of a set $S$ and a relation $\cov$ between $S$ and $\Pow{S}$ such that 
  \[
    \sat U \defeql \left\{ a \in S \mid a \cov U \right\}
  \]
  is a set for each $U \subseteq S$ and that
  \begin{enumerate}
    \item $a \in U \implies a \cov U$,
    \item $a \cov U \amp U \cov V \implies a \cov V$,
  \end{enumerate}
  where
     $U \cov V \defeqiv \left( \forall a \in U \right) a \cov V$.
  The set $S$ is called the \emph{base} of $\mathcal{S}$,
  and the relation $\cov$ is called a \emph{cover} on $S$,
  or the cover of $\mathcal{S}$.  
  Given a subset $U \subseteq S$, a subset $\sat U$ is called a
  \emph{saturation} of $U$, and $U$ is said to be \emph{saturated} if
  $U = \sat U$.  The collection $\Sat{\mathcal{S}} = \left\{ \sat U
    \mid U \in \Pow{S} \right\}$ of saturated subsets of
    $\mathcal{S}$ can be identified with $\Pow{S}$ together with the
    equality
  \[
    U =_{\mathcal{S}} V \defeqiv \sat U =  \sat V.
  \]
$\Sat{\mathcal{S}}$ 
forms a complete suplattice with joins defined by $\bigvee_{i
\in I}\sat U_i = \sat \bigcup_{i\in I} U_i$ 
for any set-indexed family $(U_i)_{i \in I}$ of subsets of $S$.
\end{definition}
\begin{notation}
  We use letters $\mathcal{S},\mathcal{S}',\dots$ to denote basic
  covers,
  and use $S,S',\dots$, $\cov,\cov',\dots$, and $\sat,
  \sat',\dots$ to denote their bases, covers, and saturation
  operations.
\end{notation}

% Morphisms
\begin{definition}\label{def:BCM}
  Let $\mathcal{S}$ and $\mathcal{S}'$ be basic covers.
  A relation $r \subseteq S \times S'$ is called a \emph{basic cover
  map} from $\mathcal{S}$ to $\mathcal{S}'$ if 
  \begin{equation}\label{FTM3}
    a \cov' U \implies r^{-} a  \cov r^{-}U
  \end{equation}
  for each $a \in S'$ and $U \subseteq S'$. 
  Basic cover maps from $\mathcal{S}$ and $\mathcal{S}'$
  are ordered by
  \[
    r \leq s \defeqiv \left( \forall a \in S \right) r^{-} a \cov
    s^{-}a,
  \]
  and they are defined to be \emph{equal}  if $r \leq
  s \amp s \leq r$. 

  The basic covers and basic cover maps form a category $\BCov$. 
  The identity map on a basic cover is the identity relation on its
  base, and the composition
  of basic cover maps $r \colon \mathcal{S} \to \mathcal{S}'$ and
  $s \colon \mathcal{S}' \to \mathcal{S}''$ is given by the
  composition of their underlying relations.
\end{definition}
A basic cover map $r \colon \mathcal{S} \to \mathcal{S}'$ determines
a suplattice homomorphism
$f_{r} \colon \Sat{\mathcal{S}'}
\to \Sat{\mathcal{S}}$ by
\begin{equation}\label{def:FuncBCovSupLat}
  f_{r}(\sat' U) = \sat r^{-}U.
\end{equation}
The assignment $r \mapsto f_{r}$ is a contravariant functor
from $\BCov$ to the category of set-based complete
suplattices, where a complete suplattice $(X, \bigvee)$ is 
\emph{set-based} if it is equipped with a subset $S \subseteq X$
such that $S_{x} \defeql \left\{ a \in S \mid a \leq x \right\}$ is a set for
each $x \in X$ and $\bigvee S_{x} = x$.
This functor forms a part of the dual equivalence between the two
categories; see \citet[Proposition 2.4]{ConvFTop}.

%%%%%%%%%%%%%%%%%%%%%%%%%%%%%%%%%%%%%%%%%%%%%%%%%%%%%%
%  Splitting subset and Formal Points
%%%%%%%%%%%%%%%%%%%%%%%%%%%%%%%%%%%%%%%%%%%%%%%%%%%%%%
The terminal object of $\BCov$ is defined by
$\Terminal \defeql \left(\left\{ * \right\}, \in \right)$,
which corresponds to the truth values $\Pow{\One}$.
A global point $\Terminal \to \mathcal{S}$ of a basic cover
$\mathcal{S}$ can be characterised as follows.
\begin{definition}
  Let $\mathcal{S}$ be a basic cover.
  A subset $V \subseteq S$ is \emph{splitting} if
  \[
    a \in V \amp  a \cov U \implies V \meets U
  \]
  for all $a \in S$ and $U \subseteq S$.
\end{definition}
A splitting subset $V \subseteq S$ bijectively corresponds to a basic
cover map $r_{V} \colon \Terminal \to \mathcal{S}$ given by
  $
  * \mathrel{r_V} a \defeqiv  a \in V.
  $
Conversely, a basic cover map $r \colon \Terminal \to \mathcal{S}$
determines a splitting subset by
  $
  V_{r} \defeql \left\{ a \in S \mid * \mathrel{r} a \right\}.
  $

\subsection{Formal topologies}
\begin{definition}
  A \emph{formal topology} is a triple $\mathcal{S}=(S, \cov, \leq)$
  where $(S, \cov)$ is a basic cover and $(S, \leq)$ is a preorder
  such that 
  \begin{enumerate}
    \item $a \leq b \implies a \cov b$,
    \item $a \cov U \amp a \cov V \implies a \cov  U \downarrow V$
  \end{enumerate}
  for all $a,b \in S$ and $U,V \subseteq S$, where
  \[
    U \downarrow V \defeql \left\{ c \in S \mid \left( \exists a \in U
      \right) \left( \exists b \in V \right) c \leq a \amp c \leq
    b\right\}.
  \]
  We write $a \downarrow b$ for
  $\left\{ a \right\} \downarrow \left\{ b \right\}$
  and $a \downarrow U$ for $\left\{ a \right\} \downarrow U$.
\end{definition}
The collection $\Sat{\mathcal{S}}$ of saturated subsets of a formal
topology $\mathcal{S}$ has finite meets
with top $S$ and meets $\sat U \wedge \sat V = \sat(U \downarrow V)$
which distribute over the join structure on $\Sat{\mathcal{S}}$
as given in Definition \ref{def:BCov}. Hence, $\Sat{\mathcal{S}}$
forms a frame.

\begin{definition}\label{def:FTM}
  Let $\mathcal{S}$ and $\mathcal{S}'$ be formal topologies.
  A \emph{formal topology map} from $\mathcal{S}$ to $\mathcal{S}'$
  is a basic cover map $r \colon \mathcal{S} \to \mathcal{S}'$
  such that 
  \begin{enumerate}[({FTM}1)]
    \item\label{FTM1} $S \cov r^{-}S'$,
    \item\label{FTM2} $r^{-} a \downarrow r^{-} b \cov r^{-}(a \downarrow' b)$
  \end{enumerate}
  for all $a,b \in S'$.
\end{definition}
The formal topologies and formal topology maps 
form a subcategory $\FTop$ of $\BCov$. The assignment given by
\eqref{def:FuncBCovSupLat} restricts to the dual equivalence between
$\FTop$ and the category of set-based frames.

\begin{definition}\label{def:Pt}
  Let $\mathcal{S}$ be a formal topology. A splitting subset $\alpha
  \subseteq S$ is called a \emph{formal point} of $\mathcal{S}$ if
  $\alpha$ is inhabited and 
  $a, b \in \alpha \implies \alpha \meets (a \downarrow b)$
  for all $a, b \in S$.
\end{definition}
The formal points of a formal topology bijectively
correspond to the formal topology maps from the terminal object
$\Terminal$.
The correspondence is the restriction of the one between the splitting
subsets and the basic cover maps from $\Terminal$.

\subsection{Inductively generated formal topologies}
The notion of inductively generated formal topology by \citet[Theorem
3.1]{Coquand200371} allows us to define a formal topology by a set of
axioms.
\begin{definition}
An \emph{axiom-set} on a set $S$
is a pair $(I,C)$ where $(I(a))_{a \in S}$ is a family of sets
indexed by $S$, and $C$ is a family $(C(a,i))_{a \in S, i \in I(a)}$
of subsets  of $S$.
\end{definition}
Given an axiom-set $(I,C)$ on a set $S$, we can 
inductively define a cover $\cov_{I,C}$ on $S$ by the following rules:
  \begin{gather*}
    \frac{a \in U}{a \cov_{I,C} U} \,\text{(reflex)},
    \qquad
    \frac{i \in I(a) \quad
    C(a,i) \cov_{I,C} U}{a \cov_{I,C} U} \, \text{(infinity)}.
  \end{gather*}
A basic cover $\mathcal{S}=(S, \cov)$ is \emph{inductively
generated} if there is an axiom-set $(I,C)$ on $S$ so
that the cover $\cov$ is inductively defined by the above rules.

In order to generate a cover of a formal topology, we start from an
axiom-set on a preordered set which is localised with respect to the
given preorder.
\begin{definition}
  Let $(S,\leq)$ be a preordered set and 
  let $(I,C)$ be an axiom-set on $S$.  We say that $(I,C)$ is
  \emph{localised} with respect to $\leq$ if for any $a,b \in S$,
\begin{align*}
  a \leq b \implies
  \left( \forall i \in I(b) \right)\left( \exists j \in I(a) \right)
  C(a,j) \subseteq a \downarrow  C(b,i).
\end{align*}
\end{definition}
\begin{remark}
Given an axiom-set $(I,C)$ on $S$ and a preorder $\leq$ on $S$, we can
always localise $(I,C)$ with respect to $\leq$ by defining a new
axiom-set $(I',C')$, the \emph{localisation of $(I,C)$}, as follows:
\begin{align*}
  I'(a) &\defeql \sum_{a \leq b}I(b), & C'( (b,i), a) &\defeql a \downarrow C(i,b).
\end{align*}
\end{remark}

If $(I,C)$ is a localised axiom-set with respect to a preorder $(S
,\leq)$, then we can define a formal topology $\mathcal{S} = (S,
\cov_{I,C}, \leq)$, where the cover $\cov_{I,C}$ is inductively defined by
the following rules:
\begin{gather*}
  \frac{a \in U}{a \cov_{I,C} U} \, \text{(reflex)},
  \quad
  \frac{a \leq b \cov_{I,C} U}{a \cov_{I,C} U} \,\text{($\leq$)}, 
  \quad
  \frac{i \in I(a) \quad
  C(a,i) \cov_{I,C} U}{a \cov_{I,C} U} \,\text{(infinity)}.
\end{gather*}
A formal topology $\mathcal{S}=(S, \cov, \leq)$ is
\emph{inductively generated} if there is a localised
axiom-set $(I,C)$ on $S$ so that the cover $\cov$ is inductively defined
by the above three rules.
We also say that a formal topology $(S, \cov, \leq)$ is
inductively generated by an (not necessarily localised) axiom-set
$(I,C)$ if the cover $\cov$ is generated by the localisation of
$(I,C)$ with respect to $\leq$.

\subsection{Geometric theories}\label{sec:GeometricTheory}
A geometric theory is a predicative
presentation of a frame (and hence a formal topology) by generators and
relations \cite{vickers1989topology}.
A geometric theory allows for a spatial understanding
of the point-free space presented by the theory by considering
what an arbitrary model of the theory is.

\begin{definition}
  Given a set $P$ of \emph{propositional symbols} (or
  \emph{generators}), a \emph{geometric theory} over $P$ is a set of
  \emph{axioms} of the  form
  \[
    \medwedge A \vdash \bigvee_{i \in I} \medwedge B_i
  \]
  where $I$ is a set and $A, B_i$ are finitely enumerable subsets of 
  $P$. We write $\top$ for $\medwedge \emptyset$ and $\bot$ for $\bigvee
  \emptyset$. Conjunction and disjunction symbols will be omitted if
  they are single conjunct or disjunct (e.g.\ we write $a$ instead of
  $\bigvee \left\{ \medwedge \left\{ a \right\} \right\}$).
\end{definition}

\begin{definition}
Given a geometric theory $T$ over $P$, define a localised axiom-set
$(I_{T},C_{T})$ on $\Fin{P}$ with respect to the reverse inclusion 
order on $\Fin{P}$ whose axioms consist of a pair
\[
  C \cup A \cov_{I_T,C_T} \left\{ C \cup B_i \mid i \in I \right\}
\]
for each $\medwedge A \vdash \bigvee_{i \in I}
\medwedge B_i \in T$ and $C \in \Fin{S}$.
We write $\mathcal{S}_{T} \defeql (\Fin{P}, \cov_{T}, \supseteq)$ for the
topology generated by $(I_{T},C_{T})$, and call
$\mathcal{S}_{T}$ \emph{the formal topology presented by $T$}.
\end{definition}

The frame $\Sat{\mathcal{S}_{T}}$ of saturated subsets of
$\mathcal{S}_{T}$ is the frame presented by $T$:
there is a function
$i_{T} \colon P \to \Sat{\mathcal{S}_{T}}$ given by
  $
  i_{T}(a) = \sat\left\{ \{a\} \right\}
  $
which respects the axioms of $T$, i.e.\
  $
  \medwedge_{a \in A} i_{T}(a) \leq \bigvee_{i \in I} \medwedge_{b
  \in B_i} i_{T}(b)
  $
  whenever
$\medwedge A \vdash \bigvee_{i \in I} \medwedge B_i \in T$,
and for any frame $X$ and a function $f \colon P \to X$ that
respects the axioms of $T$, there exists a unique
frame homomorphism $F \colon \Sat{\mathcal{S}_{T}} \to X$ such that $F
\circ i_{T} = f$, which is given by 
\[
  F(\sat_T{\mathcal{U}}) \defeql \bigvee_{A \in \mathcal{U}}
  \medwedge_{a \in A} f(a)
\]
for each $\mathcal{U} \subseteq \Fin{P}$. 

In particular, in order to define a formal topology map
$r \colon \mathcal{S} \to \mathcal{S}_{T}$ from an arbitrary formal
topology $\mathcal{S}$, it suffices
to define a relation $r$ between the base of $\mathcal{S}$
and the generators of $T$ and then verify that
\begin{align}
\label{eq:PreservAx}
  r^{-}\left\{ a_{0} \right\} \downarrow \cdots \downarrow
  r^{-}\left\{ a_{n-1} \right\} \cov \bigcup_{i
  \in I} r^{-} \left\{ b^{i}_{0} \right\} \downarrow \cdots \downarrow
  r^{-}\left\{ b^{i}_{n_{i}-1} \right\}
\end{align}
  for each axiom 
  $
  a_{0} \wedge \cdots \wedge a_{n-1}
  \vdash
  \bigvee_{i \in I} b^{i}_{0} \wedge \cdots \wedge
  b^{i}_{n_{i}-1} \in T
  $.
  Then, such a relation extends to a formal topology map 
  $r \colon \mathcal{S} \to \mathcal{S}_{T}$ by $b \mathrel{r} A
  \defeqiv
  \left( \forall a \in A \right) b \cov r^{-}a$.

\begin{definition}
  Let $T$ be a geometric theory over $P$. A \emph{model} of $T$ is
  a subset $m \subseteq P$ such that
  \[
    A \subseteq m \implies \left( \exists i \in I \right) B_i \subseteq
    m
  \]
  for all $\medwedge A \vdash \bigvee_{i \in I} \medwedge B_i \in T$.
  The class of models of $T$ is denoted by $\Model{T}$.
\end{definition}
A model $m$ of $T$ is equivalent to a function $f_{m} \colon
P \to \Pow{\One}$ which respects all the axioms of
$T$, and thus to a frame homomorphism
$F_{m} \colon \Sat{\mathcal{S}_{T}} \to \Pow{\One}$, i.e.\ to a formal point of
$\mathcal{S}_{T}$.

\section{Located subsets}\label{sec:Located}
As we noted in Introduction \ref{sec:Introduction},  our
motivation of patch and Lawson topologies comes from the
point-free notion of located subset introduced by
\citet{Spitters10LocatedOvert}. We show
that this point-free notion makes good sense in a more general setting
of continuous basic cover, a predicative notion of continuous
lattice.

\subsection{Continuous basic covers}
Predicative notions of continuous lattice and locally compact locale
were given by \citet{negri1998continuous}, where they are called
locally Stone infinitary preorder and locally Stone formal topology
respectively. Here, we review those notions by the names of 
continuous basic cover and locally compact formal topology.

\begin{definition} \label{def:LK}
Let $\mathcal{S}$ be a basic cover. For each $a, b \in S$ define
\begin{equation}
   a \ll b \defeqiv \bigl( \forall U \in \Pow{S} \bigr)\left[ \:
   b \cov U
   \implies \bigl( \exists U_0 \in \Fin{U} \bigr)\; a \cov U_0 \right].
\end{equation}
We say that  $a$ is \emph{way-below} $b$ if
$a \ll b$.
We extend $\ll$ to the subsets of $S$ by
\begin{equation*}
   U \ll V \defeqiv \bigl( \forall W \in \Pow{S} \bigr)\left[
   V \cov W
   \implies \bigl( \exists W_0 \in \Fin{W} \bigr)\; U \cov W_0 \right].
\end{equation*}
For $a \in S$ and $U \subseteq S$, we write $a \ll U$ for
$ \left\{ a \right\} \ll U$ and $U \ll a$ for $U \ll \left\{ a \right\}$. 

A basic cover $\mathcal{S}$ is \emph{continuous} if it is
equipped with a function $\wb \colon S \to \Pow{S}$ such that
\begin{enumerate}
  \item\label{eq:wb1} $\bigl( \forall b \in \wb(a) \bigr)\: b \ll a$, 
  \item\label{eq:wb2} $a \cov \wb(a)$
\end{enumerate}
for all $a \in S$. A formal topology $\mathcal{S}$ is \emph{locally
compact} if it is continuous as a basic cover.%
\end{definition}
  Since the relation $\ll$ is a proper class in general, the
  function $\wb \colon S \to \Pow{S}$ in Definition
  \ref{def:LK} is indispensable for a predicative definition.
  Note that if $\mathcal{S}$ is continuous
  with an associated function $\wb \colon S \to \Pow{S}$, then
   \[
   a \ll b \iff \bigl( \exists A \in \Fin{\wb(b)} \bigr)\; a \cov A
   \]
   for all $a,b \in S$. Hence, if such a function $\wb$ exists, 
   the relation $\ll$ is a set.
\begin{remark}\label{rem:LKInd}
  It is easy to see that every continuous basic cover
  $(S, \cov)$ is inductively
  generated by an axiom-set $(I,C)$ given by
  $I(a) \defeql \One + \left\{ A \in \Fin{S} \mid 
    a \cov A \right\}$,
  $C(a,*) \defeql \wb(a)$ and $C(a,A) \defeql A$.
\end{remark}
If $\mathcal{S}$ is a continuous basic cover,
we have $\sat U = \sat \bigcup \left\{ B \in \Fin{S} \mid B \ll U \right\}$
for each $U \subseteq S$. Thus, the suplattice
$\Sat{\mathcal{S}}$ is a continuous lattice
with the base $\left\{ \sat B \mid B \in \Fin{S} \right\}$.
Then, the Scott topology $\Scott(\mathcal{S})$ of $\Sat{\mathcal{S}}$
can be defined as a formal topology $(\Fin{S}, \cov_{\Scott},
\leq_{\Scott})$ where
\begin{align}
  \begin{aligned}
  \label{def:Scott}
  A \leq_{\Scott} B &\defeqiv B \subseteq A, \\
  A \cov_{\Scott} \mathcal{U} &\defeqiv \wayabove A \subseteq
  \bigcup_{B \in \mathcal{U}}\! \wayabove B,\\
  \wayabove A &\;\:\defeql \left\{ B \in \Fin{S} \mid A \ll B \right\}.  \end{aligned}
\end{align}

The interpolation property of $\ll$ is well known.
\begin{lemma}
  \label{lem:LKInterpolate}
  Let $\mathcal{S}$ be a continuous basic cover, and let
  $U,V \subseteq S$. Then,
  \[
    U \ll V \implies \left( \exists A \in \Fin{S} \right)
    U \ll A \amp \left( \forall a \in A \right) \left( \exists b \in
    V \right) a \ll b.
  \]
\end{lemma}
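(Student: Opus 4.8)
The plan is to prove the interpolation property for the way-below relation on subsets by exploiting the continuity of the basic cover, specifically the function $\wb$ and the key characterisation that $a \ll b \iff (\exists A \in \Fin{\wb(b)})\, a \cov A$. The strategy is to reduce the relation $U \ll V$ to a covering statement about $V$, apply the definition of $\ll$ to extract a finitely enumerable approximant, and then use continuity again to interpolate a second time.

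First I would unfold the hypothesis $U \ll V$. By continuity (Definition~\ref{def:LK}, condition~\ref{eq:wb2}), each $b \in V$ satisfies $b \cov \wb(b)$, so $V \cov \bigcup_{b \in V} \wb(b)$. Since each element $c$ of $\wb(b)$ satisfies $c \ll b$ (condition~\ref{eq:wb1}), this covering sends $V$ into elements that are way-below members of $V$. Now I apply the definition of $U \ll V$: because $V \cov W$ with $W \defeql \bigcup_{b \in V} \wb(b)$, there is a finitely enumerable subset $A \in \Fin{W}$ with $U \cov A$. This $A$ is the witness we want for the conclusion: we have $U \ll A$ (indeed even $U \cov A$, which in a continuous basic cover must be upgraded to $U \ll A$ — see the obstacle below), and since $A \subseteq W = \bigcup_{b \in V} \wb(b)$, every $a \in A$ lies in some $\wb(b)$ with $b \in V$, hence $a \ll b$ for some $b \in V$. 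This gives exactly $(\forall a \in A)(\exists b \in V)\, a \ll b$.

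The main obstacle is obtaining $U \ll A$ rather than merely $U \cov A$; the cover $U \cov A$ we extracted from the definition of $\ll$ is weaker than way-below. To fix this I would apply the extraction step one level deeper: instead of covering $V$ by $\bigcup_{b\in V}\wb(b)$, cover it by the interpolated family $\bigcup_{b \in V}\bigcup_{c \in \wb(b)}\wb(c)$. Concretely, for each $b \in V$ and each $c \in \wb(b)$ we have $c \cov \wb(c)$ by continuity, so $V$ is covered by the union $W' \defeql \bigcup_{b\in V}\,\bigcup_{c \in \wb(b)}\wb(c)$. Applying $U \ll V$ to this $W'$ yields a finite $A' \in \Fin{W'}$ with $U \cov A'$. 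One then chooses $A$ to consist of the intermediate elements $c \in \wb(b)$ whose further approximants appear in $A'$: this requires the characterisation $a \ll b \iff (\exists F \in \Fin{\wb(b)})\,a \cov F$ to repackage $U \cov A'$ as $U \ll A$. Care is needed here to organise the bookkeeping predicatively, keeping all the chosen index data finitely enumerable so that $A$ itself lands in $\Fin{S}$.

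Alternatively, and more cleanly, I would prove the statement by invoking interpolation of $\ll$ at the level of single elements first. Since $U \ll V$ already gives $U \cov A_0$ for some finite $A_0 \subseteq \bigcup_{b\in V}\wb(b)$, and $\ll$ interpolates on singletons, I can interpolate each $c \in A_0$ to find $c' \ll c$ with $U \cov \{c'\}$-style refinements collected into the desired $A$. The cleanest route is thus to first establish the singleton interpolation $a \ll b \implies (\exists c)\, a \ll c \ll b$ as a consequence of the two-level covering argument above, and then lift it to subsets by taking finite unions of the interpolants, using that $\Fin{S}$ is closed under finite unions. I expect the singleton case to be the genuine content, with the passage to subsets being routine finite bookkeeping.
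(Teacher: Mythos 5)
Your second paragraph is a correct and complete proof. Covering $V$ by $W' \defeql \bigcup_{b\in V}\bigcup_{c\in\wb(b)}\wb(c)$ (legitimate, since $b \cov \wb(b)$ and $c \cov \wb(c)$), extracting $A' \in \Fin{W'}$ with $U \cov A'$ from the definition of $U \ll V$, and taking $A$ to be the finitely many intermediate elements $c$ does the job; the finitely many existential instantiations along an enumeration of $A'$ are unproblematic in \CZF. For the repackaging step you do not actually need the characterisation $a \ll b \iff (\exists F \in \Fin{\wb(b)})\, a \cov F$: each $d \in A'$ satisfies $d \ll c_d$ for some $c_d \in A$, so if $A \cov W$ then every $c_d \cov W$, whence each $d$ is covered by some $W_d \in \Fin{W}$, and $U \cov A' \cov \bigcup_d W_d \in \Fin{W}$, i.e.\ $U \ll A$ directly from the definition. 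Note that the paper gives no proof of this lemma at all --- it is stated as well known --- and your two-level $\wb$ argument is exactly the standard one.

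Your final paragraph, however, which you advertise as the ``cleanest route'' with the singleton case as ``the genuine content'', rests on a false lemma: singleton interpolation $a \ll b \implies (\exists c)\, a \ll c \ll b$ \emph{fails} in a general continuous basic cover. The lemma is stated with a finitely enumerable interpolant $A \in \Fin{S}$ precisely because the base of a basic cover need not be closed under finite joins: your two-level argument produces a finite \emph{set} of interpolants, and collapsing it to a single element is what directedness of way-below sets buys you in a continuous lattice, not here. Concretely, let $S$ consist of the open intervals of $\mathbb{R}$ of length at most $1$ together with $\mathbb{R}$ itself, with the spatial cover $a \cov U \defeqiv a \subseteq \bigcup U$; this is continuous, with $\wb((x,y)) = \left\{ (x',y') \mid x < x' < y' < y \right\}$ and $\wb(\mathbb{R})$ the set of bounded basic intervals. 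Then $(0,1) \ll \mathbb{R}$, but if $(0,1) \ll c$ then $(0,1) \cov F$ for some $F \in \Fin{\wb(c)}$, and taking closures gives $[0,1] \subseteq \bigcup_{f \in F} \overline{f} \subseteq c$, forcing $c$ to have length greater than $1$, i.e.\ $c = \mathbb{R}$; and $\mathbb{R} \not\ll \mathbb{R}$ (cover $\mathbb{R}$ by unit intervals). So no single interpolant exists, and ``lifting by finite unions'' is unavailable since $S$ has no join operation. Keep the second-paragraph proof and drop the proposed reduction to the singleton case.
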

A natural notion of morphism between continuous basic covers is a perfect map.
\begin{definition}
  Let $\mathcal{S}$ and $\mathcal{S}'$ be continuous basic covers.
  A basic cover map $r \colon \mathcal{S} \to \mathcal{S}'$ is \emph{perfect} if 
  \[
    a \ll' b \implies r^{-}a \ll r^{-} b
  \]
  for all $a,b \in S'$.
  A \emph{perfect} formal topology map between locally compact formal
  topologies is defined similarly.
\end{definition}
The continuous basic covers and perfect maps form a category $\LFCov$.

\subsection{Located subsets}
\begin{definition}\label{def:located}
  Let $\mathcal{S}$ be a continuous basic cover. A subset $V
  \subseteq S$ is \emph{located} if it is a splitting subset of
  $\mathcal{S}$, and moreover satisfies 
  \[
  a \ll b \implies a \notin V \vee b \in V
  \]
  for all $a,b \in S$.
  A formal point of a locally compact formal topology
  is said to be \emph{located} if it is a located subset.
\end{definition}
\begin{remark}
  We will see that every located subset of a continuous basic
  cover can be seen as a located point of some suitable topology
  (cf.\ Theorem \ref{thm:LawsonTop}).
\end{remark}
Classically, every splitting subset of a continuous basic cover is located;
constructively this is not a case, which is clear from the
following examples.
\begin{example}%[Scott topology on $\Pow{\Nat}$]
Let $\mathcal{P}\omega$ be a basic cover $(\Fin{\Nat}, \cov_{\omega})$ where
\begin{align*}
  A \cov_{\omega} U &\defeqiv \left( \exists B \in U \right) B
  \subseteq A.
\end{align*}
$\mathcal{P}\omega$ is continuous since $A \ll A$ for all $A \in
\Fin{\Nat}$.
Then, a subset $V \subseteq \Fin{\Nat}$ is splitting if and
only if $V$ is closed downwards with respect to $\subseteq$,
and a splitting subset $V$ is located if and only if it is
detachable, i.e.\ $A \in V \vee A \notin V$  for each $A \in \Fin{\Nat}$.
Furthermore, it is easy to see that a formal point $\alpha$ of
$\mathcal{P}\omega$ bijectively corresponds to a subset $\bigcup
\alpha$ of $\Nat$, and that a located point bijectively corresponds to a
detachable subset of $\Nat$. 
\end{example}

\begin{example}\label{eg:Dedekind}
  Let $\FR^{u}$ be a basic cover $(\Rat, \cov_{u})$ over the rationals 
  $\Rat$, where
  \[
    q \cov_{u} U \defeqiv \left( \forall p < q\right) \left( \exists q'
    \in U \right) p < q'.
  \]
  The basic cover $\FR^{u}$ is continuous with the function $\wb(q) = \left\{ p
  \in \Rat \mid p < q \right\}$.
  Then, a subset $V \subseteq \Rat$ is splitting if and only if it is
  an upper real, i.e.
      \[
        q \in V \iff \left( \exists p < q \right) p \in V,
      \]
  and a splitting subset is a formal point if and only if it is
  inhabited.
  Moreover, a splitting subset $V$ is located if and only if it is
  an extended (i.e.\ non-finite) Dedekind real, i.e.\
  \[
    p < q \implies p \notin V \vee q \in V.
  \]
  Thus, a located point is an extended Dedekind real with an upper bound.

  It is well known that an extended Dedekind real can be equivalently
  defined as an extended Dedekind cut, namely a pair $(L,U)$ of
  subsets of $\Rat$ such that
  \begin{enumerate}
    \item $ q \in U \iff \left( \exists q' < q \right) q' \in U,$
    \item $ p \in L \iff \left( \exists p' > p \right) p' \in L,$
    \item $ L \cap U = \emptyset$,
    \item $ p < q \implies p \in L \vee q \in U$.
  \end{enumerate}
 See \citet[Chapter 5, Exercise 5.5.3]{ConstMathI}. Although extended
 Dedekind reals (i.e.\ right cuts) and extended Dedekind cuts are
 equivalent, the latter can be characterised as the models of a
 certain geometric theory (see e.g.\ \citet[Section
 2]{LocalicCompletionGenMet}), while this is not the case for extended
 Dedekind reals. To characterise an extended Dedekind
 real as a model of a certain propositional theory, we need an
 implication symbol, which is implicit in the negation $p \notin V$.
\end{example}
\begin{example}
Formal Cantor space $\mathcal{C}$ is a basic cover $(\Bin^{*},
\cov_{\mathcal{C}})$ over the finite binary sequences $\Bin^{*}$ such
that
\begin{align*}
  a \cov_{\mathcal{C}} U &\defeqiv \left( \exists k \in \Nat \right)
  \left( \forall c \in a[ k] \right) \left( \exists b \in U \right) b \preccurlyeq c
\end{align*}
where $a[k] \defeql \left\{ a * b \mid |b| = k \right\}$. 
Here, $a \preccurlyeq b$ means that $a$ is an initial segment of $b$, and
$a * b$ and $|a|$ denote the concatenation and the length of sequences
respectively. Note that $a \cov_{\mathcal{C}} U$ if an only if $U$ is
a uniform bar over $a$. A subset $V \subseteq \Bin^{*}$ is splitting
if and only if
  $
  a \in V \iff \left( \exists i \in \Bin \right) a * \langle i \rangle
  \in V,
  $
and a splitting subset is located if and only if it is detachable.
Hence, the  located subsets of $\mathcal{C}$ are exactly the sub-spreads of the binary
spread; see \citet[Section 3.1]{Intuitionism}. It is also easy to
see that every formal point of $\mathcal{C}$ is located.

The interested reader is referred to \citet{Spitters10LocatedOvert}
and \citet{PointFreeBishopCMSpa} in which the
point-free notion of located subset is related to the metric notion.
\end{example}

A located subset of a continuous basic cover $\mathcal{S}$
admits a natural characterisation as a global point of $\mathcal{S}$ in
the category $\LFCov$.
\begin{proposition}
  The located subsets of a continuous basic cover $\mathcal{S}$
  bijectively correspond to the perfect maps from $\Terminal$ to
  $\mathcal{S}$.
\end{proposition}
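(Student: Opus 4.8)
The plan is to reduce the statement to the bijection already recorded in the excerpt between splitting subsets $V \subseteq S$ and basic cover maps $r_V \colon \Terminal \to \mathcal{S}$, given by $* \mathrel{r_V} a \defeqiv a \in V$. Note first that $\Terminal = (\left\{ * \right\}, \in)$ is itself a continuous basic cover, witnessed by $\wb(*) \defeql \left\{ * \right\}$, so that perfect maps out of $\Terminal$ are legitimate morphisms in $\LFCov$. Since a located subset is by Definition \ref{def:located} a splitting subset satisfying an extra condition, and a perfect map from $\Terminal$ is by definition a basic cover map satisfying the perfectness condition, it suffices to show that the existing bijection carries located subsets exactly to perfect maps; that is, that $V$ is located if and only if $r_V$ is perfect.

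The heart of the argument is to compute the way-below relation on $\Terminal$. Subsets of $\left\{ * \right\}$ correspond to truth values via $U \mapsto (* \in U)$, and $* \cov W \iff * \in W$. Unwinding the definition of $\ll$ for subsets (Definition \ref{def:LK}), and using that $\Fin{W}$ for $W \subseteq \left\{ * \right\}$ contains only $\emptyset$ and, when $* \in W$, the set $\left\{ * \right\}$, I expect to obtain the constructive formula
\[
  U \ll V \iff * \notin U \vee * \in V
\]
for all $U, V \subseteq \left\{ * \right\}$. In the forward direction this follows by instantiating the universally quantified witness $W$ at $W = V$, where $V \cov V$ holds by reflexivity and forces $* \notin U \vee * \in V$; conversely this disjunction is readily seen to suffice for every $W$ with $V \cov W$.

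Next I would compute the inverse image $r_V^{-} a = \left\{ * \mid a \in V \right\}$, so that $* \in r_V^{-} a \iff a \in V$. Combining this with the way-below formula above gives
\[
  r_V^{-} a \ll r_V^{-} b \iff a \notin V \vee b \in V,
\]
where the left-hand $\ll$ is taken in $\Terminal$. Hence the perfectness condition for $r_V$, namely $a \ll b \implies r_V^{-} a \ll r_V^{-} b$ for all $a, b \in S$, is literally the locatedness condition $a \ll b \implies a \notin V \vee b \in V$. Together with the already established bijection this yields the claimed restricted correspondence.

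The main obstacle is the way-below computation on $\Terminal$, and specifically extracting the correct constructive content: that $\ll$ on the terminal object is the \emph{disjunction} $* \notin U \vee * \in V$ rather than the implication $* \in U \imp * \in V$. These agree classically but differ constructively, and it is precisely the disjunctive form that matches the disjunction $a \notin V \vee b \in V$ appearing in Definition \ref{def:located}. Once this is pinned down, the remaining verifications are immediate and require no inductive or cover-theoretic work.
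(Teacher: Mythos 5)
Your proposal is correct and takes essentially the same route as the paper: both reduce to the stated bijection between splitting subsets and basic cover maps from $\Terminal$, then unwind the way-below relation on $\Terminal$, using decidability of emptiness for finitely enumerable subsets of $\One$ to extract precisely the disjunction $a \notin V \vee b \in V$. The only cosmetic difference is that you isolate the computation $U \ll V \iff * \notin U \vee * \in V$ as a standalone lemma, whereas the paper inlines the same case analysis (instantiating the quantifier at a trivially self-covering witness and splitting on whether the resulting finitely enumerable subset is $\emptyset$ or $\One$) directly into the verification of perfectness in each direction.
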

\begin{proof}
  It suffices to show that a splitting subset $V$ of
  $\mathcal{S}$ is located if and only if the morphism $r_{V} \colon
  \Terminal \to \mathcal{S}$ that corresponds to $V$ is perfect
  (cf.\ Section \ref{sec:BasicCover}).

  First, suppose that $V$ is located and that $a \ll b$.
  We must show that $r_{V}^{-}a \ll_{\Terminal} r_{V}^{-}b$.
  Let $U \subseteq \One$, and suppose that $r_{V}^{-}b \cov_{\Terminal} U$.
  Since $V$ is located, either $a \notin V$ or $b \in V$. In the
  former case, we have $r_{V}^{-}a \cov_{\Terminal} \emptyset$. In
  the latter case, we have $U = \One$, and so $r_{V}^{-}a
  \cov_{\Terminal} \One$. Thus $r_{V}^{-}a \ll_{\Terminal} r_{V}^{-}b$.

  Conversely, suppose that $r_{V}$ is perfect, and that $a \ll b$. 
  Then  $r_{V}^{-}a \ll_{\Terminal}
  r_{V}^{-}b$, so there exists
  $U \in \Fin{r_{V}^{-}b}$ such that $r_{V}^{-}a \cov_{\Terminal}U$. Either
  $U = \emptyset$ or $U = \One$. In the former case, we have
  $a \notin V$, and in the latter case, we have $b \in V$. %Thus $V$ is located.
\end{proof}

\begin{corollary}
  The located points of a locally compact formal topology
  $\mathcal{S}$ bijectively correspond to the perfect formal topology
  maps from $\Terminal$ to $\mathcal{S}$.
\end{corollary}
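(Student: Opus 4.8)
The plan is to obtain the claimed bijection essentially for free by restricting correspondences that are already in place. The key observation is that both the ``located subset / perfect map'' bijection of the preceding Proposition and the standard ``formal point / formal topology map'' correspondence are restrictions of one and the same fundamental bijection between the splitting subsets of $\mathcal{S}$ and the basic cover maps $\Terminal \to \mathcal{S}$.

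First I would recall that this fundamental bijection sends a splitting subset $V \subseteq S$ to the basic cover map $r_V \colon \Terminal \to \mathcal{S}$ defined by $* \mathrel{r_V} a \defeqiv a \in V$ (Section \ref{sec:BasicCover}). The point is that each side of the claimed bijection is carved out of this fundamental one by \emph{two independent} conditions. On the subset side, a located point is precisely a splitting subset $V$ that is simultaneously a formal point (inhabited, with $V \meets (a \downarrow b)$ whenever $a,b \in V$) and located ($a \ll b \implies a \notin V \vee b \in V$). On the map side, a perfect formal topology map $\Terminal \to \mathcal{S}$ is precisely a basic cover map that is simultaneously a formal topology map (satisfying the conditions of Definition \ref{def:FTM}) and perfect ($a \ll b \implies r^{-}a \ll_{\Terminal} r^{-}b$).

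Finally I would match these conditions pairwise. Since a locally compact formal topology is continuous as a basic cover, the preceding Proposition applies and yields, under $V \mapsto r_V$, that $V$ is located if and only if $r_V$ is perfect; while the correspondence recalled after Definition \ref{def:Pt} yields, under the very same bijection, that $V$ is a formal point if and only if $r_V$ is a formal topology map. Conjoining the two equivalences shows that $V$ is a located point exactly when $r_V$ is a perfect formal topology map, so the fundamental bijection restricts to the required one. I anticipate no real obstacle here, as the substance is already contained in the Proposition and in the general theory of formal points; the only point demanding care is to confirm that the two supplementary conditions on each side are each transported by the single underlying bijection, so that their conjunctions correspond rather than interacting unexpectedly.
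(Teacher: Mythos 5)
Your proposal is correct and matches the paper's (implicit) argument: the paper states this corollary without proof precisely because, as you observe, the Proposition's bijection and the formal point/formal topology map correspondence are both restrictions of the single splitting-subset/basic-cover-map bijection $V \mapsto r_V$, so the two conditions transport independently and conjoin. Nothing further is needed.
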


The following lemma will help the reader
understand the Definition \ref{def:Cut}.
\begin{lemma} \label{lem:SplittingInLK}
  Let $\mathcal{S}$ be a continuous basic cover. Then, a
  subset $V \subseteq S$ is splitting if and only if it satisfies the
  following properties:
  \begin{enumerate}
    \item $a \cov A\amp a \in V
      \implies A \meets V$,
    \item $a \in V \implies \left( \exists b \ll a \right) b \in V$
  \end{enumerate}
  for each $a \in S$ and $A \in \Fin{S}$.
\end{lemma}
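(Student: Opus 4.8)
The plan is to prove the two implications separately, using continuity—the function $\wb$ with $a \cov \wb(a)$ and $b \ll a$ for all $b \in \wb(a)$—to mediate between arbitrary covers and finitary data.

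For the forward direction, assume $V$ is splitting. Property (1) is then immediate: it is the defining condition of splitting specialised to the subset $U = A$, noting that a finitely enumerable $A \in \Fin{S}$ is in particular a subset of $S$. For property (2) I would invoke continuity: since $a \cov \wb(a)$ and $a \in V$, the splitting condition yields $V \meets \wb(a)$, so there is some $b \in \wb(a)$ with $b \in V$; and every element of $\wb(a)$ is way-below $a$ (Definition \ref{def:LK}), so this $b$ satisfies $b \ll a$, giving the required witness.

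For the converse, assume (1) and (2), and suppose $a \in V$ and $a \cov U$ for an arbitrary subset $U \subseteq S$; I must produce an element of $V \cap U$. The key idea is to descend to a way-below approximant of $a$ that lies in $V$, and then exploit the finitary character of $\ll$. By (2) there is $b \ll a$ with $b \in V$. Applying the definition of $b \ll a$ to the cover $a \cov U$ produces a finitely enumerable $U_0 \in \Fin{U}$ with $b \cov U_0$. Since $b \in V$ and $U_0 \in \Fin{S}$, property (1) gives $U_0 \meets V$; and as $U_0 \subseteq U$, this yields $V \meets U$, as required.

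I expect the only genuinely substantive step to be the converse, where the passage from the arbitrary cover $a \cov U$ to the finitary $b \cov U_0$ has to be made: this is precisely where continuity, through the way-below relation, does the work, reducing the general splitting condition to its finitary instance (1). The forward direction and the verification of (1) are essentially bookkeeping.
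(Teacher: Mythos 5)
Your proof is correct, but it takes a different route from the paper's. The paper disposes of this lemma in one line: ``Immediate from Remark \ref{rem:LKInd}'', i.e.\ it invokes the fact that a continuous basic cover is inductively generated by the axioms $a \cov \wb(a)$ and $a \cov A$ (for finite $A$ with $a \cov A$), together with the standard principle that a subset splits an inductively generated cover if and only if it splits each generating axiom --- a principle whose proof is an induction on the inductively defined cover, hidden inside the cited machinery. You instead give a direct, induction-free argument: the two-step descent in your converse ($a \in V$ yields $b \ll a$ with $b \in V$ by (2); the definition of $b \ll a$ converts the arbitrary cover $a \cov U$ into a finitary $b \cov U_0$ with $U_0 \in \Fin{U}$; then (1) applied to $b$ and $U_0$ produces $V \meets U_0 \subseteq U$) uses only the continuity structure of Definition \ref{def:LK} and never the inductive presentation. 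What each approach buys: the paper's proof is essentially free once Remark \ref{rem:LKInd} and the splitting characterisation for inductively generated covers are on the table, whereas yours is self-contained and makes transparent exactly where continuity does the work. There is even a small point in your favour: followed literally, the Remark-based route must split the axiom $a \cov \wb(a)$, while condition (2) only supplies some $b \ll a$ in $V$, not necessarily $b \in \wb(a)$; bridging that requires precisely the finitary manoeuvre you perform explicitly, so your argument is arguably the cleaner rendering of what ``immediate'' conceals.
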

\begin{proof} 
  Immediate from Remark \ref{rem:LKInd}.
\end{proof}

Motivated by Example \ref{eg:Dedekind} of extended Dedekind reals, we give a cut like
characterisation of a located subset.
\begin{definition}\label{def:Cut}
  Let $\mathcal{S}$ be a continuous basic cover.
  A pair $(L,U)$ of subsets of $S$ is called a \emph{cut}
  if
  \begin{multicols}{2}
  \begin{enumerate}
    \item\label{def:Cut1} $a \cov A \amp a \in U
      \implies A \meets U$,

    \item\label{def:Cut3} $a \in U \implies \left( \exists b \ll a \right) b \in U$,
    \item\label{def:Cut2} $a \cov A \subseteq L \implies
      a \in L$,

    \item\label{def:Cut4} $a \in L \implies \left( \exists A \gg a
      \right) A \subseteq L$,

    \item\label{def:Cut5} $a \ll b \implies a \in L \vee b \in U$,

    \item\label{def:Cut6} $L \cap U = \emptyset$,
   \end{enumerate}
  \end{multicols}
\noindent where $a,b \in S$ and $A \in \Fin{S}$. 
\end{definition}
\begin{remark}
  If a continuous basic cover $\mathcal{S}$ is equipped with
  a join semilattice structure $(S,0,\sqcup)$ that is compatible with the cover
  $\cov$ in the sense that
  \[
    \frac{ a \cov \emptyset}{a  \cov 0},
    \qquad 
    0 \cov \emptyset,
    \qquad
    \left\{ a,b \right\} \cov a \sqcup b,
    \qquad
    \frac{ \left\{ a,b \right\} \cov U}{a \sqcup b \cov U},
  \]
  then a cut $(L,U)$ can be defined in a symmetric way as follows:
  \begin{multicols}{2}
  \begin{enumerate}
    \item $0 \notin U$,
    \item $a \cov b \amp a \in U \implies b \in U$,
    \item $a \sqcup b \in U
      \implies a \in U \vee b \in U$,
    \item $a \in U \implies \left( \exists b \ll a \right) b \in U$,

    \item $0 \in L$,
    \item $a \cov b \amp b \in L \implies a \in L$,
    \item $a, b \in L
      \implies a \sqcup b \in L$,
    \item $a \in L \implies \left( \exists b \gg a \right)
      b \in L$,

    \item\label{it:located} $a \ll b \implies a \in L \vee b \in U$,

    \item\label{it:disjoint} $L \cap U = \emptyset$.
   \end{enumerate}
 \end{multicols}
\noindent In other words, a cut $(L,U)$ is a disjoint pair of a rounded prime upper set
 $U$ and a rounded ideal $L$ that is \emph{located} (namely, the condition \ref{it:located}).
\end{remark}

\begin{proposition} \label{prop:Cut}
  Let $\mathcal{S}$ be a continuous basic cover.
   Then, the located subsets of $\mathcal{S}$
  bijectively correspond to the cuts of $\mathcal{S}$ by
    $
    V \mapsto \left( L_{V}, V \right)
    $
  where
  \[
    L_{V} \defeql \left\{ a \in S \mid \left( \exists A \in
      \Fin{S}\right) a \ll A \amp A \cap V = \emptyset \right\}.
    \]
\end{proposition}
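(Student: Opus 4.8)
The plan is to prove the bijection by writing down the inverse assignment explicitly, namely $(L,U) \mapsto U$, and checking three things: that $V \mapsto (L_{V}, V)$ really does produce a cut, that the second component of a cut is always a located subset, and that these two assignments are mutually inverse. The only non-formal point in the last step is that the lower part of a cut is forced by its upper part, i.e.\ that $L = L_{U}$ for every cut $(L,U)$; granting this, the composite $V \mapsto (L_{V},V) \mapsto V$ is the identity by inspection, and $(L,U) \mapsto U \mapsto (L_{U},U) = (L,U)$ is the identity by the equation $L = L_{U}$.

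First I would verify that $(L_{V},V)$ is a cut whenever $V$ is located. Taking $U = V$, the two conditions on the upper part (\ref{def:Cut1} and \ref{def:Cut3}) are exactly the two clauses characterising splitting subsets in Lemma \ref{lem:SplittingInLK}, so they hold. For disjointness (\ref{def:Cut6}) I would use that $a \ll A$ implies $a \cov A$ (instantiate the definition of $\ll$ at $W = A$): if $a$ lay in both $V$ and $L_{V}$, splitting applied to $a \cov A$ would give $V \meets A$, contradicting the defining condition $A \cap V = \emptyset$ of $L_{V}$. Cover-closure of $L_{V}$ (\ref{def:Cut2}) rests on the transitivity fact that $a \cov B$ together with $B \ll A$ yields $a \ll A$: given $a \cov B$ with $B \subseteq L_{V}$ finite, I collect a witness $A_{b}$ for each $b \in B$, set $A \defeql \bigcup_{b \in B} A_{b}$ (finite, disjoint from $V$), note $B \ll A$, and conclude $a \ll A$, so $a \in L_{V}$. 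Roundedness of $L_{V}$ (\ref{def:Cut4}) is a direct call to the interpolation Lemma \ref{lem:LKInterpolate}, which refines a witness for $a \in L_{V}$ into an interpolating finite set each of whose members is way-below some point of the witness and hence itself lies in $L_{V}$. Finally, for the located clause (\ref{def:Cut5}) I would, from $a \ll b$, use interpolation to get a finite $A$ with $a \ll A$ and every element of $A$ way-below $b$; applying the locatedness of $V$ to each such element and distributing the resulting disjunctions over the finite conjunction gives either $A \cap V = \emptyset$, whence $a \in L_{V}$, or $b \in V$.

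Next I would check that the upper part $U$ of any cut $(L,U)$ is a located subset. Splitting is Lemma \ref{lem:SplittingInLK} applied to \ref{def:Cut1} and \ref{def:Cut3}, and the locatedness inequality $a \ll b \implies a \notin U \vee b \in U$ follows from the located clause \ref{def:Cut5} once disjointness \ref{def:Cut6} is used to turn $a \in L$ into $a \notin U$.

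The step I expect to be the main obstacle is the equation $L = L_{U}$. The inclusion $L \subseteq L_{U}$ is the routine half: roundedness \ref{def:Cut4} supplies a finite $A$ with $a \ll A$ and $A \subseteq L$, and disjointness \ref{def:Cut6} gives $A \cap U = \emptyset$, so $A$ witnesses $a \in L_{U}$. The reverse inclusion $L_{U} \subseteq L$ is where the argument has to work: starting from a finite $A$ with $a \ll A$ and $A \cap U = \emptyset$, I interpolate to obtain a finite $B$ with $a \ll B$ such that each $b \in B$ is way-below some element of $A$; since that element is outside $U$, the located clause \ref{def:Cut5} forces $b \in L$, so $B \subseteq L$, and then cover-closure \ref{def:Cut2}, applied to $a \cov B$, delivers $a \in L$. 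This closes the equation $L = L_{U}$ and with it the bijection.
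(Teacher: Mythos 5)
Your proposal is correct and follows essentially the same route as the paper's proof: both reduce the bijection to the equation $L = L_{U}$ for an arbitrary cut $(L,U)$, proving $L \subseteq L_{U}$ via conditions \ref{def:Cut4} and \ref{def:Cut6} and the converse by interpolating the witness of $a \in L_{U}$ and applying \ref{def:Cut5} and \ref{def:Cut2}. The only difference is cosmetic: the paper leaves the verifications that $(L_{V},V)$ is a cut and that the upper part of a cut is located as ``easy,'' and extracts the interpolating finite set directly (obtaining $a \cov C$ in one step) where you invoke Lemma \ref{lem:LKInterpolate}; both are sound.
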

\begin{proof}
  In the following, the numbers 1, 2, \dots refer to the
  items in Definition \ref{def:Cut}.
  First, it is easy to show that 
  $\left( L_{V}, V \right)$ is a cut whenever $V$ is located, and
  if $\left(L, U \right)$  is a cut, then $U$ is a located subset.
We show that the correspondence is bijective. 
To this end, it suffices to show that for any cut $(L,U)$, we have
$L = L_{U}$. Let $a \in L$. By \ref{def:Cut4}, there exists
$A \in \Fin{L}$ such that 
$a \ll A$. Then $A \cap U = \emptyset$ by \ref{def:Cut6}.
  Hence $a \in L_{U}$. Conversely, let $a \in L_{U}$. Then, there
  exists $A \in \Fin{S}$ such that $a \ll A$ and $A \cap U =
  \emptyset$.
  Thus, there exists $C \in \Fin{S}$ such that $a \cov C$
  and $\left( \forall c \in C \right)\left( \exists a \in A \right) c
  \ll a$. By \ref{def:Cut5}, either $C \subseteq L$ or $A \meets U$.
  In the former case, we have $a \in L$ by \ref{def:Cut2}. The 
  latter case is a contradiction.
\end{proof}

\begin{corollary}\label{cor:CutPerfectPt}
  Let $\mathcal{S}$ be a locally compact formal topology.
  Then, the located points of $\mathcal{S}$
  bijectively correspond to the cuts $(L,U)$ of $\mathcal{S}$ that satisfies
  \begin{enumerate}
    \item $U \meets S$,
    \item $a,b \in U \implies \left( a \downarrow b \right) \meets U$.
  \end{enumerate}
\end{corollary}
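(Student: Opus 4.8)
The plan is to obtain the statement directly from Proposition \ref{prop:Cut} by transporting the two extra axioms of a formal point across the bijection established there. Since a locally compact formal topology is continuous as a basic cover, Proposition \ref{prop:Cut} applies and gives a bijection $V \mapsto (L_V, V)$ between the located subsets of $\mathcal{S}$ and the cuts of $\mathcal{S}$. The key observation is that the upper component of the cut associated with $V$ is $V$ itself, so any condition imposed on $V$ becomes a condition on the $U$-component of the cut.

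First I would recall that a located point is, by definition, a located subset that is moreover a formal point, and that a formal point (Definition \ref{def:Pt}) is an inhabited splitting subset $\alpha$ with $a,b \in \alpha \implies \alpha \meets (a \downarrow b)$. A located subset is already splitting, so being a located point amounts to the two additional requirements: $V$ is inhabited, and $V$ is filtered in the above sense. It therefore suffices to check that these two requirements on $V$ correspond, under the bijection, exactly to conditions (1) and (2) on the cut.

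For (1): because $V = U \subseteq S$, we have $U \cap S = U$, so $U \meets S$ holds precisely when $V$ is inhabited. For (2): using the symmetry of $\meets$, the implication $a,b \in U \implies (a \downarrow b) \meets U$ is literally the filtering condition $a,b \in V \implies V \meets (a \downarrow b)$. Hence the bijection of Proposition \ref{prop:Cut} restricts to a bijection between the located points of $\mathcal{S}$ and the cuts satisfying (1) and (2).

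I do not anticipate any real difficulty here, since all of the substantive work is already contained in Proposition \ref{prop:Cut}. The only point that requires a moment's care is to keep track of which component of the cut is $V$: the point-axioms constrain the upper set $U$, not the lower set $L$, which is exactly why the conditions appear as (1) and (2) and involve no reference to $L$.
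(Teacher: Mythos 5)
Your proposal is correct and matches the paper's intended argument: the paper states this corollary without proof, treating it as an immediate consequence of Proposition~\ref{prop:Cut} together with Definitions~\ref{def:Pt} and~\ref{def:located}, which is precisely your route. Your transport of the two formal-point axioms across the bijection $V \mapsto (L_V, V)$ — noting that $U = V$, so $U \meets S$ is inhabitedness and condition (2) is literally the filtering condition — is exactly the verification the paper leaves implicit.
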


It will be convenient to work with the following notion of cut.
\begin{lemma}\label{lem:CutAsScott}
  Let $\mathcal{S}$ be a continuous basic cover.  Then, the cuts of
  $\mathcal{S}$ bijectively correspond to the pairs $(\mathbb{L},U)$
  of subsets of $\Fin{S}$  and $S$ that satisfies
  {
  \setlength{\columnsep}{-65pt}
  \begin{multicols}{2}
  \begin{enumerate}
    \item $a \cov A \amp a \in U \Rightarrow A \meets U$,

    \item $a \in U \Rightarrow \left( \exists b \ll a \right) b \in
      U$,

    \item $\emptyset \in \mathbb{L}$,

    \item $A \cov B \amp B \in \mathbb{L} \Rightarrow A \in
      \mathbb{L}$,

    \item $A, B \in \mathbb{L} \Rightarrow \left(\exists C \in
      \mathbb{L} \right) A \ll C \amp B \ll C$,

    \item $a \ll b \Rightarrow \left\{ a \right\} \in \mathbb{L} \vee
      b \in U$,

    \item $\left\{ a \right\} \in \mathbb{L} \amp a \in  U \Rightarrow
      \bot$,
  \end{enumerate}
  \end{multicols}
\noindent where $a,b \in S$ and $A,B \in \Fin{S}$. 
}
\end{lemma}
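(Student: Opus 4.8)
The plan is to exhibit the bijection explicitly. Given a cut $(L,U)$ in the sense of Definition~\ref{def:Cut}, I would send it to the pair $(\mathbb{L}_L, U)$ where $\mathbb{L}_L \defeql \left\{ A \in \Fin{S} \mid A \subseteq L \right\}$; conversely, given a pair $(\mathbb{L},U)$ satisfying the seven listed conditions, I would send it to $(L_{\mathbb{L}}, U)$ where $L_{\mathbb{L}} \defeql \left\{ a \in S \mid \left\{ a \right\} \in \mathbb{L} \right\}$. Since the conditions on $U$ (items 1 and 2) are literally the same in both presentations, the entire content is the passage between $L$ and $\mathbb{L}$, and the crux will be the identity $A \in \mathbb{L} \iff A \subseteq L_{\mathbb{L}}$ for every $A \in \Fin{S}$, which makes the two assignments mutually inverse (the other composite, $L_{\mathbb{L}_L} = L$, being immediate since $\left\{ a \right\} \subseteq L \iff a \in L$).

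Before the verifications I would record a few elementary facts about a continuous basic cover, all following directly from the definitions: $\ll$ is transitive and monotone on the right ($C \subseteq C' \amp A \ll C \implies A \ll C'$); for $A \in \Fin{S}$ and $C \subseteq S$ one has $A \ll C \iff \left( \forall a \in A \right) \left\{ a \right\} \ll C$, using that a finite union of finitely enumerable witnesses is again finitely enumerable; and $A \ll C \implies A \cov C$, obtained by instantiating the definition of $\ll$ at $C \cov C$ and using reflexivity. With these in hand the routine direction is quick. If $(L,U)$ is a cut, then $(\mathbb{L}_L,U)$ satisfies items 3--7: $\emptyset \subseteq L$ gives item 3; the downward condition (item 4) follows from \ref{def:Cut2} applied elementwise; directedness (item 5) is obtained by using \ref{def:Cut4} to replace each $a \in A \cup B$ by a finite $A_a \subseteq L$ with $a \ll A_a$ and taking $C \defeql \bigcup_{a \in A \cup B} A_a$, which lies in $\mathbb{L}_L$ and satisfies $A \ll C$ and $B \ll C$ by the facts above; and items 6--7 are \ref{def:Cut5} and \ref{def:Cut6} rephrased via $\left\{ a \right\} \in \mathbb{L}_L \iff a \in L$. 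Conversely, assuming $(\mathbb{L},U)$ satisfies items 1--7, the cut axioms for $(L_{\mathbb{L}},U)$ come out similarly: \ref{def:Cut2} from item 4 together with the crux identity, \ref{def:Cut4} from item 5 applied to $A = B = \left\{ a \right\}$ (and the forward half of the crux identity, to see that the resulting $C$ is contained in $L_{\mathbb{L}}$), and \ref{def:Cut5}, \ref{def:Cut6} from items 6 and 7.

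The main obstacle is the crux identity $A \in \mathbb{L} \iff A \subseteq L_{\mathbb{L}}$. The forward implication is easy: if $A \in \mathbb{L}$ and $a \in A$, then $a \cov A$ by reflexivity, so $\left\{ a \right\} \cov A$ and item 4 gives $\left\{ a \right\} \in \mathbb{L}$, i.e.\ $a \in L_{\mathbb{L}}$. The backward implication is where the work lies: assuming $\left\{ a \right\} \in \mathbb{L}$ for every $a \in A$, I would prove by induction along a finite enumeration of $A$ that there exists $C \in \mathbb{L}$ with $A \ll C$. The base case $A = \emptyset$ uses item 3, with $\emptyset \ll C$ holding vacuously; in the inductive step, writing $A = A' \cup \left\{ a \right\}$, the induction hypothesis yields $C' \in \mathbb{L}$ with $A' \ll C'$, and item 5 applied to $C'$ and $\left\{ a \right\}$ produces $C \in \mathbb{L}$ with $C' \ll C$ and $\left\{ a \right\} \ll C$, whence $A' \ll C$ by transitivity and $A \ll C$ by the elementwise characterisation. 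Once such a $C$ is found, $A \ll C$ gives $A \cov C$, and item 4 yields $A \in \mathbb{L}$.

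Thus the whole statement reduces to this one inductive directedness argument. The step I expect to be genuinely load-bearing is the conversion of the \emph{binary} directedness of item 5 into a \emph{single} common upper bound $C \in \mathbb{L}$ for an arbitrary finite subset, followed by the descent $A \ll C \implies A \cov C \implies A \in \mathbb{L}$ through item 4; everything else, including the two bundles of axiom checks and the final confirmation that the assignments are mutually inverse, is bookkeeping built on the elementary facts about $\ll$ collected above.
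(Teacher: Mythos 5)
Your proposal is correct and takes essentially the same route as the paper: your $\mathbb{L}_{L}$ is exactly the paper's $\Fin{L}$, and your $L_{\mathbb{L}} = \left\{ a \in S \mid \left\{ a \right\} \in \mathbb{L} \right\}$ coincides with the paper's $\bigcup \mathbb{L}$ by the forward half of your crux identity (item 4 together with reflexivity of $\cov$). The paper merely asserts that these two assignments work, so your inductive directedness argument showing $A \subseteq L_{\mathbb{L}} \implies A \in \mathbb{L}$ is precisely the verification it leaves implicit.
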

\begin{proof}
  Given a cut $(L,U)$, the pair $(\Fin{L},U)$ satisfies
  the above conditions.  Conversely, if a pair $(\mathbb{L},U)$ satisfies
  those conditions, then $(\bigcup \mathbb{L},U)$ is a cut.
\end{proof}

\section{Patch topologies}\label{sec:Patch}
We give a geometric characterisation of the
patch topology of a stably locally compact formal topology.
Here, the patch topology of a stably locally compact formal topology
$\mathcal{S}$ is constructed as the space of its located points, i.e.\
the formal topology presented by a geometric theory whose models are
the located points of $\mathcal{S}$.
We also discuss patch topologies of stably compact formal
topologies and spectral formal topologies, which constitute
important subclasses of stably locally compact formal topologies.

\subsection{Stably locally compact formal topologies}
\begin{definition}\label{def:StablyLK}
  A locally compact formal topology $\mathcal{S}$ is \emph{stably locally compact}
  if $a \ll a' \amp b \ll b' \implies a \downarrow b \ll a' \downarrow b'$
  for all $a,a',b,b' \in S$.
\end{definition}
\begin{remark}
  The notion of stably locally compact formal topology corresponds
  to that of stably locally compact locale discussed by
  \citet{escardo2001regular}.
  \citet{johnstone-82} requires stably locally compact locales to be
  compact as well. A formal topology that corresponds to this notion
  is called stably compact in this paper; see Definition \ref{defSK}.
\end{remark}

Important examples of stably locally compact formal topologies are
locally compact regular formal topologies.
Recall that a formal topology $\mathcal{S}$ is \emph{regular} if 
\[
  a \cov \left\{ b \in S \mid b \lll a \right\}
\]
for all $a \in S$, where $b \lll a \defeqiv S \cov b^{*} \cup \left\{ a
\right\}$ and 
$
b^{*} \defeql \left\{ c \in S \mid c \downarrow b \cov \emptyset
\right\}$.
We extend $\lll$ to the subsets of $S$ by 
$
U \lll V \defeqiv S \cov U^{*} \cup V,
$
where $U^{*} \defeql \bigcap_{a \in U} a^{*}$.
 
\begin{lemma}[cf.\ {\citet[Lemma 4.2]{escardo2001regular}}]\label{lem:BoundedWcImpliesWb}
  Let $\mathcal{S}$ be a formal topology. For any $U,V \subseteq S$
  we have
  $
  U \ll S \,\amp\, U \lll V \implies U \ll V.
  $
\end{lemma}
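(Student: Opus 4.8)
The plan is to transcribe the familiar frame-theoretic argument into the language of the cover. In a frame, the identities $U \wedge U^{*} = \bot$ and $U^{*} \vee V = \top$ together with distributivity force $U \leq W_{i}$ as soon as $V \leq \bigvee_{i} W_{i}$ with the $W_{i}$ directed; here the role of $\wedge$ is played by $\downarrow$, the role of $U \wedge U^{*} = \bot$ by the disjointness $a \downarrow c \cov \emptyset$ for $a \in U$ and $c \in U^{*}$, and directedness is replaced by passing to finitely enumerable subsets.

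First I would unfold the conclusion: to establish $U \ll V$, fix an arbitrary $W \subseteq S$ with $V \cov W$ and produce a finitely enumerable $W_{0} \in \Fin{W}$ with $U \cov W_{0}$. From $U \lll V$, i.e.\ $S \cov U^{*} \cup V$, together with $V \cov W$, I would first derive $S \cov U^{*} \cup W$: one has $U^{*} \cup V \cov U^{*} \cup W$ (reflexivity on the $U^{*}$ part and $V \cov W$ on the $V$ part), and transitivity of $\cov$ closes the gap. Applying the hypothesis $U \ll S$ to the cover $S \cov U^{*} \cup W$ then yields a finitely enumerable $F \in \Fin{U^{*} \cup W}$ with $U \cov F$.

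The step I expect to be the main obstacle is that $F$ is a finitely enumerable subset of the \emph{union} $U^{*} \cup W$, and constructively it need not split into a finite piece of $U^{*}$ and a finite piece of $W$, since membership is undecidable. I would circumvent this by induction on the length of an enumeration $g \colon \{0,\dots,n-1\} \twoheadrightarrow F$, proving the purely existential statement that there is $W_{0} \in \Fin{W}$ with $F \subseteq U^{*} \cup W_{0}$. Because the target is an existential (a proposition), the disjunction $g(k) \in U^{*} \vee g(k) \in W$ may be used to drive the induction: in the left case keep the current $W_{0}$, in the right case adjoin $g(k)$ to it. This is exactly the point where one must be careful to construct a witness rather than decide membership.

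Finally, with such a $W_{0}$ in hand, $F \subseteq U^{*} \cup W_{0}$ gives $U \cov F \cov U^{*} \cup W_{0}$, hence $U \cov U^{*} \cup W_{0}$ by transitivity. To absorb the $U^{*}$ part I would invoke the meet-stability axiom of a formal topology for each $a \in U$: from $a \cov \{a\}$ and $a \cov U^{*} \cup W_{0}$ we obtain $a \cov a \downarrow (U^{*} \cup W_{0}) = (a \downarrow U^{*}) \cup (a \downarrow W_{0})$. Since $c \in U^{*} \subseteq a^{*}$ for every $a \in U$, we have $a \downarrow U^{*} \cov \emptyset$, whereas $a \downarrow W_{0} \cov W_{0}$; thus the right-hand side is covered by $W_{0}$, and transitivity delivers $a \cov W_{0}$. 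As $a \in U$ was arbitrary, $U \cov W_{0}$ with $W_{0} \in \Fin{W}$, which is precisely what $U \ll V$ requires.
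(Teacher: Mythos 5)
Your proof is correct and takes essentially the same route as the paper's: derive $S \cov U^{*} \cup W$ from $U \lll V$ and $V \cov W$, apply $U \ll S$ to extract a finite subcover, and then absorb the $U^{*}$ part using meet-stability together with $U \downarrow U^{*} \cov \emptyset$, exactly as in the paper's chain $U \cov \left( U^{*} \cup W_0 \right) \downarrow U \cov \left( U^{*} \downarrow U \right) \cup \left( W_0 \downarrow U \right) \cov W_0$. The only divergence is that the paper silently performs the step you make explicit (passing from $F \in \Fin{U^{*} \cup W}$ with $U \cov F$ to some $W_{0} \in \Fin{W}$ with $U \cov U^{*} \cup W_{0}$), and your caution there is slightly overstated: since $x \in U^{*} \cup W$ is by definition the disjunction $x \in U^{*} \vee x \in W$, the finite-choice induction you describe in fact yields a genuine split $F = F_{1} \cup F_{2}$ with $F_{1} \in \Fin{U^{*}}$ and $F_{2} \in \Fin{W}$, so the weaker existential form you prove is available constructively in the stronger form as well.
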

\begin{proof}
  Suppose that $U \ll S$ and  $U \lll V$.
  Let $W \subseteq S$ such that $V \cov W$. Then $S \cov U^{*} \cup V
  \cov  U^{*} \cup W$. Thus there exists $W_0 \in
  \Fin{W}$ such that $U \cov U^{*} \cup W_0$. Hence $U \cov \left(
  U^{*} \cup W_0 \right) \downarrow U \cov \left( U^{*} \downarrow U
  \right) \cup \left( W_0 \downarrow U \right) \cov W_0$. Therefore 
  $U \ll V$.
\end{proof} 
\begin{proposition}[{\citet[Lemma 4.3]{escardo2001regular}}]
   Every locally compact regular formal topology is stably locally compact.
\end{proposition}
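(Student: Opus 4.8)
The plan is to derive the stability condition for $\ll$ from the corresponding—and more tractable—stability of the well-inside relation $\lll$, using regularity to pass between the two relations and Lemma \ref{lem:BoundedWcImpliesWb} to return from $\lll$ to $\ll$. Fix $a,a',b,b' \in S$ with $a \ll a'$ and $b \ll b'$; I must show $a \downarrow b \ll a' \downarrow b'$. Applying Lemma \ref{lem:BoundedWcImpliesWb} with $U = a \downarrow b$ and $V = a' \downarrow b'$, it suffices to establish two facts: that $a \downarrow b \ll S$, and that $a \downarrow b \lll a' \downarrow b'$.

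The boundedness clause $a \downarrow b \ll S$ is the easy part. Since $a' \cov S$ by reflexivity, any cover $S \cov W$ gives $a' \cov W$ by transitivity, so $a \ll a'$ supplies a finite $W_{0} \in \Fin{W}$ with $a \cov W_{0}$; thus $a \ll S$. As every element of $a \downarrow b$ lies below $a$, we have $a \downarrow b \cov \left\{ a \right\}$, and composing with $a \cov W_{0}$ yields $a \downarrow b \cov W_{0}$. Hence $a \downarrow b \ll S$.

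The well-inside clause is where regularity enters, in two steps. First I would show that in a regular formal topology $a \ll a'$ implies $a \lll a'$: regularity gives $a' \cov \left\{ c \in S \mid c \lll a' \right\}$, so $a \ll a'$ produces $A_{0} \in \Fin{\left\{ c \mid c \lll a' \right\}}$ with $a \cov A_{0}$; closure of $\lll$ under finite joins on the left (proved by induction from the meet axiom, distributing $\downarrow$ over the unions $c^{*} \cup \left\{ a' \right\}$ and using $c_{1}^{*} \downarrow c_{2}^{*} \cov c_{1}^{*} \cap c_{2}^{*}$ and $c^{*} \downarrow a' \cov \left\{ a' \right\}$) gives $A_{0} \lll a'$; and since $a \cov A_{0}$ forces $A_{0}^{*} \cov a^{*}$ (because $d \downarrow a \cov d \downarrow A_{0}$), the cover $S \cov A_{0}^{*} \cup \left\{ a' \right\}$ refines to $S \cov a^{*} \cup \left\{ a' \right\}$, i.e.\ $a \lll a'$. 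The same argument gives $b \lll b'$. Second, I would prove $\lll$ is stable under $\downarrow$: from $a \downarrow b \cov \left\{ a \right\}$ and $a \downarrow b \cov \left\{ b \right\}$ one gets $a^{*} \cup b^{*} \cov (a \downarrow b)^{*}$, while applying the meet axiom elementwise to $S \cov a^{*} \cup \left\{ a' \right\}$ and $S \cov b^{*} \cup \left\{ b' \right\}$ gives $S \cov (a^{*} \cup \left\{ a' \right\}) \downarrow (b^{*} \cup \left\{ b' \right\})$; expanding this $\downarrow$ over the unions and discarding the terms contained in $a^{*} \cup b^{*}$ yields $S \cov a^{*} \cup b^{*} \cup (a' \downarrow b')$, hence $S \cov (a \downarrow b)^{*} \cup (a' \downarrow b')$, which is exactly $a \downarrow b \lll a' \downarrow b'$. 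Feeding both clauses into Lemma \ref{lem:BoundedWcImpliesWb} completes the proof.

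I expect the main obstacle to be the constructive, cover-theoretic bookkeeping around the pseudocomplement $(-)^{*}$: the frame identities $\bigl( \bigwedge_{i} c_{i}^{*} \bigr) \vee a' = 1$ and $a^{*} \vee b^{*} \vee (a' \wedge b') = 1$ are transparent in frame language, but each must be re-derived purely from the two formal-topology axioms by distributing $\downarrow$ over finite unions and verifying the elementary inclusions $c_{1}^{*} \downarrow c_{2}^{*} \cov c_{1}^{*} \cap c_{2}^{*}$, $c^{*} \downarrow a' \cov \left\{ a' \right\}$, and the downward closure of each $c^{*}$—taking care throughout that no classical complementation is invoked.
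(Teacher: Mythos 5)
Your proposal is correct and follows exactly the paper's own route: use regularity to pass from $a \ll a'$, $b \ll b'$ to $a \lll a'$, $b \lll b'$, invoke stability of $\lll$ under $\downarrow$ together with $a \downarrow b \ll S$, and conclude via Lemma \ref{lem:BoundedWcImpliesWb}. The only difference is that you explicitly verify the steps the paper leaves implicit (that $\ll$ implies $\lll$ under regularity via closure of $\lll$ under finite joins, and the cover-theoretic identities for $(-)^{*}$), and those verifications are sound.
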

\begin{proof}
  Let $\mathcal{S}$ be a locally compact regular formal topology.
  Suppose that $a \ll a'$ and $b \ll b'$. Since $\mathcal{S}$ is
  regular,
  we have $a \lll a'$ and $b \lll b'$. Then, $a \downarrow b \lll a'
  \downarrow b'$. Since $a \downarrow b \ll S$, we have $a
  \downarrow b \ll a' \downarrow b'$ by Lemma
  \ref{lem:BoundedWcImpliesWb}.
\end{proof}

In what follows, we fixed a stably locally compact
formal topology $\mathcal{S}$ .
\begin{definition} \label{def:Patch}
The patch of $\mathcal{S}$ is a formal topology $\Patch(\mathcal{S})$
presented by a geometric theory $T_{P}$ over the propositional
symbols
\[
  P_{P} \defeql \left\{ \lcut(A) \mid A \in \Fin{S} \right\}
  \cup \left\{ \rcut(a) \mid a \in S \right\}
\] 
with the following axioms:
\begin{enumerate}
    \myitem[(R1)]\label{r1}  $ \top \vdash \bigvee \left\{ 
   \rcut(a) \mid a \in S \right\}$ 
  
  \myitem[(R2)]\label{r2}  $\rcut(a) \wedge \rcut(b) \vdash \bigvee
  \left\{ \rcut(c) \mid c \in a \downarrow b  \right\}$ 
  
  \myitem[(R3)]\label{r3}  $\rcut(a)  \vdash \bigvee \left\{ 
  \rcut(b) \mid b \in A \right\} \qquad (a \cov A \in \Fin{S})$
  
  \myitem[(R4)]\label{r4} $\rcut(a)  \vdash \bigvee_{b \ll a} \rcut(b)$ 
  
  \myitem[(L1)]\label{l1} $ \top  \vdash \lcut(\emptyset) $
  
  \myitem[(L2)]\label{l2} $\lcut(B) \vdash \lcut(A) \qquad (A \cov B)$
  
  \myitem[(L3)]\label{l3} $\lcut(A) \wedge \lcut(B)
        \vdash \bigvee \left\{ \lcut(C) \mid A \ll C \amp B \ll C \right\} $
  
  \myitem[(Loc)]\label{Loc} $\top \vdash  \lcut(\left\{ a \right\}) \vee \rcut(b)
  \qquad (a \ll b)$
  
  \myitem[(D)]\label{D}  $\lcut(\left\{ a \right\}) \wedge \rcut(a) \vdash \bot$
\end{enumerate}
We write $\cov_{P}$ for the cover of $\Patch(\mathcal{S})$.
\end{definition}
By Corollary \ref{cor:CutPerfectPt} and Lemma \ref{lem:CutAsScott},
the models of the above theory correspond to the
located points of $\mathcal{S}$.

\begin{remark}\label{rem:TheoryT}
  The geometric theory over $\left\{ \rcut(a) \mid a \in S \right\}$
  with the axioms \ref{r1} -- \ref{r4} presents the topology
  $\mathcal{S}$ \cite[Proposition 4.1.12]{Fox05}. Moreover, it is not
  hard to show that the geometric theory over $\left\{ \lcut(A) \mid A
    \in \Fin{S} \right\}$ with the axioms \ref{l1} -- \ref{l3} 
  presents the Scott topology $\Scott(\mathcal{S})$ of
  $\Sat{\mathcal{S}}$.  
\end{remark} 
The following lemma generalises \ref{Loc}.
\begin{lemma}
  \label{lem:LittleFact}
  For any $A,B \in \Fin{S}$
  \[
    A \ll B \implies
  \Fin{P_{P}} \cov_{P} \left\{ \left\{ \lcut(A) \right\} \right\}
  \cup \left\{ \left\{ \rcut(b) \right\} \mid b \in B\right\}.
  \]
\end{lemma}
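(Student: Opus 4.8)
The plan is to reduce the claim to the atomic instances of axiom~\ref{Loc} by interpolation and then assemble the lower parts using \ref{l2} and \ref{l3}. Since every base element refines the top element $\emptyset \in \Fin{P_P}$ in the order $\supseteq$, by the rule $(\leq)$ the statement $\Fin{P_P} \cov_P \mathcal{W}$ is equivalent to $\emptyset \cov_P \mathcal{W}$, so it suffices to show that the right-hand family covers $\emptyset$. Applying the interpolation property (Lemma~\ref{lem:LKInterpolate}) to $A \ll B$, I first obtain $C \in \Fin{S}$ with $A \ll C$ such that for each $c \in C$ there is some $b \in B$ with $c \ll b$. Fixing such a $b$ and invoking \ref{Loc} gives $\emptyset \cov_P \{ \{ \lcut(\{c\}) \}, \{ \rcut(b) \} \}$; since a family still covers after enlargement, this yields, uniformly in $c \in C$,
\[
  \emptyset \cov_P \{ \{ \lcut(\{c\}) \} \} \cup \{ \{ \rcut(b) \} \mid b \in B \} .
\]
Iterating the meet law $a \cov U \amp a \cov V \implies a \cov U \downarrow V$ of $\Patch(\mathcal{S})$ over $c \in C$ then shows that $\emptyset$ is covered by all base elements of the form $\{ \lcut(\{c\}) \mid c \in I \} \cup \{ \rcut(b_c) \mid c \in C \setminus I \}$, where $I \subseteq C$ and $b_c \in B$.

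By transitivity of $\cov_P$, it remains to cover each of these base elements by $\{ \{ \lcut(A) \} \} \cup \{ \{ \rcut(b) \} \mid b \in B \}$. If $I \neq C$, the base element contains some $\rcut(b_c)$ with $b_c \in B$, so it lies below the generator $\{ \rcut(b_c) \}$ and is covered. The only substantial case is $I = C$, where I must establish
\[
  \{ \lcut(\{c\}) \mid c \in C \} \cov_P \{ \{ \lcut(A) \} \},
\]
equivalently $\bigwedge_{c \in C} \lcut(\{c\}) \leq \lcut(A)$ in the presented frame $\Sat{\Patch(\mathcal{S})}$.

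This is the crux, and the main obstacle is the lower-cut bookkeeping, since \ref{l3} combines two lower cuts only after passing to a common way-above witness. The key observation is that such a witness can always be reabsorbed: I would first prove $\lcut(D) \wedge \lcut(E) \leq \lcut(D \cup E)$ for all $D, E \in \Fin{S}$ by bounding the left-hand side, via \ref{l3}, by $\bigvee \{ \lcut(C') \mid D \ll C' \amp E \ll C' \}$, and then noting that each such $C'$ satisfies $D \cup E \ll C'$ (union the two finite subcovers), whence $D \cup E \cov C'$ and \ref{l2} collapses $\lcut(C')$ below $\lcut(D \cup E)$. Iterating this over $C$ (with \ref{l1} for the empty base case) gives $\bigwedge_{c \in C} \lcut(\{c\}) \leq \lcut(C)$. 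Finally, instantiating the definition of $\ll$ at $W = C$ shows $A \ll C \implies A \cov C$, so \ref{l2} yields $\lcut(C) \leq \lcut(A)$, closing the case and hence the proof.
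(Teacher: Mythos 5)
Your proof is correct and takes essentially the same route as the paper's: interpolate $A \ll B$ to get $C$ with $A \cov C$ and each $c \in C$ way below some $b \in B$, apply \ref{Loc} at each $c$, merge the resulting lower cuts via \ref{l3}, and conclude with \ref{l2}. What the paper compresses into ``by \ref{Loc} and \ref{l3}'' is exactly what you spell out — the distribution of the finite conjunction of \ref{Loc} instances over the disjunctions and the auxiliary inequality $\lcut(D) \wedge \lcut(E) \leq \lcut(D \cup E)$ — so your write-up is the same argument in full detail.
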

\begin{proof}
  Suppose that $A \ll B$. Then, there exists $C \in \Fin{S}$ such
  that $A \cov C$ and $\left( \forall c \in C \right) \left( \exists b
  \in B \right) c \ll b$. By \ref{Loc} and \ref{l3}, we have
  $\Fin{P_{P}} \cov_{P} \left\{ \left\{ \lcut(C) \right\} \right\}
  \cup \left\{ \left\{ \rcut(b) \right\} \mid b \in B\right\}$.
  Hence, $\Fin{P_{P}} \cov_{P} \left\{ \left\{ \lcut(A) \right\} \right\}
  \cup \left\{ \left\{ \rcut(b) \right\} \mid b \in B\right\}$ by
  \ref{l2}.
\end{proof}

We show that $\Patch(\mathcal{S})$ is a locally compact regular formal
topology. To this end, define a function
  $
  \wb_{P} \colon \Fin{P_P} \to \Pow{\Fin{P_P}}
  $
by induction on $\Fin{P_P}$:
\begin{align*}
  \wb_{P}(\emptyset)
  &\defeql
  \left\{ \left\{ \rcut(a) \right\}  \mid a \ll S \right\},\\
  \wb_{P}(\elPT{A} \cup \left\{ \rcut(a) \right\})
  &\defeql
  \left\{ \elPT{B} \cup \left\{ \rcut(b) \right\} \mid
  \elPT{B} \in \wb_{T}(\elPT{A}) \amp  b \ll a \right\},\\
  \wb_{P}(\elPT{A} \cup \left\{ \lcut(A) \right\})
  &\defeql
  \left\{ \elPT{B} \cup \left\{ \lcut(B) \right\} \mid
  \elPT{B} \in \wb_{T}(\elPT{A}) \amp  A \ll B  \right\}.
\end{align*}

\begin{lemma}\label{lem:wbT} 
  For each $\elPT{A},\elPT{B} \in \Fin{P_P}$, we have
  \begin{enumerate}
    \item\label{lem:wbT0} $\elPT{B} \in \wb_{P}(\elPT{A}) \implies
      \elPT{B} \cov_{P} \elPT{A}$,
    \item\label{lem:wbT1} $\elPT{B} \in \wb_{P}(\elPT{A}) \implies
      \elPT{B} \ll \elPT{A}$,
    \item\label{lem:wbT2} $\elPT{B} \in \wb_{P}(\elPT{A}) \implies
      \elPT{B} \lll \elPT{A}$,
    \item\label{lem:wbT3} $\elPT{A} \cov_{P} \wb_{P}(\elPT{A})$.
  \end{enumerate}
\end{lemma}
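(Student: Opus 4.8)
The plan is to prove the four items roughly in the order (1), (4), (3), (2), since the way-below statement (2) is most naturally extracted from the well-inside statement (3) through Lemma~\ref{lem:BoundedWcImpliesWb}.

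Items (1) and (4) I would prove by induction following the clauses defining $\wb_{P}$, reducing each to a statement about single generators. For (1) the point is that each generator of $\elPT{B}$ entails the matching generator of $\elPT{A}$: a $\rcut$-component satisfies $b \ll a \implies b \cov a$, whence $\{\rcut(b)\} \cov_{P} \{\rcut(a)\}$ (this cover is already present in the subtheory \ref{r1}--\ref{r4} presenting $\mathcal{S}$, cf.\ Remark~\ref{rem:TheoryT}), while an $\lcut$-component satisfies $A \ll B \implies A \cov B$, whence $\{\lcut(B)\} \cov_{P} \{\lcut(A)\}$ by \ref{l2}. Since $\Fin{P_{P}}$ is ordered by reverse inclusion and finite meets are unions, these componentwise entailments assemble into $\elPT{B} \cov_{P} \elPT{A}$ through the frame law $x \cov U \amp x \cov V \implies x \cov U \downarrow V$. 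For (4) the inductive step applies \ref{r4} (giving $\{\rcut(a)\} \cov_{P} \{\{\rcut(b)\} \mid b \ll a\}$) and \ref{l3} with $A=B$ (giving $\{\lcut(A)\} \cov_{P} \{\{\lcut(B)\} \mid A \ll B\}$), again combined by $\downarrow$-stability of covers; the base case $\emptyset \cov_{P} \wb_{P}(\emptyset)$ follows from \ref{r1} and \ref{r4} together with the observation that $b \ll a$ with $a \in S$ forces $b \ll S$.

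For (3) I would show that each $\elPT{B} \in \wb_{P}(\elPT{A})$ is well inside $\elPT{A}$, i.e.\ $\Fin{P_{P}} \cov_{P} \elPT{B}^{*} \cup \{\elPT{A}\}$, noting first that $C \in \elPT{B}^{*}$ iff $C \cup \elPT{B} \cov_{P} \bot$. For each generator of $\elPT{B}$ I would produce a two-sided cover of $\Fin{P_{P}}$ from Lemma~\ref{lem:LittleFact}: for a component $\rcut(b)$ (with $b \ll a$ and $\rcut(a) \in \elPT{A}$) interpolation (Lemma~\ref{lem:LKInterpolate}) gives a finite $A'$ with $b \ll A'$ and $a' \ll a$ for all $a' \in A'$, and Lemma~\ref{lem:LittleFact} yields $\Fin{P_{P}} \cov_{P} \{\{\lcut(\{b\})\}\} \cup \{\{\rcut(a')\} \mid a' \in A'\}$, whose $\lcut(\{b\})$-branch lies in $\elPT{B}^{*}$ by \ref{D} and whose $\rcut(a')$-branches entail $\rcut(a)$; for a component $\lcut(B)$ (with $A \ll B$) Lemma~\ref{lem:LittleFact} directly gives $\Fin{P_{P}} \cov_{P} \{\{\lcut(A)\}\} \cup \{\{\rcut(b)\} \mid b \in B\}$, the $\lcut(A)$-branch being the matching component of $\elPT{A}$ and each $\rcut(b)$-branch ($b \in B$) lying in $\elPT{B}^{*}$ since $\lcut(B) \vdash \lcut(\{b\})$ by \ref{l2} and then \ref{D} applies. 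Forming the $\downarrow$ of these finitely many covers splits $\Fin{P_{P}}$ into branches that either contain a pair inconsistent with $\elPT{B}$---hence lie in $\elPT{B}^{*}$---or jointly entail $\elPT{A}$; this finite distributive bookkeeping over the generators of $\elPT{B}$ is the most delicate step.

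Finally, (2) follows from (3) and Lemma~\ref{lem:BoundedWcImpliesWb} once $\elPT{B} \ll \Fin{P_{P}}$ is known. Unwinding the definition of $\wb_{P}$ down to its base clause shows that every $\elPT{B} \in \wb_{P}(\elPT{A})$ contains a generator $\rcut(a)$ with $a \ll S$; since $\elPT{B} \leq \{\rcut(a)\}$ in $\Fin{P_{P}}$, it then suffices to prove the boundedness fact that $\{\rcut(a)\} \ll \Fin{P_{P}}$ whenever $a \ll S$. I expect this boundedness fact to be the main obstacle: it is a compactness-transfer statement, and establishing it means taking an arbitrary patch-cover of the top of $\Patch(\mathcal{S})$---which mixes $\rcut$ and $\lcut$ generators---and projecting it, through the presentation \ref{r1}--\ref{r4} of $\mathcal{S}$ (Remark~\ref{rem:TheoryT}), onto a cover of $S$ in $\mathcal{S}$, where $a \ll S$ supplies the required finite subcover.
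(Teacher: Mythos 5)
Your items \ref{lem:wbT0}, \ref{lem:wbT2}, and \ref{lem:wbT3} are essentially the paper's arguments: \ref{lem:wbT0} and \ref{lem:wbT3} by induction over the clauses of $\wb_{P}$ using \ref{r1}, \ref{r4}, \ref{l2}, \ref{l3}, and \ref{lem:wbT2} via Lemma~\ref{lem:LittleFact} together with \ref{Loc} and \ref{D} (the paper packages your ``distributive bookkeeping'' as an induction on $\elPT{A}$, which is cleaner but the same computation). Your reduction of \ref{lem:wbT1} to \ref{lem:wbT2} through Lemma~\ref{lem:BoundedWcImpliesWb} is also sound in principle, and your observation that every $\elPT{B} \in \wb_{P}(\elPT{A})$ contains some $\rcut(a)$ with $a \ll S$ is correct.

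The genuine gap is the boundedness fact $\left\{ \rcut(a) \right\} \ll \Fin{P_P}$ for $a \ll S$, which you correctly identify as the main obstacle but whose sketched justification does not work. Covers do not ``project'' from $\Patch(\mathcal{S})$ down to $\mathcal{S}$: the patch topology is strictly finer, and the frame homomorphism induced by $\varepsilon_{P}$ goes in the direction $\Sat{\mathcal{S}} \to \Sat{\Patch(\mathcal{S})}$, not the reverse. Concretely, the axiom \ref{Loc} gives a cover of the top of $\Patch(\mathcal{S})$ by $\left\{ \lcut(\left\{ a \right\}) \right\}$ and $\left\{ \rcut(b) \right\}$ with $a \ll b$, and there is no corresponding cover of $S$ in $\mathcal{S}$ onto which this could be projected ($b$ alone need not cover $S$, and $\lcut$-generators have no open counterpart in $\mathcal{S}$). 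Since $\cov_{P}$ is inductively generated, the only available mechanism for establishing any way-below statement in $\Patch(\mathcal{S})$ is induction on the generation of $\cov_{P}$; moreover, restricting such an induction to covers of the top (or to singletons $\left\{ \rcut(a) \right\}$) does not close, because the (infinity)-rule passes through arbitrary elements $\elPT{A} \in \Fin{P_P}$, so the inductive hypothesis must be formulated for arbitrary $\elPT{A} \cov_{P} \mathcal{U}$ and arbitrary $\elPT{B} \in \wb_{P}(\elPT{A})$. That strengthened statement is exactly what the paper proves directly for item \ref{lem:wbT1}: it shows by induction on $\cov_{P}$ that $\elPT{A} \cov_{P} \mathcal{U}$ implies every $\elPT{B} \in \wb_{P}(\elPT{A})$ has a finitely enumerable subcover, verifying the claim axiom by axiom through \ref{r1}--\ref{D} (with the stability hypothesis of Definition~\ref{def:StablyLK} entering in the case \ref{r2}). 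Your route thus defers, rather than avoids, the cover induction that constitutes the bulk of the paper's proof, and as written item \ref{lem:wbT1} remains unproven.
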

\begin{proof}
  \ref{lem:wbT0}.
  By a straightforward induction on $\elPT{A}$.

  \medskip

  \noindent\ref{lem:wbT1}.  We show that for any $\elPT{A} \in \Fin{P_P}$ and
  $\mathcal{U} \subseteq \Fin{P_P}$, 
  \[
    \elPT{A} \cov_{P} \mathcal{U} \implies \left( \forall \elPT{B} \in
    \wb_{P}(\elPT{A}) \right) \left( \exists\, \mathcal{U}_{0} \in
    \Fin{\mathcal{U}} \right) \elPT{B} \cov_{P} \mathcal{U}_{0}
  \]
  by induction on $\cov_{P}$.
  Put
  \[
    \Phi(\elPT{A}) \equiv \left( \forall \elPT{B} \in
    \wb_{P}(\elPT{A}) \right) \left( \exists\, \mathcal{U}_{0} \in
    \Fin{\mathcal{U}} \right) \elPT{B} \cov_{P} \mathcal{U}_{0}.
  \]
  If $\elPT{A} \cov_{P} \mathcal{U}$ is derived by (reflex)-rule,
  then we have $\Phi(\elPT{A})$ from \ref{lem:wbT0}.
  Next, suppose that $\elPT{A} \cov_{P} \mathcal{U}$ is derived by ($\leq$)-rule.
  Let $\elPT{A}' \in \Fin{P_{P}}$ such that $\elPT{A} \supseteq \elPT{A}'$,
  and assume that $\Phi(\elPT{A}')$.
  Let $\elPT{B} \in \wb_{P}(\elPT{A})$. By the definition of
  $\wb_{P}$, there exists $\elPT{C} \in \wb_{P}(\elPT{A}')$ such that
  $\elPT{C} \subseteq \elPT{B}$. Thus, there exists $\mathcal{U}_{0}
  \in \Fin{\mathcal{U}}$ such that $\elPT{C} \cov_{P}
  \mathcal{U}_{0}$.  Hence $\elPT{B} \cov_{P} \mathcal{U}_{0}$.

  Finally, for (infinity)-rule, we check each axiom of $T_P$.

  \medskip

  \noindent\ref{r1} Assume
  $\Phi(\elPT{A} \cup \left\{ \rcut(a) \right\})$ for all $a \in S$.
  Let $\elPT{B} \in \wb_{P}(\elPT{A})$. By the definition of
  $\wb_{P}(\elPT{A})$, there
  exists $a \ll S$ such that $\rcut(a) \in \elPT{B}$. Thus, there exist
  $\left\{ a_{0},\dots,a_{n-1} \right\}$ and
  $\left\{ b_{0},\dots,b_{n-1} \right\}$ such that 
  \[
    a \cov \left\{ a_{0},\dots,a_{n-1} \right\} \amp \left( \forall i
    < n \right) a_{i} \ll b_{i}.
  \]
  For each $i < n$, since $\elPT{B} \cup \left\{ \rcut(a_{i}) \right\} \in
  \wb_{P}(\elPT{A} \cup \left\{ \rcut(b_{i}) \right\})$, 
  there exists $\mathcal{U}_{i} \in \Fin{\mathcal{U}}$ such that
  $\elPT{B} \cup \left\{ \rcut(a_{i}) \right\} \cov_{P}
  \mathcal{U}_{i}$. Then, by \ref{r3}, we have
  \[
    \elPT{B} \cov_{P} \left\{ \elPT{B} \cup \left\{ \rcut(a_{i}) \right\} \mid i <
  n \right\} \cov_{P} \bigcup_{i < n} \mathcal{U}_{i}.
  \]
  \noindent\ref{r2} Assume $\Phi(\elPT{A} \cup \left\{ \rcut(c) \right\})$
  for all $c \in a \downarrow b$. 
  Let $\elPT{B} \in
  \wb_{T}(\elPT{A} \cup \left\{ \rcut(a), \rcut(b) \right\})$.
  Then, there exists $\elPT{B}' \in \wb_{P}(\elPT{A})$, $a' \ll a$ and  $b' \ll b$
  such that $\elPT{B} = \elPT{B}' \cup \left\{ \rcut(a'), \rcut(b') \right\}$.
  Since $\mathcal{S}$ is stably locally compact, we have
  $a' \downarrow b' \ll a \downarrow b$. Thus, there exist
  $\left\{ c_{0},\dots,c_{n-1} \right\}$ and
  $\left\{ c_{0}',\dots,c_{n-1}' \right\} $
  such that  $a' \downarrow b' \cov \left\{ c_{0},\dots,c_{n-1}
  \right\}$ and $\left( \forall i < n \right) c_{i} \ll c_{i}' \in a \downarrow b$.
  Then, for each $i < n$, there exists
  $\mathcal{U}_{i} \in \Fin{\mathcal{U}}$ such that $\elPT{B}' \cup \left\{
  \rcut(c_{i}) \right\} \cov_{P}\mathcal{U}_{i}$. By
  \ref{r2} and \ref{r3}, we have
  \[
    \elPT{B} \cov_{P} \left\{ \elPT{B}' \cup \left\{ \rcut(c') \right\}
               \mid c' \in a' \downarrow b' \right\} 
    \cov_{P}
    \left\{ \elPT{B}' \cup \left\{ \rcut(c_{i}) \right\} \mid i < n \right\} 
    \cov_{P}
    \bigcup_{i < n} \mathcal{U}_{i}.
  \]
  \noindent\ref{r3} Suppose that $a \cov \left\{ a_{0},\dots,a_{n-1} \right\}$,
  and assume $\Phi(\elPT{A} \cup \left\{ \rcut(a_{i}) \right\})$
  for each $i < n$. Let $\elPT{B} \in \wb_{P}(\elPT{A} \cup \left\{
  \rcut(a) \right\})$. Then, there exists $\elPT{B}' \in
  \wb_{P}(\elPT{A})$ and $b \ll a$ such that $\elPT{B} = \elPT{B}'
  \cup \left\{ \rcut(b) \right\}$.
  Hence, there exists $\left\{ b_{0},\dots,b_{m-1} \right\}$ such
  that $b \cov \left\{ b_{j} \mid j < m \right\}$ and $\left
  ( \forall j < m \right)\left( \exists i < n \right)
  b_{j} \ll a_{i}$. Then, for each
  $j < m$ there exists $\mathcal{U}_{j} \in \Fin{\mathcal{U}}$ such
  that $\elPT{B}' \cup \left\{ \rcut(b_{j}) \right\} \cov_{P}
  \mathcal{U}_{j}$. Thus $\elPT{B} \cov_{P}
  \bigcup_{j < m} \mathcal{U}_{j}$ by \ref{r3}.

  \medskip

  \noindent\ref{r4} Similar to the case \ref{r3}.

  \medskip

  \noindent Verification of the axioms \ref{l1}, \ref{l2}, and  \ref{l3} is straightforward.

  \medskip

  \noindent\ref{Loc} Suppose that $a \ll b$. Assume $\Phi(\elPT{A} \cup \left\{
   \lcut(\left\{ a \right\}) \right\})$ and $\Phi(\elPT{A} \cup
   \left\{ \rcut(b) \right\})$.
   Let $\elPT{B} \in \wb_{P}(\elPT{A})$. Since $a \ll b$, there
   exist $A,C \in \Fin{S}$ such that $a \ll A \ll C$ and $\left(
   \forall c \in C \right) c \ll b$. By Lemma \ref{lem:LittleFact},
   we have 
   $\Fin{P_{P}} \cov_{P} \left\{ \left\{ \lcut(A) \right\} \right\}
   \cup \left\{ \left\{ \rcut(c) \right\} \mid c \in C\right\}$.
   Thus,
   \[
     \elPT{B} \cov_{P}
     \left\{\elPT{B} \cup  \left\{ \lcut(A) \right\} \right\}
     \cup \left\{ \elPT{B} \cup \left\{ \rcut(c) \right\} \mid c \in
   C\right\}.
   \]
   Note that the right hand side is finitely enumerable, and that
   $ \elPT{B} \cup  \left\{ \lcut(A) \right\} \in
   \wb_{P}(\elPT{A} \cup \left\{ \lcut(\left\{ a \right\}) \right\})$ and 
   $\elPT{B} \cup \left\{ \rcut(c) \right\} 
   \in \wb_{P}(\elPT{A} \cup \left\{ \rcut(b) \right\})$
   for each  $c \in C$. Hence, by the assumption, there
   exists $\mathcal{U}_{0} \in \Fin{\mathcal{U}}$ such that
   $\elPT{B} \cov_{P} \mathcal{U}_{0}$.

   \medskip

   \noindent\ref{D} Let $a \in S$, and let
   $\elPT{B} \in \wb_{P}(\elPT{A} \cup
   \left\{ \lcut(\left\{ a \right\}), \rcut(a) \right\})$.
   Then, there exists
   $\elPT{B}' \in \wb_{P}(\elPT{A})$, $A \gg a$,
   and  $b \ll a$ such that $\elPT{B} = \elPT{B}' \cup \left\{
   \lcut(A), \rcut(b) \right\}$. By \ref{l2} and \ref{D}, we have 
   $\elPT{B} \cov_{P} \left\{ \lcut(\left\{ b \right\}), \rcut(b)  \right\} \cov_{P} \emptyset$.

  \medskip
  \noindent\ref{lem:wbT2}. By induction on $\elPT{A}$.

  \smallskip
  \noindent$\elPT{A} = \emptyset$:
  Let $\elPT{B} \in \wb_{P}(\emptyset)$. Since $\emptyset$ is
  the top element, we have $\Fin{P_P} \cov_{P}  \elPT{B}^{*} \cup \left\{ \emptyset \right\}$.

  \smallskip

  \noindent$\elPT{A} = \elPT{A}' \cup \left\{ \rcut(a) \right\}$:
  Let $\elPT{B} \in \wb_{P}(\elPT{A})$. Then, there exists $\elPT{B}'
  \in \wb_{P}(\elPT{A}')$ and
  $b \ll a$ such that $\elPT{B} = \elPT{B}' \cup \left\{ \rcut(b) \right\}$.
  By induction hypothesis and \ref{Loc}, we have 
  \[
    \Fin{P_P}
    \cov_{P} \elPT{B}'^{*} \cup \left\{ \elPT{A}' \cup
    \left\{ \lcut(\left\{ b \right\}) \right\}, \elPT{A}' \cup
    \left\{ \rcut(a) \right\}\right\} \\
    \cov_{P} \elPT{B}^{*} \cup \left\{ \elPT{A}' \cup
    \left\{ \lcut(\left\{ b \right\}) \right\}, \elPT{A} \right\}.
    \]
   Since  $\elPT{A}' \cup \left\{ \lcut(\left\{ b \right\}) \right\}
   \in \elPT{B}^{*}$ by \ref{D}, we have  $\Fin{P_P} \cov_{P} \elPT{B}^{*}
   \cup \left\{ \elPT{A} \right\}$.

  \smallskip

  \noindent$\elPT{A} = \elPT{A}' \cup \left\{ \lcut(A) \right\}$:
   Similar to the previous case, using Lemma \ref{lem:LittleFact}.

  \medskip
  \noindent\ref{lem:wbT3}.
  By induction on $\elPT{A}$, using \ref{r1}, \ref{r4}, and
  \ref{l3}.
\end{proof}

\begin{proposition}\label{prop:PatchLKReg}
  $\Patch(\mathcal{S})$ is locally compact regular.
\end{proposition}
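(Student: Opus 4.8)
The plan is to recognise that the function $\wb_P$ defined just before Lemma \ref{lem:wbT} already serves as a witness for local compactness, so that both required properties fall out of the four clauses of that lemma with essentially no further work. The whole task reduces to matching those clauses against the definitions of locally compact and regular formal topology.

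First I would verify local compactness. Recall that $\Patch(\mathcal{S})$ is locally compact if there is a function $\wb \colon \Fin{P_P} \to \Pow{\Fin{P_P}}$ with $\left( \forall \elPT{B} \in \wb(\elPT{A}) \right) \elPT{B} \ll \elPT{A}$ and $\elPT{A} \cov_P \wb(\elPT{A})$ for all $\elPT{A}$. Taking $\wb = \wb_P$, the first condition is exactly clause \ref{lem:wbT1} of Lemma \ref{lem:wbT}, and the second is clause \ref{lem:wbT3}. Since $\cov_P$ is the cover of $\Patch(\mathcal{S})$, the relation $\ll$ appearing in clause \ref{lem:wbT1} is the way-below relation of $\Patch(\mathcal{S})$ regarded as a basic cover, so nothing more is needed.

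Next I would derive regularity, which asks that $\elPT{A} \cov_P \left\{ \elPT{B} \in \Fin{P_P} \mid \elPT{B} \lll \elPT{A} \right\}$ for every $\elPT{A} \in \Fin{P_P}$. Clause \ref{lem:wbT2} shows that $\wb_P(\elPT{A}) \subseteq \left\{ \elPT{B} \mid \elPT{B} \lll \elPT{A} \right\}$, while clause \ref{lem:wbT3} gives $\elPT{A} \cov_P \wb_P(\elPT{A})$; monotonicity of the cover then immediately yields $\elPT{A} \cov_P \left\{ \elPT{B} \mid \elPT{B} \lll \elPT{A} \right\}$, which is precisely regularity.

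There is no real obstacle remaining at this point: all the genuine content — that $\wb_P(\elPT{A})$ consists of elements both way-below and well-inside $\elPT{A}$, and that $\elPT{A}$ is covered by $\wb_P(\elPT{A})$ — has already been discharged in the inductive proof of Lemma \ref{lem:wbT}, which checked each axiom \ref{r1}--\ref{D} separately. The proposition is therefore a direct repackaging of that lemma, and the only care required is to confirm that the two relations $\ll$ and $\lll$ used in clauses \ref{lem:wbT1} and \ref{lem:wbT2} are indeed those of $\Patch(\mathcal{S})$.
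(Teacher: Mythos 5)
Your proposal is correct and takes exactly the approach the paper intends: Proposition \ref{prop:PatchLKReg} is stated without a separate proof precisely because it is an immediate repackaging of Lemma \ref{lem:wbT}, with $\wb_{P}$ witnessing continuity via clauses \ref{lem:wbT1} and \ref{lem:wbT3}, and regularity following from clauses \ref{lem:wbT2} and \ref{lem:wbT3} together with reflexivity and transitivity of $\cov_{P}$, just as you argue.
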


We show that $\Patch(\mathcal{S})$ is the best locally compact regular
approximation of $\mathcal{S}$ (cf.\ Theorem \ref{thm:CorefPatchSLK}). To this end,
define a formal topology map $\varepsilon_{P} \colon \Patch(\mathcal{S}) \to
\mathcal{S}$ by
\[
  \elPT{A} \mathrel{\varepsilon_{P}} a \defeqiv \elPT{A} \cov_{P} \left\{
    \rcut(a) \right\}.
\]
Note that $\varepsilon_{P}$ is indeed a formal topology map by Remark \ref{rem:TheoryT}.

We recall the following lemma, which simplifies some of our developments.
\begin{lemma}[{\citet[Theorem 4.3]{Palmgren:Predicativiy_problems_in_point-free_topology}}]\label{lem:RegMax}
  Let $r,s \colon \mathcal{S} \to \mathcal{S}'$ be formal topology maps
  where $\mathcal{S}'$ is regular.
  Then
    $
    r \leq s \implies s \leq r.
    $
\end{lemma}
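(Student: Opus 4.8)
\section*{Proof proposal}

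The plan is to prove the single inequality $s \leq r$, that is, $s^{-}a \cov r^{-}a$ for every $a \in S'$, by using regularity of $\mathcal{S}'$ to replace $a$ by the elements well inside it. Since $\mathcal{S}'$ is regular, $a \cov' \left\{ b \in S' \mid b \lll' a \right\}$, so applying the basic cover map property \eqref{FTM3} of $s$ (with the covering subset $\left\{ b \mid b \lll' a \right\}$) yields $s^{-}a \cov \bigcup_{b \lll' a} s^{-}b$. Hence it suffices to establish $s^{-}b \cov r^{-}a$ for each $b \lll' a$. The entire argument thus reduces to a single well-inside step, and it is here that the hypothesis $r \leq s$ and the map conditions (FTM1), (FTM2) are brought to bear.

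So fix $b \lll' a$, which unpacks as $S' \cov' b^{*} \cup \left\{ a \right\}$, where $b^{*} = \left\{ c \in S' \mid c \downarrow' b \cov' \emptyset \right\}$ is the pseudocomplement of $b$ in $\mathcal{S}'$. First I would transport this cover through $r$: by (FTM1) we have $S \cov r^{-}S'$, and by \eqref{FTM3} we have $r^{-}S' \cov r^{-}(b^{*} \cup \left\{ a \right\}) = r^{-}b^{*} \cup r^{-}a$, so $S \cov r^{-}b^{*} \cup r^{-}a$. Since $s^{-}b \subseteq S$ and $s^{-}b \cov s^{-}b$, axiom~2 of formal topology (the $\downarrow$-rule) then gives
\[
  s^{-}b \cov \left( s^{-}b \downarrow r^{-}b^{*} \right) \cup \left( s^{-}b \downarrow r^{-}a \right).
\]
It remains to show the first disjunct covers $\emptyset$ and the second covers $r^{-}a$. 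The second is routine, since every element of $s^{-}b \downarrow r^{-}a$ lies below some element of $r^{-}a$ and hence covers $r^{-}a$ by axiom~1.

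The crux, and the step I expect to be the main obstacle, is $s^{-}b \downarrow r^{-}b^{*} \cov \emptyset$; this is precisely where $r \leq s$ is consumed and where the apparent asymmetry between $r$ and $s$ dissolves. Writing $r^{-}b^{*} = \bigcup_{f \in b^{*}} r^{-}f$, I would argue for each $f \in b^{*}$ as follows: by $r \leq s$ we have $r^{-}f \cov s^{-}f$, so monotonicity of $\downarrow$ in its second argument with respect to $\cov$ (a routine consequence of the formal topology axioms) gives $s^{-}b \downarrow r^{-}f \cov s^{-}b \downarrow s^{-}f$; then (FTM2) gives $s^{-}b \downarrow s^{-}f \cov s^{-}(b \downarrow' f)$; and since $f \in b^{*}$ means $b \downarrow' f \cov' \emptyset$, the property \eqref{FTM3} forces $s^{-}(b \downarrow' f) \cov s^{-}\emptyset = \emptyset$. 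Taking the union over $f \in b^{*}$ yields $s^{-}b \downarrow r^{-}b^{*} \cov \emptyset$. Combining the two disjuncts gives $s^{-}b \cov r^{-}a$, and feeding this back through the regularity reduction delivers $s^{-}a \cov r^{-}a$, i.e.\ $s \leq r$. The delicate point throughout is the bookkeeping of which cover ($\cov$ versus $\cov'$) and which meet ($\downarrow$ versus $\downarrow'$) each relation inhabits, together with the observation that $r \leq s$ must be applied to the pseudocomplement term $r^{-}f$ rather than to $a$ itself.
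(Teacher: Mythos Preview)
Your proof is correct and is essentially the standard argument for this result; the paper does not give its own proof but merely cites Palmgren, so there is nothing further to compare against.
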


\begin{lemma}\label{lem:Core}
  \leavevmode
  \begin{enumerate}
   \item \label{lem:CorefPerfect}
  $\varepsilon_{P} \colon \Patch(\mathcal{S}) \to \mathcal{S}$ is  a
  perfect map.

  \item\label{lem:CorefMono}
  $\varepsilon_{P} \colon \Patch(\mathcal{S}) \to \mathcal{S}$ is
  a monomorphism in $\FTop$.
  \end{enumerate}
\end{lemma}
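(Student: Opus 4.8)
The two parts are largely independent; both run on the function $\wb_{P}$ and the structural facts already recorded for it in Lemma \ref{lem:wbT}. For \ref{lem:CorefPerfect}, since $\varepsilon_{P}^{-}a =_{\Patch(\mathcal{S})} \{\rcut(a)\}$ and the way-below relation depends only on saturations, it suffices to show $\{\rcut(a)\} \ll_{P} \{\rcut(b)\}$ whenever $a \ll b$. I would invoke the characterisation of $\ll_{P}$ through $\wb_{P}$ (the Remark following Definition \ref{def:LK}): it is enough to produce a finite $\mathcal{Z} \subseteq \wb_{P}(\{\rcut(b)\})$ with $\{\rcut(a)\} \cov_{P} \mathcal{Z}$. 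The key observation is that $\wb_{P}(\{\rcut(b)\}) = \left\{ \{\rcut(d),\rcut(c)\} \mid d \ll S,\ c \ll b \right\}$ contains, by taking the two components equal, every singleton $\{\rcut(e)\}$ with $e \ll b$ (note $e \ll b$ forces $e \ll S$). Now $a \ll b$ supplies a finite $B_{0} \in \Fin{\wb(b)}$ with $a \cov B_{0}$, and each $e \in B_{0}$ satisfies $e \ll b$; applying \ref{r3} gives $\{\rcut(a)\} \cov_{P} \left\{ \{\rcut(e)\} \mid e \in B_{0} \right\}$, so $\mathcal{Z} = \left\{ \{\rcut(e)\} \mid e \in B_{0} \right\}$ works.

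For \ref{lem:CorefMono}, let $u,v \colon \mathcal{T} \to \Patch(\mathcal{S})$ be formal topology maps with $\varepsilon_{P} \circ u = \varepsilon_{P} \circ v$; unfolding this equality yields $u^{-}\{\rcut(a)\} =_{\mathcal{T}} v^{-}\{\rcut(a)\}$ for every $a \in S$, i.e.\ $u$ and $v$ agree on all $\rcut$-generators. Since $\Patch(\mathcal{S})$ is regular (Proposition \ref{prop:PatchLKReg}), Lemma \ref{lem:RegMax} reduces $u = v$ to $u \leq v$. Because a basic element of $\Patch(\mathcal{S})$ is a finite meet of generators and $u^{-},v^{-}$ preserve these meets up to $=_{\mathcal{T}}$, and since $u,v$ already agree on the $\rcut$-generators, the inequality $u \leq v$ reduces further to showing $u^{-}\lcut(A) \cov_{\mathcal{T}} v^{-}\lcut(A)$ for every $A \in \Fin{S}$.

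To prove this last cover I would first establish roundedness of $\lcut$: from \ref{l3} (with the two conjuncts equal), from \ref{l2}, and from $A \ll A' \Rightarrow A \cov A'$, one gets $\lcut(A) = \bigvee_{A \ll A'} \lcut(A')$, hence $u^{-}\lcut(A) =_{\mathcal{T}} \bigcup_{A \ll A'} u^{-}\lcut(A')$, so it suffices to show $u^{-}\lcut(A') \cov_{\mathcal{T}} v^{-}\lcut(A)$ for each $A \ll A'$. Fix such an $A'$. Lemma \ref{lem:LittleFact} applied to $A \ll A'$ and pulled back along $v$ covers the top of $\mathcal{T}$ by $v^{-}\lcut(A) \cup \bigcup_{a' \in A'} v^{-}\rcut(a')$; meeting this with $u^{-}\lcut(A')$ gives
\[
  u^{-}\lcut(A') \cov_{\mathcal{T}} \bigl( u^{-}\lcut(A') \downarrow v^{-}\lcut(A) \bigr) \cup \bigcup_{a' \in A'} \bigl( u^{-}\lcut(A') \downarrow v^{-}\rcut(a') \bigr).
\]
The first term covers $v^{-}\lcut(A)$. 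For each $a' \in A'$, the inequality $\lcut(A') \leq \lcut(\{a'\})$ (by \ref{l2}, as $a' \in A'$ gives $\{a'\} \cov A'$), together with agreement on $\rcut$ and the disjointness axiom \ref{D}, yields $u^{-}\lcut(A') \downarrow v^{-}\rcut(a') \cov_{\mathcal{T}} u^{-}\lcut(\{a'\}) \downarrow u^{-}\rcut(a') \cov_{\mathcal{T}} \emptyset$. Hence $u^{-}\lcut(A') \cov_{\mathcal{T}} v^{-}\lcut(A)$, completing the reduction.

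The main obstacle is precisely this last step of \ref{lem:CorefMono}. The $\lcut$-generators are not directly constrained by $\varepsilon_{P} \circ u = \varepsilon_{P} \circ v$, and one cannot simply argue that they are determined by the $\rcut$-generators via $\lcut(\{a\}) = \neg \rcut(a)$, because frame homomorphisms do \emph{not} preserve Heyting negation. Regularity is the essential device: it both licenses the reduction to $u \leq v$ through Lemma \ref{lem:RegMax}, and, via the roundedness of $\lcut$ and the well-inside structure encoded in \ref{Loc} and \ref{D}, lets the disjointness axiom \ref{D} annihilate the cross terms \emph{geometrically} rather than by complementation.
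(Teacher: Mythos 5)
Your proposal is correct and takes essentially the same route as the paper: part (1) merely unpacks the paper's one-line appeal to the definition of $\wb_{P}$ (your observation that $\{\rcut(e)\} = \{\rcut(e),\rcut(e)\} \in \wb_{P}(\{\rcut(b)\})$ for $e \ll b$, since $e \ll b$ implies $e \ll S$, does the work the paper delegates to interpolation of $\ll$), and part (2) reproduces the paper's proof step for step. Namely: reduce via regularity and Lemma \ref{lem:RegMax}, together with agreement on the $\rcut$-generators, to $u^{-}\{\lcut(A)\} \cov v^{-}\{\lcut(A)\}$; round up with \ref{l3}; pull Lemma \ref{lem:LittleFact} back along $v$; and annihilate the cross terms with \ref{l2} and \ref{D}.
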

\begin{proof}
  \ref{lem:CorefPerfect}.\
  Immediate from the definition of $\wb_{P}$ and interpolation of $\ll$.

  \medskip
 \noindent\ref{lem:CorefMono}.
  Let $r,s \colon \mathcal{S}' \to \Patch(\mathcal{S})$ be formal
  topology maps such that $\varepsilon_{P} \circ r = \varepsilon_{P}
  \circ s$. We must show that $r = s$. By the definition of
  $\varepsilon_{P}$, and since $\Patch(\mathcal{S})$ is regular, it
  suffices to show that
  $
  r^{-}\left\{ \lcut(A) \right\} \cov' s^{-}\left\{ \lcut(A) \right\}
  $
  for each $A \in \Fin{S}$. Moreover, by \ref{l3}, it suffices to show that
    $
    A \ll B \implies r^{-}\left\{ \lcut(B) \right\}
    \cov' s^{-}\left\{ \lcut(A) \right\}.
    $
    Suppose that $A \ll B$. By Lemma \ref{lem:LittleFact}, we have
    $\Fin{P_{P}} \cov_{P} \left\{ \left\{ \lcut(A) \right\} \right\}
  \cup \left\{ \left\{ \rcut(b) \right\} \mid b \in B\right\}$. Thus
    $S'
    \cov'
    s^{-} \left\{ \lcut(A) \right\} \cup \bigcup_{b \in B} s^{-}
    \left\{ \rcut(b) \right\}
    \cov'
    s^{-} \left\{ \lcut(A) \right\} \cup \bigcup_{b \in B} r^{-}
    \left\{ \rcut(b) \right\}$.
    Hence, 
    \begin{align*}
      r^{-}\left\{ \lcut(B) \right\}
      &\cov' r^{-}\left\{ \lcut(B) \right\} \downarrow
      \left( s^{-} \left\{ \lcut(A) \right\}
      \cup \bigcup_{b \in B} r^{-} \left\{ \rcut(b) \right\} \right)\\
      &\cov' s^{-} \left\{ \lcut(A) \right\} \cup
      \bigcup_{b \in B}  r^{-}\left\{ \lcut(B),\rcut(b) \right\}\\
      &\cov' s^{-} \left\{ \lcut(A) \right\}. && \text{(by \ref{D})} \qedhere
    \end{align*}
\end{proof}

The following lemma is useful in Proposition \ref{prop:UnivPropPatch} below.
\begin{lemma}[{\citet[Lemma 4.4]{escardo2001regular}}]\label{lem:cobounded}
  A formal topology map $r \colon \mathcal{S} \to \mathcal{S}'$
  between locally compact regular formal topologies is perfect if and
  only if it is cobounded, i.e.\ $U \ll' S' \implies r^{-}U \ll S$ for all
  $U \subseteq S'$.
\end{lemma}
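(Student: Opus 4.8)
The statement is an equivalence, so I will prove the two implications separately; the main tools are Lemma~\ref{lem:BoundedWcImpliesWb}, the interpolation property (Lemma~\ref{lem:LKInterpolate}), and the conditions \eqref{FTM3}, (FTM1), (FTM2) defining a formal topology map (Definition~\ref{def:FTM}). Throughout I use the routine facts that $\ll$ is transitive, monotone in its right argument, and satisfies $X \cov Y \amp Y \ll Z \implies X \ll Z$; and that for $a,b \in S'$ the relation $a \ll' b$ already forces $\left\{ a \right\} \ll' S'$, since $S' \cov' W$ makes $b \cov' W$, whence $a \cov' W_{0}$ for some finite $W_{0} \subseteq W$.

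\emph{Cobounded implies perfect.} Assuming $r$ cobounded, let $a \ll' b$; I verify the two hypotheses of Lemma~\ref{lem:BoundedWcImpliesWb} for $r^{-}a$ and $r^{-}b$. First, $a \ll' b$ gives $\left\{ a \right\} \ll' S'$, so coboundedness yields $r^{-}a \ll S$. Second, I claim $r^{-}a \lll r^{-}b$. Regularity of $\mathcal{S}'$ ensures $a \ll' b \implies a \lll' b$ (the same step used in showing that locally compact regular formal topologies are stably locally compact), i.e.\ $S' \cov' a^{*'} \cup \left\{ b \right\}$. Pushing this along $r^{-}$ by \eqref{FTM3} and then (FTM1) gives $S \cov r^{-}(a^{*'}) \cup r^{-}b$. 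It remains to show $r^{-}(a^{*'}) \cov (r^{-}a)^{*}$: for $c \in a^{*'}$ we have $c \downarrow' a \cov' \emptyset$, so by (FTM2) and \eqref{FTM3}, $r^{-}c \downarrow r^{-}a \cov r^{-}(c \downarrow' a) \cov \emptyset$, whence $r^{-}c \subseteq (r^{-}a)^{*}$. Thus $S \cov (r^{-}a)^{*} \cup r^{-}b$, that is $r^{-}a \lll r^{-}b$, and Lemma~\ref{lem:BoundedWcImpliesWb} delivers $r^{-}a \ll r^{-}b$.

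\emph{Perfect implies cobounded.} Assuming $r$ perfect, let $U \ll' S'$. By Lemma~\ref{lem:LKInterpolate} there is a finite $A$ with $U \ll' A$ and, for each $a \in A$, some $b_{a} \in S'$ with $a \ll' b_{a}$. Testing $U \ll' A$ against the cover $A \cov' A$ gives $U \cov' A$, hence $r^{-}U \cov r^{-}A$ by \eqref{FTM3}. For each $a \in A$ perfectness gives $r^{-}a \ll r^{-}b_{a}$, and since $r^{-}b_{a} \subseteq S$ any cover $S \cov W$ restricts to $r^{-}b_{a} \cov W$, so $r^{-}a \ll S$; taking the finite union over $A$ yields $r^{-}A \ll S$. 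Combining $r^{-}U \cov r^{-}A$ with $r^{-}A \ll S$ gives $r^{-}U \ll S$, as required.

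The one delicate point is the second hypothesis of Lemma~\ref{lem:BoundedWcImpliesWb} in the first implication: transporting $a \lll' b$ along $r^{-}$ to $r^{-}a \lll r^{-}b$ hinges on correctly handling the pseudocomplement $a^{*'}$, for which the inclusion $r^{-}(a^{*'}) \cov (r^{-}a)^{*}$, and hence condition (FTM2), is essential. Everything else reduces to elementary manipulation of the cover and way-below relations.
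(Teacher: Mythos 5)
Your proposal is correct and takes essentially the same route as the paper: the cobounded-implies-perfect direction via regularity, preservation of $\lll$ by formal topology maps, and Lemma~\ref{lem:BoundedWcImpliesWb}, and the perfect-implies-cobounded direction via the way-below machinery. The only difference is that you spell out two steps the paper leaves implicit — the verification $r^{-}a \lll r^{-}b$ using (FTM\ref{FTM2}) and \eqref{FTM3}, and the interpolation argument (Lemma~\ref{lem:LKInterpolate}) hiding behind the paper's terse $r^{-}U \ll r^{-}S' \cov S$, which is needed because perfectness is defined elementwise.
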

\begin{proof}
  Suppose that $r$ is perfect, and assume that $U
  \ll' S'$.  Then $r^{-}U \ll r^{-}S' \cov S$. Thus $r^{-}U \ll S$.
  Conversely, suppose that $r$ is cobounded, and assume that $a \ll' b$. 
  Since $\mathcal{S}'$ is regular, we have $a \lll' b$.
  Since $r$ is cobounded and the relation $\lll$ is preserved by every
  formal topology map, we have $r^{-} a\ll S$ and $r^{-}a \lll
  r^{-}b$. Hence $r^{-} a \ll r^{-}b$ by Lemma \ref{lem:BoundedWcImpliesWb}.
  Therefore, $r$ is perfect.
\end{proof}

\begin{proposition}\label{prop:UnivPropPatch}
  For any locally compact regular formal topology $\mathcal{S}'$ and a
  perfect map $r \colon \mathcal{S}' \to \mathcal{S}$, there exists a
  unique perfect map $\tilde{r} \colon \mathcal{S}' \to
  \Patch(\mathcal{S})$ such that $\varepsilon_{P} \circ
  \tilde{r} = r$.
\end{proposition}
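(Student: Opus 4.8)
The plan is to derive uniqueness from the fact that $\varepsilon_{P}$ is a monomorphism, and to establish existence by writing down an explicit relation on the generators of $T_{P}$, checking that it preserves the axioms, and finally proving perfectness by a coboundedness argument. For \emph{uniqueness}: if $\tilde{r}$ and $\tilde{s}$ are perfect maps with $\varepsilon_{P}\circ\tilde{r}=r=\varepsilon_{P}\circ\tilde{s}$, then $\tilde{r}=\tilde{s}$ since $\varepsilon_{P}$ is a monomorphism in $\FTop$ by Lemma~\ref{lem:Core}\ref{lem:CorefMono}.

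For \emph{existence}, I would define a relation between the base $S'$ of $\mathcal{S}'$ and the generators $P_{P}$ by
\[
  a' \mathrel{\tilde{r}} \rcut(a) \defeqiv a' \cov' r^{-}a,
  \qquad
  a' \mathrel{\tilde{r}} \lcut(A) \defeqiv \left( \exists C \in \Fin{S} \right) A \ll C \amp a' \cov' \left( r^{-}C \right)^{*},
\]
where $\left( - \right)^{*}$ is the pseudocomplement in the regular topology $\mathcal{S}'$. The motivation is the cut description of Corollary~\ref{cor:CutPerfectPt}: at a point $\beta$ of $\mathcal{S}'$ with image point $U=r(\beta)$, the clause for $\lcut(A)$ should say $A\subseteq L_{U}$, and the (negative) condition that $C$ is disjoint from $U$ is captured point-free by $\beta\in\left(r^{-}C\right)^{*}$; the rounding by $A\ll C$ reflects the roundedness of the lower cut. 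This relation extends to a candidate map $\tilde{r}\colon\mathcal{S}'\to\Patch(\mathcal{S})$ by the recipe of Section~\ref{sec:GeometricTheory}, provided it satisfies \eqref{eq:PreservAx} for each axiom of $T_{P}$.

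The axioms \ref{r1}--\ref{r4} concern only the $\rcut$-generators and hold because $r$ is a formal topology map into $\mathcal{S}$: they amount to $S'\cov' r^{-}S$, $r^{-}a\downarrow r^{-}b\cov' r^{-}(a\downarrow b)$, preservation of $\cov$, and $a\cov\wb(a)$, respectively. The axioms \ref{l1}--\ref{l3} and \ref{D} follow from elementary manipulations of the pseudocomplement together with the monotonicity of $\ll$ and its closure under finite unions in either argument: for example \ref{D} reduces, via $a\cov C$, to $\left(r^{-}C\right)^{*}\downarrow r^{-}a\cov'\emptyset$, and \ref{l3} is obtained by passing to the single witness $C=C_{1}\cup C_{2}$ and interpolating below it with Lemma~\ref{lem:LKInterpolate}. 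I expect the real obstacle to be the located axiom \ref{Loc} (and the closely related \ref{l3}), since this is exactly where the hypotheses must interact: given $a\ll b$, perfectness of $r$ yields $r^{-}a\ll' r^{-}b$, regularity of $\mathcal{S}'$ upgrades this to $r^{-}a\lll' r^{-}b$, i.e.\ $S'\cov'\left(r^{-}a\right)^{*}\cup r^{-}b$, and after interpolating $a\ll C\ll b$ this delivers $S'\cov'\tilde{r}^{-}\lcut(\{a\})\cup\tilde{r}^{-}\rcut(b)$, which is precisely \ref{Loc}.

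Finally one checks $\varepsilon_{P}\circ\tilde{r}=r$ and that $\tilde{r}$ is perfect. Since $\{\rcut(a)\}\in\varepsilon_{P}^{-}a$ and $\tilde{r}^{-}\{\rcut(a)\}=\sat' r^{-}a$, the inequality $r\leq\varepsilon_{P}\circ\tilde{r}$ is immediate, while the reverse holds because any $\elPT{A}\cov_{P}\{\rcut(a)\}$ satisfies $\tilde{r}^{-}\elPT{A}\cov'\sat' r^{-}a$ as $\tilde{r}$ is a map; note that, since $\mathcal{S}$ need not be regular, Lemma~\ref{lem:RegMax} is unavailable here and both inequalities must be argued. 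For perfectness, both $\mathcal{S}'$ and $\Patch(\mathcal{S})$ are locally compact regular (Proposition~\ref{prop:PatchLKReg}), so by Lemma~\ref{lem:cobounded} it suffices to prove $\tilde{r}$ cobounded. If $\mathcal{W}\ll_{P}\Fin{P_{P}}$, then using $\Fin{P_{P}}\cov_{P}\wb_{P}(\emptyset)$ from Lemma~\ref{lem:wbT}\ref{lem:wbT3} and $\wb_{P}(\emptyset)=\left\{\left\{\rcut(a)\right\}\mid a\ll S\right\}$, there is a finite $\mathcal{W}_{0}=\left\{\left\{\rcut(a_{i})\right\}\mid i<n\right\}$ with $\mathcal{W}\cov_{P}\mathcal{W}_{0}$ and each $a_{i}\ll S$; pushing through $\tilde{r}$ gives $\tilde{r}^{-}\mathcal{W}\cov'\bigcup_{i<n}r^{-}a_{i}$, and each $r^{-}a_{i}\ll' S'$ by perfectness of $r$, whence $\tilde{r}^{-}\mathcal{W}\ll' S'$. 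This establishes coboundedness, hence perfectness, and completes the construction.
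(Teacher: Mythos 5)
Your proposal is correct and follows essentially the same route as the paper's proof: the same definition of $\tilde{r}$ on generators (your clauses $a' \cov' r^{-}a$ and $a' \cov' \left(r^{-}C\right)^{*}$ are the saturated forms of the paper's membership clauses, and $U^{*}$ is saturated, so the maps coincide), the same treatment of \ref{Loc} via perfectness of $r$ plus regularity of $\mathcal{S}'$ and interpolation, uniqueness via the monomorphism property of $\varepsilon_{P}$ from Lemma~\ref{lem:Core}, and perfectness via Lemma~\ref{lem:cobounded} reduced through $\wb_{P}(\emptyset)$ to $a \ll S \implies r^{-}\left\{ a \right\} \ll' S'$. Your explicit two-sided verification of $\varepsilon_{P} \circ \tilde{r} = r$, with the correct remark that Lemma~\ref{lem:RegMax} is unavailable since $\mathcal{S}$ need not be regular, is slightly more careful than the paper's ``obviously satisfies''.
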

\begin{proof}
  Let $r \colon \mathcal{S}' \to \mathcal{S}$ be a perfect map from a
  locally compact regular formal topology $\mathcal{S}'$.
  We define a formal topology map $\tilde{r} \colon \mathcal{S}' \to
  \Patch(\mathcal{S})$ by specifying its action on generators as follows:
  \begin{align}
    \begin{aligned}\label{def:uniqExt}
    b \mathrel{\tilde{r}} \left\{ \rcut(a) \right\}
    &\defeqiv b \mathrel{r} a, \\
    b \mathrel{\tilde{r}} \left\{ \lcut(A) \right\}
    &\defeqiv 
    \left( \exists B \gg A \right) b \in \left( r^{-} B \right)^{*}.
    \end{aligned}
  \end{align}
  We must show that $\tilde{r}$ respects the axioms of $T_{P}$ in
  the sense of \eqref{eq:PreservAx}.
    Since $r \colon \mathcal{S}' \to \mathcal{S}$ is a formal topology
    map, $\tilde{r}$ respects the axioms \ref{r1} -- \ref{r4}.
    It is also straightforward to show that $\tilde{r}$
    respects the axioms \ref{l1} -- \ref{l3} and \ref{D}. It remains
    to check \ref{Loc}.
    Suppose that $a \ll b$. There exists $A \in \Fin{S}$ such
      that $a \ll A \ll b$. Since $r$ is a  perfect map, we have
      $r^{-}A \ll r^{-}\left\{ b \right\}$, and since $\mathcal{S}'$
      is regular, we obtain $r^{-}A \lll r^{-}\left\{ b \right\}$. Thus,
      \[
        S' \cov' \left( r^{-}A \right)^{*} \cup r^{-}\left\{ b
        \right\} \cov' \tilde{r}^{-}\left\{ \lcut(\left\{ a \right\})
        \right\} \cup \tilde{r}^{-}\left\{ \rcut(b) \right\}.
      \]
      Hence, $\tilde{r}$ gives rise to a formal topology map
      from $\mathcal{S}'$  to $\mathcal{S}$, 
      which obviously satisfies $\varepsilon_{P} \circ \tilde{r} = r$.
      Moreover, $\tilde{r}$ is a unique such morphism by Lemma
      \ref{lem:Core}.\ref{lem:CorefMono}.

      Lastly, to see that $\tilde{r}$ is a perfect map,
      it suffices to show that $\tilde{r}$ is cobounded by Lemma
      \ref{lem:cobounded}. 
      Since $\emptyset$ is the top element
      of $\Patch(\mathcal{S})$, it is enough to show that
      $\tilde{r}^{-} \elPT{A} \ll' S'$ for each $\elPT{A} \in
      \wb_{P}(\emptyset)$. But this is equivalent to $a \ll S \implies
      r^{-}\left\{ a \right\} \ll' S'$, which follows from the fact
      that $r$ is perfect.
\end{proof}

\begin{proposition}
  If $\mathcal{S}$ is a locally compact regular formal topology, then
  $\varepsilon_{P} \colon \Patch(\mathcal{S}) \to \mathcal{S}$ is an
  isomorphism.
\end{proposition}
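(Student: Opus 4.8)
The plan is to exploit the universal property of $\varepsilon_{P}$ established in Proposition \ref{prop:UnivPropPatch}, which exhibits $\Patch(\mathcal{S})$ as the coreflection of $\mathcal{S}$ into the locally compact regular formal topologies along perfect maps, with $\varepsilon_{P}$ as the counit. Since $\mathcal{S}$ is assumed to be locally compact regular, it already lies in the relevant subcategory, and a standard categorical argument shows that the counit at such an object is an isomorphism; the substantive work has already been carried out in the preceding results.

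Concretely, I would first apply Proposition \ref{prop:UnivPropPatch} with $\mathcal{S}' = \mathcal{S}$ and $r = \id_{\mathcal{S}}$. The identity map is perfect, since $a \ll b$ gives $\id^{-}a = \left\{ a \right\} \ll \left\{ b \right\} = \id^{-}b$, so the proposition supplies a unique perfect map $s \colon \mathcal{S} \to \Patch(\mathcal{S})$ with $\varepsilon_{P} \circ s = \id_{\mathcal{S}}$. This already provides one half of the desired inverse, namely that $s$ is a section of $\varepsilon_{P}$.

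For the remaining half I would compute $\varepsilon_{P} \circ \left( s \circ \varepsilon_{P} \right) = \left( \varepsilon_{P} \circ s \right) \circ \varepsilon_{P} = \varepsilon_{P} = \varepsilon_{P} \circ \id_{\Patch(\mathcal{S})}$, and then cancel $\varepsilon_{P}$ on the left using that it is a monomorphism in $\FTop$ (Lemma \ref{lem:Core}, part \ref{lem:CorefMono}). This yields $s \circ \varepsilon_{P} = \id_{\Patch(\mathcal{S})}$, so $s$ is a two-sided inverse to $\varepsilon_{P}$, and hence $\varepsilon_{P}$ is an isomorphism.

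I expect no serious obstacle. All the genuine content resides in Proposition \ref{prop:PatchLKReg} (that $\Patch(\mathcal{S})$ is locally compact regular, so that the universal property is applicable with $\mathcal{S}'=\mathcal{S}$), in the construction of $\varepsilon_{P}$ together with the verification that it is perfect and monic (Lemma \ref{lem:Core}), and in the universal property itself (Proposition \ref{prop:UnivPropPatch}). The only points requiring slight care are confirming that $s$ is a genuine formal topology map rather than merely a generator-wise inverse of $\varepsilon_{P}$—which is guaranteed by its construction via Proposition \ref{prop:UnivPropPatch}—and that the monomorphism property of $\varepsilon_{P}$ applies to the parallel maps $s \circ \varepsilon_{P}$ and $\id_{\Patch(\mathcal{S})}$ out of $\Patch(\mathcal{S})$, which is immediate.
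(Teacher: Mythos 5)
Your proposal is correct and is essentially the paper's own argument: the paper likewise applies Proposition \ref{prop:UnivPropPatch} to $\id_{\mathcal{S}}$ to obtain a section $s$ of $\varepsilon_{P}$, making $\varepsilon_{P}$ a split epimorphism, and then concludes it is an isomorphism since it is a monomorphism by Lemma \ref{lem:Core}.\ref{lem:CorefMono}. Your explicit computation $\varepsilon_{P} \circ (s \circ \varepsilon_{P}) = \varepsilon_{P} \circ \id_{\Patch(\mathcal{S})}$ merely unfolds the standard fact that a split epi which is monic is an isomorphism, which the paper invokes without spelling out.
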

\begin{proof}
  The identity map $\id_{\mathcal{S}} \colon \mathcal{S} \to \mathcal{S}$ uniquely
  extends to a perfect map $s \colon \mathcal{S} \to \Patch(\mathcal{S})$ such
  that $\varepsilon_{P} \circ s = \id_{\mathcal{S}}$. Thus,
  $\varepsilon_{P}$ is a split epi. Since $\varepsilon_{P}$ is a
  monomorphism  by Lemma \ref{lem:Core}.\ref{lem:CorefMono}, it is an isomorphism.
\end{proof}

\begin{theorem}[{\citet[Theorem 5.8]{escardo2001regular}}]\label{thm:CorefPatchSLK}
  The patch construction exhibits the category of
  locally compact regular formal topologies
  and perfect maps as a coreflective subcategory of the category
  of stably locally compact formal topologies and perfect maps.
\end{theorem}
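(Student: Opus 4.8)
The plan is to assemble the theorem from the propositions already established about $\Patch(\mathcal{S})$ and the morphism $\varepsilon_{P}$. The statement asserts a coreflection: the inclusion $J \colon \KReg \hookrightarrow \FTopi$ of locally compact regular formal topologies (with perfect maps) into stably locally compact formal topologies (with perfect maps) has a right adjoint given by the patch construction, with counit $\varepsilon_{P}$. So the core of the proof is to verify the universal property of the counit and check functoriality and coreflectivity.

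First I would verify that $\Patch$ lands in $\KReg$, which is exactly Proposition~\ref{prop:PatchLKReg}, and that $\varepsilon_{P} \colon \Patch(\mathcal{S}) \to \mathcal{S}$ is a perfect map, which is Lemma~\ref{lem:Core}.\ref{lem:CorefPerfect}. Next I would invoke Proposition~\ref{prop:UnivPropPatch}, which is the universal property itself: for every locally compact regular $\mathcal{S}'$ and every perfect map $r \colon \mathcal{S}' \to \mathcal{S}$, there is a \emph{unique} perfect $\tilde{r} \colon \mathcal{S}' \to \Patch(\mathcal{S})$ with $\varepsilon_{P} \circ \tilde{r} = r$. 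This unique-factorisation-through-the-counit is precisely the statement that $\Patch$ is right adjoint to the inclusion, with $\varepsilon_{P}$ as counit; by the standard ``universal arrow'' formulation of adjunctions, I need only note that each object $\mathcal{S}$ of $\FTopi$ comes equipped with such a terminal lift, so no further naturality check by hand is required, since the action of $\Patch$ on morphisms is forced by the universal property and functoriality follows from uniqueness.

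Concretely, I would spell out that $\Patch$ extends to a functor: given a perfect map $f \colon \mathcal{S} \to \mathcal{T}$ between stably locally compact topologies, the composite $f \circ \varepsilon_{P}^{\mathcal{S}} \colon \Patch(\mathcal{S}) \to \mathcal{T}$ is perfect (composite of perfect maps), and since $\Patch(\mathcal{S})$ is locally compact regular, Proposition~\ref{prop:UnivPropPatch} yields a unique perfect $\Patch(f)$ with $\varepsilon_{P}^{\mathcal{T}} \circ \Patch(f) = f \circ \varepsilon_{P}^{\mathcal{S}}$; uniqueness immediately gives $\Patch(\id) = \id$ and $\Patch(g \circ f) = \Patch(g) \circ \Patch(f)$, so the $\varepsilon_{P}$ are natural. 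Finally, the last displayed proposition shows that when $\mathcal{S}$ is already locally compact regular, $\varepsilon_{P}$ is an isomorphism; this says exactly that the counit is invertible on the subcategory $\KReg$, which is the defining property of a \emph{coreflection} (as opposed to a mere adjunction). Hence $\KReg$ is a coreflective subcategory of $\FTopi$ with coreflector $\Patch$.

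The main obstacle has, in effect, already been discharged: all the real content lives in Proposition~\ref{prop:UnivPropPatch} (constructing $\tilde{r}$ and verifying \ref{Loc} via coboundedness and regularity) together with the regularity of $\Patch(\mathcal{S})$ and the fact that $\varepsilon_{P}$ is both perfect and monic. What remains in this theorem is the bookkeeping of repackaging those facts into the language of adjunctions, so the only delicate point is to state precisely which categories and which morphism class (perfect maps, not all formal topology maps) the coreflection is taken in, and to confirm that the monomorphism property from Lemma~\ref{lem:Core}.\ref{lem:CorefMono} guarantees the uniqueness clause holds within the perfect-map subcategory. I would therefore keep the proof short, citing Proposition~\ref{prop:PatchLKReg}, Lemma~\ref{lem:Core}, Proposition~\ref{prop:UnivPropPatch}, and the preceding isomorphism proposition, and conclude by the standard characterisation of coreflective subcategories via an invertible counit.
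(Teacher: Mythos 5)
Your proposal is correct and takes essentially the same route as the paper, which offers no separate argument for this theorem but presents it as the direct packaging of Proposition~\ref{prop:PatchLKReg}, Lemma~\ref{lem:Core}, Proposition~\ref{prop:UnivPropPatch}, and the preceding isomorphism proposition, with the functoriality of $\Patch$ and the naturality of $\varepsilon_{P}$ forced by the uniqueness clause exactly as you describe. One minor conceptual point: since the subcategory is full (all morphisms on both sides are perfect maps), invertibility of the counit at locally compact regular objects is automatic from the adjunction rather than an additional defining condition of coreflectivity, so the paper's final proposition serves as a confirmation, not as a needed extra step.
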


\subsection{Stably compact formal topologies}
\begin{definition}\label{defSK}
  A formal topology $\mathcal{S}$ is \emph{compact} if $S \ll S$, and
  it is  \emph{stably compact} if it is stably locally compact and
  compact. 
\end{definition}

In a compact regular formal topology, the relations $\ll$ and $\lll$
coincide \cite[Chapter IIV, Section 3.5]{johnstone-82}. Thus, every compact regular formal topology is locally
compact, and hence stably compact. Since the relation $\lll$ is
preserved by any formal topology map, every morphism between
compact regular formal topologies is perfect. Thus, the category of
compact regular formal topologies is a full subcategory of the
category of stably compact formal topologies and perfect maps.

\begin{lemma}[{\citet[Lemma 2.6]{escardo2001regular}}]
  If $r \colon \mathcal{S} \to \mathcal{S}'$ is a perfect map and 
  $\mathcal{S}'$ is compact, then $\mathcal{S}$ is compact.
\end{lemma}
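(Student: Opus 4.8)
The plan is to verify $S \ll S$, which is the definition of compactness. The key observation is that $r$, being a formal topology map, satisfies $S \cov r^{-}S'$, so it will suffice to establish $r^{-}S' \ll r^{-}S'$ and then perform a short cover-chase: given any $W \subseteq S$ with $S \cov W$, the inclusion $r^{-}S' \subseteq S$ gives $r^{-}S' \cov W$ (every element of $r^{-}S'$ lies in $S$, hence is covered by $W$); applying $r^{-}S' \ll r^{-}S'$ furnishes a finite $W_0 \in \Fin{W}$ with $r^{-}S' \cov W_0$; and then $S \cov r^{-}S' \cov W_0$ yields $S \cov W_0$ by transitivity, as required.

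The crux is therefore to obtain $r^{-}S' \ll r^{-}S'$ from the compactness $S' \ll' S'$ of $\mathcal{S}'$, and the natural route is to upgrade perfectness from basic elements to subsets, i.e.\ to show $U \ll' V \implies r^{-}U \ll r^{-}V$ for all $U,V \subseteq S'$; taking $U = V = S'$ then delivers what we need. To prove this subset version I would start from $U \ll' V$ and invoke the interpolation property (Lemma \ref{lem:LKInterpolate}) to obtain a finite $A \in \Fin{S'}$ with $U \ll' A$ and, for each $a \in A$, some $b_a \in V$ with $a \ll' b_a$. Perfectness gives $r^{-}a \ll r^{-}b_a$ for each such $a$, and since $r^{-}b_a \subseteq r^{-}V$ this yields $r^{-}a \ll r^{-}V$ by monotonicity in the right argument; as $A$ is finite and a finite union of subsets each way-below $r^{-}V$ is again way-below $r^{-}V$, we get $r^{-}A = \bigcup_{a \in A} r^{-}a \ll r^{-}V$. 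On the other hand $U \ll' A$ entails $U \cov' A$, whence $r^{-}U \cov r^{-}A$ since $r$ is a basic cover map; combining $r^{-}U \cov r^{-}A$ with $r^{-}A \ll r^{-}V$ (monotonicity in the left argument) gives $r^{-}U \ll r^{-}V$.

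All the auxiliary facts about $\ll$ used above — that $U \ll V \implies U \cov V$ (take $W = V$ in the definition), that $\ll$ is monotone in each argument with respect to $\cov$, and that finite unions on the left preserve $\ll$ — are immediate from the definition of $\ll$ and require no continuity hypothesis on $\mathcal{S}$ itself. I expect the only genuine step to be this subset-extension of perfectness, where the interplay between interpolation in $\mathcal{S}'$ and the element-wise definition of a perfect map must be handled with care; once that lemma is in place, compactness of $\mathcal{S}$ follows from the compactness of $\mathcal{S}'$ by the cover-chase sketched in the first paragraph.
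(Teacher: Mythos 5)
Your proof is correct and supplies precisely the argument behind the paper's one-word proof (``Immediate''): the cover-chase via \ref{FTM1}, i.e.\ $S \cov r^{-}S'$, together with the extension of perfectness from basic elements to subsets, which you rightly single out as the only genuine step and prove via Lemma~\ref{lem:LKInterpolate}. The paper uses this subset version of perfectness tacitly elsewhere as well (e.g.\ in the proof of Lemma~\ref{lem:cobounded}, where $U \ll' S'$ is turned into $r^{-}U \ll r^{-}S'$ without comment), so your write-up matches the intended reasoning exactly.
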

\begin{proof}
  Immediate.
\end{proof}

\begin{theorem}[{\citet[Corollary
  5.9]{escardo2001regular}}]\label{thm:CorefPatchSLKC}
  The patch construction exhibits the category of
  compact regular formal topologies
  as a coreflective subcategory of the category
  of  stably  compact formal topologies and perfect maps.
\end{theorem}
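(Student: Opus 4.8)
The plan is to deduce Theorem~\ref{thm:CorefPatchSLKC} from the already-established coreflection Theorem~\ref{thm:CorefPatchSLK} by restricting that adjunction to the compact objects on both sides. The key observation is that compactness is detected along perfect maps: by the lemma immediately preceding the statement, if $r \colon \mathcal{S} \to \mathcal{S}'$ is perfect and $\mathcal{S}'$ is compact then $\mathcal{S}$ is compact. Applied to the counit $\varepsilon_{P} \colon \Patch(\mathcal{S}) \to \mathcal{S}$ — which is perfect by Lemma~\ref{lem:Core}.\ref{lem:CorefPerfect} — this shows that whenever $\mathcal{S}$ is stably compact (hence compact), its patch $\Patch(\mathcal{S})$ is again compact. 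Since $\Patch(\mathcal{S})$ is always locally compact regular by Proposition~\ref{prop:PatchLKReg}, it is in fact a compact regular formal topology. So the patch functor sends stably compact formal topologies to compact regular ones.

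First I would make precise that the category of compact regular formal topologies sits inside the category of stably compact formal topologies and perfect maps as described in the paragraph following Definition~\ref{defSK}: every compact regular formal topology is stably compact, and every map between compact regular topologies is automatically perfect (because $\ll$ and $\lll$ coincide there and $\lll$ is preserved by all formal topology maps). Thus both categories in the desired theorem are genuinely full subcategories of the categories appearing in Theorem~\ref{thm:CorefPatchSLK}: compact regular topologies form a full subcategory of locally compact regular topologies, and stably compact topologies form a full subcategory of stably locally compact topologies. The universal property from Proposition~\ref{prop:UnivPropPatch} — that any perfect map $r \colon \mathcal{S}' \to \mathcal{S}$ from a locally compact regular $\mathcal{S}'$ factors uniquely through $\varepsilon_{P}$ via a perfect map — is exactly the coreflection property, and it does not reference compactness.

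The core of the argument is then to check that this universal property restricts correctly. Given a stably \emph{compact} $\mathcal{S}$ and a perfect map $r \colon \mathcal{S}' \to \mathcal{S}$ from a \emph{compact} regular $\mathcal{S}'$, Proposition~\ref{prop:UnivPropPatch} yields a unique perfect $\tilde{r} \colon \mathcal{S}' \to \Patch(\mathcal{S})$ with $\varepsilon_{P} \circ \tilde{r} = r$. Because $\mathcal{S}'$ is compact regular and $\Patch(\mathcal{S})$ is compact regular by the first paragraph, this $\tilde{r}$ is a morphism in the subcategory of compact regular formal topologies, and $\varepsilon_{P}$ is likewise such a morphism. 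Since the inclusion is full, there is nothing further to verify: the factorisation and its uniqueness are inherited verbatim. Hence $\varepsilon_{P} \colon \Patch(\mathcal{S}) \to \mathcal{S}$ serves as the counit exhibiting compact regular formal topologies as a coreflective subcategory of stably compact formal topologies and perfect maps.

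The only genuinely substantive step is the first one — confirming that $\Patch(\mathcal{S})$ is \emph{compact} when $\mathcal{S}$ is — and I expect this to be the main (though modest) obstacle. It hinges entirely on the interplay between the perfect counit and the compactness-reflection lemma; concretely, $S \ll S$ in $\mathcal{S}$ together with perfectness of $\varepsilon_{P}$ forces $\Fin{P_P} = \varepsilon_{P}^{-} S^{*}\!$-type suprema to be way-below the top, but the clean route is simply to invoke the preceding lemma rather than recompute. Everything else is a formal restriction of an adjunction to full subcategories closed under the relevant functor, which carries no real content once the objects are known to land in the right places.
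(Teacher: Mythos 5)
Your proposal is correct and matches the paper's intended argument exactly: the paper states this theorem without explicit proof, but places Lemma~4.19 (compactness pulls back along perfect maps, applied to the perfect counit $\varepsilon_{P}$) and the remarks that compact regular topologies form a full subcategory of stably compact ones immediately before it, which is precisely the restriction of Theorem~\ref{thm:CorefPatchSLK} that you carry out. You have simply made explicit the steps the paper leaves implicit, with no gaps.
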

Note that Theorem \ref{thm:CorefPatchSLKC} is one the main results by
\citet{CoquandZhangPrediativePatch}.

\subsubsection{De Groot duals}
One of the most interesting aspects of stably compact spaces is
\emph{de Groot duality}. Here, \emph{stably compact spaces} are the
spaces which arise as the formal points of stably compact formal
topologies.
It is well known that stably compact spaces are closed under taking de Groot
duals, and various constructions on stably compact spaces exhibit
interesting properties involving de Groot duals; see
\citet{GoubaultLarrecq-ModelofChoice}.
We give a geometric account of one of such properties proved by
\citet{escardoLawsonDual} in locale theory that the patch of stably
compact locale is isomorphic to the patch of its de Groot dual.

\begin{definition}\label{def:deGroot}
  Let $\mathcal{S}$ be a stably compact formal topology.
  The \emph{de Groot dual} of $\mathcal{S}$ is the formal topology
  $\mathcal{S}^{d}$ presented by a geometric theory $T^{d}$ over
  the propositional symbols
  \[
    P_{d} \defeql \left\{ \lcut(A) \mid A \in \Fin{S} \right\}
  \] 
  with the axioms \ref{l1} -- \ref{l3} of the theory $T_{P}$ (cf.\
  Definition \ref{def:Patch}), and the following additional axioms:
  \[
    \lcut(A) \vdash \bigvee \left\{ \lcut(A_{i}) \mid i < n \right\}
  \] 
  for each
  $\wayabove A \subseteq \bigvee^{F} \left\{
    \wayabove A_{0},
  \dots, \wayabove A_{n-1} \right\}$,
  where
  $\bigvee^{F} \left\{ \wayabove A_{0},
  \dots, \wayabove A_{n-1} \right\}$ is the finite join of
  Scott open filters $\wayabove A_{0}, \dots,
  \wayabove A_{n-1}$, i.e.\
  \[
    {\bigvee_{\mathclap{i < n}}}^{F}\!\!\wayabove A_{i}
    \defeql \left\{ B \in \Fin{S} 
    \mid  \left( \exists B_{0} \gg A_{0} \right) \cdots
    \left( \exists B_{n-1}\! \gg \!
    A_{n-1}\right) B_{0} \downarrow \cdots \downarrow B_{n-1} \ll B
  \right\}.
  \]
  The empty join is defined to be the smallest Scott open filter
  $\left\{ A \in \Fin{S} \mid S \cov A \right\}$.
\end{definition}

\begin{remark}
  Note that $\wayabove A$ is (a base of) a Scott open filter on
  $\Sat{\mathcal{S}}$ since $\mathcal{S}$ is stably compact.
  Since the axioms \ref{l1} -- \ref{l3} presents the Scott
  topology $\Sigma(\mathcal{S})$ of $\Sat{\mathcal{S}}$,
  the topology $\mathcal{S}^{d}$ is a subtopology of $\Sigma(\mathcal{S})$.
\end{remark}

In what follows, we fix a stably compact formal topology
$\mathcal{S}$. We mainly work with the following equivalent
set of axioms:
\begin{enumerate}
  \myitem[(d1)]\label{D1} $ \top  \vdash \lcut(\emptyset) $
  
  \myitem[(d2)]\label{D2} $\lcut(A) \wedge \lcut(B) \vdash \lcut(A
  \cup B)$
  
  \myitem[(d3)]\label{D3} $\lcut(A)
        \vdash \bigvee \left\{ \lcut(B) \mid A \ll B \right\} $

  \myitem[(d4)]\label{D4}
       $\lcut(A) \vdash
       \bigvee \left\{ \lcut(A_{i}) \mid i < n \right\}
       \qquad 
       \left(\wayabove A \subseteq \bigvee^{F} \left\{
         \wayabove A_{0},
       \dots, \wayabove A_{n-1} \right\}\right)$
\end{enumerate}

The following two lemmas serve to show that the de Groot dual of a stably
compact formal topology is also stably compact.
\begin{lemma}
  The topology $\mathcal{S}^{d}$ is isomorphic to the
  formal topology $\mathcal{S}^{D} = (\Fin{S},\cov^{D},\leq^{D})$,
  where $A \leq^{D} B \defeqiv B \subseteq A$ and the cover $ \cov^{D}$
  is inductively generated by the following axiom-set:
  \begin{enumerate}
    \myitem[\textup{(D3)}]\label{D3'}
        $A \cov^{D} \wayabove A$,

    \myitem[\textup{(D4)}]\label{D4'}
    $A \cov^{D}  \left\{ A_{0}, \dots, A_{n-1} \right\}
       \qquad 
       (\wayabove A \subseteq \bigvee^{F} \left\{
         \wayabove A_{0},
       \dots, \wayabove A_{n-1} \right\})$.
\end{enumerate}
\end{lemma}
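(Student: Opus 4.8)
The plan is to compare the two formal topologies through their frames of saturated subsets and then invoke the dual equivalence between $\FTop$ and set-based frames from Section~\ref{sec:FormalTopology}. Both $\Sat{\mathcal{S}^{d}}$ and $\Sat{\mathcal{S}^{D}}$ are frames presented by generators and relations: by the geometric-theory machinery of Section~\ref{sec:GeometricTheory}, $\Sat{\mathcal{S}^{d}}$ is freely generated by the symbols $\lcut(A)$ subject to the axioms \ref{D1}--\ref{D4}; and $\Sat{\mathcal{S}^{D}}$ is generated by the base $\Fin{S}$ subject to the order relations coming from $\leq^{D}$ (reverse inclusion), the meet relations encoded by $\downarrow^{D}$, and the cover relations \ref{D3'} and \ref{D4'}. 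I would exhibit mutually inverse frame homomorphisms determined by $\lcut(A) \leftrightarrow \sat^{D}\{A\}$ and then transport the resulting isomorphism back to an isomorphism $\mathcal{S}^{d} \cong \mathcal{S}^{D}$ of formal topologies.

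First I would define $\theta \colon \Sat{\mathcal{S}^{d}} \to \Sat{\mathcal{S}^{D}}$ by $\lcut(A) \mapsto \sat^{D}\{A\}$ and check that it respects \ref{D1}--\ref{D4}. Here \ref{D1} holds because $\emptyset$ is the $\leq^{D}$-top, so $\sat^{D}\{\emptyset\}$ is the frame top; \ref{D2} holds because the formal-topology meet formula gives $\sat^{D}\{A\} \wedge \sat^{D}\{B\} = \sat^{D}(A \downarrow^{D} B) = \sat^{D}\{A \cup B\}$, using that $A \cup B$ is the $\leq^{D}$-greatest element of $A \downarrow^{D} B$; and \ref{D3}, \ref{D4} are literally the cover relations \ref{D3'}, \ref{D4'}. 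Conversely I would define $\eta \colon \Sat{\mathcal{S}^{D}} \to \Sat{\mathcal{S}^{d}}$ by $\sat^{D}\{A\} \mapsto \lcut(A)$ and check the defining relations of $\mathcal{S}^{D}$: the covers \ref{D3'}, \ref{D4'} map to \ref{D3}, \ref{D4}, and the meet relation $\sat^{D}\{A\} \wedge \sat^{D}\{B\} \leq \bigvee_{C \in A \downarrow^{D} B}\sat^{D}\{C\}$ follows from \ref{D2} since $A \cup B \in A \downarrow^{D} B$.

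The main obstacle is the order relation of $\mathcal{S}^{D}$: I must derive $\lcut(A) \leq \lcut(B)$ for every $B \subseteq A$ from \ref{D1}--\ref{D4} alone. I would obtain this from \ref{D4} applied with a single disjunct $A_{0} = B$. The point is that $\bigvee^{F}\{\wayabove B\} = \wayabove B$: transitivity of $\ll$ gives one inclusion, and interpolation of $\ll$ (Lemma~\ref{lem:LKInterpolate}, in the form $A \ll B \Rightarrow (\exists C \in \Fin{S})\, A \ll C \ll B$) gives the other. Moreover $B \subseteq A$ forces $\wayabove A \subseteq \wayabove B$, because $A \ll C$ together with $B \cov A$ yields $B \ll C$. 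Hence the side condition of \ref{D4} is met and $\lcut(A) \vdash \lcut(B)$, as required.

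Finally, since $\theta$ and $\eta$ agree with the identity on generators and are frame homomorphisms, they are mutually inverse, so $\Sat{\mathcal{S}^{d}} \cong \Sat{\mathcal{S}^{D}}$ as frames; transporting along the dual equivalence yields $\mathcal{S}^{d} \cong \mathcal{S}^{D}$. I expect the verification of \ref{D1}--\ref{D4} and of the cover relations to be routine once the presentations are set up; the genuinely non-formal step is recognising that the reverse-inclusion order is recovered from the located axiom \ref{D4} via the identity $\bigvee^{F}\{\wayabove B\} = \wayabove B$.
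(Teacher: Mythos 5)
Your proposal is correct, and at bottom it is the paper's own argument made explicit: the paper dismisses this lemma with the one-liner ``Immediate from the axioms of $T^{d}$'', the point being precisely the identification you set up --- by (d1) and (d2), finite conjunctions of generators collapse to single generators, $\medwedge_{i<k} \lcut(A_{i}) = \lcut(A_{0} \cup \dots \cup A_{k-1})$, so the subbasis presentation over $\Fin{P_{d}}$ reduces to the basis presentation over $\Fin{S}$, with (d3) and (d4) becoming (D3) and (D4). The one place where you diverge is the step you call the main obstacle: deriving $\lcut(A) \vdash \lcut(B)$ for $B \subseteq A$ from (d4) via the identity $\bigvee^{F}\{\wayabove B\} = \wayabove B$. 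Your derivation is correct (transitivity gives one inclusion, and the finite-set form of the interpolation lemma, which does follow from the paper's Lemma 3.3 as you indicate, gives the other), but it is more work than needed: the theory $T^{d}$ as defined includes axiom (L2) of $T_{P}$, namely $\lcut(B) \vdash \lcut(A)$ whenever $A \cov B$, and $B \subseteq A$ gives $B \cov A$ outright, so compatibility with $\leq^{D}$ is immediate --- which is presumably why the paper regards the whole lemma as immediate. (Your detour does establish the mildly stronger fact that the reverse-inclusion order is already forced by (d1)--(d4) alone.) One small caveat: your map $\eta$ implicitly invokes a universal property of the inductively generated topology $\mathcal{S}^{D}$ --- that frame homomorphisms out of $\Sat{\mathcal{S}^{D}}$ correspond to functions on $\Fin{S}$ compatible with $\leq^{D}$, with $\downarrow^{D}$, with the top, and with the axioms (D3), (D4) --- which the paper states explicitly only for geometric-theory presentations; this is standard for inductively generated covers, but strictly you should either cite it or verify the relevant relation by induction on $\cov^{D}$. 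Neither point is a gap in substance.
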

\begin{proof}
  Immediate from the axioms of  $T^{d}$.
\end{proof}

\begin{lemma} \label{lem:DeGrootSK}
  The topology $\mathcal{S}^{D}$ satisfies
  \[
    A \cov^{D} \mathcal{U} \iff \left( \forall B \in \Fin{S}\right)
    \Bigr[ 
    B \gg A \implies \left( \exists \;\mathcal{U}_{0} 
    \in \Fin{\mathcal{U}} \right) \wayabove B \subseteq
  { \bigvee_{\mathclap{C \in  \mathcal{U}_{0}}}}^{F} \! \wayabove C \Bigr]
  \] 
  for all $A \in \Fin{S}$ and $\mathcal{U} \subseteq \Fin{S}$.
\end{lemma}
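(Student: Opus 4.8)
The plan is to show that $\cov^{D}$ coincides with the relation $\cov^{\circ}$ defined by the right-hand side, namely $A \cov^{\circ} \mathcal{U} \defeqiv \left(\forall B \in \Fin{S}\right)\bigl[\, B \gg A \Rightarrow \left(\exists \mathcal{U}_{0} \in \Fin{\mathcal{U}}\right)\, \wayabove B \subseteq \bigvee^{F}_{C \in \mathcal{U}_{0}} \wayabove C \,\bigr]$, and to prove the two inclusions separately. The inclusion $\cov^{\circ} \subseteq \cov^{D}$ is short: given $A \cov^{\circ} \mathcal{U}$, apply \ref{D3'} to get $A \cov^{D} \wayabove A$; for each $B \in \wayabove A$ the hypothesis furnishes $\mathcal{U}_{0} \in \Fin{\mathcal{U}}$ with $\wayabove B \subseteq \bigvee^{F}_{C \in \mathcal{U}_{0}} \wayabove C$, so \ref{D4'} gives $B \cov^{D} \mathcal{U}_{0}$ and hence $B \cov^{D} \mathcal{U}$ as $\mathcal{U}_{0} \subseteq \mathcal{U}$. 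Thus $\wayabove A \cov^{D} \mathcal{U}$, and since $\cov^{D}$ is a cover (being inductively generated, it is transitive) we conclude $A \cov^{D} \mathcal{U}$.

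For the reverse inclusion $\cov^{D} \subseteq \cov^{\circ}$ I would argue by induction on the generation of $\cov^{D}$, i.e.\ by showing that $\cov^{\circ}$ is closed under the reflexivity rule, the $\leq^{D}$-rule, and the (infinity) rules for \ref{D3'} and \ref{D4'}. The reflexivity case $A \in \mathcal{U}$ is handled by the witness $\mathcal{U}_{0} = \left\{A\right\}$: for $B \gg A$ and $G \in \wayabove B$ one interpolates $A \ll E \ll B \ll G$ to see $G \in \bigvee^{F}_{C \in \left\{A\right\}} \wayabove C$, using only transitivity of $\ll$ and Lemma \ref{lem:LKInterpolate}. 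The $\leq^{D}$-case reduces to the monotonicity $B \subseteq A \Rightarrow \wayabove A \subseteq \wayabove B$, and the \ref{D3'}-case is dispatched by interpolating $A \ll B'' \ll B$ and invoking the induction hypothesis at $B''$.

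The crux, and where I expect the main obstacle, is the \ref{D4'}-case. Here the side condition gives $\wayabove A \subseteq \bigvee^{F}\left\{\wayabove A_{0}, \dots, \wayabove A_{n-1}\right\}$ and the induction hypothesis gives $A_{i} \cov^{\circ} \mathcal{U}$ for each $i$. Given $B \gg A$, unfolding the finite join yields $D_{i} \gg A_{i}$ with $D_{0} \downarrow \cdots \downarrow D_{n-1} \ll B$, and the induction hypothesis applied to $D_{i} \gg A_{i}$ yields $\mathcal{U}_{i} \in \Fin{\mathcal{U}}$ with $\wayabove D_{i} \subseteq \bigvee^{F}_{C \in \mathcal{U}_{i}} \wayabove C$. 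To finish with $\mathcal{U}_{0} = \bigcup_{i} \mathcal{U}_{i}$ one needs the key \emph{splitting} property of finite joins of Scott open filters: if $D_{0} \downarrow \cdots \downarrow D_{n-1} \ll G$ then there exist $D_{i}' \gg D_{i}$ with $D_{0}' \downarrow \cdots \downarrow D_{n-1}' \ll G$, equivalently $\wayabove(D_{0} \downarrow \cdots \downarrow D_{n-1}) \subseteq \bigvee^{F}_{i<n} \wayabove D_{i}$. Granting this, each $G \in \wayabove B$ satisfies $D_{0} \downarrow \cdots \downarrow D_{n-1} \ll G$, so one gets such $D_{i}' \in \wayabove D_{i} \subseteq \bigvee^{F}_{C \in \mathcal{U}_{i}} \wayabove C$; expanding and using that stable compactness makes $\ll$ multiplicative (so $F^{(i)} \ll D_{i}'$ for all $i$ gives $F^{(0)} \downarrow \cdots \downarrow F^{(n-1)} \ll D_{0}' \downarrow \cdots \downarrow D_{n-1}'$) produces witnesses showing $G \in \bigvee^{F}_{C \in \mathcal{U}_{0}} \wayabove C$.

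The easy half of the splitting statement, $\bigvee^{F}_{i<n}\wayabove D_{i} \subseteq \wayabove(D_{0} \downarrow \cdots \downarrow D_{n-1})$, is immediate from multiplicativity of $\ll$; the reverse inclusion is the genuinely non-trivial point, and this is exactly where stable compactness is used essentially, in combination with interpolation. I would isolate it as a preliminary lemma about the lattice of Scott open filters of a stably compact formal topology, establishing that $\bigvee^{F}$ is associative and computes their join, after which the whole of Lemma \ref{lem:DeGrootSK} follows from the two inductive arguments above.
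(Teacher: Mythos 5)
Your skeleton --- proving the two inclusions separately, with $\cov^{\circ} \subseteq \cov^{D}$ via \ref{D3'}, \ref{D4'} and transitivity of the cover, and $\cov^{D} \subseteq \cov^{\circ}$ by induction on the generation of $\cov^{D}$ --- is exactly the induction behind the paper's proof (which is stated in one line, ``by induction on $\cov^{D}$''), and your reflexivity, $\leq^{D}$- and \ref{D3'}-cases are correct. The gap is the ``key splitting property'' on which your \ref{D4'}-case rests: the claim that $D_{0} \downarrow \cdots \downarrow D_{n-1} \ll G$ implies the existence of $D_{i}' \gg D_{i}$ with $D_{0}' \downarrow \cdots \downarrow D_{n-1}' \ll G$, i.e.\ $\wayabove (D_{0} \downarrow \cdots \downarrow D_{n-1}) \subseteq \bigvee^{F}_{i<n} \wayabove D_{i}$, is \emph{false}, already in compact regular (hence stably compact) formal topologies. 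In the formal topology of the unit interval take $D_{0} = \left\{ [0,1/2) \right\}$ and $D_{1} = \left\{ (1/2,1] \right\}$: then $D_{0} \downarrow D_{1} \cov \emptyset$, so $D_{0} \downarrow D_{1} \ll G$ for every $G$, in particular for $G = \emptyset$; but $D_{0} \ll D_{0}'$ forces the open determined by $D_{0}'$ to contain a compact set containing $[0,1/2)$, hence the point $1/2$, and likewise for $D_{1}'$, so $D_{0}' \downarrow D_{1}'$ covers a neighbourhood of $1/2$ and can never satisfy $D_{0}' \downarrow D_{1}' \cov \emptyset$, let alone $\ll \emptyset$. Conceptually, via Proposition \ref{prop:deGrootSOF} and the filter/compact-saturated correspondence, $\bigvee^{F}_{i<n} \wayabove D_{i}$ corresponds to the \emph{intersection of the compact saturated hulls} of the $D_{i}$, whereas $\wayabove(D_{0} \downarrow \cdots \downarrow D_{n-1})$ corresponds to the \emph{hull of the intersection}; only your ``easy half'' is valid, and equality fails whenever the hulls meet although the opens do not.

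Fortunately the \ref{D4'}-case does not need the splitting property; the repair is a different use of the induction hypothesis, which holds at \emph{every} $B' \gg A_{i}$ and not just at $D_{i}$. Given $B \gg A$, the side condition yields $D_{i} \gg A_{i}$ with $D_{0} \downarrow \cdots \downarrow D_{n-1} \ll B$. Interpolate $A_{i} \ll D_{i}^{-} \ll D_{i}$ (Lemma \ref{lem:LKInterpolate}) and apply the induction hypothesis at $D_{i}^{-}$: it gives $\mathcal{U}_{i} \in \Fin{\mathcal{U}}$ with $\wayabove D_{i}^{-} \subseteq \bigvee^{F}_{C \in \mathcal{U}_{i}} \wayabove C$, and now $D_{i}$ \emph{itself} is a member of the right-hand side, so there are $F_{C}^{i} \gg C$ (for $C \in \mathcal{U}_{i}$) whose meet is $\ll D_{i}$. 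The finite-set form of stability (obtained from Definition \ref{def:StablyLK} by interpolation; the paper uses it, e.g., in the proof of Lemma \ref{lem:PerfctSubLocBase}) then shows that the meet of all the $F_{C}^{i}$ is way below $D_{0} \downarrow \cdots \downarrow D_{n-1} \ll B \ll G$ for any $G \gg B$, which exhibits $G \in \bigvee^{F}_{C \in \mathcal{U}_{0}} \wayabove C$ with $\mathcal{U}_{0} = \bigcup_{i<n} \mathcal{U}_{i}$ (if the same $C$ occurs in several $\mathcal{U}_{i}$, merge its witnesses into a single $F_{C}$ with $C \ll F_{C} \ll F_{C}^{i_{1}} \downarrow F_{C}^{i_{2}} \downarrow \cdots$, using that $C \ll F \amp C \ll F'$ implies $C \ll F \downarrow F'$, again by stability). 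This completes the induction using only interpolation and the true (``easy'') direction of multiplicativity, so no preliminary lemma about joins of Scott open filters is needed --- which is just as well, since the one you proposed is refutable.
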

\begin{proof}
  By induction on $\cov^{D}$. 
\end{proof}
\begin{corollary}
  The formal topology $\mathcal{S}^{D}$ is stably compact, and so is $\mathcal{S}^{d}$.
\end{corollary}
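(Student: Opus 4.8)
The plan is to check that $\mathcal{S}^{D}=(\Fin{S},\cov^{D},\leq^{D})$ is continuous, compact and stable, and then to transport stable compactness across the isomorphism $\mathcal{S}^{d}\cong\mathcal{S}^{D}$ of the preceding lemma, since stable compactness is a property of the presented frame and hence invariant under isomorphism. The only tool I would use is the explicit description of $\cov^{D}$ in Lemma \ref{lem:DeGrootSK}, through which every occurrence of the way-below relation $\ll^{D}$ of $\mathcal{S}^{D}$ is unfolded. Three preliminary observations streamline everything: $\emptyset$ is the top basis element of $\mathcal{S}^{D}$ (so $A\leq^{D}\emptyset$ for all $A$); $\wayabove\emptyset=\Fin{S}$, because $\sat\emptyset$ is the bottom of $\Sat{\mathcal{S}}$ and is way-below everything; and $\wayabove(A\cup B)=\wayabove A\cap\wayabove B$, since $\ll$ is both preserved and reflected by binary joins in $\Sat{\mathcal{S}}$.

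For continuity I would set $\wb^{D}(A)\defeql\wayabove A$, so that $A\cov^{D}\wb^{D}(A)$ is axiom \ref{D3'}; the remaining condition is the ``flip'' $A\ll B\implies B\ll^{D}A$, which I verify as follows. Given $\mathcal{U}$ with $A\cov^{D}\mathcal{U}$, the instance of Lemma \ref{lem:DeGrootSK} at $B$ (legitimate since $B\gg A$) yields a finite $\mathcal{U}_{0}\subseteq\mathcal{U}$ with $\wayabove B\subseteq\bigvee^{F}_{C\in\mathcal{U}_{0}}\wayabove C$; because $B\ll C'$ forces $\wayabove{C'}\subseteq\wayabove B$ by transitivity of $\ll$, the same $\mathcal{U}_{0}$ witnesses $B\cov^{D}\mathcal{U}_{0}$ through Lemma \ref{lem:DeGrootSK} again, so $B\ll^{D}A$. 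For compactness I would prove $\Fin{S}\ll^{D}\Fin{S}$: if $\Fin{S}\cov^{D}\mathcal{W}$ then $\emptyset\cov^{D}\mathcal{W}$, and Lemma \ref{lem:DeGrootSK} applied at $\emptyset$ (using $\emptyset\ll\emptyset$ and $\wayabove\emptyset=\Fin{S}$) produces a finite $\mathcal{W}_{0}\subseteq\mathcal{W}$ with $\bigvee^{F}_{C\in\mathcal{W}_{0}}\wayabove C=\Fin{S}$; this same identity makes $\emptyset\cov^{D}\mathcal{W}_{0}$ immediate, and since every $A\leq^{D}\emptyset$, transitivity of the cover upgrades it to $\Fin{S}\cov^{D}\mathcal{W}_{0}$.

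Stability is where the real work lies. As $\sat^{D}(A\downarrow^{D}B)=\sat^{D}(A\cup B)$ and $\ll^{D}$ depends only on saturations, the claim $A\downarrow^{D}B\ll^{D}A'\downarrow^{D}B'$ reduces to $(A\cup B)\ll^{D}(A'\cup B')$. By the continuity characterisation, $A\ll^{D}A'$ gives a finite $\{C_{i}\}\subseteq\wayabove{A'}$ with $A\cov^{D}\{C_{i}\}$, and similarly $\{D_{j}\}\subseteq\wayabove{B'}$ with $B\cov^{D}\{D_{j}\}$; as $\ll$ respects joins, each $C_{i}\cup D_{j}$ lies in $\wayabove(A'\cup B')$, so it suffices to show $(A\cup B)\cov^{D}\{C_{i}\cup D_{j}\}$. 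Through Lemma \ref{lem:DeGrootSK}: any $G\gg A\cup B$ satisfies $G\gg A$ and $G\gg B$, furnishing finite $I_{0}\subseteq\{C_{i}\}$ and $J_{0}\subseteq\{D_{j}\}$ with $\wayabove G\subseteq\bigvee^{F}_{C\in I_{0}}\wayabove C$ and $\wayabove G\subseteq\bigvee^{F}_{D\in J_{0}}\wayabove D$.

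The main obstacle is then the distributive inclusion
\[
  \Bigl(\bigvee^{F}_{C\in I_{0}}\wayabove C\Bigr)\cap\Bigl(\bigvee^{F}_{D\in J_{0}}\wayabove D\Bigr)\subseteq\bigvee^{F}_{(C,D)\in I_{0}\times J_{0}}\wayabove(C\cup D),
\]
expressing that binary meet distributes over finite joins of Scott open filters. I would prove it by picking, for $F$ in the left-hand side, witnesses $B_{C}\gg C$ and $B'_{D}\gg D$ with $\bigsqcap_{C}B_{C}\ll F$ and $\bigsqcap_{D}B'_{D}\ll F$ (writing $\bigsqcap$ for iterated $\downarrow$), and using $B_{C}\cup B'_{D}\gg C\cup D$ together with the distributive law of the frame $\Sat{\mathcal{S}}$, which gives $\bigsqcap_{(C,D)}(B_{C}\cup B'_{D})=_{\mathcal{S}}(\bigsqcap_{C}B_{C})\cup(\bigsqcap_{D}B'_{D})\ll F$ since $\ll$ preserves joins. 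With $\wayabove(C\cup D)=\wayabove C\cap\wayabove D$ this places $F$ on the right, yielding $\wayabove G\subseteq\bigvee^{F}_{(C,D)}\wayabove(C\cup D)$, hence $(A\cup B)\cov^{D}\{C_{i}\cup D_{j}\}$ and stability. Stable compactness of $\mathcal{S}$ itself is what makes the whole framework legitimate: it is precisely what guarantees that each $\wayabove A$ is a Scott open filter and what underpins Lemma \ref{lem:DeGrootSK}, so that the computation above is permitted.
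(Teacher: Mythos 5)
Your proof is correct, and its overall skeleton matches the paper's: continuity of $\mathcal{S}^{D}$ via $\wb^{D}(A) \defeql \wayabove A$ and the ``flip'' $A \ll B \implies B \ll^{D} A$ extracted from axiom \ref{D3'} and Lemma \ref{lem:DeGrootSK}; compactness from $\emptyset \ll \emptyset$; and stability resting on the union law $A \ll B \amp A' \ll B' \implies A \cup A' \ll B \cup B'$, which is exactly the fact the paper's one-line proof cites. Where you genuinely diverge is in the stability step. Having reduced to showing $(A \cup B) \cov^{D} \left\{ C_{i} \cup D_{j} \right\}$ from $A \cov^{D} \left\{ C_{i} \right\}$ and $B \cov^{D} \left\{ D_{j} \right\}$, you pass back through the semantic characterisation of $\cov^{D}$ in Lemma \ref{lem:DeGrootSK} and prove, as an auxiliary fact, that binary intersection distributes over finite joins of Scott open filters. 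The paper's implicit route is purely formal and much shorter: since $\mathcal{S}^{D}$ is a formal topology (generated from a localised axiom-set), its cover automatically satisfies the meet-stability condition $E \cov^{D} \mathcal{U} \amp E \cov^{D} \mathcal{V} \implies E \cov^{D} \mathcal{U} \downarrow^{D} \mathcal{V}$; taking $E = A \cup B$ and noting $A \cup B \leq^{D} A, B$ and that every element of $\left\{ C_{i} \right\} \downarrow^{D} \left\{ D_{j} \right\}$ contains some $C_{i} \cup D_{j}$ (hence is $\leq^{D}$ it), the required finite cover falls out in two lines, with no appeal to the filter-theoretic description. Your detour costs more work but is self-contained at the level of $\Sat{\mathcal{S}}$ and has the side benefit of explicitly verifying the distributivity inclusion $\bigl(\bigvee^{F}_{C \in I_{0}} \wayabove C\bigr) \cap \bigl(\bigvee^{F}_{D \in J_{0}} \wayabove D\bigr) \subseteq \bigvee^{F}_{(C,D)} \wayabove (C \cup D)$, a nontrivial fact about the de Groot dual that the paper never states; your closing remark correctly identifies that stable compactness of $\mathcal{S}$ is what makes each $\wayabove A$ a filter and legitimises the whole framework. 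Your handling of $\mathcal{S}^{d}$ by transport along the isomorphism of the preceding lemma also matches the paper's (implicit) treatment.
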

\begin{proof} 
  By Lemma \ref{lem:DeGrootSK} and axioms \ref{D3'} and 
  \ref{D4'}, the topology $\mathcal{S}^{D}$ is locally compact.
  It is also compact because
  $\emptyset \ll \emptyset$.
  Lastly, $\mathcal{S}^{D}$ is stably compact
  since $A \ll B \amp A' \ll B' \implies A \cup A' \ll B \cup B'$.
\end{proof}

The de Groot dual of a locale $X$ is defined to be
the collection of Scott open filters on $X$ ordered by inclusion
 \cite{escardoLawsonDual}. The following lemma says that
Definition \ref{def:deGroot} is correct.
\begin{proposition}\label{prop:deGrootSOF}
  There exists a bijective correspondence between the saturated
  subsets of $\mathcal{S}^{D}$ and the Scott open filters on
  $\Sat{\mathcal{S}}$. Specifically, 
  \begin{enumerate}
    \item 
  if $\mathcal{U} \subseteq
  \Fin{S}$ is a saturated subset of $\mathcal{S}^{D}$, then
  $\bigcup_{A \in \mathcal{U}} \wayabove A$ is a Scott open filter,
  i.e.\ a saturated subset of $\Scott(\mathcal{S})$ which is a filter
  with respect to $\cov$, and we have $\mathcal{U} = \sat^{D} \bigcup_{A \in \mathcal{U}} \wayabove A$;

  \item if $\left( A_{i} \right)_{i \in I}$ is a family of elements
  of $\Fin{S}$ such that $\bigcup_{i \in I} \wayabove A_{i}$ is
  a filter with respect to $\cov$, then $\bigcup_{i \in I} \wayabove A_i
  = \bigcup \left\{ \wayabove A \mid A \cov^{D} \bigcup_{i \in I}
\wayabove A_i \right\}$.
  \end{enumerate}
\end{proposition}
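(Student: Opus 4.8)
The plan is to present the correspondence as a pair of mutually inverse maps. In one direction, $F(\mathcal{U}) \defeql \bigcup_{A \in \mathcal{U}} \wayabove A$ sends a saturated subset of $\mathcal{S}^{D}$ to a subset of $\Fin{S}$; in the other, $G(\Phi) \defeql \sat^{D}\Phi = \left\{ A \mid A \cov^{D} \Phi \right\}$. The engine throughout is Lemma \ref{lem:DeGrootSK}. Combined with interpolation (Lemma \ref{lem:LKInterpolate}) and the identity $\wayabove A = \bigcup_{B \gg A} \wayabove B$, it yields the convenient reformulation
\[
  A \cov^{D} \mathcal{U} \iff \wayabove A \subseteq \hat{\mathcal{U}},
\]
where $\hat{\mathcal{U}} \defeql \bigcup \bigl\{ \textstyle\bigvee^{F}_{C \in \mathcal{U}_{0}} \wayabove C \mid \mathcal{U}_{0} \in \Fin{\mathcal{U}} \bigr\}$ is the Scott open filter generated by the principal filters $\wayabove C$, $C \in \mathcal{U}$. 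I would first record that $\hat{\mathcal{U}}$ is genuinely a Scott open filter, using that a finite join $\bigvee^{F}$ of Scott open filters is again one (this is exactly where stable compactness enters, as in the Remark after Definition \ref{def:deGroot}) and that a directed union of Scott open filters is one; the passage from the $\exists\,\mathcal{U}_{0}$ of Lemma \ref{lem:DeGrootSK} to a single member of this directed family again uses that $\wayabove B$ is finitely generated.

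For statement (1) I would show $F(\mathcal{U}) = \hat{\mathcal{U}}$ for saturated $\mathcal{U}$. The inclusion $F(\mathcal{U}) \subseteq \hat{\mathcal{U}}$ is immediate since each $\wayabove C \subseteq \hat{\mathcal{U}}$. For $\hat{\mathcal{U}} \subseteq F(\mathcal{U})$, given $x \in \bigvee^{F}_{C \in \mathcal{U}_{0}} \wayabove C$ one extracts elements $B_{C} \gg C$ (for $C \in \mathcal{U}_{0}$) whose finite $\downarrow$-meet $E$ satisfies $E \ll x$; by definition $\wayabove E \subseteq \bigvee^{F}_{C \in \mathcal{U}_{0}} \wayabove C \subseteq \hat{\mathcal{U}}$, so the reformulation gives $E \cov^{D} \mathcal{U}$, whence $E \in \mathcal{U}$ because $\mathcal{U}$ is saturated; as $E \ll x$ this gives $x \in \wayabove E \subseteq F(\mathcal{U})$. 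Thus $F(\mathcal{U}) = \hat{\mathcal{U}}$ is a Scott open filter. The equation $\mathcal{U} = \sat^{D} F(\mathcal{U})$ then splits into $\supseteq$, which follows from axiom \ref{D3'} (for $A \in \mathcal{U}$ we have $A \cov^{D} \wayabove A \subseteq F(\mathcal{U})$), and $\subseteq$, for which one first checks $F(\mathcal{U}) \subseteq \mathcal{U}$ by a single-element application of Lemma \ref{lem:DeGrootSK} (if $A \in \mathcal{U}$ and $A \ll D$, then $\wayabove D' \subseteq \wayabove A$ for every $D' \gg D$, so $D \cov^{D} \mathcal{U}$) and then applies $\sat^{D}$.

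For statement (2), which is exactly $F(G(\Phi)) = \Phi$ for a Scott open filter $\Phi = \bigcup_{i} \wayabove A_{i}$, the inclusion $\supseteq$ is easy: if $D \in \Phi$ then $A_{i} \ll D$ for some $i$, and $A_{i} \cov^{D} \wayabove A_{i} \subseteq \Phi$ by \ref{D3'}, so $D \in \wayabove A_{i}$ with $A_{i} \in \sat^{D}\Phi$. The inclusion $\subseteq$ is the crux, and is where the filter hypothesis is indispensable: given $A \cov^{D} \Phi$ and $D \in \wayabove A$, interpolation and Lemma \ref{lem:DeGrootSK} produce a finite $\Phi_{0} \subseteq \Phi$ and a $B$ with $A \ll B \ll D$ such that $\wayabove B \subseteq \bigvee^{F}_{C \in \Phi_{0}} \wayabove C$, whence $D \in \bigvee^{F}_{C \in \Phi_{0}} \wayabove C$. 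One then shows this finite join lands back inside $\Phi$: since $\Phi$ is rounded, each $C \in \Phi_{0}$ gives $\wayabove C \subseteq \Phi$, so the chosen $B_{C} \gg C$ all lie in $\Phi$; their $\downarrow$-meet lies in $\Phi$ by closure under finite meets, and $D$ lies above it, hence $D \in \Phi$. Assembling (1) and (2) shows $F$ and $G$ are mutually inverse, giving the asserted bijection.

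I expect the main obstacle to be precisely these filter-closure arguments, i.e.\ matching the de Groot finite-join operation $\bigvee^{F}$ — which on the dual side encodes the binary meet $\downarrow$ of $\Sat{\mathcal{S}}$ — against the meet-closure of a Scott open filter. The two places this bites are the identity $F(\mathcal{U}) = \hat{\mathcal{U}}$ in (1) and the containment $\bigvee^{F}_{C \in \Phi_{0}} \wayabove C \subseteq \Phi$ in (2); both rely on stable compactness to ensure that $\bigvee^{F}$ of Scott open filters is again a Scott open filter, and on the finiteness supplied by Lemma \ref{lem:DeGrootSK}.
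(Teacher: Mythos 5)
Your skeleton is correct and is essentially the paper's own argument, fleshed out: the paper likewise proves statement (1) by establishing the identity $\bigcup_{A \in \mathcal{U}} \wayabove A = \bigcup_{\mathcal{V} \in \Fin{\mathcal{U}}} \bigvee^{F}_{B \in \mathcal{V}} \wayabove B$ (your $\hat{\mathcal{U}}$), noting the right-hand side is a directed union of Scott open filters, and deduces $\mathcal{U} = \sat^{D}\bigcup_{A \in \mathcal{U}} \wayabove A$ from the axioms \ref{D3'} and \ref{D4'}; statement (2) it dispatches with Lemma \ref{lem:DeGrootSK}, exactly as you do. Your route through the reformulation $A \cov^{D} \mathcal{U} \iff \wayabove A \subseteq \hat{\mathcal{U}}$ is a valid repackaging of \ref{D4'} together with Lemma \ref{lem:DeGrootSK}.

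There is, however, a typing slip at precisely the two places you single out as the crux. The set $E = B_{0} \downarrow \cdots \downarrow B_{n-1}$ is an arbitrary subset of $S$, not an element of $\Fin{S}$ (the base is not meet-closed), so the expressions ``$\wayabove E$'', ``$E \cov^{D} \mathcal{U}$'' and ``$E \in \mathcal{U}$'' in your proof of (1), and ``their $\downarrow$-meet lies in $\Phi$'' in (2), are ill-formed as written: $\cov^{D}$, $\wayabove$, and membership in $\mathcal{U}$ or $\Phi$ only make sense for finitely enumerable subsets. Both steps are repairable with tools you already invoke. In (1), use Lemma \ref{lem:LKInterpolate} to interpolate a finite $A$ with $E \ll A \ll x$; then every $D \gg A$ satisfies $E \ll D$, so $\wayabove A \subseteq \bigvee^{F}_{C \in \mathcal{U}_{0}} \wayabove C \subseteq \hat{\mathcal{U}}$ with the same witnesses $B_{C}$, whence $A \cov^{D} \mathcal{U}$, $A \in \mathcal{U}$ by saturation, and $x \in \wayabove A \subseteq F(\mathcal{U})$. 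In (2), rather than placing the meet in $\Phi$, use that $\Phi$ is a filter with respect to $\cov$: directedness yields $E' \in \Phi$ with $E' \cov B_{C}$ for all $C \in \Phi_{0}$, hence $E' \cov B_{0} \downarrow \cdots \downarrow B_{n-1} \cov D$ (the last cover because the meet is $\ll D$), and $D \in \Phi$ by upward closure. One cosmetic point: in your treatment of $\mathcal{U} = \sat^{D} F(\mathcal{U})$ the inclusion labels are swapped (the \ref{D3'} argument gives $\subseteq$, applying $\sat^{D}$ to $F(\mathcal{U}) \subseteq \mathcal{U}$ gives $\supseteq$), but the content is right.
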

\begin{proof}
  For the first claim, let $\mathcal{U} \in \Sat{\mathcal{S}^{D}}$.
  To see that $\bigcup_{A \in \mathcal{U}} \wayabove A$ is a Scott open
   filter, it suffices to show that 
  \[
    \bigcup_{A \in \mathcal{U}} \wayabove A
    = \bigcup_{\mathcal{V} \in \Fin{\mathcal{U}}}
    \;
    {\bigvee_{\mathclap{B \in \mathcal{V}}}}^{F} \wayabove B
    \]
  because the right hand side is a directed union of Scott open
  filters, and hence it is a Scott open filter.
  But this follows from the axiom \ref{D4'} and the fact that
  $\mathcal{U}$ is saturated. Then, the equation 
  $\mathcal{U} = \sat^{D} \bigcup_{A \in \mathcal{U}} \wayabove A$
  follows from \ref{D3'} and \ref{D4'}.
  The second claim follows from Lemma \ref{lem:DeGrootSK}.
\end{proof}

We show that $\Patch(\mathcal{S})$
is the patch of the de Groot dual $\mathcal{S}^{d}$.
First, there is a formal topology map $\varepsilon_{d} \colon
\Patch(\mathcal{S}) \to \mathcal{S}^{d}$ defined on
the generators of $T^{d}$ by 
\[
  \elPT{A} \mathrel{\varepsilon_{d}} \left\{ \lcut(A) \right\}
  \defeqiv
  \elPT{A} \cov_{P} \left\{ \lcut(A) \right\}.
\]
\begin{lemma} \label{lem:edFTopMap}
  \leavevmode
  \begin{enumerate}
    \item\label{lem:edFTopMap1} $\varepsilon_{d}$ is indeed a formal topology map.
    \item\label{lem:edFTopMap2} $\varepsilon_{d}$ is a perfect map.
    \item\label{lem:edFTopMap3} $\varepsilon_{d}$ is a monomorphism in $\FTop$.
  \end{enumerate}
\end{lemma}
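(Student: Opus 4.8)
The plan is to verify each of the three claims about $\varepsilon_{d}$ in order. The overarching strategy is to exploit that $\mathcal{S}^{d}$ is presented by a geometric theory (axioms \ref{l1}--\ref{l3} plus the de Groot axioms), so that $\varepsilon_{d}$ is a formal topology map precisely when the relation $\elPT{A} \mathrel{\varepsilon_{d}} \left\{ \lcut(A) \right\}$ respects those axioms in the sense of \eqref{eq:PreservAx}, and that $\mathcal{S}^{d}$ is regular (being stably compact, hence locally compact regular since $\lll$ and $\ll$ coincide in the compact regular case — though here I must be careful that $\mathcal{S}^{d}$ is stably compact but not obviously regular, so I will instead lean on the established perfectness machinery for the monomorphism claim).

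\textbf{Claim \ref{lem:edFTopMap1}.} First I would check that $\elPT{A} \mapsto \elPT{A} \mathrel{\varepsilon_{d}} \left\{ \lcut(A) \right\}$ respects the axioms of $T^{d}$. Since $\varepsilon_{d}$ is defined exactly as the restriction of the identity-like inclusion sending $\lcut(A)$ to $\lcut(A)$ inside $T_{P}$, the axioms \ref{l1}--\ref{l3} of $T^{d}$ are literally among the axioms of $T_{P}$, so they are respected immediately. The only real content is the de Groot axiom \ref{D4} (equivalently \ref{D4'}): I must show that whenever $\wayabove A \subseteq \bigvee^{F}\{\wayabove A_{0}, \dots, \wayabove A_{n-1}\}$, we have $\varepsilon_{d}^{-}\left\{ \lcut(A) \right\} \cov_{P} \bigcup_{i<n} \varepsilon_{d}^{-}\left\{ \lcut(A_{i}) \right\}$, i.e.\ $\left\{ \lcut(A) \right\} \cov_{P} \{ \{\lcut(A_{i})\} \mid i < n \}$. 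This is the main obstacle: I expect to prove it by unfolding the filter-join condition, extracting witnesses $B_{i} \gg A_{i}$ with $B_{0} \downarrow \cdots \downarrow B_{n-1} \ll B$ for each $B \gg A$, and then combining the patch axioms \ref{l3} (to pass to way-above refinements), \ref{Loc}/Lemma \ref{lem:LittleFact} (to convert $\ll$-data into covers mixing $\lcut$ and $\rcut$ symbols), and \ref{D} (to discard the contradictory $\rcut$ disjuncts) so that only the $\lcut(A_{i})$ survive.

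\textbf{Claim \ref{lem:edFTopMap2}.} For perfectness I would produce, for each generator $\lcut(A)$, a way-below approximation compatible with $\wb_{P}$. Concretely, using that $\Patch(\mathcal{S})$ is locally compact regular (Proposition \ref{prop:PatchLKReg}) and that $\varepsilon_{d}^{-}\left\{ \lcut(A) \right\} = \sat_{P}\left\{ \lcut(A) \right\}$, it suffices to check that $A \ll B$ in $\mathcal{S}$ forces $\varepsilon_{d}^{-}\left\{ \lcut(B) \right\} \ll \varepsilon_{d}^{-}\left\{ \lcut(A) \right\}$ in $\Patch(\mathcal{S})$; the way-below witnesses $\lcut(B) \in \wb_{P}(\{\lcut(A)\})$ read off directly from the definition of $\wb_{P}$ on $\lcut$-generators, and the de Groot structure guarantees $A \cov^{D} \wayabove A$ provides enough such $B$. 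I would then invoke Lemma \ref{lem:wbT}.\ref{lem:wbT1} to transfer way-below along $\wb_{P}$.

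\textbf{Claim \ref{lem:edFTopMap3}.} For the monomorphism property I would mirror the argument of Lemma \ref{lem:Core}.\ref{lem:CorefMono}: given formal topology maps $r, s \colon \mathcal{S}' \to \Patch(\mathcal{S})$ with $\varepsilon_{d} \circ r = \varepsilon_{d} \circ s$, I must deduce $r = s$. The equality of composites gives $r^{-}\left\{ \lcut(A) \right\} =_{\mathcal{S}'} s^{-}\left\{ \lcut(A) \right\}$ for all $A$, so it remains to force agreement on the $\rcut$-generators. Here the key move is again the disjointness axiom \ref{D} together with Lemma \ref{lem:LittleFact}: from $A \ll B$ and the shared $\lcut$-values I can sandwich $r^{-}\left\{ \rcut(b) \right\}$ between data determined by $s$, exactly as in the displayed $\cov'$-chain of Lemma \ref{lem:Core}.\ref{lem:CorefMono}, using regularity of $\Patch(\mathcal{S})$ and Lemma \ref{lem:RegMax} to upgrade one inequality of maps to an equality. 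The hardest part overall is Claim \ref{lem:edFTopMap1}'s verification of \ref{D4}, since it is where the combinatorics of the finite filter-join $\bigvee^{F}$ must be reconciled with the located-cut axioms of the patch.
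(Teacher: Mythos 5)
Your items \ref{lem:edFTopMap1} and \ref{lem:edFTopMap3} match the paper's argument: for \ref{lem:edFTopMap1} the paper likewise observes that \ref{l1}--\ref{l3} are already axioms of $T_{P}$ and that the whole content is deriving \ref{D4} in $T_{P}$, which it does exactly as you sketch (apply \ref{l3} to pass to $B \gg A$, extract $B_{0} \gg A_{0}, \dots, B_{n-1} \gg A_{n-1}$ with $B_{0} \downarrow \cdots \downarrow B_{n-1} \ll B$, use Lemma \ref{lem:LittleFact} for each $i$, combine with \ref{r2}, and kill each $\rcut(c)$, $c \in B_{0} \downarrow \cdots \downarrow B_{n-1}$, against $\lcut(B)$ via \ref{l2} and \ref{D}); for \ref{lem:edFTopMap3} the paper just says ``similar to Lemma \ref{lem:Core}.\ref{lem:CorefMono}'', and your dualised version---agreement on the $\lcut$-generators is given, agreement on $\rcut(a)$ is forced via \ref{r4}, \ref{Loc}/Lemma \ref{lem:LittleFact}, \ref{D}, and Lemma \ref{lem:RegMax}---is the intended argument, with the minor correction that the sandwich uses $b \ll a$ rather than $A \ll B$.

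The genuine gap is in item \ref{lem:edFTopMap2}. Your reduction to ``$A \ll B$ in $\mathcal{S}$ forces $\left\{ \lcut(B) \right\} \ll \left\{ \lcut(A) \right\}$ in $\Patch(\mathcal{S})$'' agrees with the paper, but your justification---that the witnesses $\left\{ \lcut(B) \right\} \in \wb_{P}(\left\{ \lcut(A) \right\})$ ``read off directly from the definition of $\wb_{P}$''---is false. Unfolding the inductive definition, $\wb_{P}(\left\{ \lcut(A) \right\}) = \left\{ \elPT{B} \cup \left\{ \lcut(B) \right\} \mid \elPT{B} \in \wb_{P}(\emptyset) \amp A \ll B \right\}$ with $\wb_{P}(\emptyset) = \left\{ \left\{ \rcut(a) \right\} \mid a \ll S \right\}$, so every element of $\wb_{P}(\left\{ \lcut(A) \right\})$ carries an $\rcut$-generator, and $\left\{ \lcut(B) \right\}$ alone is never among them. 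The missing idea is precisely the compactness of $\mathcal{S}$ (this is the stably compact section, and this is where compactness earns its keep): since $\mathcal{S}$ is compact and locally compact, there is $C \in \Fin{S}$ with $S \cov C$ and $c \ll S$ for each $c \in C$; then by \ref{r1} and \ref{r3} one gets $\left\{ \lcut(B) \right\} \cov_{P} \left\{ \left\{ \lcut(B), \rcut(c) \right\} \mid c \in C \right\}$, a finitely enumerable family contained in $\wb_{P}(\left\{ \lcut(A) \right\})$, whence $\left\{ \lcut(B) \right\} \ll \left\{ \lcut(A) \right\}$ by Lemma \ref{lem:wbT}.\ref{lem:wbT1}. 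This is not a cosmetic repair: for a merely stably locally compact, non-compact $\mathcal{S}$, the top element $\emptyset$ of $\Patch(\mathcal{S})$ is only approximated by sets containing $\rcut$-data, the step fails, and $\varepsilon_{d}$ would not be perfect---so your proof of item \ref{lem:edFTopMap2}, which never invokes compactness of $\mathcal{S}$, does not close.
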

\begin{proof}
  \ref{lem:edFTopMap1}.
  It suffices to show that the axiom \ref{D4}
  is derivable in $T_{P}$. Suppose that
$\wayabove A \subseteq \bigvee^{F}\! \left\{
  \wayabove A_{0}, \dots, \wayabove A_{n-1} \right\}$.
  By \ref{l3}, we have
$\left\{ \lcut(A) \right\}
\cov_{P}\
\left\{ \left\{ \lcut(B) \right\} \mid A \ll B \right\}$.
Let $B \gg A$. Then, there exist $B_{0} \gg A_{0}, \dots,
B_{n-1} \gg A_{n-1}$ such that $B_{0} \downarrow \cdots   \downarrow
B_{n-1} \ll B$. For each $i < n$, we have
  $
  \Fin{P_{P}} \cov_{P} \left\{ \left\{ \lcut(A_{i}) \right\} \right\}
  \cup \left\{ \left\{ \rcut(b) \right\} \mid b \in B_{i}\right\}
  $
by Lemma \ref{lem:LittleFact}. Then, by \ref{r2} we have
\[
  \Fin{P_{P}} \cov_{P} \left\{ \left\{ \lcut(A_{i}) \right\} \mid i <
n \right\}
  \cup \left\{ \left\{ \rcut(c) \right\} \mid c \in  B_{0} \downarrow
  \cdots \downarrow  B_{n-1} \right\}.
\]
For each $c \in B_{0} \downarrow \cdots \downarrow  B_{n-1}$, we have
$\left\{ \rcut(c), \lcut(B) \right\} \cov_{P} \emptyset$ 
by \ref{l2} and \ref{D}.
Hence,
  $
  \left\{ \lcut(B) \right\} \cov_{P}
  \left\{ \left\{ \lcut(A_{i}) \right\} \mid i < n \right\}.
  $
Therefore $\left\{ \lcut(A) \right\} \cov_{P}
\left\{ \left\{ \lcut(A_{i}) \right\} \mid i < n \right\}$.

\medskip
\noindent
  \ref{lem:edFTopMap2}.  It suffices to show that $B \gg A$
  implies $\left\{\lcut(B) \right\} \ll \left\{\lcut(A) \right\}$ in
  $\Patch(\mathcal{S})$. Suppose that $B \gg A$. Since
  $\mathcal{S}$ is compact and locally compact, there exists $ C \in
  \Fin{S}$ such that $S \cov C$ and $c \ll S$ for each $c \in C$.
  By the axioms \ref{r1} and \ref{r3}, we have
  $\left\{  \lcut(B)\right\}
  \cov_{P}
  \left\{ \left\{ \lcut(B), \rcut(c) \right\} \mid c \in C \right\}
   \subseteq
   \wb_{P}(\left\{ \lcut(A) \right\})$. 
   Hence,
  $\left\{\lcut(B) \right\} \ll \left\{\lcut(A) \right\}$.

\medskip
\noindent
\ref{lem:edFTopMap3}.\ The proof is similar to that of Lemma
\ref{lem:Core}.\ref{lem:CorefMono}.
\end{proof}

\begin{proposition}
  For any compact regular formal topology $\mathcal{S}'$ and a perfect
  formal topology map $r \colon \mathcal{S}' \to \mathcal{S}^{d}$, there exists a
  unique
  formal topology map $\tilde{r} \colon \mathcal{S}' \to
  \Patch(\mathcal{S})$ such that $\varepsilon_{d} \circ
  \tilde{r} = r$.
\end{proposition}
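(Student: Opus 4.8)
The plan is to follow closely the template of Proposition \ref{prop:UnivPropPatch}, exploiting that $\mathcal{S}^{d}$ retains only the $\lcut$-generators, so that the $\rcut$-part of $\tilde{r}$ must be reconstructed as a located complement. First I would define $\tilde{r}\colon\mathcal{S}'\to\Patch(\mathcal{S})$ on the generators of $T_{P}$ by
\[
  b\mathrel{\tilde{r}}\left\{\lcut(A)\right\}\defeqiv b\mathrel{r}\left\{\lcut(A)\right\},
  \qquad
  b\mathrel{\tilde{r}}\left\{\rcut(a)\right\}\defeqiv\left(\exists a'\ll a\right)b\in\left(r^{-}\left\{\lcut(\left\{a'\right\})\right\}\right)^{*},
\]
the second clause being dictated by the identities $\neg\lcut(\{a'\})\leq\rcut(a)\leq\neg\lcut(\{a\})$ (valid for $a'\ll a$) forced by \ref{Loc} and \ref{D}. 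Writing $V_{c}\defeql r^{-}\{\lcut(\{c\})\}$, one sees immediately that $\varepsilon_{d}\circ\tilde{r}$ agrees with $r$ on every generator $\lcut(A)$ of $T^{d}$, hence equals $r$; and uniqueness is immediate from the fact that $\varepsilon_{d}$ is a monomorphism (Lemma \ref{lem:edFTopMap}.\ref{lem:edFTopMap3}). The substance of the proof is therefore to check that $\tilde{r}$ respects the axioms of $T_{P}$ in the sense of \eqref{eq:PreservAx}.

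The axioms \ref{l1}--\ref{l3} are respected for free, since on the $\lcut$-generators $\tilde{r}$ is just $r$ and \ref{l1}--\ref{l3} are among the axioms of $T^{d}$. Axiom \ref{D} reduces to $V_{a}\downarrow V_{a'}^{*}\cov'\emptyset$ for $a'\ll a$; this holds because $a'\ll a$ yields $\lcut(\{a\})\leq\lcut(\{a'\})$ in $\Sat{\mathcal{S}^{d}}$ (Lemma \ref{lem:DeGrootSK}), whence $V_{a}\cov' V_{a'}$ and $V_{a}\downarrow V_{a'}^{*}\cov' V_{a'}\downarrow V_{a'}^{*}\cov'\emptyset$. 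Axiom \ref{r4} follows by interpolating $a'\ll a''\ll a$, which gives $V_{a'}^{*}\subseteq\tilde{r}^{-}\{\rcut(a'')\}$, and axioms \ref{r2}, \ref{r3} are handled similarly using stable compactness (Definition \ref{def:StablyLK}) and the cover structure of $\mathcal{S}$.

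The two genuinely delicate axioms are \ref{Loc} and \ref{r1}, and both rest on one structural fact: for $a\ll c$ one has $\lcut(\{c\})\ll\lcut(\{a\})$ in $\mathcal{S}^{d}$ (indeed $\{a\}\ll\{c\}$, and by Lemma \ref{lem:DeGrootSK} together with roundedness \ref{D3} and interpolation this upgrades to way-below in $\mathcal{S}^{d}$). Since $r$ is perfect it preserves this relation, and since $\mathcal{S}'$ is compact regular the relations $\ll$ and $\lll$ coincide there, so that $V_{c}\lll V_{a}$, i.e.\ $S'\cov' V_{c}^{*}\cup V_{a}$. For \ref{Loc}, given $a\ll b$ I interpolate $a\ll c\ll b$ and read off $S'\cov' V_{c}^{*}\cup V_{a}\subseteq\tilde{r}^{-}\{\rcut(b)\}\cup\tilde{r}^{-}\{\lcut(\{a\})\}$. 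For \ref{r1} I use compactness of $\mathcal{S}$ to extract a finite cover $S\cov\{b_{0},\dots,b_{n-1}\}$ with $b_{i}\ll a_{i}$, interpolate $b_{i}\ll b_{i}'\ll a_{i}$, and observe that $S\cov\{b_{0},\dots,b_{n-1}\}$ forces $\lcut(\{b_{0},\dots,b_{n-1}\})\vdash\bot$ in $\mathcal{S}^{d}$ (by the empty-join instance of \ref{D4}), whence $V_{b_{0}}\downarrow\cdots\downarrow V_{b_{n-1}}\cov'\emptyset$; telescoping the relations $S'\cov' V_{b_{i}'}^{*}\cup V_{b_{i}}$ along this empty meet then yields $S'\cov'\bigcup_{i}V_{b_{i}'}^{*}\subseteq\bigcup_{i}\tilde{r}^{-}\{\rcut(a_{i})\}$. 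The main obstacle is precisely this interplay: the located definition of $\tilde{r}$ on $\rcut$ interacts correctly with complements only once one upgrades way-below in $\mathcal{S}^{d}$ to $\lll$ in $\mathcal{S}'$, which is exactly where the hypotheses that $r$ be perfect and that $\mathcal{S}'$ be compact regular are indispensable.
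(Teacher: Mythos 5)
Your proposal is correct and takes essentially the same route as the paper: the definition of $\tilde{r}$ on generators is identical, your structural fact that $a \ll c$ yields $S' \cov' \bigl( r^{-}\left\{ \lcut(\left\{ c \right\}) \right\} \bigr)^{*} \cup r^{-}\left\{ \lcut(\left\{ a \right\}) \right\}$ (via perfectness of $r$ and the coincidence of $\ll$ and $\lll$ in the compact regular $\mathcal{S}'$) is precisely the singleton form of the paper's key relation \eqref{eq:SOF}, and uniqueness is obtained in the same way from Lemma \ref{lem:edFTopMap}.\ref{lem:edFTopMap3}. The one case you compress, \ref{r2}, is the case the paper treats in most detail --- it requires a binary instance of \ref{D4}, namely a finite $C \ll a \downarrow b$ with $\left\{ \lcut(C) \right\} \cov^{d} \left\{ \left\{ \lcut(\left\{ a' \right\}) \right\}, \left\{ \lcut(\left\{ b' \right\}) \right\} \right\}$ furnished by stable compactness and interpolation --- but this is exactly analogous to your nullary use of \ref{D4} in \ref{r1}, so your toolkit does close it.
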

\begin{proof}
  Let $r \colon \mathcal{S}' \to \mathcal{S}^{d}$ be a perfect map from a
  compact regular formal topology $\mathcal{S}'$. Define
  $\tilde{r} \colon \mathcal{S}' \to \Patch(\mathcal{S})$ on the
  generators of $T^{d}$ by
  \begin{align*}
    \begin{aligned}
    b \mathrel{\tilde{r}} \left\{ \rcut(a) \right\}
    &\defeqiv \left( \exists a' \ll a \right) b \in
    \left( r^{-}\left\{ \lcut(\left\{ a' \right\}) \right\} \right)^{*},\\
    b \mathrel{\tilde{r}} \left\{ \lcut(A) \right\}
    &\defeqiv b \mathrel{r} \left\{ \lcut(A) \right\}.
    \end{aligned}
  \end{align*}
  We must show that $\tilde{r}$ respects the axioms of $T^{d}$.
  Since $r$ is a formal topology map from $\mathcal{S}'$ to
  $\mathcal{S}^{d}$,  $\tilde{r}$ respects
  the axioms \ref{l1} -- \ref{l3}. To see that $\tilde{r}$ respects the
  other axioms, we first show that
  \begin{equation}\label{eq:SOF}
    A \ll B \implies S'
    \cov'
    \bigcup_{b \in B}\tilde{r}^{-}\left\{
    \rcut(b) \right\} \cup \tilde{r}^{-}\left\{ \lcut(A) \right\}
  \end{equation}
  for all $A, B \in \Fin{S}$. Suppose that $A \ll B$. Then, 
  there exists $C = \left\{ c_{0},\dots,c_{n-1} \right\}$
  and $D = \left\{ d_{0},\dots, d_{n-1} \right\}$ such that
  $A \cov C$ and for each $i < n$, there exists $b \in B$
  such that $c_{i} \ll  d_{i} \ll b$.
  Since $r \colon \mathcal{S}' \to \mathcal{S}^{d}$ is perfect and
  $\mathcal{S}'$ is regular, we have
    $
    r^{-} \left\{ \lcut(\left\{ d_{i} \right\}) \right\}
    \lll r^{-} \left\{ \lcut(\left\{ c_{i} \right\}) \right\}
    $
  for each $i < n$. Hence, 
  \begin{align*}
    S'
    &\cov'
      \bigcup_{d \in D} \left( r^{-}\left\{ \lcut(\left\{ d \right\})
    \right\} \right)^{*}
      \cup \left( r^{-}\left\{ \lcut(\left\{ c_{0} \right\}) \right\}
      \downarrow \cdots \downarrow
      r^{-}\left\{ \lcut(\left\{ c_{n-1} \right\}) \right\}
      \right) \\
    &\cov'
      \bigcup_{d \in D} \left( r^{-}\left\{ \lcut(\left\{ d \right\})
    \right\} \right)^{*}
      \cup r^{-}\left( \left\{ \lcut(\left\{ c_{0} \right\}) \right\}
      \downarrow \cdots \downarrow
      \left\{ \lcut(\left\{ c_{n-1} \right\}) \right\}
      \right) \\
    &\cov'
      \bigcup_{d \in D} \left( r^{-}\left\{ \lcut(\left\{ d \right\})
    \right\} \right)^{*}
      \cup r^{-}\left\{ \lcut(C) \right\}\\
    &\cov'
    \bigcup_{b \in B}\tilde{r}^{-}\left\{
    \rcut(b) \right\} \cup \tilde{r}^{-}\left\{ \lcut(A) \right\}.
  \end{align*}
  We now show that $\tilde{r}$ respects the remaining axioms.

  \medskip
  \noindent\ref{r1} Since $\mathcal{S}$ is locally compact and compact, there
  exists $A \ll S$ such that $S \cov A$. Since $\left\{
  \lcut(A) \right\} \cov^{d} \emptyset$ by \ref{D4}, we have
  $S' \cov' \bigcup_{b \in S}\tilde{r}^{-}\left\{ \rcut(b) \right\}$
  by \eqref{eq:SOF}.

  \medskip
  \noindent\ref{r2}
  Suppose that $d \in \tilde{r}^{-}\left\{ \rcut(a) \right\}
  \downarrow \tilde{r}^{-}\left\{ \rcut(b) \right\}$. Then, there
  exist $a' \ll a$ and $b' \ll b$ such that
  $d \downarrow r^{-}\left\{ \lcut(\left\{ a' \right\}) \right\} \cov' \emptyset$ and 
  $d \downarrow r^{-}\left\{ \lcut(\left\{ b' \right\}) \right\} \cov' \emptyset$.
  Since $\mathcal{S}$ is stably compact, there exists $C \in \Fin{S}$ such that
  $\left\{ \lcut(C) \right\} \cov^{d} \left\{ \left\{
    \lcut(\left\{ a' \right\})\right\}, \left\{\lcut(\left\{ b' \right\})
  \right\}\right\}$ and $C \ll a \downarrow b$.
  Then by \eqref{eq:SOF}, we have
  \begin{align*}
    d
    &\cov' 
   \left( \bigcup_{c \in a \downarrow b}\tilde{r}^{-}\left\{ \rcut(c)
   \right\}
   \cup \tilde{r}^{-}\left\{ \lcut(C) \right\} \right) \downarrow d \\
    &\cov' 
    \bigcup_{c \in a \downarrow b}\tilde{r}^{-}\left\{ \rcut(c) \right\}
    \cup \left( \tilde{r}^{-}\left\{ \lcut(C) \right\} \downarrow d \right)\\
    &\cov' 
    \bigcup_{c \in a \downarrow b}\tilde{r}^{-}\left\{ \rcut(c) \right\}
    \cup  \left( \tilde{r}^{-}\left\{ \left\{ \lcut(\left\{ a'
    \right\}) \right\},
    \left\{ \lcut(\left\{ b' \right\}) \right\} \right\} \downarrow d \right)\\
    &\cov' 
    \bigcup_{c \in a \downarrow b}\tilde{r}^{-}\left\{ \rcut(c) \right\}.
  \end{align*}
  Verification of the axioms \ref{r3}, \ref{r4}, and \ref{Loc} is straightforward.

  \medskip
  \noindent\ref{D} Suppose that
  $b \in \tilde{r}^{-} \left\{ \lcut(\left\{ a \right\}) \right\}
  \downarrow \tilde{r}^{-} \left\{ \rcut( a ) \right\}$. Then, there
  exists $a' \ll a$ such that $b \downarrow r^{-}\left\{
    \lcut(\left\{ a' \right\}) \right\} \cov' \emptyset$. Since $r$ is perfect and $\mathcal{S}'$
  is regular, we have
  \begin{align*}
    b
    &\cov' \left( \left(r^{-} \left\{ \lcut(\left\{ a \right\})
    \right\} \right)^{*} \cup r^{-}\left\{ \lcut({a'}) \right\}\right)
    \downarrow b \\
    &\cov' \left( \left(r^{-} \left\{ \lcut(\left\{ a \right\})
    \right\} \right)^{*}\downarrow b \right) \cup \left(
    r^{-}\left\{ \lcut({a'}) \right\} \downarrow b\right) \\
    &\cov'  \left(r^{-} \left\{ \lcut(\left\{ a \right\})
  \right\} \right)^{*}\downarrow r^{-}\left\{ \lcut(\left\{ a
  \right\}) \right\}\\
    &\cov'  \emptyset.
  \end{align*}
  Hence, $\tilde{r}$ gives rise to a formal topology map from
  $\mathcal{S}'$ to $\Patch(\mathcal{S})$, which clearly satisfies
  $\varepsilon_{d} \circ \tilde{r} = r$. The uniqueness of $\tilde{r}$
  follows from Lemma \ref{lem:edFTopMap}.\ref{lem:edFTopMap3}.
\end{proof}

Thus, $\varepsilon_{d} \colon \Patch(\mathcal{S}) \to \mathcal{S}^{d}$
satisfies the universal property of the patch of $\mathcal{S}^{d}$.
\begin{theorem}\label{thm:PatchdeGroot}
  For any stably compact formal topology $\mathcal{S}$, we have
  $\Patch(\mathcal{S}) \cong \Patch(\mathcal{S}^{d})$.
\end{theorem}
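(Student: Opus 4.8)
The plan is to read this statement as an instance of the uniqueness of coreflections, since essentially all of the work has already been carried out in the preceding proposition. First I would assemble the two structural facts that make the argument go through. On the one hand, $\Patch(\mathcal{S})$ is compact regular: it is locally compact regular by Proposition \ref{prop:PatchLKReg}, and it is compact because $\varepsilon_{P} \colon \Patch(\mathcal{S}) \to \mathcal{S}$ is a perfect map (Lemma \ref{lem:Core}.\ref{lem:CorefPerfect}) whose codomain $\mathcal{S}$ is compact, so the lemma stating that the domain of a perfect map into a compact formal topology is compact applies. On the other hand, $\mathcal{S}^{d}$ is stably compact by the corollary following Lemma \ref{lem:DeGrootSK}, so the patch construction applies to it and Theorem \ref{thm:CorefPatchSLKC} is available for $\mathcal{S}^{d}$.

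Next I would observe that the preceding proposition, together with Lemma \ref{lem:edFTopMap}, says exactly that $\varepsilon_{d} \colon \Patch(\mathcal{S}) \to \mathcal{S}^{d}$ is a universal arrow from the (co)reflective subcategory of compact regular formal topologies to the object $\mathcal{S}^{d}$: it is a perfect map (Lemma \ref{lem:edFTopMap}.\ref{lem:edFTopMap2}) whose domain is compact regular, and every perfect map $r \colon \mathcal{S}' \to \mathcal{S}^{d}$ out of a compact regular $\mathcal{S}'$ factors uniquely as $\varepsilon_{d} \circ \tilde{r}$. The only point needing care is that the factoring map $\tilde{r}$, a priori merely a formal topology map, is automatically perfect and hence a morphism of the relevant subcategory; this is immediate from the remark after Definition \ref{defSK} that every formal topology map between compact regular formal topologies is perfect. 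Thus $\varepsilon_{d}$ exhibits $\Patch(\mathcal{S})$ as a coreflection of $\mathcal{S}^{d}$ into the compact regular formal topologies.

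Finally, Theorem \ref{thm:CorefPatchSLKC} applied to the stably compact topology $\mathcal{S}^{d}$ supplies a second such coreflection, namely $\varepsilon_{P} \colon \Patch(\mathcal{S}^{d}) \to \mathcal{S}^{d}$. Both $\Patch(\mathcal{S})$ and $\Patch(\mathcal{S}^{d})$ are therefore terminal objects in the comma category of perfect maps from compact regular formal topologies into $\mathcal{S}^{d}$, and any two such universal arrows are uniquely isomorphic; transporting this isomorphism back yields $\Patch(\mathcal{S}) \cong \Patch(\mathcal{S}^{d})$. I expect no genuine obstacle at this stage: the substantive content — the construction of the lift $\tilde{r}$ and the verification that it respects the axioms of $T^{d}$ — is precisely what the preceding proposition provides, so the remaining task is only the bookkeeping of the categorical uniqueness argument together with the automatic-perfectness observation.
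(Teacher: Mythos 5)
Your proposal is correct and takes essentially the same route as the paper: the paper derives the theorem from the preceding proposition and Lemma \ref{lem:edFTopMap} by observing that $\varepsilon_{d} \colon \Patch(\mathcal{S}) \to \mathcal{S}^{d}$ satisfies the universal property of the patch of $\mathcal{S}^{d}$, whence the isomorphism follows from uniqueness of coreflections. Your extra bookkeeping (compactness of $\Patch(\mathcal{S})$ via Lemma \ref{lem:Core}.\ref{lem:CorefPerfect} together with the fact that the domain of a perfect map into a compact topology is compact, and the automatic perfectness of the factoring map between compact regular formal topologies) only makes explicit steps the paper leaves implicit.
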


\subsection{Spectral formal topologies}\label{sec:Spectral}
\begin{definition}
  A formal topology $\mathcal{S}$ is \emph{spectral} if the base of
  $\mathcal{S}$ is equipped with a meet semilattice structure $(S, 1,
  \wedge)$ that is compatible with the cover of $\mathcal{S}$ and
  such that $a \ll a$ for each $a \in S$.
  A formal topology map $r \colon \mathcal{S} \to \mathcal{S}'$
  between spectral formal topologies is \emph{spectral} if
  for each $a' \in S'$ there exists $A \in \Fin{S}$ such that
$
A =_{\mathcal{S}} r^{-}\left\{ a' \right\}.
$
A formal topology $\mathcal{S}$ is \emph{Stone} if it is spectral and $a \lll a$
  for each $a \in S$.
\end{definition}
Note that a spectral formal topology $\mathcal{S}$ is Stone if and only if 
it is regular if and only if every element $a \in S$ is complemented.
The spectral formal topologies and spectral maps form a category that is dually
equivalent to the category of distributive lattices and lattice homomorphisms
\cite[Chapter II, Section 3]{johnstone-82}.
Thus, its full subcategory of Stone topologies is dually equivalent to
the category of Boolean algebras and homomorphisms.

The following lemma prepares for Theorem \ref{thm:CorefPatchSpec} below.
\begin{lemma}\label{lem:Spec}
  \leavevmode
  \begin{enumerate}
    \item 
    \label{lem:SpectSLK}
    Every spectral formal topology is stably compact.

    \item 
    \label{lem:SpectPerfect}
    A formal topology map between spectral formal topologies
    is perfect if and only if it is spectral.

    \item \label{lem:PatchSpect}
     If $\mathcal{S}$ is a spectral formal topology, then $\Patch(S)$
     is Stone.
  \end{enumerate}
\end{lemma}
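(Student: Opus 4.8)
The plan is to treat the three items separately: items \ref{lem:SpectSLK} and \ref{lem:SpectPerfect} are mostly bookkeeping around the reflexivity $a \ll a$ that comes with spectrality, whereas item \ref{lem:PatchSpect} carries the actual content. As a preliminary I would record that in a spectral formal topology one has $a \ll b \iff a \cov \{b\}$, and more generally $A \ll B \iff A \cov B$ for $A,B \in \Fin{S}$: the forward implications are instances of the definition of $\ll$ (taking $W = \{b\}$, resp.\ $W = B$), and the converses hold because a finite set whose members are each way-below themselves is way-below itself. I would also note the general fact that $U \ll U$ together with $U \cov V$ implies $U \ll V$, which I will reuse several times.

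For item \ref{lem:SpectSLK}, local compactness is immediate with $\wb(a) \defeql \{a\}$, since $a \cov \{a\}$ and $a \ll a$. Compactness ($S \ll S$) follows from the top element $1$: from $S \cov \{1\}$ and $1 \ll 1$, any $W$ with $S \cov W$ admits a finite $W_{0} \subseteq W$ with $1 \cov W_{0}$, and then $a \cov \{1\} \cov W_{0}$ for every $a \in S$. For stability, given $a \ll a'$ and $b \ll b'$ I would first check $a \downarrow b \cov a' \downarrow b'$ (each $c \in a \downarrow b$ has $c \cov a'$ and $c \cov b'$, hence $c \cov a' \downarrow b'$) and then upgrade this cover to $a \downarrow b \ll a' \downarrow b'$ via the fact above, noting $a \downarrow b \ll a \downarrow b$ because $a \downarrow b =_{\mathcal{S}} \{a \wedge b\}$ and $a \wedge b \ll a \wedge b$.

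For item \ref{lem:SpectPerfect}, both directions are short. If $r$ is perfect, then for $a' \in S'$ spectrality of $\mathcal{S}'$ gives $a' \ll' a'$, hence $r^{-}a' \ll r^{-}a'$; feeding the cover $r^{-}a' \cov r^{-}a'$ into this way-below relation yields a finite $A \in \Fin{r^{-}a'}$ with $r^{-}a' \cov A$, and since $A \subseteq r^{-}a'$ also gives $A \cov r^{-}a'$ we obtain $A =_{\mathcal{S}} r^{-}a'$, so $r$ is spectral. Conversely, if $r$ is spectral and $a' \ll' b'$ (equivalently $a' \cov' b'$), then $r^{-}a' \cov r^{-}b'$ while $r^{-}b' =_{\mathcal{S}} B$ for some $B \in \Fin{S}$ with $B \ll B$; the same $U \ll U$/$U \cov V$ fact then gives $r^{-}a' \ll r^{-}b'$, so $r$ is perfect.

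Item \ref{lem:PatchSpect} is the main obstacle. By Proposition \ref{prop:PatchLKReg}, $\Patch(\mathcal{S})$ is already locally compact regular, and since a spectral formal topology is Stone exactly when it is regular, it suffices to prove that $\Patch(\mathcal{S})$ is spectral. The base $\Fin{P_{P}}$ carries the meet semilattice structure with top $\emptyset$ and meet $\elPT{A} \wedge \elPT{B} = \elPT{A} \cup \elPT{B}$, compatible with $\cov_{P}$ because the corresponding identity $\sat_{P}(\elPT{A} \cup \elPT{B}) = \sat_{P}\elPT{A} \wedge \sat_{P}\elPT{B}$ holds in the frame $\Sat{\Patch(\mathcal{S})}$. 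The crux is therefore the compactness condition $\elPT{A} \ll_{P} \elPT{A}$ for every $\elPT{A} \in \Fin{P_{P}}$, which I would establish by induction on $\elPT{A}$ using the explicit $\wb_{P}$ from Lemma \ref{lem:wbT} and the criterion $\elPT{A} \ll_{P} \elPT{A} \iff (\exists \mathcal{C} \in \Fin{\wb_{P}(\elPT{A})})\, \elPT{A} \cov_{P} \mathcal{C}$. The base case $\elPT{A} = \emptyset$ uses compactness of $\mathcal{S}$ with \ref{r1} and \ref{r3} to reduce $\wb_{P}(\emptyset)$ to the single element $\{\rcut(1)\}$. For the inductive steps, spectrality of $\mathcal{S}$ (giving $a \ll a$ and $A \ll A$) guarantees that $\elPT{C} \in \wb_{P}(\elPT{A})$ implies $\elPT{C} \cup \{\rcut(a)\} \in \wb_{P}(\elPT{A} \cup \{\rcut(a)\})$ and $\elPT{C} \cup \{\lcut(A)\} \in \wb_{P}(\elPT{A} \cup \{\lcut(A)\})$; meeting the inductive cover $\elPT{A} \cov_{P} \mathcal{C}$ with the adjoined generator, which is legitimate because $\wedge$ distributes over $\bigvee$ in $\Sat{\Patch(\mathcal{S})}$, then produces the required finite cover of $\elPT{A} \cup \{\rcut(a)\}$ (resp.\ $\elPT{A} \cup \{\lcut(A)\}$) by elements of its $\wb_{P}$. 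The delicate point is precisely this propagation: the reflexivity built into spectrality of $\mathcal{S}$ is what keeps the way-below witnesses at the same level so that the induction closes. With $\elPT{A} \ll_{P} \elPT{A}$ in hand, $\Patch(\mathcal{S})$ is spectral and regular, hence Stone.
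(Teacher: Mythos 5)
Your proposal is correct, and for the substantive item it takes a genuinely different route from the paper. Items \ref{lem:SpectSLK} and \ref{lem:SpectPerfect} the paper dismisses as straightforward, and your fill-ins are exactly the right bookkeeping: the equivalences $a \ll b \iff a \cov \left\{ b \right\}$ and $A \ll B \iff A \cov B$, plus the transfer fact $U \ll U \amp U \cov V \implies U \ll V$, which you correctly reuse for compactness, stability, and both directions of \ref{lem:SpectPerfect}. For item \ref{lem:PatchSpect}, both you and the paper reduce to showing that $\Patch(\mathcal{S})$ is spectral (the meet structure $1 = \emptyset$, $\elPT{A} \wedge \elPT{B} = \elPT{A} \cup \elPT{B}$, and regularity via Proposition \ref{prop:PatchLKReg} are common to both), but you prove the compactness condition $\elPT{A} \ll_{P} \elPT{A}$ directly, by induction on $\elPT{A}$ through the explicit function $\wb_{P}$ and the criterion $\elPT{A} \ll_{P} \elPT{A} \iff \left( \exists\, \mathcal{C} \in \Fin{\wb_{P}(\elPT{A})} \right) \elPT{A} \cov_{P} \mathcal{C}$, whereas the paper instead rewrites the presentation: using $a \ll a$ and the top element it replaces \ref{r1}, \ref{r2}, \ref{l3} by the finitary axioms $\top \vdash \rcut(1)$, $\rcut(a) \wedge \rcut(b) \vdash \rcut(a \wedge b)$, and $\lcut(A) \wedge \lcut(B) \vdash \lcut(A \cup B)$, notes that \ref{r4} becomes trivial (as $\rcut(a)$ is among its own disjuncts), and concludes spectrality from the general principle that a theory with only finite joins over a meet semilattice of generators presents a spectral topology. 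The paper's route is shorter and exposes the coherent, finitary character of the patch theory of a spectral topology --- in keeping with the distributive-lattice/Boolean-algebra reading after Theorem \ref{thm:CorefPatchSpec} --- but it leans on that general principle without proof; your route inlines it, reusing the already-established Lemma \ref{lem:wbT} (whose cover-induction does the work the general principle would), and you correctly identify the hinge that makes the induction close: spectrality lets you take the way-below witnesses at the same level ($b = a$, $B = A$), so that $\elPT{C} \cup \left\{ \rcut(a) \right\} \in \wb_{P}(\elPT{A} \cup \left\{ \rcut(a) \right\})$ and likewise for $\lcut$. Both arguments ultimately rest on the same reflexivity $a \ll a$, $A \ll A$; yours is more self-contained given the paper's lemmas, the paper's is more economical.
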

\begin{proof}
\ref{lem:SpectSLK} and \noindent\ref{lem:SpectPerfect} are
straightforward. As for \ref{lem:PatchSpect}, 
  if $\mathcal{S}$ is a spectral formal topology,
  then the axioms \ref{r1}, \ref{r2}, and \ref{l3} are equivalent to
  the following axioms:
  \begin{enumerate}
    \item[(R1)'] $\top \vdash \rcut(1)$
    \item[(R2)'] $\rcut(a) \wedge \rcut(b) \vdash \rcut(a \wedge b)$
    \item[(L3)'] $\lcut(A) \wedge \lcut(B) \vdash \lcut(A \cup B)$
  \end{enumerate}
  Moreover, the axiom \ref{r4} becomes trivial. Thus, the axioms
  of the theory $T_{P}$ only have finite joins. Since
the base of $\Patch(\mathcal{S})$ is equipped with a finite meet
structure with $1 \defeql \emptyset$ and
$\elPT{A} \wedge \elPT{B} \defeql
\elPT{A} \cup \elPT{B}$, it is a spectral formal topology. Since
$\Patch(\mathcal{S})$ is regular by Proposition \ref{prop:PatchLKReg}, it is Stone.
\end{proof}

\begin{theorem}[{\citet[Theorem
  3.3]{escardo2001regular}}]\label{thm:CorefPatchSpec}
  The patch construction exhibits the category of
  Stone formal topologies
  as a coreflective subcategory of the category
  of spectral formal topologies and spectral maps.
\end{theorem}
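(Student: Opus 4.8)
The plan is to deduce this statement by restricting the coreflection of Theorem~\ref{thm:CorefPatchSLKC} along the subcategory inclusions supplied by Lemma~\ref{lem:Spec}, rather than re-verifying a universal property from first principles. First I would record the relevant inclusions of categories. By Lemma~\ref{lem:Spec}.\ref{lem:SpectSLK} every spectral formal topology is stably compact, and by Lemma~\ref{lem:Spec}.\ref{lem:SpectPerfect} a formal topology map between spectral topologies is spectral exactly when it is perfect; hence the category of spectral formal topologies and spectral maps is a full subcategory of the category of stably compact formal topologies and perfect maps. Likewise, a Stone topology is compact regular (it is spectral, hence stably compact and in particular compact by Lemma~\ref{lem:Spec}.\ref{lem:SpectSLK}, and regular by hypothesis), so the Stone topologies form a full subcategory of the compact regular formal topologies, and again by Lemma~\ref{lem:Spec}.\ref{lem:SpectPerfect} the spectral maps between Stone topologies coincide with the perfect ones.

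Next I would check that the patch functor and its counit restrict correctly. For spectral $\mathcal{S}$, Lemma~\ref{lem:Spec}.\ref{lem:PatchSpect} shows that $\Patch(\mathcal{S})$ is Stone, so $\Patch$ carries the spectral subcategory into the Stone subcategory. The counit $\varepsilon_{P} \colon \Patch(\mathcal{S}) \to \mathcal{S}$ is perfect by Lemma~\ref{lem:Core}.\ref{lem:CorefPerfect}, and since $\Patch(\mathcal{S})$ and $\mathcal{S}$ are both spectral, Lemma~\ref{lem:Spec}.\ref{lem:SpectPerfect} upgrades it to a spectral map; thus $\varepsilon_{P}$ is a morphism in the category of spectral topologies and spectral maps.

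Finally I would transfer the universal property. Given a Stone topology $\mathcal{S}'$ and a spectral map $r \colon \mathcal{S}' \to \mathcal{S}$, I view $r$ as a perfect map from the compact regular $\mathcal{S}'$ to the stably compact $\mathcal{S}$, whereupon Theorem~\ref{thm:CorefPatchSLKC} yields a unique perfect map $\tilde{r} \colon \mathcal{S}' \to \Patch(\mathcal{S})$ with $\varepsilon_{P} \circ \tilde{r} = r$. Since $\mathcal{S}'$ and $\Patch(\mathcal{S})$ are both Stone, Lemma~\ref{lem:Spec}.\ref{lem:SpectPerfect} makes $\tilde{r}$ spectral, and any spectral lift of $r$ is in particular a perfect lift, so uniqueness among spectral maps is inherited from uniqueness among perfect maps. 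This exhibits $\varepsilon_{P}$ as the universal arrow and hence $\Patch$ as right adjoint to the (full) inclusion, establishing the coreflection. The only genuine content sits in Lemma~\ref{lem:Spec}.\ref{lem:PatchSpect} (that the patch of a spectral topology is Stone) and in the identification of spectral with perfect maps in Lemma~\ref{lem:Spec}.\ref{lem:SpectPerfect}; with those available the argument is a purely formal restriction, so I do not expect a real obstacle beyond keeping the three nested subcategory inclusions straight.
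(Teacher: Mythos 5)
Your proposal is correct and matches the paper's intended argument: the paper gives no separate proof of Theorem~\ref{thm:CorefPatchSpec}, but states Lemma~\ref{lem:Spec} explicitly as its preparation, so the proof is precisely your restriction of the coreflection of Theorem~\ref{thm:CorefPatchSLKC} along the inclusions furnished by Lemma~\ref{lem:Spec}.\ref{lem:SpectSLK}--\ref{lem:PatchSpect}. The checks you add---that the counit $\varepsilon_{P}$ and the unique perfect lift $\tilde{r}$ are spectral via Lemma~\ref{lem:Spec}.\ref{lem:SpectPerfect}, and that uniqueness transfers---are exactly the routine bookkeeping the paper leaves implicit.
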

Hence $\Patch(\mathcal{S})$ represents the Boolean algebra generated
by the distributive lattice represented by 
the spectral topology $\mathcal{S}$; see
\citet{cederquist2000entailment}.

\section{Frames of perfect nuclei}\label{sec:PerfectNuclei}
\citet{escardo2001regular} defined the patch of a frame $X$ to be the
frame of perfect nuclei on $X$. We give a predicative presentation the
frame of perfect nuclei on a stably locally compact formal topology,
and clarify the relation between this topology and the geometric
presentation of the patch topology given in Section~\ref{sec:Patch}.

\subsection{Subtopologies}
Since the nuclei on a frame correspond to its sublocales 
\cite[Chapter II, Section 2.3]{johnstone-82}, we review
the notion of sublocale in the setting of formal topology.
\begin{definition}
  A \emph{subtopology} of a formal topology $\mathcal{S} = (S, \cov,
  \leq)$ is a formal topology $\mathcal{T} = (S,
  \cov_{\mathcal{T}}, \leq)$ such
  that 
  $
  a \cov U \implies a \cov_{\mathcal{T}} U
  $
  for all $a \in S$ and $U \subseteq S$.
  If $\mathcal{T}$ is a subtopology of $\mathcal{S}$, we write
  $\mathcal{T} \sqsubseteq \mathcal{S}$. In this case,
  there is a canonical embedding
  $\iota_{\mathcal{T}} \colon \mathcal{T} \to \mathcal{S}$ represented
  by the identity relation $\id_{S}$ on $S$.
\end{definition}

Let $\mathcal{S}$ be a formal topology and let $V \subseteq S$.
The \emph{open subtopology} of $\mathcal{S}$ determined by $V$
has a cover defined by
$
a \cov_V U \defeqiv a \downarrow V \cov U.
$
The \emph{closed subtopology} of $\mathcal{S}$ determined by $V$
has a cover defined by
  $
  a \cov^{\mathcal{S} - V} U \defeqiv a \cov V \cup U.
  $
If $\mathcal{S}$ is inductively generated, then the
open subtopology determined by $V$ can be characterised by
the axiom $S \cov_{V} V$, i.e.\ $a \cov_{V} V$ for
each $a \in S$. Similarly, the closed subtopology
determined by $V$ can be characterised by the axiom $V
\cov^{\mathcal{S} - V}
\emptyset$.

\subsubsection*{The frame of subtopologies}
In \cite[Section 4]{SublocFTop}, Vickers showed that the class
$\SubTop(\mathcal{S})$ of inductively generated subtopologies of an inductively generated
formal topology $\mathcal{S}$ forms a coframe: they are closed under
finite joins and set-indexed meets, and finite joins distribute over
set-indexed meets. 
Specifically, the
subtopology $\mathcal{S}_{\bot}$ with the trivial cover $\cov_{\bot}
=
S \times \Pow{S}$ is the least subtopology of $\mathcal{S}$. It is
characterised by the axiom $S \cov_{\bot} \emptyset$. If $\mathcal{S}_{0}$ and
$\mathcal{S}_{1}$ are inductively generated subtopologies of
$\mathcal{S}$, then their binary join $\mathcal{S}_{0} \vee
\mathcal{S}_{1}$ is characterised by the axioms of the form
  $
  a \downarrow b \cov_{\vee}  U \cup V
  $
where $a \cov_{0} U$ and $b \cov_{1} V$ are axioms of
$\mathcal{S}_{0}$ and $\mathcal{S}_{1}$ respectively. If $( \mathcal{S}_{i} )_{i \in I}$ is a family of
inductively generated subtopologies of $\mathcal{S}$, then the meet
$\bigwedge_{i \in I} \mathcal{S}_{i}$ is characterised by the union of axioms of each
$\mathcal{S}_{i}$.
We write $\SubTop(\mathcal{S})^{\op}$ for the frame
obtained by reversing the order of $\SubTop(\mathcal{S})$.

\subsection{Perfect subtopologies}\label{sec:PerferctSubTop}
A perfect nucleus on a stably locally compact locale
corresponds to the following notion in formal topology.
See \citet[Definition 2.3]{escardo2001regular} for the localic notion.
\begin{definition}
  A subtopology $\mathcal{S}'$ of a stably locally compact formal
  topology $\mathcal{S}$ is $\emph{perfect}$ if the canonical
  embedding $\iota_{\mathcal{S}'} \colon \mathcal{S}' \to
  \mathcal{S}$ is perfect, i.e.\
  \[
    a \ll b \implies a \ll' b.
  \]
Note that every perfect subtopology is locally compact,
and hence inductively generated.
\end{definition}

In what follows, we fix a stably locally compact formal
topology $\mathcal{S}$. For each $a \in S$ and $A \in \Fin{S}$, we
introduce the following notations:
\begin{align*}
  \Closed{a} &\defeql \text{the closed subtopology of $\mathcal{S}$ determined by
  $\left\{ a \right\}$.} \\
  \KFit{A} &\defeql \bigwedge_{\mathclap{A \ll B}}\mathcal{S}_{B} \;\text{where
    $\mathcal{S}_{B}$ is the open subtopology of $\mathcal{S}$
    determined by $B \in \Fin{S}$.}
\end{align*}
Note that for each $U \subseteq S$, the closed
subtopology  of $\mathcal{S}$  determined by $U$ is $\bigwedge_{a \in U} \Closed{a}$. In
particular, we have $\bigwedge_{a \in S} \Closed{a} =
\mathcal{S}_{\bot}$. Note also that  $\KFit{\emptyset} =
\mathcal{S}_{\bot}$.

We show that the perfect subtopologies of $\mathcal{S}$ form a
set-based subframe of $\SubTop(\mathcal{S})^{\op}$ generated by the
subtopologies of the form $\Closed{a} \vee \KFit{A}$.
\begin{lemma}\label{lem:PerfctSubLoc}
  \leavevmode
  \begin{enumerate}
    \item\label{lem:PerfctSubLoc1} $\Closed{a} \vee \Closed{b} = \bigwedge_{ c \in a \downarrow
      b} \Closed{c}$.
    \item\label{lem:PerfctSubLoc2} $\KFit{A} \vee \KFit{B} = \KFit{A \cup B}$.
  \end{enumerate}
\end{lemma}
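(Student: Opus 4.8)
The plan is to prove both equalities by working with the characterising axioms of the subtopologies involved, using the fact (established in the excerpt) that $\SubTop(\mathcal{S})^{\op}$ is a frame and that joins and meets of inductively generated subtopologies are computed via explicit axiom-sets. The general strategy for each part is to show that the two sides have the same inductively generated cover, which (since both are subtopologies over the same base $S$ with the same order) suffices for equality. I would establish each direction by exhibiting, for a basic covering axiom of one side, a derivation of the corresponding cover relation in the other.

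For part \ref{lem:PerfctSubLoc1}, recall that the closed subtopology $\Closed{a}$ is characterised by the single axiom $\left\{ a \right\} \cov^{\mathcal{S}-\left\{a\right\}} \emptyset$, i.e.\ $a \cov a$ is strengthened to $a \cov \emptyset$ in the subtopology. By the description of binary joins in the coframe, $\Closed{a} \vee \Closed{b}$ is characterised by the axiom $a \downarrow b \cov \emptyset$ (taking $U = V = \emptyset$ in the join formula). On the other side, $\bigwedge_{c \in a \downarrow b} \Closed{c}$ is characterised by the union of the axioms $c \cov \emptyset$ for every $c \in a \downarrow b$; since meet in the coframe corresponds to taking all the axioms together, this is again the demand that every element of $a \downarrow b$ covers $\emptyset$. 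These two presentations coincide, because $a \downarrow b \cov \emptyset$ holds precisely when $c \cov \emptyset$ for every $c \in a \downarrow b$. Hence the two subtopologies have the same cover.

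For part \ref{lem:PerfctSubLoc2}, I would unwind the definition $\KFit{A} = \bigwedge_{A \ll B} \mathcal{S}_{B}$, where each open subtopology $\mathcal{S}_{B}$ is characterised by $S \cov_{B} B$. The meet $\KFit{A} \vee \KFit{B}$ unfolds, via the join-of-meets formula in the frame $\SubTop(\mathcal{S})^{\op}$, into a subtopology whose axioms come from pairing an axiom $S \cov A'$ (with $A \ll A'$) against an axiom $S \cov B'$ (with $B \ll B'$), producing axioms of the form $S \cov A' \downarrow B'$. The key computation is that these match the axioms of $\KFit{A \cup B} = \bigwedge_{A \cup B \ll C} \mathcal{S}_{C}$: on one hand, stable local compactness gives $A' \downarrow B' \ll (A \cup A') \downarrow \cdots$ of the right shape, and on the other, the interpolation property (Lemma \ref{lem:LKInterpolate}) lets me refine any $C$ with $A \cup B \ll C$ into a downward product $A' \downarrow B'$ with $A \ll A'$ and $B \ll B'$. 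Establishing this equivalence between the two families of generating axioms is the heart of the argument.

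\textbf{The main obstacle} I anticipate is part \ref{lem:PerfctSubLoc2}: the correspondence between the axioms of the join $\KFit{A} \vee \KFit{B}$ and those of $\KFit{A \cup B}$ is not immediate, because one must pass between the way-below relation on finite subsets and the $\downarrow$ operation, and this is exactly where stable local compactness (Definition \ref{def:StablyLK}, extended to finite subsets) and interpolation are needed in tandem. I would handle this by first proving the auxiliary fact that $A \cup B \ll C$ holds if and only if there exist $A' , B'$ with $A \ll A'$, $B \ll B'$, and $A' \downarrow B' \ll C$; the forward direction uses interpolation to split $C$, and the backward direction uses the stability property $a \ll a' \amp b \ll b' \implies a \downarrow b \ll a' \downarrow b'$ lifted to finite subsets. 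Once this lemma is in hand, the equality of the two subtopologies' covers follows by comparing their generating axioms, and the remaining verifications are routine applications of the coframe operations described earlier in the excerpt.
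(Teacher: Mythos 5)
Part \ref{lem:PerfctSubLoc1} of your proposal is correct and is essentially the paper's own argument. The genuine problem is part \ref{lem:PerfctSubLoc2}, where you unfold the join incorrectly. In the coframe $\SubTop(\mathcal{S})$ as described in the paper, the binary join of subtopologies with axioms $a \cov_{0} U$ and $b \cov_{1} V$ is presented by axioms $a \downarrow b \cov_{\vee} U \cup V$: the $\downarrow$ acts on the \emph{covered} elements on the left, while the covering sets on the right are combined by \emph{union}. Applied to the open subtopologies $\mathcal{S}_{A'}$ (axioms $c \cov A'$ for all $c \in S$) and $\mathcal{S}_{B'}$, this yields axioms $c \downarrow d \cov A' \cup B'$, i.e.\ $S \cov A' \cup B'$, so that $\mathcal{S}_{A'} \vee \mathcal{S}_{B'} = \mathcal{S}_{A' \cup B'}$. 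Your unfolding instead produces axioms $S \cov A' \downarrow B'$, which present the \emph{meet} $\mathcal{S}_{A'} \wedge \mathcal{S}_{B'}$ (the open subtopology of the intersection), not the join.

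This error propagates into your proposed auxiliary lemma, whose backward direction is false: from $A \ll A'$, $B \ll B'$ and $A' \downarrow B' \ll C$ one cannot conclude $A \cup B \ll C$. Whenever $A' \downarrow B' \cov \emptyset$ (think of two basic opens with disjoint closures), the hypothesis $A' \downarrow B' \ll C$ holds vacuously for every $C$, while $A \cup B \ll C$ generally fails; stability runs in the other direction, giving $A \downarrow B \ll A' \downarrow B'$, which says nothing about unions. A further warning sign is that your plan invokes stable local compactness and interpolation, whereas with the correct unfolding part \ref{lem:PerfctSubLoc2} needs neither (in the paper, stability only enters afterwards, in Lemma \ref{lem:PerfctSubLocBase}). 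All that is required is the elementary fact $A \ll A' \amp B \ll B' \implies A \cup B \ll A' \cup B'$ (given a cover of $A' \cup B'$, take the union of the two finite subcovers): then every axiom $a \cov A' \cup B'$ of $\KFit{A} \vee \KFit{B}$ is an axiom of $\KFit{A \cup B}$, and conversely each axiom $a \cov C$ of $\KFit{A \cup B}$, with $A \cup B \ll C$, arises from the choice $A' = B' = C$. This comparison of generating axioms is exactly the paper's short proof.
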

\begin{proof}
  \ref{lem:PerfctSubLoc1}. The axioms of $\Closed{a} \vee \Closed{b}$
  are of the form
        $
        c \cov' \emptyset
        $
      for each $c \in a \downarrow b$. Hence,
      $\Closed{a} \vee \Closed{b} = \bigwedge_{ c \in a \downarrow
      b} \Closed{c}$.

      \medskip
  \noindent\ref{lem:PerfctSubLoc2}.
  The axioms of $\KFit{A} \vee \KFit{B}$ are of the form $a \cov' A'
  \cup B'$ where $A \ll A'$ and $B \ll B'$, which is an axiom of 
  $\KFit{A \cup B}$. Conversely, every axiom of $\KFit{A \cup B}$
  is an axiom of $\KFit{A} \vee  \KFit{B}$. Thus, 
  $\KFit{A} \vee \KFit{B} = \KFit{A \cup B}$.
\end{proof}

\begin{lemma}\label{lem:PerfctSubLocBase}
  \leavevmode
  \begin{enumerate}
    \item\label{lem:PerfctSubLocBase1}
      $\Closed{a} \vee \KFit{A}$ is a perfect subtopology for each $a \in S$ and
      $A \in \Fin{S}$.
    \item\label{lem:PerfctSubLocBase2}
      The subtopology meet of a set-indexed family $\left(\Closed{a_{i}} \vee
      \KFit{A_{i}}  \right)_{i \in I}$ is perfect.
    \item\label{lem:PerfctSubLocBase3}
      Every perfect subtopology of $\mathcal{S}$ is a meet
      of subtopologies of the form $\Closed{a} \vee \KFit{A}$.
    \item\label{lem:PerfctSubLocBase4}
      The class of perfect subtopologies of $\mathcal{S}$ is closed under finite joins
      and set-indexed meets.
  \end{enumerate}
\end{lemma}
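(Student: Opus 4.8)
The plan is to work throughout with an explicit description of the generating axioms of the subtopologies involved, and to prove the four clauses in the order \ref{lem:PerfctSubLocBase1}, \ref{lem:PerfctSubLocBase2}, \ref{lem:PerfctSubLocBase3}, \ref{lem:PerfctSubLocBase4}, the last being a formal consequence of the first three. First I would read off, from the binary-join description of $\SubTop(\mathcal{S})$ together with Lemma~\ref{lem:PerfctSubLoc}, that $\Closed{a}\vee\KFit{A}$ is exactly the subtopology of $\mathcal{S}$ obtained by adjoining the axioms $a\cov B$ for every finite $B$ with $A\ll B$: the closed axiom $a\cov\emptyset$ and the fitted axioms $c\cov B$ combine to $a\downarrow c\cov B$, which over the base cover is equivalent to $a\cov B$. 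Consequently the meet in clause~\ref{lem:PerfctSubLocBase2} is the subtopology adjoining the whole family $\{\,a_i\cov B\mid i\in I,\ A_i\ll B\,\}$, and clause~\ref{lem:PerfctSubLocBase1} is the one-generator instance of clause~\ref{lem:PerfctSubLocBase2}; so I would treat the two together. It is worth noting at the outset that $\Closed{a}$ itself is perfect: if $x\ll y$ and $y\cov_{\Closed{a}}U$, i.e.\ $y\cov\{a\}\cup U$, then a finite subcover $x\cov W_0\subseteq\{a\}\cup U$ gives $x\cov_{\Closed{a}}(W_0\setminus\{a\})$ with $W_0\setminus\{a\}\in\Fin{U}$.

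For clauses~\ref{lem:PerfctSubLocBase1}--\ref{lem:PerfctSubLocBase2}, write $\mathcal{T}$ for the subtopology adjoining $\{\,a_i\cov B\mid A_i\ll B\,\}$; perfectness amounts to $\ll\,\subseteq\,\ll_{\mathcal{T}}$. The decisive, and genuinely new, case is the adjoined axiom: when $a_i\cov_{\mathcal{T}}U$ is inferred from a premise $B\cov_{\mathcal{T}}U$ with $A_i\ll B$, note that $B$ is \emph{finite}, and interpolating $A_i\ll B$ (Lemma~\ref{lem:LKInterpolate}) yields a finite $C$ with $A_i\ll C$ and $(\forall c\in C)(\exists b\in B)\,c\ll b$. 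Since $A_i\ll C$, the fitted axiom gives $x\cov_{\mathcal{T}}C$ for \emph{every} $x$, while the induction hypothesis applied to each $c\ll b\in B$ (using $B\cov_{\mathcal{T}}U$) produces a finite $U_0\subseteq U$ with $C\cov_{\mathcal{T}}U_0$, so $x\cov_{\mathcal{T}}U_0$. The point I expect to require care is organising the overall induction so that the base $\mathrm{(infinity)}$-rules of $\mathcal{S}$, which may sit above instances of the adjoined axioms, do not force one to combine $\ll$ in $\mathcal{S}$ with covers in $\mathcal{T}$; the clean way to avoid this is to follow the method of Lemma~\ref{lem:wbT} and verify perfectness through a purpose-built way-below function $\wb_{\mathcal{T}}$ for $\mathcal{T}$ that is closed under the adjoined axioms, proving simultaneously that $\mathcal{T}$ is locally compact and that $\wb(w)$ lies $\mathcal{T}$-way-below $w$.

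For clause~\ref{lem:PerfctSubLocBase3}, let $\mathcal{T}$ be perfect and put $\mathcal{G}=\{\Closed{a}\vee\KFit{A}\mid\mathcal{T}\sqsubseteq\Closed{a}\vee\KFit{A}\}$; the aim is $\mathcal{T}=\bigwedge\mathcal{G}$. The inclusion $\mathcal{T}\sqsubseteq\bigwedge\mathcal{G}$ is immediate. For the converse I would use that a perfect subtopology is locally compact, so by Remark~\ref{rem:LKInd} its cover is presented by a way-below function $\wb_{\mathcal{T}}$, and it then suffices to derive each basic cover $w\cov_{\mathcal{T}}\wb_{\mathcal{T}}(w)$ inside $\bigwedge\mathcal{G}$; this I would do by realising it through a single generator $\Closed{w}\vee\KFit{A}$ lying above $\mathcal{T}$, choosing $A$ from the $\mathcal{T}$-way-below data of $w$ and using stability of $\mathcal{S}$ to align the fitted axioms with $\wb_{\mathcal{T}}(w)$. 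Showing that the closed-and-fitted generators above $\mathcal{T}$ already regenerate every cover of $\mathcal{T}$ is, I expect, the main obstacle of the whole lemma.

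Finally, clause~\ref{lem:PerfctSubLocBase4} is formal. Closure under set-indexed meets follows from clauses~\ref{lem:PerfctSubLocBase2} and~\ref{lem:PerfctSubLocBase3}: each perfect subtopology is a meet of generators, hence so is any meet of perfect subtopologies. For finite joins, given perfect $\mathcal{T}=\bigwedge_i G_i$ and $\mathcal{T}'=\bigwedge_j G_j'$ with $G_i,G_j'$ generators, the coframe distributive law gives $\mathcal{T}\vee\mathcal{T}'=\bigwedge_{i,j}(G_i\vee G_j')$, and by Lemma~\ref{lem:PerfctSubLoc} each $G_i\vee G_j'=\bigwedge_{c\in a_i\downarrow a_j'}\bigl(\Closed{c}\vee\KFit{A_i\cup A_j'}\bigr)$ is again a meet of generators; hence $\mathcal{T}\vee\mathcal{T}'$ is a meet of generators and is perfect by clause~\ref{lem:PerfctSubLocBase2}.
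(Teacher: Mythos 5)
Your overall architecture (identify the adjoined axioms, induct on the generated cover for perfectness, take a canonical meet in clause~\ref{lem:PerfctSubLocBase3}, coframe algebra for clause~\ref{lem:PerfctSubLocBase4} --- the last being exactly the paper's argument) matches the paper, but at both decisive points the step you describe would fail as written. In clauses~\ref{lem:PerfctSubLocBase1}--\ref{lem:PerfctSubLocBase2} your ``decisive case'' takes the premise of the adjoined axiom to be $B \cov_{\mathcal{T}} U$ with $B$ finite. But $\cov_{\mathcal{T}}$ is generated from the \emph{localised} axiom-set, whose instances are $c \cov_{\mathcal{T}} C \downarrow c$ for $c \leq a_i$ and $A_i \ll C$ (localised covers such as $c \cov_{\mathcal{T}} c \downarrow C$ are not derivable from the unlocalised axioms plus the $\leq$-rule), so the induction hypothesis you actually have is $\Phi(c')$ for $c' \in C \downarrow c$ --- an in general infinite set --- not for the elements of a finite $B$. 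To close this case one must pass from $b \ll c$ and an interpolant $A_i \ll C' \ll C$ to $C' \downarrow b \ll C \downarrow c$, which is precisely the stability hypothesis of Definition~\ref{def:StablyLK}; interpolation of $\ll$ alone does not push way-below through $\downarrow$. Your proposal never invokes stable local compactness in these clauses, and your fallback --- ``follow the method of Lemma~\ref{lem:wbT} with a purpose-built $\wb_{\mathcal{T}}$'' --- meets the same case and the same need (compare the case \ref{r2} in the proof of Lemma~\ref{lem:wbT}), so it defers the crux rather than resolving it. (Minor: ``the fitted axiom gives $x \cov_{\mathcal{T}} C$ for \emph{every} $x$'' holds for $\KFit{A_i}$ alone, not for the join $\Closed{a_i} \vee \KFit{A_i}$, whose axioms are restricted below $a_i$; for your $x \ll a_i$ this is repairable via $x \cov a_i$.)

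In clause~\ref{lem:PerfctSubLocBase3} the reduction ``it suffices to derive each $w \cov_{\mathcal{T}} \wb_{\mathcal{T}}(w)$ inside $\bigwedge \mathcal{G}$'' is insufficient: by Remark~\ref{rem:LKInd} the cover of $\mathcal{T}$ is generated by those axioms \emph{together with all finite covers} $w \cov_{\mathcal{T}} A$, and neither kind of axiom can be realised by a single generator $\Closed{w} \vee \KFit{A}$, since such a generator only yields covers $w \cov B$ for $B \gg A$, never the cover by $\wb_{\mathcal{T}}(w)$ or by $A$ itself. The paper's mechanism, absent from your sketch, is: a perfect subtopology $\mathcal{T}$ is continuous with the \emph{same} function $\wb$ as $\mathcal{S}$ (the elements of $\wb(a)$ are $\ll a$, hence $\ll_{\mathcal{T}} a$); then, given an arbitrary cover $a \cov_{\mathcal{T}} U$ and $b \in \wb(a)$, one has $b \ll_{\mathcal{T}} a \cov_{\mathcal{T}} \bigcup_{u \in U} \wb(u)$, whence finite $A, B$ with $b \cov_{\mathcal{T}} A$ and $A \ll B \subseteq U$; the generator indexed by the pair $(b,A)$ --- a different generator for each $b \in \wb(a)$, not a single one --- then gives $b \cov^{*} B \subseteq U$, and finally $a \cov^{*} \wb(a) \cov^{*} U$. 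Note that no stability is needed here, contrary to your sketch; stability belongs in clauses~\ref{lem:PerfctSubLocBase1}--\ref{lem:PerfctSubLocBase2}, where you omitted it.
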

\begin{proof}
  \ref{lem:PerfctSubLocBase1}.
  Let $\cov'$ be the cover of $\Closed{a} \vee \KFit{A}$.
  It suffices to show that
  \[
    a \cov' U \implies \left( \forall b \ll a \right) \left( \exists
    B \in \Fin{U} \right) a \cov' B
  \]
  for all $a \in S$ and $U \subseteq S$. This is proved by induction on
  $\cov'$. Fix $U \subseteq S$, and put
  \[
    \Phi(a) \equiv \left( \forall b \ll a \right) \left( \exists
    B \in \Fin{U} \right) a \cov' B.
  \]
  The (reflex) and ($\leq$)-rules are straightforward. For
  (infinity)-rule, we must check each localised axiom of $\Closed{a} \vee
  \KFit{A}$, which is of the following form:
  \[
    c \cov' C \downarrow c  \tag*{$(c \leq a \amp A \ll C \in
      \Fin{S})$}
  \]
  Assume $\Phi(c')$ for all $c' \in C \downarrow c$.
   Let $b \ll c$. There exists $C' \in \Fin{S}$
  such that $A \ll C' \ll C$. Since $\mathcal{S}$ is stably locally
  compact, we have $C' \downarrow b \ll C \downarrow c$.
  Then, there exists $\left\{ b_{0},\dots,b_{n-1} \right\}$ such that
  $C' \downarrow b \cov \left\{ b_{i} \mid i < n \right\}$
  and for each $i < n$ there exists $c' \in C \downarrow c$ such that 
  $b_{i} \ll c'$. Thus, for
  each $i < n$  there exists  $B_{i} \in \Fin{U}$ such that
  $b_{i} \cov' B_{i}$. Since $c \cov' C' \downarrow c$ is an axiom
  of $\Closed{a} \vee \KFit{A}$, we have
  $b \cov' C' \downarrow b \cov' \bigcup_{i < n }
  B_{i}$. Hence $\Phi(c)$.

  \medskip
  \noindent\ref{lem:PerfctSubLocBase2}. Similar to \ref{lem:PerfctSubLocBase1}.

  \medskip
  \noindent\ref{lem:PerfctSubLocBase3}.
  Let $\mathcal{S}'$ be a perfect subtopology of $\mathcal{S}$.
  We show that
    $
    \mathcal{S}' = \bigwedge_{a \cov' A}\left(\Closed{a} \vee
    \KFit{A}\right).
    $
  Write $\mathcal{S}^{*}$ for $\bigwedge_{a \cov' A}\left(\Closed{a} \vee
  \KFit{A}\right)$. Since $\ll \mathbin{\subseteq} \ll'$,
  the topology $\mathcal{S}'$ satisfies every axiom of $\mathcal{S}^{*}$. Thus
  $\mathcal{S}' \sqsubseteq \mathcal{S}^{*}$. Conversely, suppose that
  $a \cov' U$. Let $b \ll a$. Since $\mathcal{S}'$ is perfect, we have
  $b \ll' a$. Thus there exists 
  $A, B \in \Fin{S}$ such that $A \ll B \subseteq U$ and $b \cov' A$.
  Then $b \cov^{*} B$, so $a \cov^{*} \wb(a) \cov^{*} U$.
  Hence $\mathcal{S}^{*} \sqsubseteq \mathcal{S}'$.
  
  \medskip
  \noindent\ref{lem:PerfctSubLocBase4}.
  The least subtopology $\mathcal{S}_{\bot}$ is
  clearly perfect. The general claim follow from
  \ref{lem:PerfctSubLocBase2}, \ref{lem:PerfctSubLocBase3}, Lemma
  \ref{lem:PerfctSubLoc}, and distributivity of binary joins over set-indexed meets.
\end{proof}

Thus, the perfect subtopologies of $\mathcal{S}$ form a set-based
subframe of $\SubTop(\mathcal{S})^{\op}$ generated by the subset
  $
  \left\{\Closed{a} \vee \KFit{A} \mid a \in S \amp A \in \Fin{S})
\right\}.
  $
Hence, we can present
this subframe as a formal topology $\Perf(\mathcal{S})$ as follows:
\begin{align}
  \begin{aligned}\label{def:FrmPNuclei}
    \Perf(\mathcal{S}) &\;\:= (S_{\mathfrak{P}}, \cov_{\mathfrak{P}}, \leq_{\mathfrak{P}}),\\
    S_{\mathfrak{P}} &\:\,\defeql S \times \Fin{S},\\
    (a,A) \leq_{\mathfrak{P}} (b,B) &\defeqiv 
   \Closed{b} \vee \KFit{B} \sqsubseteq \Closed{a} \vee \KFit{A},\\
   (a,A) \cov_{\mathfrak{P}} U  &\defeqiv 
  \bigwedge \left\{ \Closed{b} \vee \KFit{B} \mid  (b,B) \in U \right\}
  \sqsubseteq \Closed{a} \vee \KFit{A}.
  \end{aligned}
\end{align}
Note that  the use of the relation $\sqsubseteq$ in the definition of
$\Perf(\mathcal{S})$ is predicative since the inclusion relation $\sqsubseteq$
between inductively generated subtopologies can be described in terms
of axiom-sets.

\subsection{Equivalence of \texorpdfstring{$\Perf(\mathcal{S})$ and
$\Patch(\mathcal{S})$}{PSub(S) and Patch(S)}}
\label{sec:PerferctSubTopEquiv}
We show that the topology $\Perf(\mathcal{S})$ given by
\eqref{def:FrmPNuclei} is isomorphic to $\Patch(\mathcal{S})$.
First, we define a formal topology map $r \colon \Perf(\mathcal{S}) \to
\Patch(\mathcal{S})$, which is defined on the generators
of $\Patch(\mathcal{S})$ as follows:
\begin{align*}
  \begin{aligned}%\label{def:ScottPatch}
    (a,A) \mathrel{r} \left\{ \rcut(b) \right\}
    &\defeqiv 
    (a,A) \cov_{\mathfrak{P}} (b, \emptyset),\\
    (a,A) \mathrel{r} \left\{ \lcut(B) \right\}
    &\defeqiv 
    (a,A) \cov_{\mathfrak{P}} \left\{ (c, B) \mid c \in S \right\}.
  \end{aligned}
\end{align*}
\begin{lemma}
  The relation $r$ determines a formal
  topology map from $\Perf(\mathcal{S})$ to $\Patch(\mathcal{S})$.
\end{lemma}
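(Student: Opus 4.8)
The plan is to use the universal description of maps into a topology presented by a geometric theory. Since $\Patch(\mathcal{S})$ is presented by $T_{P}$, by the remarks around \eqref{eq:PreservAx} it suffices to check that the relation $r$ respects each of the axioms \ref{r1}--\ref{r4}, \ref{l1}--\ref{l3}, \ref{Loc}, \ref{D}. The decisive preliminary step is to compute the inverse images of the two families of generators. Unfolding the definition of $r$ and using that finite joins distribute over set-indexed meets in $\SubTop(\mathcal{S})$, I would show $r^{-}\left\{ \rcut(a) \right\} =_{\mathfrak{P}} \left\{ (a,\emptyset) \right\}$ and $r^{-}\left\{ \lcut(A) \right\} =_{\mathfrak{P}} \left\{ (c,A) \mid c \in S \right\}$; the latter rests on the identity $\bigwedge_{c \in S}\left( \Closed{c} \vee \KFit{A} \right) = \left( \bigwedge_{c \in S}\Closed{c} \right) \vee \KFit{A} = \mathcal{S}_{\bot} \vee \KFit{A} = \KFit{A}$, using $\bigwedge_{c \in S}\Closed{c} = \mathcal{S}_{\bot}$. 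Under the presentation \eqref{def:FrmPNuclei}, these inverse images are precisely the perfect subtopologies $\Closed{a}$ and $\KFit{A}$.

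Once this dictionary is in place, verifying \eqref{eq:PreservAx} amounts to checking, for each axiom, an (in)equality in $\SubTop(\mathcal{S})^{\op}$, remembering that the frame meet $\downarrow$ of $\Perf(\mathcal{S})$ is the join $\vee$ of $\SubTop(\mathcal{S})$, the frame join is the meet $\bigwedge$, the frame top is $\mathcal{S}_{\bot}$ and the frame bottom is $\mathcal{S}$. The axioms in the $\rcut$ generators reduce to facts about closed subtopologies: \ref{r1} to $\bigwedge_{a \in S}\Closed{a} = \mathcal{S}_{\bot}$, \ref{r2} to Lemma \ref{lem:PerfctSubLoc}.\ref{lem:PerfctSubLoc1}, and \ref{r3}, \ref{r4} to the fact that $a \cov A$ implies that the closed subtopology determined by $A$ is a subtopology of $\Closed{a}$ (for \ref{r4} one uses $a \cov \wb(a) \subseteq \left\{ b \mid b \ll a \right\}$). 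Dually, the axioms in the $\lcut$ generators reduce to facts about the subtopologies $\KFit{A}$: \ref{l1} to $\KFit{\emptyset} = \mathcal{S}_{\bot}$, \ref{l2} to $A \cov B \implies \KFit{A} \sqsubseteq \KFit{B}$ (since $A \cov B$ and $B \ll C$ force $A \ll C$), and \ref{l3} to Lemma \ref{lem:PerfctSubLoc}.\ref{lem:PerfctSubLoc2} together with interpolation of $\ll$.

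The heart of the argument is the pair of mixed axioms \ref{Loc} and \ref{D}, which express the complementarity between the fitted subtopologies $\KFit{\left\{ a \right\}}$ and the closed subtopologies $\Closed{b}$. For \ref{Loc}, given $a \ll b$, I must show $\KFit{\left\{ a \right\}} \wedge \Closed{b} = \mathcal{S}_{\bot}$; since $\left\{ a \right\} \ll \left\{ b \right\}$, the open subtopology $\mathcal{S}_{\left\{ b \right\}}$ is one of the meetands of $\KFit{\left\{ a \right\}}$, so in the meet the axiom $S \cov \left\{ b \right\}$ coming from $\KFit{\left\{ a \right\}}$ and the axiom $b \cov \emptyset$ coming from $\Closed{b}$ combine to $S \cov \emptyset$, which forces the least subtopology. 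For \ref{D}, I must show $\KFit{\left\{ a \right\}} \vee \Closed{a} = \mathcal{S}$. Here I would use two observations: first, the open subtopology $\mathcal{S}_{\left\{ a \right\}}$ and the closed subtopology $\Closed{a}$ are complementary, $\mathcal{S}_{\left\{ a \right\}} \vee \Closed{a} = \mathcal{S}$, by a direct computation with the join axioms; and second, $\mathcal{S}_{\left\{ a \right\}} \sqsubseteq \KFit{\left\{ a \right\}}$, which holds because $a \ll C$ implies $a \cov C$ (instantiate the definition of $\ll$ at $W = C$), whence the generating axiom $S \cov C$ of $\KFit{\left\{ a \right\}}$ is valid in $\mathcal{S}_{\left\{ a \right\}}$. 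Combining, $\KFit{\left\{ a \right\}} \vee \Closed{a} \sqsupseteq \mathcal{S}_{\left\{ a \right\}} \vee \Closed{a} = \mathcal{S}$, and equality follows since $\mathcal{S}$ is the top of $\SubTop(\mathcal{S})$.

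I expect the main obstacle to be bookkeeping rather than any deep difficulty: one must keep straight the three successive order reversals between $\Perf(\mathcal{S})$, the frame $\SubTop(\mathcal{S})^{\op}$, and the coframe $\SubTop(\mathcal{S})$, and carry out the complementation computations for \ref{D} at the level of generating axioms. All the genuinely topological content is already packaged in Lemma \ref{lem:PerfctSubLoc}, the characterisations of open and closed subtopologies, and the implication $a \ll C \implies a \cov C$.
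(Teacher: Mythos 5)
Your proposal is correct and takes essentially the same route as the paper: both reduce the claim via \eqref{eq:PreservAx} to checking each axiom of $T_{P}$ as an (in)equality in $\SubTop(\mathcal{S})$ under the dictionary $\rcut(a) \leftrightarrow \Closed{a}$, $\lcut(A) \leftrightarrow \KFit{A}$, invoking $\bigwedge_{a \in S}\Closed{a} = \mathcal{S}_{\bot}$ for \ref{r1}, Lemma \ref{lem:PerfctSubLoc} (with interpolation of $\ll$) for \ref{r2} and \ref{l3}, and the complementarity of $\KFit{\left\{ a \right\}}$ and $\Closed{b}$ for \ref{Loc} and \ref{D}. Your only deviation is cosmetic: for \ref{D} the paper checks directly that every generating axiom of $\Closed{a} \vee \KFit{\left\{ a \right\}}$ has the form $a \cov' A$ with $a \ll A$ and is therefore already valid in $\mathcal{S}$, whereas you factor the same computation through the open--closed complementation $\mathcal{S}_{\left\{ a \right\}} \vee \Closed{a} = \mathcal{S}$ together with $\mathcal{S}_{\left\{ a \right\}} \sqsubseteq \KFit{\left\{ a \right\}}$, both arguments resting on the implication $a \ll A \implies a \cov A$.
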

\begin{proof}
  We must show that $r$ respects the axioms of the theory $T_{P}$.

  \noindent\ref{r1}
    Note that $\bigwedge_{a \in S} \Closed{a} = \mathcal{S}_{\bot}$.

  \noindent\ref{r2}
  By Lemma \ref{lem:PerfctSubLoc}.\ref{lem:PerfctSubLoc1}.

  \noindent\ref{r3}
  If $a \cov \left\{ a_{0},\dots,a_{n-1} \right\}$, then
$\bigwedge_{i < n} \Closed{a_{i}} \sqsubseteq \Closed{a}$.

  \noindent\ref{r4}
  Similar to the above case, noting that $a \cov \left\{ b \in S
  \mid b \ll a \right\}$. 

  \noindent\ref{l1} Similar to the case \ref{r1} above.

  \noindent\ref{l2}
  If $A \cov B$, then $\KFit{A} \sqsubseteq \KFit{B}$.

  \noindent\ref{l3}
  First, note that $\KFit{A} = \bigwedge_{ A \ll B}\mathcal{S}_{B} =
  \bigwedge_{ A \ll B } \KFit{B}$. Thus, by 
  Lemma \ref{lem:PerfctSubLoc}.\ref{lem:PerfctSubLoc2}, we have
    $
    \KFit{A} \vee \KFit{B}
    = \KFit{A \cup B}
    = \bigwedge\left\{\KFit{C} \mid  A \cup B \ll C \right\}
    = \bigwedge\left\{\KFit{C} \mid A \ll C \amp B \ll C \right\}.
    $

  \noindent\ref{Loc}
  Suppose that $a \ll b$.
  We must show that $\KFit{ \left\{ a \right\}} \wedge
  \Closed{b} = \mathcal{S}_{\bot}$.
  Let $\cov'$ denotes the cover of
  $\KFit{ \left\{ a \right\}} \wedge
  \Closed{b}$.
  For any $c \in S$, we have $c \cov' b$ by the axiom of $\KFit{ \left\{ a \right\}}$ and
  $b \cov' \emptyset$ by the axiom of $\Closed{b}$. Hence 
  $\KFit{ \left\{ a \right\}} \wedge
  \Closed{b} = \mathcal{S}_{\bot}$.

  \noindent\ref{D}
  We must show that $\Closed{a} \vee \KFit{ \left\{ a
  \right\}} = \mathcal{S}$. But the axioms of $\Closed{a} \vee \KFit{
  \left\{ a \right\}} = \mathcal{S}$ are of the forms
  $a \cov' A$ where $a \ll A$,
  which is trivial. Thus, $\Closed{a} \vee \KFit{ \left\{ a
  \right\}} = \mathcal{S}$.
  \qedhere
\end{proof}

We define the inverse $s \colon \Patch(\mathcal{S}) \to
\Perf(\mathcal{S})$ of $r$ by 
\begin{equation} \label{def:Patch2SP}
  \elPT{A} \mathrel{s} (a,B) \defeqiv \elPT{A}
  \cov_{P}
  \left\{ \rcut(a), \lcut(B)\right\}
\end{equation}
where $\cov_{P}$ is the cover of $\Patch(\mathcal{S})$.
Before showing that $s$ is indeed a formal topology
map, we prove
the following auxiliary lemma.
\begin{lemma}\label{lem:AuxRelationS}
  For any $U \subseteq S_{\mathfrak{P}}$, $a \in S$ and $A \in \Fin{S}$,
  \[
    a \cov' A \implies 
      \left\{ \rcut(a), \lcut(A) \right\}
      \cov_{P}
      \left\{
      \left\{ \rcut(b), \lcut(B) \right\} \mid (b,B) \in U
      \right\}
  \]
  where $\cov'$ is the cover of  $\bigwedge_{(b,B) \in U}
  \Closed{b} \vee \KFit{B}$.
\end{lemma}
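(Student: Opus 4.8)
The plan is to argue by induction on the cover $\cov'$ of $\bigwedge_{(b,B) \in U}\left(\Closed{b} \vee \KFit{B}\right)$. As in the proof of Lemma~\ref{lem:PerfctSubLocBase1}, this cover is inductively generated by the axioms of $\mathcal{S}$ together with, for each $(b,B) \in U$, the localised axioms $d \cov' C \downarrow d$ where $d \leq b$ and $B \ll C \in \Fin{S}$. Two routine facts about $\Patch(\mathcal{S})$ will be used repeatedly. First, a \emph{carrying} principle: since the order on $\Fin{P_P}$ is reverse inclusion, binary meets are unions, and the $\downarrow$-law of formal topologies gives that $\elPT{X} \cov_{P} \mathcal{W}$ implies $\elPT{X} \cup \elPT{Y} \cov_{P} \left\{ \elPT{W} \cup \elPT{Y} \mid \elPT{W} \in \mathcal{W} \right\}$ for any $\elPT{Y} \in \Fin{P_P}$; this lets a fixed conjunct be carried along a derivation. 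Second, by Remark~\ref{rem:TheoryT} the axioms \ref{r1}--\ref{r4} present $\mathcal{S}$, so $a \cov X$ implies $\left\{ \rcut(a) \right\} \cov_{P} \left\{ \left\{ \rcut(x) \right\} \mid x \in X \right\}$.

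The easy cases I would dispatch first. For reflexivity, $a \in A$ gives $a \cov A$, so \ref{l2} yields $\left\{ \lcut(A) \right\} \cov_{P} \left\{ \left\{ \lcut(\left\{ a \right\}) \right\} \right\}$, and carrying $\rcut(a)$ along followed by \ref{D} gives $\left\{ \rcut(a), \lcut(A) \right\} \cov_{P} \emptyset$, which dominates the required right-hand side. For the $\leq$-rule, $a \leq a'$ with $a' \cov' A$, axiom \ref{r3} gives $\left\{ \rcut(a) \right\} \cov_{P} \left\{ \left\{ \rcut(a') \right\} \right\}$; carrying $\lcut(A)$ along reduces to the inductive hypothesis at $a'$. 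For an infinity step from an axiom $a \cov C(a,i)$ of $\mathcal{S}$ with $C(a,i) \cov' A$, the presentation fact and carrying give $\left\{ \rcut(a), \lcut(A) \right\} \cov_{P} \left\{ \left\{ \rcut(c), \lcut(A) \right\} \mid c \in C(a,i) \right\}$, and the inductive hypothesis applied at each $c \in C(a,i)$ closes the case.

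The crux, and the step I expect to be the main obstacle, is the infinity rule for a new axiom $d \cov' C \downarrow d$ coming from some $(b,B) \in U$, where $d \leq b$, $B \ll C$, and the inductive hypothesis is available for every element of $C \downarrow d$. From $B \ll C$, Lemma~\ref{lem:LittleFact} gives $\Fin{P_P} \cov_{P} \left\{ \left\{ \lcut(B) \right\} \right\} \cup \left\{ \left\{ \rcut(c) \right\} \mid c \in C \right\}$; meeting this with $\left\{ \rcut(d), \lcut(A) \right\}$ produces
\[
  \left\{ \rcut(d), \lcut(A) \right\} \cov_{P}
  \left\{ \left\{ \rcut(d), \lcut(A), \lcut(B) \right\} \right\}
  \cup \left\{ \left\{ \rcut(d), \lcut(A), \rcut(c) \right\} \mid c \in C \right\}.
\]
On the first component, $d \leq b$ and \ref{r3} give $\left\{ \rcut(d) \right\} \cov_{P} \left\{ \left\{ \rcut(b) \right\} \right\}$, and discarding $\lcut(A)$ by the $\leq$-rule lands at $\left\{ \rcut(b), \lcut(B) \right\}$, which lies in the target since $(b,B) \in U$. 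On each remaining component, \ref{r2} gives $\left\{ \rcut(d), \rcut(c) \right\} \cov_{P} \left\{ \left\{ \rcut(c') \right\} \mid c' \in d \downarrow c \right\}$, and since $d \downarrow c \subseteq C \downarrow d$ the inductive hypothesis applies to each $c'$ after carrying $\lcut(A)$. The essential insight is that the located axiom \ref{Loc}, packaged as Lemma~\ref{lem:LittleFact} for $B \ll C$, is exactly what turns the open part $\KFit{B}$ of the generating axiom into a $\lcut(B)$-branch that meets the target $(b,B)$ and $\rcut(c)$-branches that feed the induction, while the closed part $\Closed{b}$ supplies $d \leq b$ for the match $\left\{ \rcut(b), \lcut(B) \right\} \in U$.
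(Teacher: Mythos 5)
Your proposal is correct and takes essentially the same route as the paper's proof: the same induction on $\cov'$ with the localised axioms $d \cov' C \downarrow d$ ($d \leq b$, $B \ll C$, $(b,B) \in U$), the same splitting of $\left\{ \rcut(d), \lcut(A) \right\}$ via Lemma \ref{lem:LittleFact} applied to $B \ll C$, and the same use of \ref{r3}, \ref{r2}, \ref{l2} and \ref{D} to reach the target element $\left\{ \rcut(b), \lcut(B) \right\}$ on one branch and the inductive hypothesis (via $d \downarrow c \subseteq C \downarrow d$) on the others. Your only deviation is that you spell out the infinity steps arising from the axioms of $\mathcal{S}$ itself, which the paper leaves implicit behind \ref{r3}; that treatment is sound.
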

\begin{proof}
  The proof is by induction on $\cov'$.
  Fix $U \subseteq S_{\mathfrak{P}}$ and $A \in \Fin{S}$, and put
  \[
    \Phi(a) \equiv 
      \left\{ \rcut(a), \lcut(A) \right\}
      \cov_{P}
      \left\{
      \left\{ \rcut(b), \lcut(B) \right\} \mid (b,B) \in U
      \right\}.
  \]
  The case for (reflex)-rule follows
  from \ref{D}, and that of ($\leq$)-rule follows from \ref{r3}.
  As for (infinity)-rule, note that the axioms of 
  $\bigwedge_{(b,B) \in U} \Closed{b} \vee \KFit{B}$ are of the
  form $b \cov' C$ for each $(b,B) \in U$ and $B \ll C$.
  Localising it, we obtain $c \cov' C \downarrow c$
  where $(b,B) \in U \amp B \ll C \amp c \leq b$.
  Now assume $\Phi(c')$ for all $c' \in C \downarrow c$.
  Since $B \ll C$, we have
  $
  \Fin{P_{P}} \cov_{P} \left\{ \left\{ \lcut(B) \right\} \right\}
  \cup \left\{ \left\{ \rcut(c') \right\} \mid c' \in C \right\}
  $
  by Lemma \ref{lem:LittleFact}. Thus, 
  \begin{align*}
    \left\{ \rcut(c), \lcut(A)\right\}
    &\cov_{P} \left\{ \left\{ \lcut(B), \rcut(c), \lcut(A)\right\} \right\}
    \cup \left\{ \left\{ \rcut(c'), \rcut(c), \lcut(A)\right\}  \mid
  c' \in C \right\} \\
    &\cov_{P} \left\{ \left\{ \lcut(B), \rcut(b)\right\} \right\}
    \cup \left\{ \left\{ \rcut(c'), \rcut(c), \lcut(A)\right\}  \mid
  c' \in C \right\}&& \text{(by \ref{r3})}  \\
    &\cov_{P} \left\{ \left\{ \lcut(B), \rcut(b) \right\} \right\}
    \cup \left\{ \left\{ \rcut(c'), \lcut(A)\right\}  \mid
  c' \in C  \downarrow c \right\}&& \text{(by \ref{r2})} \\
    &\cov_{P} \left\{ \left\{ \rcut(b'), \lcut(B') \right\} \mid
  (b',B') \in U \right\}.&& \text{(by I.H)}
  \qedhere
  \end{align*}
\end{proof}

\begin{lemma}
  The relation $s$ given by \eqref{def:Patch2SP} is a formal topology
  map from $\Patch(\mathcal{S})$ to $\Perf(\mathcal{S})$.
\end{lemma}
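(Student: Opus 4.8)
The plan is to verify the three conditions of Definition \ref{def:FTM}: that $s$ is a basic cover map, and that it satisfies (FTM1) and (FTM2). Throughout, the engine is Lemma \ref{lem:AuxRelationS}, which converts a cover $a \cov' A$ in the subtopology $\bigwedge_{(b,B)\in U}(\Closed{b}\vee\KFit{B})$ into the $\cov_P$-cover $\{\rcut(a),\lcut(A)\}\cov_P\{\{\rcut(b),\lcut(B)\}\mid(b,B)\in U\}$; I will also repeatedly use that $\{\rcut(b),\lcut(B)\}$ lies in $s^-(b,B)$ by reflexivity, so that the right-hand side above is contained in $s^- U$. The first task is to unwind the two covers involved: $s^-(a,A)=\{\elPT{D}\mid\elPT{D}\cov_P\{\rcut(a),\lcut(A)\}\}$ on the $\Patch$ side, and $(a,A)\cov_{\mathfrak P}U$ meaning $\bigwedge_{(b,B)\in U}(\Closed{b}\vee\KFit{B})\sqsubseteq\Closed{a}\vee\KFit{A}$ on the $\Perf$ side (see \eqref{def:Patch2SP} and \eqref{def:FrmPNuclei}).

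For the basic cover condition, suppose $(a,A)\cov_{\mathfrak P}U$. Reading off the generating axioms of $\Closed{a}\vee\KFit{A}$, which are exactly $a\cov C$ for $A\ll C$, and using $\sqsubseteq$, I get $a\cov' C$ in $\bigwedge_{(b,B)\in U}(\Closed{b}\vee\KFit{B})$ for every $A\ll C$. It then suffices to show that the basic element $\{\rcut(a),\lcut(A)\}$ is $\cov_P$-covered by $s^- U$, since every $\elPT{D}\in s^-(a,A)$ lies $\cov_P$-below it. Here is the key maneuver: by \ref{l3} I first refine, $\{\rcut(a),\lcut(A)\}\cov_P\{\{\rcut(a),\lcut(C)\}\mid A\ll C\}$, and then for each such $C$ I apply Lemma \ref{lem:AuxRelationS} to the pair $(a,C)$, legitimate because $a\cov' C$, obtaining $\{\rcut(a),\lcut(C)\}\cov_P\{\{\rcut(b),\lcut(B)\}\mid(b,B)\in U\}\subseteq s^- U$.

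Condition (FTM1), $\Fin{P_P}\cov_P s^- S_{\mathfrak P}$, reduces to \ref{r1} together with \ref{l1}: the axiom $\top\vdash\lcut(\emptyset)$ puts $\{\rcut(a)\}$ into $s^-(a,\emptyset)$, and $\top\vdash\bigvee\{\rcut(a)\mid a\in S\}$ then covers the top element $\emptyset$ by $s^- S_{\mathfrak P}$. For (FTM2) I first compute the meet in $\Perf$: by Lemma \ref{lem:PerfctSubLoc} the subtopology join $(\Closed{a}\vee\KFit{A})\vee(\Closed{a'}\vee\KFit{A'})$ equals $\bigwedge_{c\in a\downarrow a'}(\Closed{c}\vee\KFit{A\cup A'})$, so that $(c,C)\leq_{\mathfrak P}(a,A)$ and $(c,C)\leq_{\mathfrak P}(a',A')$ whenever $c\in a\downarrow a'$ and $A\cup A'\ll C$; hence such $(c,C)$ lie in $(a,A)\downarrow_{\mathfrak P}(a',A')$ and $\{\rcut(c),\lcut(C)\}\in s^-((a,A)\downarrow_{\mathfrak P}(a',A'))$. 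On the $\Patch$ side a typical element of $s^-(a,A)\downarrow s^-(a',A')$ lies $\cov_P$-below the basic element $\{\rcut(a),\rcut(a'),\lcut(A),\lcut(A')\}$, which by \ref{r2} and \ref{l3} is covered by $\{\{\rcut(c),\lcut(C)\}\mid c\in a\downarrow a',\ A\cup A'\ll C\}$; this is a subset of the target, closing (FTM2).

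The main obstacle I anticipate is not any single computation but the careful bookkeeping in passing between the semantically defined cover $\cov_{\mathfrak P}$, phrased through $\sqsubseteq$ of inductively generated subtopologies, and the syntactic cover $\cov_P$. The crux is the mismatch that $(a,A)\cov_{\mathfrak P}U$ supplies covers of $a$ only by sets $C$ with $A\ll C$, never by $A$ itself; the roundedness axiom \ref{l3} of $\Patch$ is precisely what bridges this gap, and the analogous point recurs in (FTM2). Getting the generating axioms of $\Closed{a}\vee\KFit{A}$ and of the relevant meets exactly right, via Lemma \ref{lem:PerfctSubLoc} and the coframe structure of $\SubTop(\mathcal{S})$, is where the real care is needed; once those are pinned down, each verification is a short application of Lemma \ref{lem:AuxRelationS} together with the axioms \ref{r1}--\ref{D}.
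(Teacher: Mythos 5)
Your proposal is correct and takes essentially the same route as the paper: the cover condition \eqref{FTM3} is handled by first rounding $\left\{ \rcut(a), \lcut(A) \right\}$ via \ref{l3} into the covers $\left\{ \rcut(a), \lcut(C) \right\}$ with $A \ll C$ and then invoking Lemma \ref{lem:AuxRelationS}, exactly as in the paper's proof, while \ref{FTM1} and \ref{FTM2} are reduced to \ref{r1}, \ref{l1} and \ref{r2}, \ref{l3} respectively. Your explicit computation of the meet in $\Perf(\mathcal{S})$ via Lemma \ref{lem:PerfctSubLoc} only spells out what the paper leaves as a one-line remark.
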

\begin{proof}
  We must check the conditions \ref{FTM1}, \ref{FTM2}, and \eqref{FTM3}.
  The condition \ref{FTM1} follows from \ref{r1} and \ref{l1}, and
  \ref{FTM2} follows from \ref{r2} and \ref{l3}. 
  As for \eqref{FTM3}, suppose that $(a,A) \cov_{\mathfrak{P}} U$, i.e.\
      $\bigwedge_{(b,B) \in U}\left(\Closed{b} \vee \KFit{B}  \right) \sqsubseteq
      \Closed{a} \vee \KFit{A}$. This is equivalent to saying that 
       $\bigwedge_{(b,B) \in U} \Closed{b} \vee \KFit{B}$
       satisfies all the axioms of $\Closed{a} \vee \KFit{A}$.
       Since $a \cov' A'$ is an axiom of $\Closed{a} \vee \KFit{A}$
       for all $A' \gg A$, we have
       \begin{align*}
         s^{-}(a,A)
         \cov_{P} \left\{ \rcut(a), \lcut(A) \right\}
         &\cov_{P}
         \left\{\left\{ \rcut(a), \lcut(A') \right\}
            \mid A \ll A'  \right\}&& \text{(by \ref{l3})}\\
         &\cov_{P}
         \left\{ \left\{ \rcut(b), \lcut(B)\right\} 
            \mid (b,B) \in U \right\}&& \text{(by Lemma
            \ref{lem:AuxRelationS})}\\
         &\cov_{P} s^{-} U. &&\qedhere
       \end{align*}
\end{proof}

\begin{theorem}\label{thm:EqivEscardoOurs}
  The morphisms $r \colon \Perf(\mathcal{S}) \to
  \Patch(\mathcal{S})$ and $s \colon \Patch(\mathcal{S}) \to
  \Perf(\mathcal{S})$ are inverse to each other.
  Hence $\Perf(\mathcal{S})$ and $\Patch(\mathcal{S})$ are
  isomorphic.
\end{theorem}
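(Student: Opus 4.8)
The plan is to prove that $r$ and $s$ are mutually inverse by establishing $r \circ s = \id_{\Patch(\mathcal{S})}$ and $s \circ r = \id_{\Perf(\mathcal{S})}$ separately. In each case I would compare the inverse image of a generator under the composite with its inverse image under the identity, which is just the identity relation; since a formal topology map is determined up to cover equivalence by its inverse images on the generating base elements, it suffices to compute $(r \circ s)^{-}$ on the base elements $\left\{ \rcut(b) \right\}$ and $\left\{ \lcut(B) \right\}$, and $(s \circ r)^{-}$ on $(c,C)$, and to check both directions of the relevant cover.

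The computations that carry the argument are two dictionary lemmas. First, unfolding the definition of $r$ together with the coframe facts $\KFit{\emptyset} = \mathcal{S}_{\bot}$ and $\bigwedge_{c' \in S}\left( \Closed{c'} \vee \KFit{C} \right) = \KFit{C}$ (the latter from $\bigwedge_{c' \in S}\Closed{c'} = \mathcal{S}_{\bot}$ and the distributivity of finite joins over set-indexed meets in $\SubTop(\mathcal{S})$), one reads off $r^{-}\left\{ \rcut(c) \right\} \cov_{\mathfrak{P}} (c,\emptyset)$ and $r^{-}\left\{ \lcut(C) \right\} \cov_{\mathfrak{P}} \left\{ (c',C) \mid c' \in S \right\}$. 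Hence any $(a',A')$ with $(a',A') \mathrel{r} \left\{ \rcut(c),\lcut(C) \right\}$ satisfies both $\Closed{c} \sqsubseteq \Closed{a'} \vee \KFit{A'}$ and $\KFit{C} \sqsubseteq \Closed{a'} \vee \KFit{A'}$, so $\Closed{c} \vee \KFit{C} \sqsubseteq \Closed{a'} \vee \KFit{A'}$, i.e.\ $(a',A') \cov_{\mathfrak{P}} (c,C)$; this gives $r^{-}\left\{ \rcut(c),\lcut(C) \right\} \cov_{\mathfrak{P}} (c,C)$, while $(c,C) \cov_{\mathfrak{P}} r^{-}\left\{ \rcut(c),\lcut(C) \right\}$ holds because $\Closed{c},\KFit{C} \sqsubseteq \Closed{c} \vee \KFit{C}$ already give $(c,C) \mathrel{r} \left\{ \rcut(c),\lcut(C) \right\}$. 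Dually, from axioms \ref{r1} and \ref{l1} and the base order $\supseteq$ one gets $s^{-}(b,\emptyset) \cov_{P} \left\{ \rcut(b) \right\}$ and $s^{-}(c,B) \cov_{P} \left\{ \lcut(B) \right\}$, with the reverse covers coming from reflexivity.

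With these in hand both identities follow by propagating covers through the basic-cover-map condition \eqref{FTM3}. For $s \circ r \le \id_{\Perf(\mathcal{S})}$: if $(a,A) \in (s \circ r)^{-}(c,C)$, there is $\elPT{X}$ with $\elPT{X} \cov_{P} \left\{ \rcut(c),\lcut(C) \right\}$ and $(a,A) \in r^{-}\elPT{X}$, so \eqref{FTM3} for $r$ yields $r^{-}\elPT{X} \cov_{\mathfrak{P}} r^{-}\left\{ \rcut(c),\lcut(C) \right\} \cov_{\mathfrak{P}} (c,C)$, whence $(a,A) \cov_{\mathfrak{P}} (c,C)$; the reverse inequality is immediate since $\left\{ \rcut(c),\lcut(C) \right\} \in s^{-}(c,C)$ and $(c,C) \in r^{-}\left\{ \rcut(c),\lcut(C) \right\}$ place $(c,C)$ in $(s \circ r)^{-}(c,C)$. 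The argument for $r \circ s = \id_{\Patch(\mathcal{S})}$ is entirely analogous, using \eqref{FTM3} for $s$ and the covers $s^{-}(b,\emptyset) \cov_{P} \left\{ \rcut(b) \right\}$, $s^{-}(c,B) \cov_{P} \left\{ \lcut(B) \right\}$; alternatively, since $\Patch(\mathcal{S})$ is regular by Proposition \ref{prop:PatchLKReg}, a single inequality suffices on that side by Lemma \ref{lem:RegMax}.

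I expect the main obstacle to be purely bookkeeping: correctly matching the inverse images of the $\Patch(\mathcal{S})$-generators with joins of the basic perfect subtopologies $\Closed{a} \vee \KFit{A}$ in the coframe $\SubTop(\mathcal{S})$, and in particular justifying the set-indexed meet identity $\bigwedge_{c' \in S}\left( \Closed{c'} \vee \KFit{C} \right) = \KFit{C}$ that governs the generator $\lcut(C)$. Once this dictionary is set up via Lemma \ref{lem:PerfctSubLoc} and the distributive law, no further induction on the covers is required beyond those already carried out in verifying that $r$ and $s$ are formal topology maps (notably Lemma \ref{lem:AuxRelationS}).
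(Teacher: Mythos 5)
Your proposal is correct and takes essentially the same route as the paper, whose proof is simply ``Straightforward from the definitions of $r$ and $s$'': unwinding the definitions on generators via the coframe facts $\KFit{\emptyset} = \mathcal{S}_{\bot}$ and $\bigwedge_{c' \in S}\left( \Closed{c'} \vee \KFit{C} \right) = \KFit{C}$, and propagating covers through condition \eqref{FTM3}, which was already secured for $r$ and $s$ by the preceding lemmas (notably Lemma \ref{lem:AuxRelationS}), is exactly the intended verification. Your dictionary covers ($r^{-}\left\{ \rcut(c) \right\} \cov_{\mathfrak{P}} (c,\emptyset)$, $s^{-}(b,\emptyset) \cov_{P} \left\{ \rcut(b) \right\}$, etc.) and the two-sided check on base elements are accurate, so there is no gap.
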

\begin{proof}
  Straightforward from the definitions of $r$ and $s$.
\end{proof}

From the definitions of $r$ and $s$, it is clear that
$\Perf(\mathcal{S})$ and $\Patch(\mathcal{S})$ are essentially that same
presentations. The difference is that
$\Perf(\mathcal{S})$ is a presentation 
by a basis while $\Patch(\mathcal{S})$ is a presentation by a subbasis.

\section{Lawson topologies}\label{sec:LawsonTop}
We give a geometric characterisation of the Lawson topology
of a continuous lattice in terms of continuous basic cover.
Here, the Lawson topology of a continuous basic cover is presented by
a geometric theory whose models are the located subsets of the given
continuous basic cover. Classically, there is
another way of constructing the Lawson topology of a continuous
lattice as the patch of its Scott topology. We clarify the connection
between these two approaches in Section \ref{sec:LawsonasPatch}. We
also give a point-free account of the fundamental connection between
Lawson topologies of continuous lattices and the Vietoris monad on the
category of compact Hausdorff spaces; see Section \ref{sec:Vietoris}.

\subsection{Geometric theories of located subsets}
We fix a continuous basic cover
$\mathcal{S}$.
The following construction is motivated by the characterisation of
located subsets in Lemma
\ref{lem:CutAsScott}.
\begin{definition} \label{def:LocPower}
The \emph{Lawson
topology} of a continuous basic cover $\mathcal{S}$ is a formal topology
$\Located(\mathcal{S})$ presented by a geometric theory $T_{\Located}$
over the propositional symbols
\[
  P_{\Located} \defeql \left\{ \lcut(A) \mid A \in \Fin{S} \right\}
  \cup \left\{ \rcut(a) \mid a \in S \right\}
\] 
with the axioms \ref{r3} -- \ref{D} of the theory $T_{P}$ (see
Definition \ref{def:Patch}).
The models of $T_{\Located}$
bijectively correspond to the located subsets of $\mathcal{S}$.
\end{definition}

Given a continuous basic cover $\mathcal{S}$ and the geometric theory
$T_{\Located}$ as in Definition \ref{def:LocPower}, define a function
  $
  \wb_{\Located} \colon  \Fin{P_{\Located}} \to \Pow{\Fin{P_{\Located}}}
  $
by induction on $\Fin{P_{\Located}}$:
\begin{align*}
  \wb_{\Located}(\emptyset)
  &\defeql \left\{  \emptyset \right\},\\
  \wb_{\Located}(\elPT{A} \cup \left\{ \rcut(a) \right\})
  &\defeql
  \left\{ \elPT{B} \cup \left\{ \rcut(b) \right\} \mid
  \elPT{B} \in \wb_{\Located}(\elPT{A}) \amp  b \ll a \right\},\\
  \wb_{\Located}(\elPT{A} \cup \left\{ \lcut(A) \right\})
  &\defeql
  \left\{ \elPT{B} \cup \left\{ \lcut(B) \right\} \mid
  \elPT{B} \in \wb_{\Located}(\elPT{A}) \amp  A \ll B \right\}.
\end{align*}

\begin{lemma}\label{lem:wbL} \leavevmode
  \begin{enumerate}
    \item\label{lem:wbL1} $\elPT{B} \in \wb_{\Located}(\elPT{A}) \implies
      \elPT{B} \ll \elPT{A} \amp \elPT{B} \lll \elPT{A}$,
    \item\label{lem:wbL3} $\elPT{A}
      \cov_{\Located(\mathcal{S})} \wb_{\Located}(\elPT{A})$.
  \end{enumerate}
\end{lemma}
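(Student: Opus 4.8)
The plan is to follow the proof of Lemma \ref{lem:wbT} almost verbatim, keeping in mind that the theory $T_{\Located}$ retains only the axioms \ref{r3}--\ref{D} of $T_{P}$; in particular \ref{r1} and \ref{r2} are absent, so, unlike in the patch case, I never need to appeal to stable local compactness, and the base clause $\wb_{\Located}(\emptyset) = \{ \emptyset \}$ replaces the non-trivial $\wb_{P}(\emptyset)$. Before the two inductions I would record two auxiliary facts whose proofs transcribe those of the corresponding patch statements and invoke only axioms present in $T_{\Located}$. First, a reflexivity lemma $\elPT{B} \in \wb_{\Located}(\elPT{A}) \implies \elPT{B} \cov_{\Located(\mathcal{S})} \elPT{A}$, proved by induction on $\elPT{A}$: for the $\rcut$ clause one uses that $b \ll a$ entails $b \cov \{ a \}$ together with \ref{r3}, and for the $\lcut$ clause that $A \ll B$ entails $A \cov B$ together with \ref{l2}. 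Second, the analogue of Lemma \ref{lem:LittleFact} for $\Located(\mathcal{S})$, namely $A \ll B \implies \Fin{P_{\Located}} \cov_{\Located(\mathcal{S})} \{ \{ \lcut(A) \} \} \cup \{ \{ \rcut(b) \} \mid b \in B \}$, whose proof only uses \ref{Loc}, \ref{l3}, and \ref{l2}.

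Part \ref{lem:wbL3} I would prove by induction on $\elPT{A}$, exactly as in Lemma \ref{lem:wbT}.\ref{lem:wbT3} but with a simpler base case. For $\elPT{A} = \emptyset$ we have $\wb_{\Located}(\emptyset) = \{ \emptyset \}$, so $\emptyset \cov_{\Located(\mathcal{S})} \{ \emptyset \}$ holds by (reflex); this is the one place where dropping \ref{r1} simplifies matters, since there is no inhabitedness requirement to discharge. The inductive steps combine the induction hypothesis with \ref{r4} on a $\rcut$ generator and with \ref{l3} on a $\lcut$ generator, using the stability of $\cov_{\Located(\mathcal{S})}$ under adjoining context (built into the context-indexed axioms defining $\cov_{\Located(\mathcal{S})}$).

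For the way-below half of part \ref{lem:wbL1}, I would show, by induction on $\cov_{\Located(\mathcal{S})}$, that $\elPT{A} \cov_{\Located(\mathcal{S})} \mathcal{U}$ implies $(\forall \elPT{B} \in \wb_{\Located}(\elPT{A}))(\exists \mathcal{U}_{0} \in \Fin{\mathcal{U}})\ \elPT{B} \cov_{\Located(\mathcal{S})} \mathcal{U}_{0}$, and read off $\emptyset \ll \emptyset$ (the compactness of $\Located(\mathcal{S})$) by taking $\elPT{A} = \elPT{B} = \emptyset$. The (reflex)-case is the reflexivity lemma above; the $(\leq)$-case uses that every $\elPT{B} \in \wb_{\Located}(\elPT{A})$ contains some $\elPT{C} \in \wb_{\Located}(\elPT{A}')$ whenever $\elPT{A}' \subseteq \elPT{A}$; and the (infinity)-case is checked axiom by axiom, where only \ref{r3}, \ref{r4}, \ref{l1}--\ref{l3}, \ref{Loc} and \ref{D} occur, each going through as in Lemma \ref{lem:wbT}.\ref{lem:wbT1}, with \ref{Loc} relying on interpolation (Lemma \ref{lem:LKInterpolate}) and the Lemma \ref{lem:LittleFact}-analogue.

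Finally, the rather-below half of part \ref{lem:wbL1} I would prove by induction on $\elPT{A}$, following Lemma \ref{lem:wbT}.\ref{lem:wbT2}. The base case $\elPT{A} = \emptyset$ is immediate since $\emptyset$ is the top element, giving $\Fin{P_{\Located}} \cov_{\Located(\mathcal{S})} \{ \emptyset \} \subseteq \elPT{B}^{*} \cup \{ \emptyset \}$. In the $\rcut$ step, writing $\elPT{B} = \elPT{B}' \cup \{ \rcut(b) \}$ with $b \ll a$, I would combine the induction hypothesis $\elPT{B}' \lll \elPT{A}'$ with \ref{Loc} and then observe, via \ref{D}, that $\elPT{A}' \cup \{ \lcut(\{ b \}) \} \in \elPT{B}^{*}$, which discharges the extra disjunct; the $\lcut$ step is analogous but driven by the Lemma \ref{lem:LittleFact}-analogue, again using \ref{D} to place each $\elPT{A}' \cup \{ \rcut(b) \}$ into $\elPT{B}^{*}$. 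The only real obstacle throughout is the bookkeeping with the pseudocomplements $\elPT{B}^{*}$ and the verification of these membership claims from \ref{D}; conceptually the argument is a routine adaptation of Lemma \ref{lem:wbT}, the sole genuinely new phenomenon being that the trivial base clause $\wb_{\Located}(\emptyset) = \{ \emptyset \}$ already encodes the compactness of the Lawson topology even though $\mathcal{S}$ itself need not be compact.
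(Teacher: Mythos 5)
Your proposal is correct and is exactly the paper's intended argument: the paper proves Lemma~\ref{lem:wbL} by the single remark ``Similar to the proof of Lemma~\ref{lem:wbT}'', and you carry out precisely that adaptation, correctly identifying that dropping \ref{r1} and \ref{r2} removes the appeals to stable local compactness, that the analogues of Lemma~\ref{lem:wbT}.\ref{lem:wbT0} and Lemma~\ref{lem:LittleFact} only invoke axioms present in $T_{\Located}$, and that the trivial base clause $\wb_{\Located}(\emptyset)=\left\{ \emptyset \right\}$ both simplifies the base cases and yields $\emptyset \ll \emptyset$, i.e.\ compactness of $\Located(\mathcal{S})$.
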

\begin{proof}
  Similar to the proof of Lemma \ref{lem:wbT}.
\end{proof}

\begin{proposition}\label{prop:LocPowerKR}
  $\Located(\mathcal{S})$ is a compact regular formal topology.
\end{proposition}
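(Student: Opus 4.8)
The plan is to read off both properties directly from Lemma~\ref{lem:wbL}, which already carries the nontrivial content. The one structural fact I would record first is that the base of $\Located(\mathcal{S})$ is $\Fin{P_{\Located}}$ ordered by reverse inclusion, so $\emptyset$ is the top element. Consequently, by the $(\leq)$-rule, $\emptyset \cov_{\Located(\mathcal{S})} W$ holds if and only if $\Fin{P_{\Located}} \cov_{\Located(\mathcal{S})} W$, for every $W \subseteq \Fin{P_{\Located}}$.

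For compactness I would verify $\Fin{P_{\Located}} \ll \Fin{P_{\Located}}$ (Definition~\ref{defSK}). Unwinding the definition of $\ll$ on subsets and using the equivalence just noted, this assertion reduces to $\emptyset \ll \emptyset$. Since $\emptyset \in \wb_{\Located}(\emptyset)$ by the base case of the inductive definition of $\wb_{\Located}$, Lemma~\ref{lem:wbL}.\ref{lem:wbL1} (applied with $\elPT{A} = \elPT{B} = \emptyset$) yields $\emptyset \ll \emptyset$ immediately, so $\Located(\mathcal{S})$ is compact.

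For regularity I would fix an arbitrary $\elPT{A} \in \Fin{P_{\Located}}$ and show $\elPT{A} \cov_{\Located(\mathcal{S})} \left\{ \elPT{B} \mid \elPT{B} \lll \elPT{A} \right\}$. Lemma~\ref{lem:wbL}.\ref{lem:wbL3} supplies the cover $\elPT{A} \cov_{\Located(\mathcal{S})} \wb_{\Located}(\elPT{A})$, and Lemma~\ref{lem:wbL}.\ref{lem:wbL1} guarantees $\elPT{B} \lll \elPT{A}$ for every $\elPT{B} \in \wb_{\Located}(\elPT{A})$, so that $\wb_{\Located}(\elPT{A}) \subseteq \left\{ \elPT{B} \mid \elPT{B} \lll \elPT{A} \right\}$. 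Composing the cover with this inclusion gives exactly the regularity condition, and hence $\Located(\mathcal{S})$ is compact regular; in particular it is also locally compact, since $\ll$ and $\lll$ coincide on any compact regular formal topology.

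The genuine difficulty, exactly as in the patch case, is therefore not in the proposition but in Lemma~\ref{lem:wbL}: the delicate half is the $\lll$ clause of \ref{lem:wbL1}, whose inductive verification over $\cov_{\Located(\mathcal{S})}$ relies on the axioms \ref{Loc} and \ref{D}, mirroring the proof of Lemma~\ref{lem:wbT}.\ref{lem:wbT2}. Once that lemma is granted, the present proposition is an immediate corollary.
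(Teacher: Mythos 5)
Your proof is correct and takes essentially the same route as the paper's, which likewise reads regularity (and local compactness) off Lemma~\ref{lem:wbL} and obtains compactness from the observation that $\emptyset$ is the top element together with $\emptyset \ll \emptyset$. Your extra details---the reduction of $\Fin{P_{\Located}} \ll \Fin{P_{\Located}}$ to $\emptyset \ll \emptyset$ via the $(\leq)$-rule, and the inclusion $\wb_{\Located}(\elPT{A}) \subseteq \left\{ \elPT{B} \mid \elPT{B} \lll \elPT{A} \right\}$---merely make explicit what the paper's terse proof leaves implicit.
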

\begin{proof}
$\Located(\mathcal{S})$ is locally compact regular by Lemma
\ref{lem:wbL}. Since $\emptyset$ is a top element and $\emptyset \ll
\emptyset$, it is also compact.
\end{proof}

There exists a basic cover map $\varepsilon_{\Located} \colon
\Located(\mathcal{S}) \to \mathcal{S}$ defined by 
\begin{equation}\label{def:CorefLocated}
  \elPT{A} \mathrel{\varepsilon_{\Located}} a \defeqiv \elPT{A}
  \cov_{\Located(\mathcal{S})} \left\{
  \rcut(a) \right\}
\end{equation}
which is perfect by the definition of $\wb_{\Located}$.
\begin{proposition}\label{prop:UnivPropLocated}
  For any compact regular formal topology $\mathcal{S}'$ and a
  perfect basic cover map $r \colon \mathcal{S}' \to
  \mathcal{S}$, there exists a
  unique formal topology map $\tilde{r} \colon \mathcal{S}' \to
  \Patch(\mathcal{S})$ such that $\varepsilon_{\Located} \circ
  \tilde{r} = r$.
\end{proposition}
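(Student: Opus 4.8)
The plan is to follow the proof of Proposition~\ref{prop:UnivPropPatch} almost verbatim, the two simplifications being that $\mathcal{S}$ is now only a continuous basic cover (so $T_{\Located}$ omits \ref{r1} and \ref{r2}, and $\mathcal{S}$ carries no $\downarrow$) and that both $\mathcal{S}'$ and the codomain $\Located(\mathcal{S})$ are \emph{compact} regular, the latter by Proposition~\ref{prop:LocPowerKR}. I would define $\tilde{r}\colon\mathcal{S}'\to\Located(\mathcal{S})$ on the generators of $T_{\Located}$ exactly as in \eqref{def:uniqExt}, namely $b\mathrel{\tilde{r}}\{\rcut(a)\}\defeqiv b\mathrel{r}a$ and $b\mathrel{\tilde{r}}\{\lcut(A)\}\defeqiv(\exists B\gg A)\,b\in(r^{-}B)^{*}$, and then verify that this relation respects the axioms \ref{r3}--\ref{D} in the sense of \eqref{eq:PreservAx}.

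Because $r$ is a basic cover map, $\tilde{r}$ respects \ref{r3} and \ref{r4}, which involve only the $\rcut$ generators. The verifications of \ref{l1}--\ref{l3} and \ref{D} are routine and go as in the Patch case: \ref{l1} holds since $\emptyset\gg\emptyset$ and $(r^{-}\emptyset)^{*}=S'$; \ref{l2} follows because $A\cov B$ and $B\ll C$ give $A\ll C$; \ref{l3} combines closure of $\ll$ under finite unions with interpolation to produce the witnessing $C$; and \ref{D} uses that $a\ll B$ gives $r^{-}a\cov' r^{-}B$ while downward closure of $(r^{-}B)^{*}$ forces the relevant meet to cover $\emptyset$. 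The only axiom that calls on the hypotheses on $\mathcal{S}'$ is \ref{Loc}: given $a\ll b$, interpolation (Lemma~\ref{lem:LKInterpolate}) yields $A\in\Fin{S}$ with $a\ll A\ll b$, and perfectness of $r$ gives $r^{-}A\ll r^{-}\{b\}$. Here the compact regularity of $\mathcal{S}'$ lets me pass directly from $\ll$ to $\lll$ (the two coincide on a compact regular formal topology), so $r^{-}A\lll r^{-}\{b\}$ and hence $S'\cov'(r^{-}A)^{*}\cup r^{-}\{b\}\cov'\tilde{r}^{-}\{\lcut(\{a\})\}\cup\tilde{r}^{-}\{\rcut(b)\}$, the last step because $a\ll A$ witnesses $(r^{-}A)^{*}\subseteq\tilde{r}^{-}\{\lcut(\{a\})\}$. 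This replaces the regularity argument used for \ref{Loc} in Proposition~\ref{prop:UnivPropPatch}.

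Thus $\tilde{r}$ extends to a formal topology map, which is automatically perfect since every map between compact regular formal topologies is perfect, and $\varepsilon_{\Located}\circ\tilde{r}=r$ holds by construction. For uniqueness I would first show, as in Lemma~\ref{lem:Core}.\ref{lem:CorefMono}, that $\varepsilon_{\Located}$ is a monomorphism. The equation $\varepsilon_{\Located}\circ\tilde{r}=\varepsilon_{\Located}\circ\tilde{s}$ forces $\tilde{r}^{-}\{\rcut(a)\}=_{\mathcal{S}'}\tilde{s}^{-}\{\rcut(a)\}$ for all $a\in S$, and since $\Located(\mathcal{S})$ is regular it then suffices to show that $\tilde{r}$ and $\tilde{s}$ agree on the $\lcut$ generators. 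Reducing via \ref{l3} to the case $A\ll B$, the analogue of Lemma~\ref{lem:LittleFact}---which holds for $\Located(\mathcal{S})$ since $T_{\Located}$ retains \ref{Loc} and \ref{l3}---gives $\Fin{P_{\Located}}\cov_{\Located(\mathcal{S})}\{\{\lcut(A)\}\}\cup\{\{\rcut(b)\}\mid b\in B\}$; pulling this back along $\tilde{s}$, substituting $\tilde{r}^{-}\{\rcut(b)\}$ for $\tilde{s}^{-}\{\rcut(b)\}$, and applying \ref{D} yields $\tilde{r}^{-}\{\lcut(B)\}\cov'\tilde{s}^{-}\{\lcut(A)\}$, whence $\tilde{r}=\tilde{s}$.

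I expect the main obstacle to be essentially bookkeeping: checking that the Patch-case verifications of \ref{l1}--\ref{l3} and \ref{D} survive the weakening of $\mathcal{S}$ to a bare continuous basic cover, that is, that none of them secretly used \ref{r1}, \ref{r2}, or the $\downarrow$ structure on $\mathcal{S}$. The one genuinely substantive point is the coincidence of $\ll$ and $\lll$ on the compact regular topology $\mathcal{S}'$, which is what makes \ref{Loc} go through and is cleaner here than the corresponding regularity step in Proposition~\ref{prop:UnivPropPatch}.
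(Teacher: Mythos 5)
Your proposal is correct and takes essentially the same route as the paper, whose proof consists precisely of the remark that the argument is analogous to Proposition~\ref{prop:UnivPropPatch} with $\tilde{r}$ defined by \eqref{def:uniqExt}; you have filled in exactly the intended details, including the genuine simplifications (only axioms \ref{r3}--\ref{D} need checking since $r$ is merely a basic cover map, perfectness of $\tilde{r}$ is automatic between compact regular topologies, and the coincidence of $\ll$ and $\lll$ on the compact regular $\mathcal{S}'$ handles \ref{Loc}, with uniqueness via the monomorphism argument of Lemma~\ref{lem:Core} transported through the analogue of Lemma~\ref{lem:LittleFact}). You also correctly read the codomain $\Patch(\mathcal{S})$ in the statement as a typo for $\Located(\mathcal{S})$.
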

\begin{proof}
  The proof is analogous to that of Proposition \ref{prop:UnivPropPatch}.
  In particular, the unique formal topology map $\tilde{r} \colon
  \mathcal{S}' \to \Patch(\mathcal{S})$ is defined by
  \eqref{def:uniqExt}.
\end{proof}

Let $\KReg$ be the full subcategory of $\FTop$ consisting of compact
regular formal topologies. Since compact regular formal topologies are
locally compact and morphisms between them are always perfect, we have a
forgetful functor $U \colon \KReg \to \LFCov$. Then, Proposition
\ref{prop:UnivPropLocated} implies the following.
\begin{theorem}\label{thm:LRegLFCov}
  The forgetful functor $U \colon \KReg \to \LFCov$ has a right
  adjoint. The right adjoint is exhibited by the construction of
  Lawson topologies.
\end{theorem}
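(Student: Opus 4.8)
The plan is to read Proposition~\ref{prop:UnivPropLocated} as the assertion that, for each continuous basic cover $\mathcal{S}$, the perfect map $\varepsilon_{\Located}^{\mathcal{S}} \colon U\Located(\mathcal{S}) \to \mathcal{S}$ (the component at $\mathcal{S}$ of the map defined in~\eqref{def:CorefLocated}) is a \emph{universal arrow} from $U$ to the object $\mathcal{S}$ of $\LFCov$, and then to invoke the standard characterisation of adjunctions by universal arrows: a functor has a right adjoint precisely when every object of the codomain admits such a universal arrow. All the ingredients are already available. The object $\Located(\mathcal{S})$ lies in $\KReg$ by Proposition~\ref{prop:LocPowerKR}; the map $\varepsilon_{\Located}^{\mathcal{S}}$ is perfect, hence a morphism of $\LFCov$; and Proposition~\ref{prop:UnivPropLocated} says that every perfect map $r \colon U\mathcal{S}' \to \mathcal{S}$ out of a compact regular $\mathcal{S}'$ factors as $r = \varepsilon_{\Located}^{\mathcal{S}} \circ U\tilde{r}$ through a \emph{unique} $\KReg$-morphism $\tilde{r} \colon \mathcal{S}' \to \Located(\mathcal{S})$. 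Here uniqueness among formal topology maps is the same as uniqueness among $\KReg$-morphisms because $\KReg$ is a full subcategory of $\FTop$. In other words, $(\Located(\mathcal{S}), \varepsilon_{\Located}^{\mathcal{S}})$ is terminal in the comma category $(U \downarrow \mathcal{S})$.

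First I would promote the object assignment $\mathcal{S} \mapsto \Located(\mathcal{S})$ to a functor $\Located \colon \LFCov \to \KReg$. Given a perfect map $f \colon \mathcal{S} \to \mathcal{T}$, the composite $f \circ \varepsilon_{\Located}^{\mathcal{S}} \colon U\Located(\mathcal{S}) \to \mathcal{T}$ is again perfect, since perfect maps are closed under composition, so the universal property supplies a unique $\KReg$-morphism $\Located(f) \colon \Located(\mathcal{S}) \to \Located(\mathcal{T})$ satisfying
\[
  \varepsilon_{\Located}^{\mathcal{T}} \circ U\Located(f) = f \circ \varepsilon_{\Located}^{\mathcal{S}}.
\]
Functoriality then follows purely from uniqueness: $\id_{\Located(\mathcal{S})}$ satisfies the defining equation for $\id_{\mathcal{S}}$, so $\Located(\id_{\mathcal{S}}) = \id_{\Located(\mathcal{S})}$; and for composable $f,g$ both $\Located(g)\circ\Located(f)$ and $\Located(g\circ f)$ are $\KReg$-morphisms whose post-composition with $\varepsilon_{\Located}$ equals $(g\circ f)\circ\varepsilon_{\Located}^{\mathcal{S}}$, whence they coincide.

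The displayed equation is exactly the naturality of the family $\varepsilon_{\Located}^{\mathcal{S}}$, so $\varepsilon_{\Located} \colon U\Located \Rightarrow \id_{\LFCov}$ is a natural transformation, and the universal factorisation holds at every object. The standard theorem now yields the adjunction $U \dashv \Located$ with counit $\varepsilon_{\Located}$; the unit is obtained by applying Proposition~\ref{prop:UnivPropLocated} to the identity maps $\id_{U\mathcal{S}'}$, and the triangle identities hold by the same uniqueness clause. I do not expect a genuine obstacle here, as the analytic content lives entirely in Proposition~\ref{prop:UnivPropLocated}; the only points requiring care are the verifications, already recorded in the excerpt, that $\varepsilon_{\Located}^{\mathcal{S}}$ is perfect and that perfect maps compose, so that all composites used above are legitimate morphisms of $\LFCov$.
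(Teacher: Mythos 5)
Your proposal is correct and takes essentially the same approach as the paper: the paper derives Theorem \ref{thm:LRegLFCov} directly from Proposition \ref{prop:UnivPropLocated}, which is exactly the universal-arrow statement you invoke (the proposition's stated codomain $\Patch(\mathcal{S})$ is a typo for $\Located(\mathcal{S})$, which you implicitly corrected). Your explicit unwinding of the standard universal-arrows-to-adjunction machinery is precisely what the paper leaves implicit.
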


\subsection{Lawson topologies as patch topologies}
\label{sec:LawsonasPatch}
The forgetful functor $U \colon \FTopi \to \BCovi$ from
the category of inductively generated formal topologies to that of
inductively generated basic covers has a right adjoint
\cite{ConvFTop}.  The monad on $\FTopi$ induced by this adjunction
corresponds to the lower powerlocale monad
\cite{Vickers95constructivepoints}. Thus, we call the right adjoint
the lower powerlocale construction.
We show that the construction of Lawson topologies can be decomposed
into the patch construction and the lower powerlocale construction.

\begin{definition}
The \emph{lower powerlocale} $\Lower{\mathcal{S}}$ of an inductively
generated  basic cover $\mathcal{S}$ is presented by the geometric
theory whose models are the splitting subsets of $\mathcal{S}$.
Specifically, if $(I,C)$ is an axiom-set of $\mathcal{S}$, then
$\Lower{\mathcal{S}}$ can be presented by a geometric theory over $S$
with the axioms of the form
\[
  a \vdash \bigvee \left\{ b \mid b \in C(i,a) \right\}
\]
for each $a \in S$ and $i \in I(a)$.
There is a basic cover map $\sigma_{L}
\colon \Lower{\mathcal{S}} \to \mathcal{S}$ given by  
\begin{equation}\label{eq:LowerCounit}
  A \mathrel{\sigma_{L}} a \defeqiv A \cov_{\Lower{\mathcal{S}}} \left\{ a \right\}
\end{equation}
with the following universal property: for any formal topology
$\mathcal{S}'$ and a basic cover map $r \colon \mathcal{S}' \to
\mathcal{S}$, there exists a unique formal topology map $\overline{r}
\colon \mathcal{S}' \to \Lower{\mathcal{S}}$ such that $\sigma_{L}
\circ \overline{r} = r$.
\end{definition}

Note that, if $\mathcal{S}$ is continuous basic cover,
then $\Lower{\mathcal{S}}$ can be presented by the geometric theory over the
generators $\left\{ \rcut(a) \mid a \in S \right\}$ together with
axioms \ref{r3} and \ref{r4} of $T_{\Located}$ (cf.\ Lemma
\ref{lem:SplittingInLK}).

\begin{lemma}\label{lem:LowerSLK}
  If $\mathcal{S}$ is a continuous basic cover,
  then $\Lower{\mathcal{S}}$  is stably compact.
\end{lemma}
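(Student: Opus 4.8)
The plan is to establish the three ingredients of stable compactness of $\Lower{\mathcal{S}}$ separately---local compactness, compactness, and stability of the way-below relation---working throughout with the presentation of $\Lower{\mathcal{S}}$ by the generators $\rcut(a)$ ($a \in S$) subject only to the axioms \ref{r3} and \ref{r4} (as recorded in the Note preceding the lemma). I write $\elPT{A}$ for a basic element of $\Lower{\mathcal{S}}$, i.e.\ a finite set of generators $\rcut(a)$, with meet given by union and $\cov_{\Lower{\mathcal{S}}}$ its cover. Mimicking the development around Lemma \ref{lem:wbT} and Lemma \ref{lem:wbL}, I would introduce a function $\wb_{L}$ by $\wb_{L}(\emptyset) \defeql \left\{ \emptyset \right\}$ and $\wb_{L}(\elPT{A} \cup \left\{ \rcut(a) \right\}) \defeql \left\{ \elPT{B} \cup \left\{ \rcut(b) \right\} \mid \elPT{B} \in \wb_{L}(\elPT{A}) \amp b \ll a \right\}$, so that $\wb_{L}(\elPT{A})$ consists of the ``pointwise $\ll$-shrinkings'' of $\elPT{A}$.

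First I would prove the analogue of Lemma \ref{lem:wbT}: for all $\elPT{A},\elPT{B} \in \Fin{P}$, that $\elPT{B} \in \wb_{L}(\elPT{A})$ implies both $\elPT{B} \cov_{\Lower{\mathcal{S}}} \left\{ \elPT{A} \right\}$ and $\elPT{B} \ll \elPT{A}$, and that $\elPT{A} \cov_{\Lower{\mathcal{S}}} \wb_{L}(\elPT{A})$. The containment $\elPT{B} \cov_{\Lower{\mathcal{S}}} \left\{ \elPT{A} \right\}$ is immediate once one notes that $b \ll a$ forces $b \cov \left\{ a \right\}$ (take $U = \left\{ a \right\}$ in Definition \ref{def:LK}), whence $\rcut(b) \leq \rcut(a)$ in $\Lower{\mathcal{S}}$ by \ref{r3}; the cover $\elPT{A} \cov_{\Lower{\mathcal{S}}} \wb_{L}(\elPT{A})$ follows by induction on $\elPT{A}$ using \ref{r4}; and $\elPT{B} \ll \elPT{A}$ is proved by induction on $\cov_{\Lower{\mathcal{S}}}$, checking the two axioms \ref{r3} and \ref{r4} exactly as in the cases \ref{r3} and \ref{r4} of the proof of Lemma \ref{lem:wbT} (both use only interpolation of $\ll$ and continuity of $\mathcal{S}$, never stability). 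This shows $\Lower{\mathcal{S}}$ is locally compact, and in particular yields the characterisation $\elPT{X} \ll \elPT{Y} \iff \left( \exists \mathcal{Z} \in \Fin{\wb_{L}(\elPT{Y})} \right) \elPT{X} \cov_{\Lower{\mathcal{S}}} \mathcal{Z}$. Compactness is then immediate: $\emptyset$ is the top element and $\emptyset \in \wb_{L}(\emptyset)$ gives $\emptyset \ll \emptyset$, exactly as in Proposition \ref{prop:LocPowerKR}.

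The key remaining point is stability, $\elPT{X} \ll \elPT{X}' \amp \elPT{Y} \ll \elPT{Y}' \implies \elPT{X} \downarrow \elPT{Y} \ll \elPT{X}' \downarrow \elPT{Y}'$; since $\elPT{X} \downarrow \elPT{Y}$ and $\elPT{X} \cup \elPT{Y}$ have the same saturation, this reduces to $\elPT{X} \cup \elPT{Y} \ll \elPT{X}' \cup \elPT{Y}'$. The crucial simplification is that meets in $\Lower{\mathcal{S}}$ are unions, so $\wb_{L}$ is multiplicative: $\left\{ \elPT{Z} \cup \elPT{W} \mid \elPT{Z} \in \wb_{L}(\elPT{X}') \amp \elPT{W} \in \wb_{L}(\elPT{Y}') \right\} \subseteq \wb_{L}(\elPT{X}' \cup \elPT{Y}')$, directly from the inductive definition. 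Given $\elPT{X} \cov_{\Lower{\mathcal{S}}} \mathcal{Z}$ with $\mathcal{Z} \in \Fin{\wb_{L}(\elPT{X}')}$ and $\elPT{Y} \cov_{\Lower{\mathcal{S}}} \mathcal{W}$ with $\mathcal{W} \in \Fin{\wb_{L}(\elPT{Y}')}$, the formal-topology meet axiom gives $\elPT{X} \cup \elPT{Y} \cov_{\Lower{\mathcal{S}}} \mathcal{Z} \downarrow \mathcal{W}$, and since every element of $\mathcal{Z} \downarrow \mathcal{W}$ lies above some $\elPT{Z} \cup \elPT{W}$ with $\elPT{Z} \in \mathcal{Z}$, $\elPT{W} \in \mathcal{W}$, we obtain the finite cover $\elPT{X} \cup \elPT{Y} \cov_{\Lower{\mathcal{S}}} \left\{ \elPT{Z} \cup \elPT{W} \mid \elPT{Z} \in \mathcal{Z} \amp \elPT{W} \in \mathcal{W} \right\}$ by elements of $\wb_{L}(\elPT{X}' \cup \elPT{Y}')$. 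By the way-below characterisation this is exactly $\elPT{X} \cup \elPT{Y} \ll \elPT{X}' \cup \elPT{Y}'$, i.e.\ stability.

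I expect the main obstacle to be bookkeeping rather than conceptual: carrying out the induction for $\elPT{B} \ll \elPT{A}$ cleanly (the analogue of Lemma \ref{lem:wbT}.\ref{lem:wbT1}), since one must track the context $\elPT{A}$ through each localised instance of \ref{r3} and \ref{r4}. Conceptually the noteworthy feature---and the reason the hypothesis on $\mathcal{S}$ is merely continuity, not stable local compactness---is that stability of $\Lower{\mathcal{S}}$ comes for free: binary meets in $\Lower{\mathcal{S}}$ are unions and $\wb_{L}$ respects them, so no interaction between the $\downarrow$-operation of $\mathcal{S}$ and $\ll$ is ever required.
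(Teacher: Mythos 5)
Your proof is correct, but it takes a genuinely different route from the paper's. The paper argues structurally: by Lemma \ref{lem:Retract}.\ref{lem:Retract2}, every continuous basic cover $\mathcal{S}$ is a retract of the finitary basic cover $\mathcal{S}_{F}$; for a finitary cover the axiom \ref{r4} is redundant (as $a \ll a$), so $\Lower{\mathcal{S}_{F}}$ is presented by axioms of the form \ref{r3} alone and is therefore spectral, hence stably compact by Lemma \ref{lem:Spec}; finally, since the lower powerlocale construction is functorial, $\Lower{\mathcal{S}}$ is a retract of $\Lower{\mathcal{S}_{F}}$ in $\FTop$, and stable local compactness and compactness pass to retracts (Lemma \ref{lem:Retract}.\ref{lem:Retract3} and \ref{lem:Retract}.\ref{lem:Retract4}). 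You instead verify everything directly on the presentation: an explicit $\wb_{L}$ witnessing continuity (with the $\cov$-induction running exactly as in the \ref{r3} and \ref{r4} cases of Lemma \ref{lem:wbT}), compactness from $\emptyset \ll \emptyset$, and stability for free because binary meets of basic elements are unions and $\wb_{L}$ is multiplicative with respect to union --- which is indeed the conceptual reason the hypothesis is mere continuity rather than stable local compactness. What each approach buys: yours is self-contained, avoids Lemma \ref{lem:Retract} altogether, and produces an explicit way-below witness for $\Lower{\mathcal{S}}$ (predicatively valuable); the paper's is much shorter given Lemma \ref{lem:Retract} (which it proves anyway and which reproves classical retract facts of independent interest), reuses the spectral case, and avoids a third instance of the laborious induction it already performs for $\wb_{P}$ (Lemma \ref{lem:wbT}) and $\wb_{\Located}$ (Lemma \ref{lem:wbL}). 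Two cosmetic points: the recursion defining $\wb_{L}$ on finitely enumerable sets has the same non-canonical-decomposition issue as the paper's own $\wb_{P}$ and $\wb_{\Located}$, and is best read as ``$\elPT{B} \in \wb_{L}(\elPT{A})$ iff there are enumerations $\elPT{A} = \left\{ \rcut(a_{0}),\dots,\rcut(a_{n-1}) \right\}$ and $\elPT{B} = \left\{ \rcut(b_{0}),\dots,\rcut(b_{n-1}) \right\}$ with $b_{i} \ll a_{i}$ for all $i < n$'' (under which your multiplicativity claim is immediate by concatenating enumerations); and ``$\rcut(b) \leq \rcut(a)$'' should be stated as the cover $\left\{ \rcut(b) \right\} \cov_{\Lower{\mathcal{S}}} \left\{ \left\{ \rcut(a) \right\} \right\}$, since the preorder on the base is reverse inclusion, not a relation between single generators.
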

Before proving Lemma \ref{lem:LowerSLK}, we reprove some well known results
about continuous lattices and locales in terms of basic covers and formal
topologies; see \citet[Chapter IIV, Section 4.6]{johnstone-82} for the
corresponding localic results.
\begin{lemma}\label{lem:Retract}
  \leavevmode
  \begin{enumerate}
    \item\label{lem:Retract1} A retract of a continuous basic cover in
      $\BCov$ is continuous.
    \item\label{lem:Retract2} Every continuous basic cover is a retract of
      a finitely basic cover, where a basic cover $\mathcal{S}$ is
      \emph{finitely} if $a \ll a$ for all $a \in S$.
    \item\label{lem:Retract3} A retract of a stably locally compact formal topology in
      $\FTop$ is stably locally compact.
    \item\label{lem:Retract4} A retract of a compact formal topology is compact.
  \end{enumerate}
\end{lemma}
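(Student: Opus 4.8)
The plan is to pass through the dual equivalence between $\BCov$ (resp.\ $\FTop$) and set-based suplattices (resp.\ frames) recorded after Definition~\ref{def:BCM}. Writing $s \colon \mathcal{T} \to \mathcal{S}$ and $r \colon \mathcal{S} \to \mathcal{T}$ for the section and retraction with $r \circ s = \id_{\mathcal{T}}$, contravariance turns these into maps $f_{r} \colon \Sat{\mathcal{T}} \to \Sat{\mathcal{S}}$ and $f_{s} \colon \Sat{\mathcal{S}} \to \Sat{\mathcal{T}}$ with $f_{s} \circ f_{r} = \id$. Thus $e \defeql f_{r} \circ f_{s}$ is an idempotent join-preserving (in the $\FTop$ case, frame-) endomorphism of $L \defeql \Sat{\mathcal{S}}$, and $\Sat{\mathcal{T}} \cong M \defeql \{ z \in L \mid e(z) = z \}$. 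The first thing I would record is that, since $e$ preserves all joins and fixes each element of $M$, we have $e(\bigvee^{L} D) = \bigvee^{L} e(d) = \bigvee^{L} D$ for every $D \subseteq M$; hence $M$ is closed under arbitrary joins of $L$ (and, in the frame case, under finite meets, as $e$ preserves them), and these agree with the operations of $\Sat{\mathcal{T}}$.

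The technical heart is a description of $\ll$ in $M$ in terms of $\ll$ in $L$: for $u,v \in M$,
\[
  u \ll_{M} v \iff (\exists w \ll_{L} v)\; u \leq e(w).
\]
The direction $\Leftarrow$ is immediate (if $v \leq \bigvee^{L} D$ with $D \subseteq M$ directed, then $w \leq d$ for some $d$, so $u \leq e(w) \leq e(d) = d$); for $\Rightarrow$ one writes $v = e(v) = \bigvee^{L}\{ e(w) \mid w \ll_{L} v \}$, a directed family in $M$ with join $v$, and applies $u \ll_{M} v$. This is the crucial step, and I expect it to be the main obstacle: the naive guess that $\ll_{M}$ is the restriction of $\ll_{L}$ is false, because $e$ need not be comparable with the identity, so the relation has to be mediated by $e$. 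Part~\ref{lem:Retract1} then drops out, since continuity of $L$ gives $v = \bigvee^{M}\{ e(w) \mid w \ll_{L} v \}$ with each $e(w) \ll_{M} v$. For a predicatively explicit witness I would also exhibit $\wb_{\mathcal{T}}(x) \defeql s^{-}\bigl( \bigcup_{a \in r^{-}x} \wb(a) \bigr)$ on the base $T$ and check its two defining clauses directly on covers, both clauses invoking the retract identity $r \circ s = \id$.

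For Parts~\ref{lem:Retract3} and~\ref{lem:Retract4} I would work in the frame case, where $e$ preserves finite meets. Local compactness of $\mathcal{T}$ is Part~\ref{lem:Retract1} applied to the underlying $\BCov$-retract. For stability, given $u \ll_{M} u'$ and $v \ll_{M} v'$ choose witnesses $p \ll_{L} u'$, $q \ll_{L} v'$ with $u \leq e(p)$ and $v \leq e(q)$; stability of $\mathcal{S}$ yields $p \wedge q \ll_{L} u' \wedge v'$, and since $e$ preserves meets, $u \wedge v \leq e(p) \wedge e(q) = e(p \wedge q)$, whence $u \wedge v \ll_{M} u' \wedge v'$ by the characterisation. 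Since $\wedge$ corresponds to $\downarrow$, this is exactly Definition~\ref{def:StablyLK}. Part~\ref{lem:Retract4} is shortest: the top $\top$ lies in $M$ and $\top \ll_{L} \top$ by compactness of $\mathcal{S}$, so taking $w = \top$ gives $\top \ll_{M} \top$; equivalently $f_{r}$ is injective, so $r$ is a localic surjection and compactness transfers along it.

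Finally, Part~\ref{lem:Retract2} is of a different flavour, being the point-free form of ``every continuous lattice is a retract of an algebraic one''. I would take $\mathcal{S}^{+} \defeql (\Fin{S}, \cov^{+})$ with $A \cov^{+} \mathcal{U} \defeqiv (\exists \mathcal{U}_{0} \in \Fin{\mathcal{U}})\, A \cov \bigcup \mathcal{U}_{0}$, which is finitely because every cover is by construction finitely approximable, so $A \ll^{+} A$. The section $s_{0} \colon \mathcal{S} \to \mathcal{S}^{+}$, $a \mathrel{s_{0}} A \defeqiv a \in A$, is at once a cover map, and the retraction is induced by the join assignment $\mathcal{U} \mapsto \bigcup \mathcal{U}$ together with the way-below approximation; the retract identity reduces to $a \cov \wb(a)$ and the interpolation property of $\ll$ (Lemma~\ref{lem:LKInterpolate}), which is precisely what makes the approximating assignment preserve joins. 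I expect verifying that this assignment really is a cover map — that interpolation supplies the required finite $\cov^{+}$-covers — to be the only delicate computation in this part.
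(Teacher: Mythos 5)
Your proposal is correct, but for parts~\ref{lem:Retract1}, \ref{lem:Retract3} and \ref{lem:Retract4} it takes a genuinely different route from the paper. You pass to the dual algebraic side, split the idempotent $e = f_{r} \circ f_{s}$, identify $\Sat{\mathcal{T}}$ with the fixed-point lattice $M$, and prove the classical characterisation $u \ll_{M} v \iff \left( \exists w \ll_{L} v \right) u \leq e(w)$; the paper never leaves the relational side: it defines the witnessing function directly on the base by $\wb(a) \defeql \left\{ b \in S' \mid \left( \exists c \in S \right) b \mathrel{s} c \amp c \ll r^{-}a \right\}$ (your $\wb_{\mathcal{T}}(x) = s^{-}\bigl( \bigcup_{a \in r^{-}x} \wb(a) \bigr)$ is a cover-equivalent variant, and your remark that both halves of the retract identity are consumed, one per clause of Definition~\ref{def:LK}, matches the paper's computation), and it obtains stability in part~\ref{lem:Retract3} by chasing finite interpolants through $r$ and $s$ on the base, with no mention of $e$ or of way-below in $\Sat{\mathcal{T}}$. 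Your route buys the conceptual point that this is the standard argument that retracts of continuous lattices are continuous, but two steps you gloss over should be made explicit: (i) in part~\ref{lem:Retract3} you invoke ``stability of $\mathcal{S}$'' for arbitrary frame elements ($p \wedge q \ll_{L} u' \wedge v'$), whereas Definition~\ref{def:StablyLK} is stated only on base elements; the bridging fact $U \ll U' \amp V \ll V' \implies U \downarrow V \ll U' \downarrow V'$ does follow from the base definition, by continuity, interpolation, and the closure of $\ll$ under finite unions on the left, but it is this frame-level statement, not the definition itself, that your argument consumes; (ii) predicatively, quantifiers such as $\left( \exists w \ll_{L} v \right)$ range over a class, so they must be restricted to elements of the form $\sat B$ with $B \in \Fin{S}$ --- legitimate since $L$ is set-based continuous, and exactly the device you use in part~\ref{lem:Retract1} via $\wb_{\mathcal{T}}$, but the same restriction is needed, silently, in the directed family $\left\{ e(w) \mid w \ll_{L} v \right\}$ and in part~\ref{lem:Retract3}. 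For part~\ref{lem:Retract2} your construction works (the retraction being $A \mathrel{r_{0}} a \defeqiv A \ll a$, with Lemma~\ref{lem:LKInterpolate} indeed carrying the cover-map condition, as you predicted), but the paper is more economical: it keeps the original base $S$, sets $a \cov_{F} U \defeqiv \left( \exists A \in \Fin{U} \right) a \cov A$, and takes the retraction to be $\mathord{\ll}$ itself with the identity relation as section, avoiding the passage to $\Fin{S}$ altogether.
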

\begin{proof}
  \ref{lem:Retract1}.
  Let  $r \colon \mathcal{S} \to
  \mathcal{S}'$ and $s \colon \mathcal{S}' \to \mathcal{S}$ be basic
  cover maps such that $r \circ s = \id_{\mathcal{S}'}$, and suppose
  that $\mathcal{S}$ is continuous. Define a function $\wb \colon S'
  \to \Pow{S'}$ by
  \begin{equation}\label{lem:Retract1wb}
    \wb(a) \defeql \left\{ b \in S' \mid \left( \exists c \in S \right) b
      \mathrel{s} c \amp c \ll r^{-}a \right\}.
    \end{equation}
  We show that $\wb$ makes $\mathcal{S}'$ continuous (see Definition
  \ref{def:LK}). First, since $\mathcal{S}$ is continuous, we have
    $
    a \cov' s^{-}r^{-}a \cov' s^{-} \left\{ c \in S \mid c \ll
      r^{-}a \right\} \cov' \wb(a).
    $
  Next, let $b \in \wb(a)$, and suppose that $a \cov' U$.
  Then, there exists $c \in S$ such that $b \mathrel{s} c$
  and $c \ll r^{-}a$. Since $r^{-}a \cov r^{-}U$, there exists
  $B \in \Fin{U}$ such that $c \cov r^{-}B$. Then, 
  $b \in s^{-}c \cov s^{-}r^{-}B \cov B$. Hence, $b \ll' a$.

  \medskip
  \noindent\ref{lem:Retract2}.
  If $\mathcal{S}$ is a continuous basic cover,
  then it is a retract of the finitely basic cover
  $\mathcal{S}_{F} = (S, \cov_{F})$ given by
    $
    a \cov_{F} U \defeqiv \left( \exists A \in \Fin{U} \right) a \cov A.
    $
  The retraction $r \colon \mathcal{S}_{F} \to \mathcal{S}$ is given
  by $r \defeql \ll$.

  \medskip
  \noindent\ref{lem:Retract3}.
  Let  $r \colon \mathcal{S} \to
  \mathcal{S}'$ and $s \colon \mathcal{S}' \to \mathcal{S}$ be formal
  topology maps such that $r \circ s = \id_{\mathcal{S}'}$, and suppose
  that $\mathcal{S}$ is stably locally compact. Then, 
  $\mathcal{S}'$ is locally compact by \ref{lem:Retract1}. Let
  $a \ll' a'$ and $b \ll' b'$, and suppose that $a' \downarrow b' \cov' U$.
  Let $\wb$ be the function defined by \eqref{lem:Retract1wb}.
  Then, there exist $\left\{ a_{0},\dots,a_{n-1} \right\} \in
  \Fin{\wb(a')}$ and $\left\{ b_{0},\dots,b_{m-1} \right\} \in
  \Fin{\wb(b')}$ such that $a \cov' \left\{ a_{i} \mid i < n\right\}$
  and $b \cov' \left\{ b_{j} \mid j < m\right\}$.
  Then, for each $i < n$ and $j < m$, there exist $c_{i}, c_{j} \in S$
  such that $a_{i} \mathrel{s} c_{i} \ll r^{-} a'$ and 
  $b_{j} \mathrel{s} c_{j} \ll r^{-} b'$. Since $\mathcal{S}$ is
  stably locally compact, we have 
  \[
    c_{i} \downarrow c_{j} \ll r^{-}a' \downarrow r^{-}b' \cov
    r^{-}(a' \downarrow b' ) \cov r^{-}U.
  \]
  Then, there exists $C_{i,j} \in \Fin{U}$ such that $c_{i} \downarrow
  c_{j} \cov r^{-}C_{i,j}$. Hence, 
  \[
    a_{i} \downarrow b_{j} \cov'
    s^{-} c_{i} \downarrow s^{-} c_{j} \cov' s^{-}(c_{i} \downarrow
    c_{j}) \cov' s^{-}r^{-}C_{i,j} \cov' C_{i,j},
  \]
  so that $a \downarrow b \cov' \left\{ a_{i} \mid i < n \right\}
  \downarrow \left\{ b_{j} \mid j < m \right\} \cov'
  \bigcup_{\substack{i < n \\j < m}}C_{i,j}$. Thus, $a \downarrow
  b \ll' a' \downarrow b'$.

  \medskip
\noindent\ref{lem:Retract4}. Straightforward.
\end{proof}

\begin{proof}[{Proof of Lemma \ref{lem:LowerSLK}}]
  By Lemma \ref{lem:Retract}.\ref{lem:Retract2},
  $\mathcal{S}$ is a retract of the finitary basic cover
  $\mathcal{S}_{F}$.
  Since $\Lower{\mathcal{S}_{F}}$ is generated by the axioms of the
  form \ref{r3} only, $\Lower{\mathcal{S}_{F}}$ is a spectral formal topology,
  which is stably compact (cf.\ Lemma \ref{lem:Spec}).
  Hence, $\Lower{\mathcal{S}}$  is stably compact by Lemma
  \ref{lem:Retract}.\ref{lem:Retract3} and
  \ref{lem:Retract}.\ref{lem:Retract4}.
\end{proof}

Let $\mathcal{S}$ be a continuous basic cover.  The map
$\varepsilon_{\Located} \colon \Located(\mathcal{S}) \to \mathcal{S}$
given by \eqref{def:CorefLocated} uniquely extends to a formal
topology map $\overline{\varepsilon_{\Located}} \colon
\Located(\mathcal{S}) \to \Lower{\mathcal{S}}$ via $\sigma_{L} \colon
\Lower{\mathcal{S}} \to \mathcal{S}$. It is easy to show that both
$\overline{\varepsilon_{\Located}} \colon \Located(\mathcal{S}) \to
\Lower{\mathcal{S}}$ and $\sigma_{L} \colon \Lower{\mathcal{S}} \to
\mathcal{S}$ are perfect. 
Now, let $r \colon \mathcal{S}' \to \Lower{\mathcal{S}}$ be a perfect
formal topology map from a compact regular formal topology
$\mathcal{S}'$. By Proposition \ref{prop:UnivPropLocated}, the
composition $\sigma_{L} \circ r$ uniquely extends to a formal topology
map $s \colon \mathcal{S}' \to \Located(\mathcal{S})$ via
$\varepsilon_{\Located} \colon \Located(\mathcal{S}) \to \mathcal{S}$.
Then, $\sigma_{L} \circ r = \varepsilon_{\Located} \circ s =
\sigma_{L} \circ \overline{\varepsilon_{\Located}} \circ s$. Hence by
the universal property of $\sigma_{L}$, we have $r =
\overline{\varepsilon_{\Located}} \circ s$. Then, $s$ is a unique
formal topology map from $\mathcal{S}'$ to $\Located(\mathcal{S})$
such that $r = \overline{\varepsilon_{\Located}} \circ s$. The
following diagram summarises the argument.
\[
  \xymatrix{
    & \Located(\mathcal{S}) \ar[dl]_{\varepsilon_{\Located}}
    \ar[d]^-{\overline{\varepsilon_{\Located}}}
    & \ar[l]_-{s} \ar[ld]^-{r} \mathcal{S}' \\
   \mathcal{S}  &\Lower{\mathcal{S}} \ar[l]^{\sigma_{L}} & 
  }
\]
By Theorem \ref{thm:CorefPatchSLKC}, we have the following theorem.
\begin{theorem}\label{thm:LawsonTop}
  The Lawson topology $\Located(\mathcal{S})$
  is naturally isomorphic to $\Patch(\Lower{\mathcal{S}})$.
\end{theorem}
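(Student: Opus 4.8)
The plan is to show that $\Located(\mathcal{S})$ and $\Patch(\Lower{\mathcal{S}})$ solve one and the same universal problem, and then invoke the uniqueness up to canonical isomorphism of its solution. Since $\mathcal{S}$ is a continuous basic cover, $\Lower{\mathcal{S}}$ is stably compact by Lemma \ref{lem:LowerSLK}, so $\Patch(\Lower{\mathcal{S}})$ is defined; and by Theorem \ref{thm:CorefPatchSLKC} the counit $\varepsilon_{P} \colon \Patch(\Lower{\mathcal{S}}) \to \Lower{\mathcal{S}}$ exhibits $\Patch(\Lower{\mathcal{S}})$ as the coreflection of $\Lower{\mathcal{S}}$ into $\KReg$ along perfect maps.

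First I would observe that the discussion preceding the theorem establishes that $\overline{\varepsilon_{\Located}} \colon \Located(\mathcal{S}) \to \Lower{\mathcal{S}}$ enjoys exactly the same universal property: for every compact regular formal topology $\mathcal{S}'$ and every perfect map $r \colon \mathcal{S}' \to \Lower{\mathcal{S}}$, applying Proposition \ref{prop:UnivPropLocated} to $\sigma_{L} \circ r$ yields a unique $s \colon \mathcal{S}' \to \Located(\mathcal{S})$ with $\varepsilon_{\Located} \circ s = \sigma_{L} \circ r$, and the universal property of $\sigma_{L}$ promotes this to $r = \overline{\varepsilon_{\Located}} \circ s$, with $s$ still unique. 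Hence $\overline{\varepsilon_{\Located}}$ is also a coreflection arrow for $\Lower{\mathcal{S}}$ into $\KReg$, using that $\Located(\mathcal{S})$ is compact regular by Proposition \ref{prop:LocPowerKR}.

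Then I would conclude by the standard argument that a coreflection is determined up to unique isomorphism: lifting $\overline{\varepsilon_{\Located}}$ through $\varepsilon_{P}$ and $\varepsilon_{P}$ through $\overline{\varepsilon_{\Located}}$ produces comparison maps whose composites are forced to be the respective identities by the uniqueness clauses, yielding an isomorphism $\Located(\mathcal{S}) \cong \Patch(\Lower{\mathcal{S}})$ commuting with the two counits into $\Lower{\mathcal{S}}$. Naturality in $\mathcal{S}$ is then automatic: reading the successive bijections, namely perfect maps into $\Patch(\Lower{\mathcal{S}})$, then into $\Lower{\mathcal{S}}$, then perfect basic cover maps into $\mathcal{S}$ via $\sigma_{L}$, then formal topology maps into $\Located(\mathcal{S})$ via $\varepsilon_{\Located}$, each step is natural in the test object, so a Yoneda argument makes the induced isomorphism natural in $\mathcal{S}$.

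I do not expect a genuine obstacle, since all the analytic input has been discharged earlier: Proposition \ref{prop:LocPowerKR} gives compact regularity of $\Located(\mathcal{S})$, Lemma \ref{lem:LowerSLK} gives stable compactness of $\Lower{\mathcal{S}}$, and the perfectness of $\overline{\varepsilon_{\Located}}$ and $\sigma_{L}$ is already noted. The one point meriting care is to confirm that the two universal properties are literally over the same class of test data, perfect maps from compact regular formal topologies into $\Lower{\mathcal{S}}$, so that the coreflection supplied by Theorem \ref{thm:CorefPatchSLKC} and the universal arrow $\overline{\varepsilon_{\Located}}$ are directly comparable; this reduces to checking that factoring along $\sigma_{L}$ carries perfect maps to perfect maps and back, which is exactly what the preceding paragraph verifies.
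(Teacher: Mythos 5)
Your proposal is correct and is essentially the paper's own argument: the paper proves Theorem \ref{thm:LawsonTop} by exactly the factorisation you describe, using Proposition \ref{prop:UnivPropLocated} applied to $\sigma_{L} \circ r$ together with the universal property of $\sigma_{L}$ to show that $\overline{\varepsilon_{\Located}} \colon \Located(\mathcal{S}) \to \Lower{\mathcal{S}}$ is a coreflection arrow into $\KReg$, and then invoking Theorem \ref{thm:CorefPatchSLKC} (with Lemma \ref{lem:LowerSLK} guaranteeing that $\Lower{\mathcal{S}}$ is stably compact) to identify $\Located(\mathcal{S})$ with $\Patch(\Lower{\mathcal{S}})$. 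The only difference is cosmetic: the paper leaves the uniqueness-of-coreflections step and the naturality claim implicit, which you spell out.
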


Classically, the Lawson topology of a continuous lattice agrees with
the patch topology on its Scott topology; see \citet[Lemma
V-5.15]{gierz2003continuous}. We give a constructive
account of this fact by showing that
the Scott topology of a continuous basic cover $\mathcal{S}$
is the de Groot dual of the lower powerlocale $\Lower{\mathcal{S}}$.
 
Recall that a \emph{dcpo} is a poset with all directed
joins, and a dcpo homomorphism is a function that preserves directed
joins. A \emph{preframe} is a dcpo with finite meets
that distribute over the directed joins, and a
preframe homomorphism is a function that preserves finite meets and
directed joins.

The next proposition, which is a consequence of the preframe coverage
theorem due to \citet{JohnstoneVickersPreframePresent}, is crucial for
our development. We give a predicative proof for the sake of
completeness.
\begin{proposition}\label{prop:preframeCoverage}
  Let $\mathcal{S}$ be an inductively generated basic cover.
  Then, for any preframe $X$ and a dcpo homomorphism $f \colon
  \Sat{\mathcal{S}} \to X$, there exists a unique preframe homomorphism
  $F \colon \Sat{\Lower{S}} \to X$ such that
  $F \circ \sigma_{L}^{*} = f$, where
   $\sigma_{L}^{*} \colon \Sat{\mathcal{S}} \to
  \Sat{\Lower{\mathcal{S}}}$ is the suplattice homomorphism induced
  by the basic cover map $\sigma_L \colon \Lower{\mathcal{S}} \to
  \mathcal{S}$ given by \eqref{eq:LowerCounit}.
\end{proposition}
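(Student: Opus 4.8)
The plan is to construct $F$ by an explicit formula and to verify its properties by induction on the cover $\cov_{\Lower{\mathcal{S}}}$. Recall that $\Lower{\mathcal{S}}$ has base $\Fin{S}$, that a finite set $A \in \Fin{S}$ denotes the finite meet $\bigwedge_{a \in A}\dia a$ of the generators $\dia a \defeql \sigma_{L}^{*}(\sat\{a\})$, and that $\sigma_{L}^{*}(\sat U) = \bigvee_{a \in U}\dia a$. Since every element of $\Sat{\Lower{\mathcal{S}}}$ is a join of such finite meets, any preframe homomorphism $F$ with $F \circ \sigma_{L}^{*} = f$ is forced to satisfy $F(\dia a) = f(\sat\{a\})$ and hence $F(\bigwedge_{a \in A}\dia a) = \bigwedge_{a \in A}f(\sat\{a\})$; writing an arbitrary element as a \emph{directed} join of finite joins of such meets then pins down $F$ uniquely. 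Existence is therefore the substantive part.

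The key manoeuvre is to trade the finite joins of $X$, which do not exist, for finite meets, which do, using finite distributivity in the frame $\Sat{\Lower{\mathcal{S}}}$: for $\mathcal{V} \in \Fin{\Fin{S}}$,
\[
  \bigvee_{A \in \mathcal{V}}\ \bigwedge_{a \in A}\dia a
  \;=\;
  \bigwedge_{g}\ \sigma_{L}^{*}\bigl(\sat\{\, g(A) \mid A \in \mathcal{V}\,\}\bigr),
\]
where $g$ ranges over the finitely enumerable set of choice functions with $g(A) \in A$ for each $A \in \mathcal{V}$, and where the inner finite joins $\bigvee_{a}\sat\{a\}$ have been absorbed into the domain $\Sat{\mathcal{S}}$, in which all joins exist. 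Guided by this, I set, for $\mathcal{U} \subseteq \Fin{S}$,
\[
  F(\sat_{\Lower{\mathcal{S}}}\mathcal{U})
  \defeql
  \bigvee_{\mathcal{V} \in \Fin{\mathcal{U}}}^{\uparrow}\ \bigwedge_{g}\ f\bigl(\sat\{\, g(A) \mid A \in \mathcal{V}\,\}\bigr).
\]
The inner meet is a genuine finite meet in $X$, and the family indexed by $\mathcal{V} \in \Fin{\mathcal{U}}$ is directed, since enlarging $\mathcal{V}$ enlarges each selected set and $f$ is monotone, so the outer directed join exists in the preframe $X$.

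The main obstacle is well-definedness: that the right-hand side depends only on the saturation $\sat_{\Lower{\mathcal{S}}}\mathcal{U}$, equivalently that it is stable under the generating covers of $\Lower{\mathcal{S}}$. I would prove this by induction on $\cov_{\Lower{\mathcal{S}}}$, carrying through the induction an auxiliary ``context'' of extra conjuncts so that the inductive hypothesis can be applied underneath a finite meet. The reflexivity and order rules are immediate from monotonicity; the only real case is a localised instance of a theory axiom $D \cup \{a\} \cov_{\Lower{\mathcal{S}}} \{D \cup \{b\} \mid b \in C(i,a)\}$. Here I rewrite the conjunct indexed by $a$ using $a \cov C(i,a)$ in $\mathcal{S}$, replace the resulting arbitrary join $\sat C(i,a) = \bigvee_{C_{0} \in \Fin{C(i,a)}}^{\uparrow}\sat C_{0}$ by its directed approximation, and then invoke both that $f$ preserves directed joins and that finite meets distribute over directed joins in $X$ to push the join outside and match it against the cover premises supplied by the induction.

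Once $F$ is well-defined, the remaining verifications are comparatively routine. The identity $F \circ \sigma_{L}^{*} = f$ follows by specialising the formula to $\mathcal{U} = \{\{a\} \mid a \in U\}$, where every choice function is forced and the formula collapses to $\bigvee_{V_{0} \in \Fin{U}}^{\uparrow}f(\sat V_{0}) = f(\sat U)$. Preservation of directed joins follows since $\sat_{\Lower{\mathcal{S}}}$ sends directed unions of index sets to directed joins and every finite $\mathcal{V}$ already lies inside one member of a directed family. For preservation of finite meets, the inequality $F(x \wedge y) \leq F(x) \wedge F(y)$ is free from monotonicity, while the reverse inequality reduces, via preframe distributivity and the identity $\sat_{\Lower{\mathcal{S}}}\mathcal{U} \wedge \sat_{\Lower{\mathcal{S}}}\mathcal{U}' = \sat_{\Lower{\mathcal{S}}}\{A \cup A' \mid A \in \mathcal{U},\ A' \in \mathcal{U}'\}$, to checking that the value of the formula on $\{A \cup A' \mid A \in \mathcal{V}_{1},\ A' \in \mathcal{V}_{2}\}$ dominates the meet of its values on $\mathcal{V}_{1}$ and $\mathcal{V}_{2}$, a finite distributive-lattice computation. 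Uniqueness was already noted above.
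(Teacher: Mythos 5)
Your proposal is correct and is essentially the paper's own proof: the same candidate formula (a directed join over $\mathcal{V} \in \Fin{\mathcal{U}}$ of finite meets of $f$ applied to finite joins absorbed into $\Sat{\mathcal{S}}$ --- the paper indexes these meets by an operation $\mathcal{V} \mapsto \mathcal{V}^{*}$ built from inhabited finite subsets of each $A \in \mathcal{V}$ rather than your choice functions, an equivalent and enumeration-robust variant given monotonicity of $f$), the same well-definedness argument by induction on $\cov_{\Lower{\mathcal{S}}}$ carrying an auxiliary context of extra conjuncts so the inductive hypothesis can be applied under a finite meet, and the same appeals in the (infinity) case to preservation of directed joins by $f$ and to preframe distributivity. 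Uniqueness is likewise argued identically, by evaluating any competing preframe homomorphism on the canonical normal form.
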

\begin{proof}
  First, we define a function $(-)^{*} \colon \Fin{\Fin{S}} \to
  \Fin{\Fin{S}}$ by induction:
  \begin{align*}
    \emptyset^{*} &\defeql \left\{ \emptyset \right\},\\
    \left( \mathcal{V} \cup \left\{ A \right\} \right)^{*} &\defeql
    \left\{ B \cup C \mid B \in \mathcal{V}^{*} \amp C \in
    \PFin{A}\right\},
  \end{align*}
  where $\PFin{A}$ is the set of inhabited finitely enumerable
  subsets of $A$. Note that for each $\mathcal{V} \in
  \Fin{\Fin{S}}$ we have
    $
    \bigvee_{A \in \mathcal{V}} \medwedge_{a \in A}\sat_{L}\left\{ a \right\}
    =_{\Lower{\mathcal{S}}} 
    \bigwedge_{B \in \mathcal{V}^{*}} \medvee_{b \in B}\sat_{L}\left\{
    b \right\},
    $
    where $\sat_{L}$ is the saturation operation of
    $\Lower{\mathcal{S}}$.

  Let $f \colon \Sat{\mathcal{S}} \to X$ be a dcpo homomorphism to a
  preframe $X$. Define a function
  $F \colon \Sat{\Lower{S}} \to X$ by
  \begin{align*}
    F(\sat_{L}\mathcal{U}) 
    &\defeql\;
    \bigvee_{\mathclap{\mathcal{V} \in \Fin{\mathcal{U}}}}
    \widetilde{f}(\mathcal{V})
  \end{align*}
  for each $\mathcal{U} \subseteq \Fin{S}$, where
  $\widetilde{f}(\mathcal{V}) \defeql \bigwedge_{B \in \mathcal{V}^{*}}
  f(\sat B)$.

  We must show that
  $F$ is well defined, i.e.\ 
  $F(\sat_{L} \mathcal{U}) 
  = F(\sat_{L} \mathcal{U}')$ whenever
  $\sat_{L} \mathcal{U} = \sat_{L} \mathcal{U}'$.
  To this end, it suffices to show that
  \begin{equation}\label{eq:FwellDefined}
    \mathcal{V} \cov_{\Lower{\mathcal{S}}} \mathcal{U} \implies
    \widetilde{f}(\mathcal{V}) \leq_{X} F(\mathcal{U})
  \end{equation}
  for all $\mathcal{V} \in \Fin{\Fin{S}}$ and $\mathcal{U} \subseteq
  \Fin{S}$.  Fix $\mathcal{U} \subseteq \Fin{S}$, and define a
  predicate $\Phi$ on $\Fin{\Fin{S}}$ by
  \[
    \Phi(\mathcal{W}) \equiv \left( \forall \mathcal{V} \in \Fin{\Fin{S}}\right)
    \widetilde{f}(\mathcal{V} \cup \mathcal{W}) \leq_{X}
    F(\mathcal{V} \cup \mathcal{U}).
  \]
  By induction on $\mathcal{W}$, it is easy to see that
    $
      \left( \forall A \in \mathcal{W} \right) \Phi(\left\{ A
    \right\})
  \implies \Phi(\mathcal{W}).
    $
  Thus, to prove \eqref{eq:FwellDefined}, it suffices to show that
  \[
    A \cov_{\Lower{\mathcal{S}}} \mathcal{U} \implies \Phi(\left\{ A
    \right\})
  \]
  for all $A \in \Fin{S}$.
  This is proved by induction on
  $\cov_{\Lower{\mathcal{S}}}$. The cases for (reflex) and
  ($\leq$)-rules are straightforward. For (infinity)-rule, let
  $a \in S$, $i \in I(a)$ and $A \in \Fin{S}$, and suppose that
  $\Phi(\left\{ A \cup \left\{ b \right\} \right\})$ for all
  $b \in C(a,i)$. Let $\mathcal{V} \in \Fin{\Fin{S}}$. Note that
  each element $B \in \widetilde{f}(\mathcal{V} \cup \left\{ A \cup \left\{
  a \right\} \right\})$ is either of the following forms:
  \begin{enumerate}
    \item $B = B_{0} \cup B_{1} \cup \left\{ a \right\}$, where
      $B_{0} \in \mathcal{V}^{*}$ and $B_{1} \in \Fin{A}$,
    \item $B = B_{0} \cup B_{1}$, where
      $B_{0} \in \mathcal{V}^{*}$ and $B_{1} \in \PFin{A}$.
  \end{enumerate}
  If $B$ is of the form $B_{0} \cup B_{1} \cup
  \left\{ a \right\}$ for some $B_{0} \in \mathcal{V}^{*}$ and $B_{1}
  \in \Fin{A}$, then
  \[
    f(\sat B)
    \leq_{X}
    \bigvee\left\{ f(\sat \left(B_{0} \cup B_{1} \cup C
    \right)) \mid C \in \Fin{C(a,i)}\right\}
  \]
  because $f$ preserves directed joins. Since $X$ is a preframe, we
  have
  \begin{align*}
    \widetilde{f}(\mathcal{V} \cup \left\{ A \cup \left\{ a
    \right\} \right\})
    &= 
    z \wedge \;\; \medwedge_{\mathclap{\substack{
        B_{0} \in \mathcal{V}^{*} \\
        B_{1} \in \Fin{A}}}}\;
      f(\sat \left(B_{0} \cup B_{1} \cup \left\{ a \right\} \right))\\
    &\leq_{X}
      \bigvee\Bigl\{ z \wedge \;\;
        \medwedge_{\mathclap{\substack{
        B_{0} \in \mathcal{V}^{*} \\
        B_{1} \in \Fin{A}}}}\;
        f(\sat \left(B_{0} \cup B_{1} \cup C
        \right)) \mid C \in \Fin{C(a,i)}\Bigr\},
      \end{align*}
      where $z \defeql \medwedge
      \left\{ f(\sat \left(B_{0} \cup B_{1}\right)) \mid 
        B_{0} \in \mathcal{V}^{*} \amp B_{1} \in \PFin{A} \right\}$.
      Let $C \in \Fin{C(a,i)}$. Then, we clearly have
      \[
      z \wedge \;\;
        \medwedge_{\mathclap{\substack{
        B_{0} \in \mathcal{V}^{*} \\
        B_{1} \in \Fin{A}}}}\;
        f(\sat \left(B_{0} \cup B_{1} \cup C
        \right)) \leq_{X} \widetilde{f}(\mathcal{V} \cup \left\{ 
          A \cup \left\{ b \right\} \mid b \in C \right\}).
      \]
      Then, by induction on the size of $C$, using assumption
      $\Phi(\left\{ A \cup \left\{ b \right\} \right\})$ for each
      $b \in C(a,i)$, one can easily show that
      $
      \widetilde{f}(\mathcal{V} \cup \left\{ A \cup \left\{ b
      \right\} \mid b \in C \right\}) \leq_{X}
      F(\mathcal{U} \cup \mathcal{V}).
      $
      Hence $\Phi(\left\{ A \cup \left\{ a \right\} \right\})$.
      Therefore $F$ is well defined.
      
      Now, it is straightforward to show that $F$ is a preframe
      homomorphism. 
      Finally, if $G \colon \Sat{\Lower{\mathcal{S}}} \to X$
      is any other preframe homomorphism such that $G \circ
      \sigma_{L}^{*} = f$, then for each $\mathcal{U} \subseteq
      \Fin{S}$, we have
      \begin{align*}
        G(\sat_{L} \mathcal{U})
        = \bigvee_{\mathclap{\mathcal{V} \in
        \Fin{\mathcal{U}}}} G(\sat_{L} \mathcal{V})
        &= \bigvee_{\mathcal{V} \in
        \Fin{\mathcal{U}}} \medwedge_{B \in
          \mathcal{V}^{*}}G(\medvee_{b \in B} \sat_{L} \left\{ b \right\})\\
        &= \bigvee_{\mathcal{V} \in
        \Fin{\mathcal{U}}} \medwedge_{B \in
          \mathcal{V}^{*}}G(\sigma_{L}^{*}(\sat B))\\
        &= \bigvee_{\mathcal{V} \in
        \Fin{\mathcal{U}}} \medwedge_{B \in
          \mathcal{V}^{*}}f(\sat B)
        = F(\sat_{L}\mathcal{U}).
        \qedhere
      \end{align*}
\end{proof}

If $\mathcal{S}$ is a continuous basic cover, then it is easy to see
that the dcpo homomorphisms from  $\Sat{\mathcal{S}}$ to $\Pow{\One}$
bijectively correspond to the Scott open subsets of
$\Sat{\mathcal{S}}$, and hence to the saturated subsets of
$\Scott(\mathcal{S})$. Similarly, if $\mathcal{S}$ is a stably compact
formal topology, then the preframe homomorphisms from
$\Sat{\mathcal{S}}$ to $\Pow{\One}$
bijectively correspond to the Scott open filters on
$\Sat{\mathcal{S}}$, and hence to the saturated subsets of the de
Groot dual $\mathcal{S}^{d}$ by Proposition~\ref{prop:deGrootSOF}.
Hence, by Proposition \ref{prop:preframeCoverage} with $X = \Pow{\One}$,
we conclude as follows.
\begin{theorem}
  The Scott topology $\Sigma(\mathcal{S})$ of a continuous basic cover
  $\mathcal{S}$ is the de Groot dual of the lower powerlocale
  $\Lower{\mathcal{S}}$.
\end{theorem}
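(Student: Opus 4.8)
The plan is to produce a frame isomorphism $\Sat{\Sigma(\mathcal{S})} \cong \Sat{\Lower{\mathcal{S}}^{d}}$ and then read it back, via the dual equivalence between $\FTop$ and the category of set-based frames, as an isomorphism of formal topologies $\Sigma(\mathcal{S}) \cong \Lower{\mathcal{S}}^{d}$. First I would record that the de Groot dual in question exists: a continuous basic cover is inductively generated (Remark \ref{rem:LKInd}), so $\Lower{\mathcal{S}}$ is defined, and it is stably compact by Lemma \ref{lem:LowerSLK}; hence Definition \ref{def:deGroot} and Proposition \ref{prop:deGrootSOF} apply to it.

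The next step is to identify each of the two frames with a frame of homomorphisms into the Sierpiński frame $\Pow{\One}$. On the one hand, as noted just before the statement, a dcpo homomorphism $\Sat{\mathcal{S}} \to \Pow{\One}$ is the same datum as a Scott open subset of $\Sat{\mathcal{S}}$, and the Scott opens are precisely the saturated subsets of $\Sigma(\mathcal{S})$; ordering homomorphisms pointwise corresponds to ordering opens by inclusion, so this is a frame isomorphism $\Sat{\Sigma(\mathcal{S})} \cong \mathrm{dcpo}(\Sat{\mathcal{S}}, \Pow{\One})$. On the other hand, a preframe homomorphism $\Sat{\Lower{\mathcal{S}}} \to \Pow{\One}$ is the same datum as a Scott open filter on $\Sat{\Lower{\mathcal{S}}}$, and by Proposition \ref{prop:deGrootSOF} these filters are precisely the saturated subsets of $\Lower{\mathcal{S}}^{d}$, that correspondence being inclusion-preserving; this gives a frame isomorphism $\Sat{\Lower{\mathcal{S}}^{d}} \cong \mathrm{preframe}(\Sat{\Lower{\mathcal{S}}}, \Pow{\One})$.

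The heart of the proof is Proposition \ref{prop:preframeCoverage} taken with $X = \Pow{\One}$, which is a preframe. It says that $F \mapsto F \circ \sigma_{L}^{*}$ is a bijection from preframe homomorphisms $\Sat{\Lower{\mathcal{S}}} \to \Pow{\One}$ onto dcpo homomorphisms $\Sat{\mathcal{S}} \to \Pow{\One}$, its inverse being the unique preframe extension $f \mapsto F$. I would then verify that this bijection is an order isomorphism: precomposition with $\sigma_{L}^{*}$ is plainly monotone, and the inverse is monotone because the explicit extension formula $F(\sat_{L}\mathcal{U}) = \bigvee_{\mathcal{V} \in \Fin{\mathcal{U}}} \bigwedge_{B \in \mathcal{V}^{*}} f(\sat B)$ from the proof of Proposition \ref{prop:preframeCoverage} is visibly monotone in $f$. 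Since an order isomorphism between frames preserves all joins and meets, it is a frame isomorphism, and composing the three isomorphisms yields $\Sat{\Sigma(\mathcal{S})} \cong \Sat{\Lower{\mathcal{S}}^{d}}$, whence the claim.

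I expect the only genuinely delicate point to be this last observation that Proposition \ref{prop:preframeCoverage} furnishes an order isomorphism and not merely a set-theoretic bijection; once monotonicity of both directions is in hand, passage to a frame isomorphism and then to an isomorphism of formal topologies is automatic. The two identifications with hom-frames are the facts flagged as immediate before the statement, so they need only be recorded, together with the remark that the bijection of Proposition \ref{prop:deGrootSOF} respects inclusion.
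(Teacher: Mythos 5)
Your proposal is correct and takes essentially the same route as the paper: the paper likewise identifies $\Sat{\Scott(\mathcal{S})}$ with the dcpo homomorphisms $\Sat{\mathcal{S}} \to \Pow{\One}$ and, using Lemma \ref{lem:LowerSLK} together with Proposition \ref{prop:deGrootSOF}, identifies the saturated subsets of the de Groot dual of $\Lower{\mathcal{S}}$ with the preframe homomorphisms $\Sat{\Lower{\mathcal{S}}} \to \Pow{\One}$, then concludes by Proposition \ref{prop:preframeCoverage} with $X = \Pow{\One}$. The only addition on your side is the explicit check that the bijection given by precomposition with $\sigma_{L}^{*}$ is an order isomorphism (hence a frame isomorphism), a point the paper leaves implicit but which your monotonicity argument settles correctly.
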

\begin{corollary}
  For any continuous basic cover $\mathcal{S}$, we have
  $\Located(\mathcal{S}) \cong \Patch(\Scott(\mathcal{S}))$.
\end{corollary}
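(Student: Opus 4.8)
The plan is to assemble the corollary from three results already established in the excerpt, with no new construction required. By Theorem \ref{thm:LawsonTop}, the Lawson topology is the patch of the lower powerlocale, so $\Located(\mathcal{S}) \cong \Patch(\Lower{\mathcal{S}})$. The theorem immediately preceding this corollary identifies the Scott topology of $\mathcal{S}$ with the de Groot dual of the lower powerlocale, namely $\Scott(\mathcal{S}) = (\Lower{\mathcal{S}})^{d}$. Thus it suffices to know that forming the patch is invariant under de Groot duality, which is exactly the content of Theorem \ref{thm:PatchdeGroot}.

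The one hypothesis that must be verified is that Theorem \ref{thm:PatchdeGroot} is applicable to $\Lower{\mathcal{S}}$: that theorem requires its argument to be a stably compact formal topology. This is precisely what Lemma \ref{lem:LowerSLK} supplies, since $\mathcal{S}$ is a continuous basic cover and hence its lower powerlocale $\Lower{\mathcal{S}}$ is stably compact. With this in hand, Theorem \ref{thm:PatchdeGroot} gives $\Patch(\Lower{\mathcal{S}}) \cong \Patch\bigl((\Lower{\mathcal{S}})^{d}\bigr)$.

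Chaining these isomorphisms yields
\[
  \Located(\mathcal{S}) \cong \Patch(\Lower{\mathcal{S}})
  \cong \Patch\bigl((\Lower{\mathcal{S}})^{d}\bigr)
  = \Patch(\Scott(\mathcal{S})),
\]
which is the desired conclusion. Since every step is a direct appeal to a prior result, there is no genuine obstacle to the argument; the only point that warrants attention is confirming the stable compactness of $\Lower{\mathcal{S}}$ via Lemma \ref{lem:LowerSLK}, so that the de Groot invariance of the patch established in Theorem \ref{thm:PatchdeGroot} may legitimately be invoked.
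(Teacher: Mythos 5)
Your proposal is correct and matches the paper's own argument, which likewise derives the result immediately from Theorem \ref{thm:PatchdeGroot} and Theorem \ref{thm:LawsonTop} together with the preceding theorem identifying $\Scott(\mathcal{S})$ with the de Groot dual of $\Lower{\mathcal{S}}$. Your explicit verification that $\Lower{\mathcal{S}}$ is stably compact via Lemma \ref{lem:LowerSLK} is a welcome spelling-out of a hypothesis the paper leaves implicit, but it is the same route.
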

\begin{proof}
Immediate from Theorem \ref{thm:PatchdeGroot} and Theorem
\ref{thm:LawsonTop}.
\end{proof}

\subsection{The Vietoris monad}\label{sec:Vietoris}
By Theorem \ref{thm:LRegLFCov}, the construction of Lawson topologies
induces a monad on the category of compact regular formal topologies.
We show that this monad is the Vietoris monad
\cite[Chapter III, Section 4]{johnstone-82}.

We work over a fixed compact regular formal topology $\mathcal{S}$.
\begin{definition} \label{def:Vietoris}
The \emph{Vietoris powerlocale} of
$\mathcal{S}$ is a formal topology $\Viet(\mathcal{S})$ presented by the
geometric theory $T_{\Viet}$ over the
propositional symbols
\[
  P_{\Viet} \defeql \left\{ \sq A \mid A \in \Fin{S} \right\}
  \cup \left\{ \dia a  \mid a \in S \right\}
\] 
with the following axioms:
\begin{enumerate}
  \myitem[($\dia$1)]\label{d1}  $\dia a  \vdash \bigvee \left\{ 
  \dia b \mid b \in A \right\} \qquad (a \cov A \in \Fin{S})$
  
  \myitem[($\dia$2)]\label{d2} $\dia a  \vdash
  \bigvee\left\{ \dia b \mid b  \ll a \right\}$ 

  \myitem[($\sq$1)]\label{s1} $ \top  \vdash \bigvee \left\{ 
    \sq A \mid  A \in \Fin{S} \right\}$
  
  \myitem[($\sq$2)]\label{s2} $\sq A \vdash \sq B \qquad (A \cov B)$
  
  \myitem[($\sq$3)]\label{s3} $\sq A \wedge \sq B
        \vdash \bigvee \left\{ \sq C \mid C \ll A \amp C \ll B \right\} $
  
  \myitem[(V1)]\label{v1} $\sq A \wedge \dia a
        \vdash \bigvee \left\{ \dia b \mid b \in A \downarrow a \right\} $

  \myitem[(V2)]\label{v2}  $\sq(A \cup \left\{ a \right\})
        \vdash \sq A \vee \dia a$
\end{enumerate}
\end{definition}
\begin{remark}
The axioms \ref{d1} and \ref{d2} are essentially the same as
the axioms \ref{r3} and \ref{r4} of the theory $T_{\Located}$.
Moreover,
the axioms \ref{s1} -- \ref{s3} are analogous to the axioms \ref{l1} -- 
\ref{l3} of $T_{\Located}$ except that the relations $\cov$ and $\ll$
are reversed. Although we do not elaborate,
there is an important reason for this: the geometric theory over 
$\left\{ \sq A \mid A \in \Fin{S} \right\}$ with the axioms 
\ref{s1} -- \ref{s3} presents the upper powerlocale of $\mathcal{S}$
which is the Lawson dual of the Scott topology $\Scott(\mathcal{S})$
in the sense of information system for continuous posets; see
\citet[Theorem 3.5]{Infosys}.
\end{remark}

The theory $T_{\Viet}$ also describes the space of located subsets of
$\mathcal{S}$ in the sense of Proposition \ref{prop:PtV} below.
This observation is due to \citet[Proposition 73]{Spitters10LocatedOvert}.
Note that in compact regular formal topologies, closed subtopologies
and compact subtopologies are equivalent \cite[Chapter III,
Proposition 1.2]{johnstone-82}.
\begin{proposition}\label{prop:PtV}
  If $m \subseteq P_{\Viet}$ is a model of $T_{\Viet}$, then
  \[
    U_{m} \defeql \left\{ a \in S \mid \dia a \in m \right\}, \qquad
    \mathcal{C}_{m} \defeql \left\{ A  \in \Fin{S} \mid \sq A \in m \right\} 
  \]
  are respectively a located subset of $\mathcal{S}$ and 
  the set of finite covers of the closed subtopology
  of $\mathcal{S}$ determined by the complement $\neg U_{m} \defeql
  \left\{ a \in S \mid a \notin U_{m} \right\}$.

  Conversely, if $U \subseteq S$ is a located subset of
  $\mathcal{S}$, then
  \[
    m_{U} \defeql \left\{ \dia a \mid a \in U \right\} \cup \left\{ \sq A \mid S
      \cov' A \right\}
  \]
  is a model of $T_{\Viet}$, where $\cov'$ is the cover of the closed
  subtopology of $\mathcal{S}$ determined by $\neg U$.
  Moreover, the correspondence is bijective.
\end{proposition}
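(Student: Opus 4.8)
The plan is to treat the two assignments $m \mapsto U_m$ and $U \mapsto m_U$ separately and then verify that they are mutually inverse. Throughout I will use that $\mathcal{S}$ is compact regular, so that $\ll$ and $\lll$ coincide and compactness ($S \ll S$) lets me extract finite subcovers from any cover; I will also use Lemma~\ref{lem:SplittingInLK} to test splittingness. For the forward direction I first isolate the single observation that does most of the work: if $D \in \Fin{S}$ with $\sq D \in m$ and $a \downarrow D \cov \emptyset$, then $a \notin U_m$. Indeed, were $\dia a \in m$, axiom~\ref{v1} would produce some $d \in D \downarrow a$ with $\dia d \in m$, whereas $a \downarrow D \cov \emptyset$ forces $d \cov \emptyset$, contradicting that $U_m$ is splitting. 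Splittingness of $U_m$ itself is immediate from \ref{d1} and \ref{d2} via Lemma~\ref{lem:SplittingInLK}.

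Given this, locatedness of $U_m$ and the cover-characterisation of $\mathcal{C}_m$ both follow. For locatedness, suppose $a \ll b$; by regularity $a \lll b$, i.e.\ $S \cov a^{*} \cup \{b\}$, and compactness yields a finite $A_0 \subseteq a^{*}$ with $S \cov A_0 \cup \{b\}$. Using \ref{s1} and \ref{s2} one gets $\sq(A_0 \cup \{b\}) \in m$, and \ref{v2} gives $\sq A_0 \in m$ or $\dia b \in m$; in the first case the observation (with $D = A_0$, since $a \downarrow A_0 \cov \emptyset$) gives $a \notin U_m$, in the second $b \in U_m$. For the identity $\sq A \in m \iff S \cov \neg U_m \cup A$ — which is precisely the assertion that $\mathcal{C}_m$ is the set of finite covers of the closed subtopology determined by $\neg U_m$ — the direction $\Leftarrow$ is an induction that strips the $\neg U_m$-part off a finite subcover one element at a time using \ref{v2}, and the direction $\Rightarrow$ uses \ref{s3} with $B = A$ to round $\sq A$ down to some $\sq A_1 \in m$ with $A_1 \ll A$; then $A_1 \ll A$ gives $S \cov A_1^{*} \cup A$ by regularity, and every $c \in A_1^{*}$ lies in $\neg U_m$ by the observation, whence $S \cov \neg U_m \cup A$.

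For the converse I must check that $m_U$ satisfies the seven axioms, where $\sq A \in m_U$ means $S \cov \neg U \cup A$. Splittingness of $U$ handles \ref{d1}, \ref{d2} and \ref{v1} (for \ref{v1} note that $\neg U$ is downward closed and disjoint from $U$); \ref{s1} uses compactness to find a finite $A$ with $S \cov A$; and \ref{s2} is monotonicity of $\cov$. Axiom \ref{s3} is the one requiring care: from $S \cov \neg U \cup A$ and $S \cov \neg U \cup B$ one first meets the two covers to obtain $S \cov \neg U \cup (A \downarrow B)$, and then refines $A \downarrow B$ by regularity and compactness to a finite $C$ with $S \cov \neg U \cup C$ and $C \ll A \downarrow B$, so that $C \ll A$ and $C \ll B$. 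Axiom \ref{v2} is where locatedness of $U$ re-enters: replacing $a$ by $\wb(a)$ and taking a finite subcover of $S \cov \neg U \cup A \cup \wb(a)$ reduces matters to finitely many instances $b_i \in \neg U \vee a \in U$ of the located condition, and the intuitionistically valid distribution $\bigwedge_i (P_i \vee Q) \to (\bigwedge_i P_i) \vee Q$ delivers the required disjunction, namely $S \cov \neg U \cup A$ or $a \in U$.

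Finally, bijectivity is short: $U_{m_U} = U$ holds by definition, while $m_{U_m} = m$ reduces exactly to the two facts proved above, that $U_m$ is located and that $\sq A \in m \iff S \cov \neg U_m \cup A$. The main obstacle is the forward direction of this last equivalence together with locatedness of $U_m$: both hinge on turning the mere overlap information supplied by \ref{v1} into genuine covering information, which is possible only because compact regularity furnishes the identity $\ll = \lll$ and the finite-subcover property.
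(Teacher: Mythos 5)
Your proof is correct and follows essentially the same route as the paper's: splittingness via \ref{d1}--\ref{d2}, locatedness and the characterisation of $\mathcal{C}_m$ via the normality property of compact regular topologies (your inline use of $\ll\,=\,\lll$, $a^{*}$, and compactness is exactly the paper's equation \eqref{eq:Normalty}) combined with \ref{s1}, \ref{s2}, \ref{s3}, \ref{v1}, \ref{v2}, the converse via locatedness applied to a finite $B \ll a$, and bijectivity read off from the definitions. The only differences are presentational: you factor the \ref{v1}-plus-\ref{d1} refutation into a standalone observation and spell out the verification of \ref{s3} and \ref{v1} for $m_U$, which the paper dismisses as straightforward.
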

\begin{proof}
  First, note that in a compact regular formal topology
  $\mathcal{S}$, we have 
\begin{equation}\label{eq:Normalty}
  U \ll V \iff\left( \exists C \in \Fin{S}\right) C \downarrow U \cov
  \emptyset \amp S \cov C \cup V.
\end{equation}

\noindent($\Rightarrow$) Let $m \subseteq P_{\Viet}$ be a model of
$T_{\Viet}$. First, we show that $U_{m}$ is a located subset.
  Note that $U_{m}$ is splitting by \ref{d1} and
  \ref{d2}. Suppose that $a \ll b$. 
  By \eqref{eq:Normalty}, there exists $A \in \Fin{S}$ such that 
  $A \downarrow a \cov \emptyset$ and $S \cov A \cup \left\{ b
  \right\}$.
  Thus, $\sq(A \cup \left\{ b \right\}) \in m$ by \ref{s1} and \ref{s2}.
  Either $\sq A \in m$ or  $\dia b \in m$ by \ref{v2}. In the
  former case, if $a \in U_{m}$, then there exists $c \in A \downarrow
  a$ such that $\dia c \in m$ by \ref{v1}, which is a contradiction by
  \ref{d1}. Thus $a \notin U_{m}$. In the latter case, we have
  $b \in U_{m}$. Hence $U_{m}$ is located.
  Next, let $\cov'$ denote the cover of the closed
  subtopology of $\mathcal{S}$ determined by $\neg U_{m}$, and let $A \in
  \mathcal{C}_{m}$.  We must show that $S \cov' A$, i.e.\ $S \cov \neg
  U_{m} \cup A$.  By \ref{s3}, there exists $B \ll A$ such that $\sq B
  \in m$. Then $S \cov B^{*} \cup A$, and so 
  $B^{*} \subseteq \neg U_{m}$ by \ref{v1}  and \ref{d1}. Hence $S \cov' A$.
  Conversely, suppose that  $S \cov' A$. Then there exists $\sq (B_{0} \cup
  B_{1}) \in m$ such that $B_{0} \subseteq \neg U_{m}$ and
  $B_{1} \subseteq A$ by \ref{s1} and \ref{s2}. Then, $\sq
  B_{1} \in m$ by repeated applications of \ref{v2}, and so $\sq A \in m$ by \ref{s2}.

  \medskip
  \noindent($\Leftarrow$) Let $U \subseteq S$ be a located subset of
  $\mathcal{S}$. We show that $m_{U}$ satisfies \ref{v2}. Other
  axioms are straightforward to check. Suppose that $\sq(A \cup
  \left\{ a \right\}) \in m_{U}$. Then, $S \cov' A \cup \left\{ a
  \right\}$. Since $\mathcal{S}$ is compact regular, there exists $B \ll a$
  such that $S \cov' A \cup B$. Since $U$ is located, either $B
  \subseteq \neg U$ or $a \in U$. In the former case, we have
  $S \cov' A$, and so $\sq A \in m_{U}$. In the latter
  case, we have $\dia a \in m_{U}$.

  The above correspondence is clearly bijective.
\end{proof}

By the observation made in Proposition \ref{prop:PtV}, we can define a
translation between the models of $T_{\Located}$ and
the models of $T_{\Viet}$ as follows:
\begin{align*}%\label{eq:TransModTLTV}
  \begin{aligned}
  m \in \Model{ T_{\Located} } &\mapsto \left\{ \dia a \mid
  \rcut(a) \in m \right\} \cup \left\{ \sq A \mid \left( \exists\,
    \lcut(B) \in m \right) S \cov B \cup A \right\}, \\
  M \in \Model{ T_{\Viet} } &\mapsto \left\{ \rcut(a) \mid
  \dia a \in M \right\} \cup \left\{ \lcut(A) \mid \left( \exists\,
    \sq B \in M \right)  B  \downarrow  A \cov \emptyset \right\}.
  \end{aligned}
\end{align*}
It is straightforward to show that these mappings are well-defined and bijective.
Motivated by this correspondence, we define formal
topology maps $r \colon \Located(\mathcal{S}) \to \Viet(\mathcal{S})$
and $s \colon \Viet(\mathcal{S}) \to \Located(\mathcal{S})$ by
specifying their actions on generators as follows:
\begin{align}
  &\begin{aligned}
  \elPT{A} \mathrel{r} \left\{ \dia a \right\}
  &\defeqiv \elPT{A} \cov_{\Located(\mathcal{S})} \left\{ \rcut(a) \right\},
  \\
  \elPT{A} \mathrel{r} \left\{ \sq A \right\}
  &\defeqiv \elPT{A} \cov_{\Located(\mathcal{S})} \left\{ \left\{ \lcut(B)\right\} \mid S \cov A
  \cup B \right\}, \label{def:NatIsor}
  \end{aligned} \\
  &\begin{aligned}
  \elPT{A} \mathrel{s} \left\{ \rcut(a) \right\}
  &\defeqiv \elPT{A} \cov_{\Viet(\mathcal{S})} \left\{ \dia a \right\},
  \\
  \elPT{B} \mathrel{s} \left\{ \lcut(A) \right\}
  &\defeqiv \elPT{B} \cov_{\Viet(\mathcal{S})} \left\{ \left\{ \sq B \right\} \mid B \downarrow A
    \cov \emptyset \right\}, \label{def:NatIsos}
  \end{aligned}
\end{align}

\begin{lemma} \label{lem:VietMaps}
  $r$ and $s$ respect the axioms of $T_{\Viet}$ and $T_{\Located}$ respectively.
\end{lemma}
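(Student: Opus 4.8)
The plan is to verify the single condition \eqref{eq:PreservAx} for each axiom of $T_{\Viet}$ (for $r$) and of $T_{\Located}$ (for $s$), reading off from \eqref{def:NatIsor} and \eqref{def:NatIsos} the frame-level translation of each generator. Working in the frame $\Sat{\Located(\mathcal{S})}$, the map $r$ sends $\dia a$ to $\rcut(a)$ and $\sq A$ to $\bigvee\{\lcut(B)\mid S\cov A\cup B\}$, while $s$ sends $\rcut(a)$ to $\dia a$ and $\lcut(A)$ to $\bigvee\{\sq B\mid B\downarrow A\cov\emptyset\}$. The two descriptions of the $\sq/\lcut$ correspondence are precisely the two witnesses appearing in the normality identity \eqref{eq:Normalty} valid in compact regular topologies; this identity, together with compactness and the fact that both presented topologies are compact regular (Proposition \ref{prop:LocPowerKR}), is the engine of the whole argument. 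Throughout I use that the meet of two base elements is represented by their union.

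The routine cases come first. The axioms \ref{d1} and \ref{d2} translate verbatim into \ref{r3} and \ref{r4}; the monotonicity axiom \ref{s2} follows because $A\cov B$ makes the defining family for $\sq A$ a subfamily of that for $\sq B$; and \ref{s1} follows from compactness together with \ref{l1}, by choosing a finite $A_{0}$ with $S\cov A_{0}$ and observing that $\{\lcut(\emptyset)\}\in r^{-}\{\sq A_{0}\}$ (take $B=\emptyset$ in the defining family).

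The substantial work lies in the conjunction axiom \ref{s3} and the mixed axioms \ref{v1}, \ref{v2}. For \ref{v1} I would expand $r^{-}\{\sq A\}\wedge r^{-}\{\dia a\}$ as a join of $\lcut(E)\wedge\rcut(a)$ over $E$ with $S\cov A\cup E$; intersecting $a$ with this cover gives $a\cov (a\downarrow A)\cup(a\downarrow E)$, so by \ref{r3} it suffices to treat each $\rcut(c)$ with $c\in (a\downarrow A)\cup(a\downarrow E)$; the terms with $c\in a\downarrow A$ satisfy $c\in A\downarrow a$ and land in the target, while for $c\le e\in E$ one has $c\cov E$, whence $\lcut(E)\le\lcut(\{c\})$ by \ref{l2} and $\lcut(E)\wedge\rcut(c)=\bot$ by \ref{D}. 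For \ref{s3}, axiom \ref{l3} rewrites $\lcut(E)\wedge\lcut(F)$ as $\bigvee\{\lcut(G)\mid E\ll G\ \amp\ F\ll G\}$; feeding $E\ll G$ and $F\ll G$ into \eqref{eq:Normalty} yields $D,D'$ with $D\downarrow E\cov\emptyset$, $S\cov D\cup G$, $D'\downarrow F\cov\emptyset$, $S\cov D'\cup G$, and setting $C\defeql D\downarrow D'$, meeting the two covers gives $S\cov C\cup G$, while $C\downarrow E\cov\emptyset$ with $S\cov A\cup E$ gives $C\ll A$ through \eqref{eq:Normalty} (and symmetrically $C\ll B$), so that $\lcut(G)\le r^{-}\{\sq C\}$ with $C\ll A$ and $C\ll B$.

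The hardest case is \ref{v2}, where compact regularity is genuinely used. Given $S\cov A\cup\{a\}\cup E$, regularity lets me replace $a$ by $\{a'\mid a'\ll a\}$, and compactness extracts a finite $A'$ of such $a'$ with $S\cov A\cup A'\cup E$. Applying \ref{Loc} to each $a'\in A'$ and distributing yields $\top\le\rcut(a)\vee\bigwedge_{a'\in A'}\lcut(\{a'\})$, and iterating \ref{l3} turns the conjunction into $\bigvee\{\lcut(G)\mid A'\ll G\}$. Meeting with $\lcut(E)$, the $\rcut(a)$ disjunct falls on the $\dia a$ side; for each remaining $G$ a further \ref{l3} produces $H$ with $E\ll H$ and $G\ll H$, and then $E\cov H$ and $A'\cov H$ fold $S\cov A\cup A'\cup E$ into $S\cov A\cup H$, placing $\lcut(H)\le r^{-}\{\sq A\}$. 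The recurring obstacle, shared by \ref{s3} and \ref{v2}, is exactly this bookkeeping of way-below witnesses through \eqref{eq:Normalty} and the passage $\ll\Rightarrow\cov$; once the witnesses are chosen correctly the steps are mechanical. Finally, the seven axioms for $s$ are handled by the same scheme: \ref{r3}, \ref{r4} go to \ref{d1}, \ref{d2}; \ref{l1}, \ref{l2} go to \ref{s1}, \ref{s2}; and \ref{l3}, \ref{Loc}, \ref{D} are the exact duals of \ref{s3}, \ref{v2}, \ref{v1}, with meets and joins (equivalently the two witnesses of \eqref{eq:Normalty}) interchanged, the witness in the dual of \ref{s3} being $D\cup D'$ in place of $D\downarrow D'$.
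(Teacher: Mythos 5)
Your overall strategy is the paper's: verify \eqref{eq:PreservAx} axiom by axiom, with the normality identity \eqref{eq:Normalty} of compact regular topologies as the engine, treating \ref{v1} and \ref{v2} as the substantial cases for $r$ and dispatching the axioms for $s$ by the dual pattern (the paper details \ref{Loc} for $s$, via \ref{s1}, \ref{s2} and \ref{v2}, exactly as your duality scheme predicts; your remark that the combining witness dualises from $D \downarrow D'$ to a union is correct). Your \ref{v2} argument is a sound inline re-derivation of what the paper gets from Lemma \ref{lem:LittleFact} together with \ref{l2} and \ref{l3}, and your treatment of \ref{s1}, \ref{s2}, \ref{d1}, \ref{d2} is fine.

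There are, however, two finiteness slips where the step as written is not licensed. First, in \ref{v1} you apply \ref{r3} to the cover $a \cov (a \downarrow A) \cup (a \downarrow E)$; but \ref{r3} only splits $\rcut(a)$ over a \emph{finitely enumerable} cover, and $a \downarrow A$, $a \downarrow E$ are in general infinite. This is precisely why the paper first invokes \ref{r4} to pass to $b \ll a$ and then extracts $C \in \Fin{a \downarrow A}$ with $b \cov C \cup E$ before using \ref{r3} (equivalently, you may cite the derived rule, via Remark \ref{rem:TheoryT} or the presentation of $\Lower{\mathcal{S}}$ by \ref{r3} and \ref{r4}, that $a \cov U$ implies $\left\{ \rcut(a) \right\} \cov_{\Located(\mathcal{S})} \left\{ \left\{ \rcut(b) \right\} \mid b \in U \right\}$ for arbitrary $U$). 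Second, in \ref{s3} your witness $C \defeql D \downarrow D'$ is generally not in $\Fin{S}$, so $\sq C$ is not a generator of $T_{\Viet}$. The repair is immediate: from $S \cov (D \downarrow D') \cup G$ and compactness $S \ll S$, extract $C_{0} \in \Fin{D \downarrow D'}$ with $S \cov C_{0} \cup G$; then $C_{0} \downarrow E \cov \emptyset$ and $C_{0} \downarrow F \cov \emptyset$ are inherited, and \eqref{eq:Normalty} (using that the generator indices $E$, $F$ are finitely enumerable) gives $C_{0} \ll A$ and $C_{0} \ll B$ as you intended. With these two local fixes your proof goes through and coincides in substance with the paper's.
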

\begin{proof}
  1.\ $r$ respects the axioms of $T_{\Viet}$:
  We show that $r$ respects \ref{v1} and \ref{v2}. Other axioms
  are straightforward to check.

  %%%%%%%%%%%%%%%%%%%
  \medskip
  \noindent\ref{v1}
  %%%%%%%%%%%%%%%%%%%
  Let $\elPT{A} \in r^{-} \left\{ \sq A \right\} \downarrow
  r^{-} \left\{ \dia a \right\}$. We may assume
  that $\elPT{A}$ is of the form $\left\{ \lcut(B), \rcut(a) \right\}$
  for some $B \in \Fin{S}$ such that $S \cov A \cup B$.  By \ref{r4}, we have
    $
    \elPT{A} \cov_{\Located(\mathcal{S})} \left\{ \left\{ \lcut(B), \rcut(b)
    \right\} \mid b \ll a \right\}.
    $
  Let $b \ll a$. Since $a \cov (a \downarrow A) \cup B$, there exists
  $C \in \Fin{a \downarrow A}$ such that $b \cov C \cup B$. Then by
  \ref{r3} and \ref{D}, we have
  $
  \left\{ \lcut(B), \rcut(b) \right\}
  \cov_{\Located(\mathcal{S})}
  \left\{ \left\{ \rcut(c) \right\} \mid c \in C \right\}.
  $
  Hence,
  $
  \elPT{A} \cov_{\Located(\mathcal{S})}
  r^{-} \left\{ \left\{ \dia c \right\}
  \mid c \in A \downarrow a \right\}.
  $

  %%%%%%%%%%%%%%%%%%%
  \medskip
  \noindent\ref{v2}
  %%%%%%%%%%%%%%%%%%%
  Let $\elPT{A} \in r^{-} \left\{ \sq \left(  A \cup \left\{ a
  \right\}\right) \right\}$. We may assume that $\elPT{A}$ is of the
  form $\left\{ \lcut(B) \right\}$ where $S \cov A \cup \left\{ a
  \right\} \cup B$. Since $\mathcal{S}$ is compact regular, there exists $C \in
  \Fin{S}$ such that $C \ll a$ and $S \cov C \cup A \cup B$. Then, 
  \begin{align*}
    \left\{ \lcut(B) \right\}
    &\cov_{\Located(\mathcal{S})}
    \left\{ \left\{ \lcut(B), \lcut(C) \right\} 
    \left\{ \lcut(B), \rcut(a) \right\} \right\} && \text{(by Lemma
      \ref{lem:LittleFact})} \\
    &\cov_{\Located(\mathcal{S})}
    \left\{ \left\{ \lcut(B \cup C) \right\} 
    \left\{ \lcut(B), \rcut(a) \right\} \right\} && \text{(by \ref{l2}
    and \ref{l3})} \\
    &\cov_{\Located(\mathcal{S})}
    \left\{ \left\{ \lcut(B \cup C) \right\} 
    \left\{ \rcut(a) \right\} \right\} \\
    &\cov_{\Located(\mathcal{S})}
    r^{-}\left\{ \left\{ \sq A \right\}, \left\{ \dia a
    \right\}\right\}.
  \end{align*}

  %%%%%%%%%%%%%%%%%%%
  \noindent2.\ $s$ respects the axioms of $T_{\Located}$: We show that $s$
  respects \ref{Loc}. The other axioms can be treated similarly.
  Suppose that $a \ll b$.
  By \eqref{eq:Normalty}, there exists
  $C \in \Fin{S}$ such that $C \downarrow a \cov \emptyset$
  and $S \cov C \cup \left\{ b
  \right\}$. Then
  \begin{align*}
    \Fin{P_{\Viet}}
    &\cov_{\Viet(\mathcal{S})}
    \left\{ \sq (C \cup \left\{ b \right\}) \right\}
    && \text{(by \ref{s1} and \ref{s2})}\\
    &\cov_{\Viet(\mathcal{S})}
     \left\{ \left\{ \sq C \right\},
     \left\{ \dia b \right\} \right\}
    && \text{(by \ref{v2})}\\
    &\subseteq s^{-}\left\{ \left\{
      \lcut(\left\{ a \right\}) \right\}, \left\{ \rcut(b) \right\}
    \right\}. &&\qedhere
  \end{align*}
\end{proof}
Hence, $r$ and $s$ indeed uniquely extend to formal topology maps
$r \colon \Located(\mathcal{S}) \to \Viet(\mathcal{S})$ and 
$s \colon \Viet(\mathcal{S}) \to \Located(\mathcal{S})$.

\begin{lemma}
  $r$ and $s$ are inverse to each other.
\end{lemma}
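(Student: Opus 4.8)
The plan is to verify the two equalities $s\circ r=\id_{\Located(\mathcal{S})}$ and $r\circ s=\id_{\Viet(\mathcal{S})}$ separately, halving the work in each case via Lemma \ref{lem:RegMax}. Both $\Located(\mathcal{S})$ and $\Viet(\mathcal{S})$ are compact regular (the former by Proposition \ref{prop:LocPowerKR}, the latter by the analogue of Lemma \ref{lem:wbL} for a suitable $\wb_{\Viet}$), so it suffices to prove the inequalities $s\circ r\leq\id_{\Located(\mathcal{S})}$ and $r\circ s\leq\id_{\Viet(\mathcal{S})}$; Lemma \ref{lem:RegMax} then supplies the reverse inequalities. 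Since composition in $\FTop$ is relational composition, $(s\circ r)^{-}=r^{-}\circ s^{-}$ and $(r\circ s)^{-}=s^{-}\circ r^{-}$, and since $r,s$ are basic cover maps they commute with saturation; as $r^{-}$ and $s^{-}$ also carry a basic element to the $\downarrow$-meet of the images of its generators, it is enough to check the inequalities on the generators $\rcut(a),\lcut(A)$ and $\dia a,\sq A$.

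On $\rcut(a)$ and $\dia a$ one actually obtains equalities. Unwinding \eqref{def:NatIsor} and \eqref{def:NatIsos},
\[
  (s\circ r)^{-}\{\rcut(a)\}=r^{-}(s^{-}\{\rcut(a)\})=_{\Located(\mathcal{S})}r^{-}\{\dia a\}=_{\Located(\mathcal{S})}\{\rcut(a)\},
\]
using that $s^{-}\{\rcut(a)\}$ is the $\Viet(\mathcal{S})$-saturation of $\{\dia a\}$, that $r$ is a cover map, and \eqref{def:NatIsor}; dually $(r\circ s)^{-}\{\dia a\}=_{\Viet(\mathcal{S})}\{\dia a\}$. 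The substance is in $\lcut(A)$ and $\sq A$, where the same unwinding yields
\[
  (s\circ r)^{-}\{\lcut(A)\}=_{\Located(\mathcal{S})}\bigcup_{B\downarrow A\cov\emptyset}\bigl\{\,\{\lcut(E)\}\mid S\cov B\cup E\,\bigr\}
\]
and
\[
  (r\circ s)^{-}\{\sq A\}=_{\Viet(\mathcal{S})}\bigcup_{S\cov A\cup E}\bigl\{\,\{\sq B\}\mid B\downarrow E\cov\emptyset\,\bigr\}.
\]
For the first inequality I must show $\{\lcut(E)\}\cov_{\Located(\mathcal{S})}\{\lcut(A)\}$ whenever $B\downarrow A\cov\emptyset$ and $S\cov B\cup E$; for the second, $\{\sq B\}\cov_{\Viet(\mathcal{S})}\{\sq A\}$ whenever $S\cov A\cup E$ and $B\downarrow E\cov\emptyset$.

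Both reduce to one elementary fact about $\mathcal{S}$: if $X,Y,Z\subseteq S$ satisfy $S\cov Y\cup Z$ and $X\downarrow Y\cov\emptyset$, then $X\cov Z$. Indeed, for $x\in X$ one has $x\cov Y\cup Z$ and $x\cov\{x\}$, hence $x\cov(\{x\}\downarrow Y)\cup(\{x\}\downarrow Z)$; since $\{x\}\downarrow Y\subseteq X\downarrow Y\cov\emptyset$ and $\{x\}\downarrow Z\cov Z$, this gives $x\cov Z$. Taking $(X,Y,Z)=(A,B,E)$ yields $A\cov E$, so $\lcut(E)\vdash\lcut(A)$ by axiom \ref{l2}; taking $(X,Y,Z)=(B,E,A)$ yields $B\cov A$, so $\sq B\vdash\sq A$ by axiom \ref{s2}. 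This establishes the two inequalities, and Lemma \ref{lem:RegMax} completes the argument. I expect the main obstacle to be purely the bookkeeping above: propagating the saturations of the two presenting theories through the relational composites to reach the two explicit unions, and keeping the side conditions aligned across the definitions of $r$ and $s$. Should one prefer not to invoke regularity of $\Viet(\mathcal{S})$, the reverse covers can be obtained directly instead: axiom \ref{l3} (resp.\ \ref{s3}) gives $\{\lcut(A)\}\cov_{\Located(\mathcal{S})}\bigl\{\,\{\lcut(C)\}\mid A\ll C\,\bigr\}$ (resp.\ $\{\sq A\}\cov_{\Viet(\mathcal{S})}\bigl\{\,\{\sq C\}\mid C\ll A\,\bigr\}$), and the normality identity \eqref{eq:Normalty} for the compact regular $\mathcal{S}$ places each such $\lcut(C)$ (resp.\ $\sq C$) inside the corresponding union above.
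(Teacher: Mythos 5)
Your proposal is correct and, at its core, is the same proof as the paper's: you reduce to the generators $\lcut(A)$ and $\sq A$, unwind the composites to the same two explicit unions, and settle the cover equalities via \ref{l2}/\ref{s2} and \ref{l3}/\ref{s3} together with \eqref{eq:Normalty} --- indeed your ``elementary fact'' ($S \cov Y \cup Z$ and $X \downarrow Y \cov \emptyset$ imply $X \cov Z$) is exactly the easy direction of \eqref{eq:Normalty}, and your fallback for the reverse covers coincides with the paper's use of \ref{l3}, \ref{s3} and \eqref{eq:Normalty}. The only wrinkle is your primary route via Lemma \ref{lem:RegMax} for $r \circ s \leq \id_{\Viet(\mathcal{S})}$, which presupposes regularity of $\Viet(\mathcal{S})$ --- something the paper never verifies (and which cannot be extracted from the isomorphism being proved without circularity), so the self-contained fallback you give should be regarded as the actual argument, not an optional variant.
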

\begin{proof}
  By the definition of $r$ and $s$, it suffices to show that 
  \begin{align*}
    \left\{ \lcut(A) \right\} &=_{\Located(\mathcal{S})}
    r^{-}s^{-}\left\{ \lcut(A) \right\}, &
    \left\{ \sq A \right\} &=_{\Viet(\mathcal{S})}
    s^{-}r^{-}\left\{ \sq A \right\}
  \end{align*}
  for each $A \in \Fin{S}$. These conditions are equivalent to
  \begin{align*}
    \left\{ \lcut(A) \right\} &=_{\Located(\mathcal{S})}
    \left\{ \left\{ \lcut(C) \right\} \mid \left( \exists B \in
    \Fin{S} \right) S \cov B \cup C \amp B \downarrow A \cov \emptyset
  \right\},\\
    \left\{ \sq A \right\} &=_{\Viet(\mathcal{S})}
    \left\{ \left\{ \sq C \right\} \mid \left( \exists B \in
    \Fin{S} \right) C \downarrow B \cov \emptyset \amp S \cov B \cup A
  \right\}.
  \end{align*}
  The former follows from \eqref{eq:Normalty} and \ref{l3};
  the latter follows from \eqref{eq:Normalty} and  \ref{s3}.
\end{proof}

The constructions $\Located(\mathcal{S})$ and $\Viet(\mathcal{S})$
extend to functors on $\KReg$. 
Their actions on a morphism $t \colon \mathcal{S}
\to \mathcal{S}'$ are defined as follows:
\begin{align*}
  \elPT{B} \mathrel{\Viet(t)} \left\{ \dia a \right\}
  &\defeqiv
  \elPT{B} \cov_{\Viet(\mathcal{S})} \left\{ \left\{ \dia b \right\} \mid b
  \mathrel{t} a \right\}, \\
  \elPT{B} \mathrel{\Viet(t)} \left\{ \sq A \right\}
  &\defeqiv
  \elPT{B} \cov_{\Viet(\mathcal{S})} \left\{ \left\{ \sq B \right\} \mid B \cov
  t^{-} A\right\},\\
  \elPT{A} \mathrel{\Located(t)} \left\{ \rcut(a) \right\}
  &\defeqiv
  \elPT{A} \cov_{\Located(\mathcal{S})} \left\{ \left\{ \rcut(b) \right\} \mid b
  \mathrel{t} a \right\}, \\
  \elPT{A} \mathrel{\Located(t)} \left\{ \lcut(A) \right\}
  &\defeqiv
  \elPT{A} \cov_{\Located(\mathcal{S})} \left\{ \elPT{A}' \mid 
  \left( \exists B \gg A \right) 
  \left\{ \elPT{A}' \cup \left\{ \rcut(a) \right\}\mid  a \in t^{-}B  \right\}
  \cov_{\Located(\mathcal{S})} \emptyset  \right\}.
\end{align*}
The  Vietoris functor $\Viet$ is a part of a monad
$\langle \Viet, \eta^{\Viet}, \mu^{\Viet} \rangle$ on $\KReg$; see
\citet[Chapter III, Section 4.5]{johnstone-82}. The structure of the
monad is defined as follows:
\begin{align*}
  a \mathrel{\eta^{\Viet}} \left\{ \dia b \right\}
  &\defeqiv
  a \cov b,\\
  a \mathrel{\eta^{\Viet}} \left\{ \sq B\right\}
  &\defeqiv
  a \cov B,\\
  \mathcal{U} \mathrel{\mu^{\Viet}} \left\{ \dia a \right\}
  &\defeqiv
  \mathcal{U} \cov_{\VViet(\mathcal{S})}
  \left\{ \dia \left\{ \dia a \right\} \right\}, \\
  \mathcal{U} \mathrel{\mu^{\Viet}} \left\{ \sq A\right\}
  &\defeqiv
  \mathcal{U} \cov_{\VViet(\mathcal{S})}
  \left\{ \sq \left\{ \left\{ \sq A \right\} \right\} \right\},
\end{align*}
where $\cov_{\VViet(\mathcal{S})}$ is the
cover of $\Viet(\Viet(\mathcal{S}))$ and
$\mathcal{U}$ ranges over the base of $\Viet(\Viet(\mathcal{S}))$.

By Theorem \ref{thm:LRegLFCov}, the functor $\Located$ also induces a
monad $\langle \Located, \eta^{\Located}, \mu^{\Located} \rangle$  on
$\KReg$:
\begin{align*}
  a \mathrel{\eta^{\Located}} \left\{ \rcut(b) \right\}
  &\defeqiv
  a \cov b, \\
  a \mathrel{\eta^{\Located}} \left\{ \lcut(B) \right\}
  &\defeqiv
  a \cov \left\{ a' \mid \left( \exists B' \gg B \right) a' \downarrow B' \cov \emptyset \right\},\\
  \mathcal{W} \mathrel{\mu^{\Located}} \left\{ \rcut(a)\right\}
  &\defeqiv
  \mathcal{W} \cov_{\LLocated(\mathcal{S})}
  \left\{ \rcut(\left\{ \rcut(a) \right\}) \right\},\\
  \mathcal{W} \mathrel{\mu^{\Located}} \left\{ \lcut(A) \right\}
  &\defeqiv
  \mathcal{W} \cov_{\LLocated(\mathcal{S})}
  \left\{ \mathcal{W}'\! \mid 
  \left( \exists B \gg\! A \right) 
  \left\{ \mathcal{W}' \cup \left\{\rcut(\left\{ \rcut(b) \right\})  \right\} \mid  b
\in B  \right\} \cov_{\LLocated(\mathcal{S})} \emptyset  \right\},
\end{align*}
where $\cov_{\LLocated(\mathcal{S})}$ is the
cover of $\Located(\Located(\mathcal{S}))$ and
$\mathcal{W}$ ranges over the base of $\Located(\Located(\mathcal{S}))$.

\begin{theorem}\label{thm:EquiLV}
  The morphism $r \colon \Located(\mathcal{S}) \to \Viet(\mathcal{S})$
  defined by \eqref{def:NatIsor} is an isomorphism between monads
  $\langle \Located, \eta^{\Located}, \mu^{\Located} \rangle$
  and $\langle \Viet, \eta^{\Viet}, \mu^{\Viet} \rangle$.
  Hence, the monad induced by the adjunction
    $
    U \dashv \Located \colon \KReg \to \LFCov
    $
  is the Vietoris monad on $\KReg$.
\end{theorem}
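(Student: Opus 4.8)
The plan is to build on the preceding lemma, which already shows that for each fixed compact regular formal topology $\mathcal{S}$ the map $r_{\mathcal{S}} \colon \Located(\mathcal{S}) \to \Viet(\mathcal{S})$ is an isomorphism of formal topologies with inverse $s_{\mathcal{S}}$. It therefore remains only to promote this pointwise isomorphism to an isomorphism of monads, which amounts to three verifications: (i) that $r = (r_{\mathcal{S}})_{\mathcal{S}}$ is natural, i.e.\ $\Viet(t) \circ r_{\mathcal{S}} = r_{\mathcal{S}'} \circ \Located(t)$ for every morphism $t \colon \mathcal{S} \to \mathcal{S}'$ of $\KReg$; (ii) the unit law $r_{\mathcal{S}} \circ \eta^{\Located}_{\mathcal{S}} = \eta^{\Viet}_{\mathcal{S}}$; and (iii) the multiplication law $r_{\mathcal{S}} \circ \mu^{\Located}_{\mathcal{S}} = \mu^{\Viet}_{\mathcal{S}} \circ \Viet(r_{\mathcal{S}}) \circ r_{\Located(\mathcal{S})}$. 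The single most useful observation is that every codomain appearing in these equations is compact regular: $\Viet(\mathcal{S})$ and the iterated powerlocale $\Viet(\Viet(\mathcal{S}))$ lie in $\KReg$ since $\Viet$ is an endofunctor on $\KReg$, and $\Located(\mathcal{S})$ is compact regular by Proposition \ref{prop:LocPowerKR}. Hence Lemma \ref{lem:RegMax} applies throughout, so in every case it suffices to prove a single inequality $\leq$ between the two formal topology maps, and since each codomain is presented by a geometric theory, that inequality in turn reduces to comparing the preimages of the generators.

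For naturality (i), I would compare the two maps $\Located(\mathcal{S}) \to \Viet(\mathcal{S}')$ on the generators $\dia a'$ and $\sq A'$ of $T_{\Viet}$ over $\mathcal{S}'$. Unfolding the definitions of $\Viet(t)$, $\Located(t)$ and $r$ from \eqref{def:NatIsor}, the $\dia$-component reduces to compatibility of $t$ with the $\rcut$-generators, which is immediate, while the $\sq$-component reduces to a cover computation relating $\{\sq A'\}$ to covers $S' \cov t^{-}A' \cup B'$; here the translation between $\sq$ and $\lcut$ built into \eqref{def:NatIsor} together with the normality identity \eqref{eq:Normalty} does the work. This is bookkeeping of the kind already carried out in Lemma \ref{lem:VietMaps}, and regularity lets me check only one direction.

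The unit law (ii) is the easiest: comparing $r_{\mathcal{S}} \circ \eta^{\Located}_{\mathcal{S}}$ with $\eta^{\Viet}_{\mathcal{S}}$ on $\dia a$ gives $a \cov b$ on both sides directly from the definitions of $\eta^{\Located}$, $\eta^{\Viet}$ and the $\rcut$-clause of $r$, while comparing them on $\sq A$ unwinds, through the $\lcut$-clauses and \eqref{eq:Normalty}, to the statement that $a \cov A$ holds exactly when there is a witnessing closed-subtopology cover, which is what \eqref{eq:Normalty} provides. The multiplication law (iii) is where the real effort lies. Because $r$ is invertible, (iii) is equivalent to saying that $\mu^{\Viet}_{\mathcal{S}}$ is the transport of $\mu^{\Located}_{\mathcal{S}}$ along $r$; I would verify this by evaluating both composites on $\dia a$ and $\sq A$ and tracing them back through the base generators $\dia\{\dia a\}$, $\sq\{\{\sq A\}\}$ of $\Viet(\Viet(\mathcal{S}))$ and $\rcut(\{\rcut a\})$ of $\Located(\Located(\mathcal{S}))$, using the explicit formulas for $\mu^{\Viet}$, $\mu^{\Located}$, $\Viet(r_{\mathcal{S}})$ and $r_{\Located(\mathcal{S})}$.

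The hard part will be precisely this last step: the nested powerlocales force one to track covers at two levels simultaneously, and matching the $\lcut$-generator clause of $\mu^{\Located}$ (with its existential over $B \gg A$ and its auxiliary cover to $\emptyset$) against the $\sq$-generator clause of $\mu^{\Viet}$ is delicate. My expectation is that the translation dictionary of Proposition \ref{prop:PtV}, the defining clauses \eqref{def:NatIsor} and \eqref{def:NatIsos}, and repeated use of \eqref{eq:Normalty} together with the $\sq$-axioms \ref{s1}--\ref{s3} and the interaction axioms \ref{v1}, \ref{v2} will reduce even this to a finite cover manipulation, after which regularity again collapses the two required inclusions to one. Once (i)--(iii) are established, $r$ is an isomorphism of monads; the final assertion then follows because, by Theorem \ref{thm:LRegLFCov}, $\langle \Located, \eta^{\Located}, \mu^{\Located}\rangle$ is by construction the monad induced by the adjunction $U \dashv \Located$, so that that monad is carried isomorphically onto the Vietoris monad.
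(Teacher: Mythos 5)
Your proposal is correct and takes essentially the same route as the paper: verify naturality and the unit and multiplication laws, use Lemma \ref{lem:RegMax} together with Proposition \ref{prop:LocPowerKR} (and the fact that $\Viet$ is an endofunctor on $\KReg$) to reduce each equality to a single cover inclusion, observe that the $\dia$/$\rcut$ components are immediate so only the $\sq$-generators require computation, and deduce the final assertion from Theorem \ref{thm:LRegLFCov}. The only (harmless) deviation is that your multiplication square uses the whiskering $\Viet(r_{\mathcal{S}}) \circ r_{\Located(\mathcal{S})}$ where the paper uses $r_{\Viet(\mathcal{S})} \circ \Located(r_{\mathcal{S}})$ --- these coincide by the naturality established in the same argument --- and the cover manipulations you anticipate (via \eqref{eq:Normalty}, the axioms \ref{s1}--\ref{s3} and \ref{v2}, with the paper additionally invoking Lemma \ref{lem:LittleFact} and compact-regular interpolation) are exactly those the paper carries out.
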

\begin{proof}
  We must show that $r$ is natural in $\mathcal{S}$, and it preserves 
  the unit $\eta$ and multiplication $\mu$. The latter means that
  the following diagrams commute:
  \[
    \xymatrix{
      & \mathcal{S} \ar[dl]_{\eta^{\Located}_{\mathcal{S}}}
      \ar[dr]^{\eta^{\Viet}_{\mathcal{S}}}&\\
      \Located(\mathcal{S}) \ar[rr]_{r_{\mathcal{S}}} & & \Viet(\mathcal{S})
    }
    \qquad
    \xymatrix@C+=3em{
      \Located(\Located(\mathcal{S}))
      \ar[d]_{\mu^{\Located}_{\mathcal{S}}}
      \ar[r]^{\Located(r_{\mathcal{S}})}
      & \Located(\Viet(\mathcal{S})) \ar[r]^{r_{\Viet(\mathcal{S})}}
      & \Viet(\Viet(\mathcal{S}))
      \ar[d]^{\mu^{\Viet}_{\mathcal{S}}}\\
      \Located(\mathcal{S}) \ar[rr]_{r_{\mathcal{S}}} & & \Viet(\mathcal{S})
    }
  \]
  Here, by writing $r_{\mathcal{S}}$, we consider $r \colon
  \Located(\mathcal{S}) \to \Viet(\mathcal{S})$ to be parameterised by
  $\mathcal{S}$.
  Let $t \colon \mathcal{S} \to \mathcal{S}'$ be a morphism
  between compact regular formal topologies.  We must show that
  $r_{\mathcal{S}'} \circ \Located(t) = \Viet(t) \circ
  r_{\mathcal{S}}$.  
  By  Lemma \ref{lem:RegMax} and by the definitions of $\Located$ and
  $\Viet$, it suffices to show that
  \[
    \left( r_{\mathcal{S}'} \circ \Located(t)\right)^{-}\left\{ \sq B \right\}
    \cov_{\Located(\mathcal{S})} (\Viet(t) \circ r_{\mathcal{S}})^{-}\left\{ \sq B \right\}
  \]
  for each $B \in \Fin{S'}$.
  Let $\elPT{A} \in \left( r_{\mathcal{S}'} \circ
  \Located(t)\right)^{-}\left\{ \sq B \right\}$. We may assume that
  there exist $A,A' \in \Fin{S'}$ such that $S' \cov'
  A \cup B$, $A \ll A'$  and $\elPT{A} \cup  \left\{ \rcut(a') \right\}
  \cov_{\Located(\mathcal{S})} \emptyset$ for each $a' \in
  t^{-}A'$.
  Then, $S \cov t^{-}A' \cup t^{-}B$. Since $\mathcal{S}$ is compact regular,
  there exist  $C,C' \in
  \Fin{S}$ such that $C \ll C' \subseteq t^{-}A'$ and
   $S \cov C \cup t^{-}B$. Then, by Lemma~\ref{lem:LittleFact}, we
   have
  \begin{align*}
  \elPT{A}
  &\cov_{\Located(\mathcal{S})}
  \left\{ \elPT{A} \cup \left\{ \lcut(C) \right\} \right\}
  \cup \left\{ \elPT{A} \cup \left\{ \rcut(c') \right\} \mid c' \in C' \right\}\\ 
  &\cov_{\Located(\mathcal{S})}
  \left\{ \left\{ \lcut(C) \right\} \right\}
  \subseteq 
    \left(\Viet(t) \circ r_{\mathcal{S}}\right)^{-}\left\{ \sq B
    \right\}.
\end{align*}
Hence, $r_{\mathcal{S}}$ is a natural isomorphism between $\Located$ and
$\Viet$.

The commutativity of the left diagram is straightforward.
For the diagram on the right, it suffices to show that
  \[
    \left( r_{\mathcal{S}} \circ \mu^{\Located}_{\mathcal{S}} \right)^{-}\left\{ \sq B \right\}
    \cov_{\LLocated(\mathcal{S})}
    (\mu^{\Viet}_{\mathcal{S}} \circ
    r_{\Viet(\mathcal{S})} \circ \Located(r_{\mathcal{S}}))^{-}\left\{ \sq B \right\}
  \]
  for each $B \in \Fin{S}$.  Suppose that
  $\mathcal{U} \in
  \left( r_{\mathcal{S}} \circ
  \mu^{\Located}_{\mathcal{S}} \right)^{-}\left\{ \sq B \right\}$.
  We may assume that
  there exist $A, A' \in \Fin{S}$ such that
  $S \cov A \cup B$, $A' \gg A$, and  $\mathcal{U} \cup \left\{ \rcut(\left\{
  \rcut(a') \right\}) \right\} \cov_{\LLocated(\mathcal{S})} \emptyset$
  for each $a' \in A'$. Then
  \begin{align*}
    \Fin{P_{\Viet}}
    &\cov_{\Viet(\mathcal{S})}
    \left\{ \sq(A \cup B)\right\} && \text{(by \ref{s1} and \ref{s2})}\\
    &\cov_{\Viet(\mathcal{S})}
    \left\{ \left\{ \dia a  \right\} \mid a \in A \right\} \cup \left\{
      \left\{ \sq B\right\}\right\} && \text{(by \ref{v2})}.
    \end{align*}
  Put $\mathcal{A} = \left\{ \left\{ \dia a \right\} \mid a \in A \right\}$
  and 
  $\mathcal{A}' = \left\{ \left\{ \dia a' \right\} \mid a' \in A'
  \right\}$. Then, $\left\{ \lcut(\mathcal{A}) \right\}
  \mathrel{r_{\Viet(\mathrel{S})}} \left\{ \sq \left\{\left\{ \sq B
  \right\}  \right\} \right\}$, and since $\mathcal{A} \ll
  \mathcal{A}'$ in $\Viet(\mathcal{S})$, we have $\mathcal{U}
  \mathrel{\Located(r_{\mathcal{S}})} \left\{ \lcut(\mathcal{A})
  \right\}$. Hence,
  $\mathcal{U}
    \cov_{\LLocated(\mathcal{S})}
    (\mu^{\Viet}_{\mathcal{S}} \circ
    r_{\Viet(\mathcal{S})} \circ \Located(r_{\mathcal{S}}))^{-}\left\{ \sq B \right\}
  $.
\end{proof}

Theorem \ref{thm:EquiLV} is a point-free version of the fact that
the category of algebras of the Vietoris monad is equivalent to that of
Lawson semilattices and Lawson continuous homomorphisms;
see \citet[Chapter VII, Theorem 3.6]{johnstone-82}.
Classically, the functor $\Located \colon \LFCov \to \KReg$ must
be monadic, which demonstrates the fact that the category of continuous
lattices and perfect maps are equivalent to that of Lawson
semilattices and Lawson continuous homomorphisms.
We could not find a constructive version of this classical fact.

\section*{Acknowledgements}
The author was supported by Istituto Nazionale di Alta Matematica ``F.
Severi'' (INdAM) as a fellow of INdAM-COFUND-2012.

\bibliographystyle{myabbrvnat}
%\bibliography{$HOME/refs}

\end{document}